\newtheorem{theorem}   {Theorem}[section]
\newtheorem{proposition}  {Proposition}[section]
\newtheorem{corollary}   {Corollary}[section]
\newtheorem{lemma} {Lemma}[section]
\newtheorem{remark}   {Remark}[section]
\newtheorem{definition}  {Definition}[section]
\title{Solutions without any symmetry for semilinear elliptic problems}
\author{Weiwei Ao}
\address{Weiwei Ao. Department of Mathematics, The Chinese University of Hong Kong, Shatin, Hong Kong.}
\email{wwao@math.cuhk.edu.hk}
\author{Monica Musso}
\address{Monica Musso. Departamento de Matem\'atica, Pontificia Universidad Catolica de
Chile, Avda. Vicu\~na Mackenna 4860, Macul, Chile.}
\email{mmusso@mat.puc.cl}
\author[Frank Pacard] {Frank Pacard}
\address{Frank Pacard. Centre de
Math\'ematiques Laurent Schwartz, UMR-CNRS 7640, \'Ecole polytechnique,
91128 Palaiseau, France and Institut Universitaire de France. }
\email{Email: frank.pacard@math.polytechnique.fr}
\author{Juncheng Wei}
\address{Juncheng Wei. Department of Mathematics, The
Chinese University of Hong Kong, Shatin, Hong Kong.}
\email{wei@math.cuhk.edu.hk}
\thanks{This work has been partly supported by the
 contract C09E06 from the ECOS-CONICYT. The research of the second  author
has been partly supported by Fondecyt Grant 1120151 and
CAPDE-Anillo ACT-125, Chile. The third
author is partially supported by the ANR-08-BLANC-0335-01 grant. The
 research of the first and fourth author is supported by an Earmarked Grant from
 RGC of Hong Kong and Oversea Joint Grant of NSFC}
\begin{document}

\maketitle

\begin{abstract}
We prove the existence of infinitely many solitary waves for the nonlinear Klein-Gordon or Schr\"odinger equation
\[
\Delta u-u+ u^3 =0 ,
\]
in ${\bf R}^2$, which have finite energy and whose maximal group of symmetry reduces to the identity. 
\end{abstract}

\section{Introduction}

In this paper, we develop tools to construct infinitely many entire solutions of 
\begin{equation}
\Delta u - u + u^3 =0,
\label{eq:nls}
\end{equation}
which are defined in ${\bf R}^2$ and in particular, solutions whose maximal group of symmetry (i.e. the largest subgroup of isometries of ${\bf R}^2$ leaving the solution $u$ fixed) is discete. The solutions we are interested in can be either positive, negative or may change sign but they have finite energy in the sense that their energy
\[
\mathcal E (u) : =  \frac{1}{2} \int_{{\bf R}^2} ( |\nabla u|^2 + u^2) \, dx - \frac{1}{4} \int_{{\bf R}^2} u^4 \, dx ,
\]
is finite.

Equations like (\ref{eq:nls}), in dimension $2$ or in higher dimensions, have been thoroughly studied over the last decades since they are ubiquitous in various models in physics, mathematical physics or biology. For example, the study of standing waves (or solitary waves) for the nonlinear Klein-Gordon or Schr\"odinger equations reduces to (\ref{eq:nls}). We refer to \cite{BL,BL2} for further references and motivations on the subject.

Let us remind the reader of a few classical and well known results concerning the solvability of (\ref{eq:nls}) and the properties of the solutions of this equation. We will restrict our attention to the results which are relevant to the $2$-dimensional case and we have deliberately chosen not to mention results which hold in higher dimensions since the list of results and contributors is by now fairly long.

It is known \cite{BL} that there exists a unique positive, radial solution of (\ref{eq:nls}). This solution, which will be denoted by $u_0$, has the property that it decays exponentially to $0$ at infinity. More precisely, it follows from \cite{BL} that $u_0$ behaves at infinity like one of the modified Bessel's functions of the second kind and hence, that there exists a constant $C >0$ such that
\begin{equation}
u_0 (r) =  C\, e^{-r} \, r^{-1/2} \left( 1 + \mathcal O \left(\frac{1}{r}\right) \right),
\label{eq:exuo}
\end{equation}
as $r$ tends to infinity. The classical result of Gidas, Ni and Nirenberg \cite{GNN} asserts that any finite energy, positive solution of (\ref{eq:nls}) is (up to a translation) radially symmetric and hence finite energy, positive solutions of (\ref{eq:nls}) are all congruent to $u_0$.

As far as sign changing solutions are concerned, Berestycki and Lions \cite{BL2} have proved that  (\ref{eq:nls}) has infinitely many radial solutions which change sign. Again, these solutions do have finite energy. To complete this description, let us mention that it is proven in \cite{MPW} that there exists solutions of (\ref{eq:nls}) which have less symmetry than the ones constructed by Berestycki and Lions. In fact, given an integer $k\geq 7$, it is proven in \cite{MPW} that there exist infinitely many solutions of (\ref{eq:nls}) whose group of symmetry is the dihedral group of symmetry leaving a regular $k$-polygon fixed. Again, these solutions also change sign and have finite energy. In view of these results, a natural question is the following~: 

\noindent 

\begin{center}
{\em Do all solutions of (\ref{eq:nls}) have a nontrivial group of symmetry ?} 
\end{center}

Surprisingly, the answer to this question is negative. In fact, we prove the~:
\begin{theorem}
There exist infinitely many solutions of (\ref{eq:nls}) which have finite energy but whose maximal group of symmetry reduces to the identity.
\label{th:main}
\end{theorem}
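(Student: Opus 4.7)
The plan is to use a Lyapunov--Schmidt reduction built on signed superpositions of the positive ground state $u_0$. For a finite set of points $\xi_1,\dots,\xi_N\in\mathbb R^2$ with large pairwise distances and a choice of signs $\sigma_j\in\{\pm 1\}$, the approximate solution will be
\[
U(x) \;=\; \sum_{j=1}^N \sigma_j\, u_0(x-\xi_j).
\]
Because the nonlinearity is odd, each term individually solves (\ref{eq:nls}), so the failure of $U$ to solve the equation comes entirely from the cubic cross-terms, which by (\ref{eq:exuo}) are exponentially small in the minimal pairwise distance $d=\min_{j\ne k}|\xi_j-\xi_k|$. The true solution is sought in the form $u=U+\phi$ with $\phi$ small in a weighted norm adapted to the exponential decay around each bump.

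The first step is to invert the linearized operator $L=\Delta-1+3U^2$ modulo the $2N$-dimensional approximate kernel spanned by the translation modes $\partial_{x_1}u_0(\cdot-\xi_j)$, $\partial_{x_2}u_0(\cdot-\xi_j)$ at each bump; the nondegeneracy of $u_0$ (only these translations annihilate $\Delta-1+3u_0^2$ among bounded functions) provides a uniform inverse estimate, and a standard contraction argument then yields $\phi=\phi(\xi,\sigma)$ solving the projected equation. The essential problem then reduces to a finite-dimensional system of $2N$ balance equations demanding that the Lagrange multipliers vanish. Using (\ref{eq:exuo}), a routine computation shows that these equations take the leading form
\[
\sum_{k\ne j}\sigma_j\sigma_k\, e^{-|\xi_j-\xi_k|}\,|\xi_j-\xi_k|^{-1/2}\,\frac{\xi_j-\xi_k}{|\xi_j-\xi_k|} \;=\; 0,\qquad j=1,\dots,N,
\]
modulo smaller errors. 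This is precisely the equilibrium condition for $N$ particles interacting through exponentially decaying pairwise forces, attractive between opposite signs and repulsive between equal signs; the polygonal solutions of \cite{MPW} arise by imposing dihedral symmetry from the outset, which automatically forces balance.

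To produce a solution with trivial symmetry group one must exhibit equilibria whose isometry group reduces to the identity, and this is where the chief difficulty lies. The natural idea is to assemble a composite configuration from several \emph{balanced clusters} living on widely separated scales, arranged so that no isometry of $\mathbb R^2$ maps the union to itself: for instance, small internally-symmetric subconfigurations (regular $k$-gons of different sizes, or isolated bumps) placed at generically chosen positions and far apart. Because the mutual interactions between distant clusters are exponentially smaller than the intra-cluster ones, they act as small perturbations of each cluster's own balance equations and can be corrected by an iterative implicit-function-theorem argument that slightly displaces the clusters and adjusts their internal parameters. The main technical obstacle is the design of such a composite configuration so that the global balance is attainable while preserving asymmetry, together with the set-up of weighted norms sensitive to the hierarchy of exponential scales and the verification that the linearization of the reduced system is invertible at each step, so that successive perturbations converge without destroying the equilibria already achieved. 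Varying $N$, the signs, the cluster types and the intermediate scales then yields infinitely many pairwise non-congruent solutions, each with trivial isometry group.
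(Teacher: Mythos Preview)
Your Lyapunov--Schmidt framework and the reduction to the balance system are correct and match the paper's set-up (this is the content of sections~5--8). The gap is precisely where you locate the ``chief difficulty'': you do not actually construct a nonsymmetric balanced configuration, and the heuristic you offer does not work as stated.

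The specific problem with placing several independently balanced clusters far apart is this. For each cluster $C_i$ the linearized balance map has a three-dimensional cokernel generated by translations and rotation of $C_i$ (this is Lemma~\ref{le:net-2.3}). The inter-cluster force on $C_i$ is $O(e^{-D})$ in size, but its component along these cokernel directions --- in particular the \emph{net} force $\sum_{j\in C_i}F_j$ --- is generically nonzero and cannot be absorbed by internal perturbations of $C_i$. To cancel it you must move $C_i$ rigidly, but rigid motions of $C_i$ change the inter-cluster force only at the same order $O(e^{-D})$, so the displacement needed is $O(1)$, not small, and the implicit function theorem does not close. In effect, two disconnected balanced clusters attract or repel one another and there is no nearby equilibrium; the situation is the same as for two bumps, which is why the paper's configurations are always \emph{connected} networks.

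The paper's solution is quite different in structure. One starts from a single connected balanced weighted network $(\mathscr N,a)$ (Definition~\ref{de:net-1.4}) and builds the point configuration by dilating it by $\kappa\ell$, replacing each vertex by a small auxiliary sub-network, and inserting $\sim\kappa$ equally spaced points along each edge. The balance condition at all the intermediate points is then automatic; what remains is a finite system at the vertices, whose solvability requires the network to be \emph{flexible} (Definition~\ref{de:net-3.2}). Even then one equation is left over because the distances along edges must be integer multiples of the adjusted spacing; this last degree of freedom is recovered from the dependence of the spacing on the weights, and the condition making this work is called \emph{closable} (Definition~\ref{de:clos}). The proof of Theorem~\ref{th:main} is completed by exhibiting an explicit balanced, flexible, closable network with trivial symmetry group (Example~10.3), together with suitable sub-networks at its vertices. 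The substance of the argument is this network machinery (sections~3--5), which your proposal does not supply.
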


The proof of this result relies on an extension of the construction in \cite{MPW}. As we will see, we will be able to find solutions of (\ref{eq:nls}) whose maximal group of symmetry reduces or not to the identity and hence, our construction provides a wealth of non congruent solutions of (\ref{eq:nls}) which change sign and have finite energy.  

Let us observe that solutions of (\ref{eq:nls}) which have {\em infinite energy} do exist in abundance and it is even known that positive solutions without any symmetry do exist in this context, i.e. if the finite energy assumption is relaxed. Concerning  infinite energy solutions there are two different classes of interest depending on the behavior of 
\[
\mathcal E_R (u) : = \frac{1}{2} \int_{D(0,R)} ( |\nabla u|^2 + u^2) \, dx - \frac{1}{4} \int_{D(0,R)} u^4 \, dx ,
\]
as $R$ tends to infinity, where the integrals are understood over the disc of radius $R$, centered at the origin. For example, non constant, doubly periodic  solutions are easy to construct using variational methods. These solutions have the property that $\mathcal E_R (u) \sim R^2$ as $R$ tends to infinity.  Non constant singly periodic solutions are also known to exist and they correspond to solutions for which $\mathcal E_R (u) \sim R$ as $R$ tends to infinity. Solutions sharing this later property have been constructed by Malchiodi in  \cite{M} and geometrically different solutions were also obtained in \cite{DKPW1} and in \cite{SW}. One of the main differences between solutions of (\ref{eq:nls}) with infinite energy and  solutions of (\ref{eq:nls}) with finite energy is that (once the action of the group of isometries of ${\bf R}^2$ has been taken into account) the moduli space of solutions with finite energy is expected to be discrete while the moduli space of infinite energy solutions is expected to have positive (finite) dimension.

\section{Description of the construction and comments}
\label{se:2}

The proof of Theorem~\ref{th:main} is  quite involved and, to help the reader, we now spend some time to briefly describe the main ideas behind the construction, without paying much attention on technical details such as estimates and functions spaces which will be used. Since we are working in ${\bf R}^2$, it will be  convenient to identify ${\bf R}^2$ with the complex plane $\bf C$. The scalar product in $\bf C$ will be denoted by $\langle \, \, , \, \, \rangle_{\bf C}$ so that 
\[
\langle z, z'\rangle_{\bf C} : =  \Re \, (\bar z \, z').
\]

In a nutshell, the idea of the construction is to start with two {\em finite} sets of points 
\[
Z^+ : =  \{ z_j^+ \in {\bf C} \, : \, j =1, \ldots, n^+\} \qquad \mbox{and} \qquad Z^- : =  \{ z_j^-  \in {\bf C} \, : \, j=1, \ldots, n^-\} ,
\]
and define an approximate solution to (\ref{eq:nls}) by simply adding copies of $+u_0$ centered at the points $z_j^+$ and copies of $-u_0$ centered at the points $z_j^-$. More precisely, with these notations, we define an approximate solution $\tilde u$ by the formula
\[
\tilde u : =  \sum_{z \in Z^+} u_0 (\cdot - z) - \sum_{z' \in Z^-} u_0 (\cdot - z').
\]

We set
\[
Z : = Z^+ \cup  Z^- ,
\]
and we agree that
\[
\ell : =  \min_{z \neq z' \in Z} \, |z-z'| ,
\]
denotes the minimum of the distances between the points of $Z$ (we assume that the points of $Z$ are all distinct so that $\ell >0$). Since the solution $u_0$ is exponentially decreasing to $0$ at infinity, the fact that $\tilde u$ is a fairly good approximate solution of (\ref{eq:nls}) as $\ell$ tends to infinity should not come as a surprise. Indeed, if 
\[
\tilde E  : =  \Delta \tilde u - \tilde u + \tilde u^3,
\]
it is not hard to check that 
\[
\| \tilde E \|_{L^\infty (\bf C)}Ê\leq  C \, e^{-\ell} \, \ell^{-1/2}.  
\]
for some constant $C >0$ which does not depend on $\ell \gg 1$.

The natural idea is then to let $\ell$ tend to infinity and to look for a solution $u$ of (\ref{eq:nls}) as a (small) perturbation of $\tilde u$. Writing $u= \tilde u +v$, this amounts to solve a nonlinear problem of the form 
\begin{equation}
\tilde L v +  \tilde E + \tilde Q (v) =0, 
\label{eq:szq}
\end{equation}
where 
\[
\tilde L  : = \Delta -1 + 3 \, \tilde u^2 ,
\]
is the linearized operator about $\tilde u$ and where 
\[
\tilde Q(v) := v^3 + 3 \, \tilde u\, v^2,
\]
collects all the nonlinear terms. In order to solve (\ref{eq:szq}), we try to invert $\tilde L$ so that we can rephrase the problem as a fixed point problem which we solve using a fixed point theorem for contraction mapping.  It turns out that this part of the argument is rather delicate due to the presence of small eigenvalues associated to the operator $\tilde L$. Indeed, the bounded kernel of the operator 
\[
L_0  : = \Delta -1 + 3 \, u_0^2,
\]
clearly contains the functions $\partial_x u_0$ and $\partial_y u_0$ and, transplanting these functions at any of the points of $Z$, one can prove that there exist $2 \, (n^+ + n^-)$ eigenfunctions of $\tilde L$ which are associated to small eigenvalues which in addition tend to $0$ as $\ell$ tends to infinity (in fact, in absolute value, these small eigenvalues can be seen to tend to $0$ exponentially fast as $\ell$ tends to infinity).  As usual when this phenomenon happens, one is lead to work orthogonally to the space of eigenfunctions associated to small eigenvalues of $\tilde L$ since, on such a space, the operator $\tilde L$ is invertible and has inverse whose norm can be controlled uniformly as $\ell$ tends to infinity.  This amounts to replace the equation $\tilde L v =f$ by 
\[
\tilde L \, v + \sum_{z \in Z} \langle c_{z} , \nabla u_0 (\cdot - z)\rangle_{\bf C} = f,
\]
where the solution is now the function $v$ and the complex numbers $c_z \in {\bf C}$. 
Once this is understood, one can make use of a fixed point theorem for contraction mappings to perturb $\tilde u$ into $u := \tilde u + v$ (where $v$ is a small function) solution of 
\begin{equation}
\Delta u -u + u^3 = \sum_{z \in Z} \langle F_{z} , \nabla u_0 (\cdot - z)\rangle_{\bf C} ,
\label{eq:nnls}
\end{equation}
where, for each $z \in Z$,  the complex number $F_z \in {\bf C}$ depends on all the coordinates of the points of $Z$. 

At this stage, the solvability of (\ref{eq:nls}) reduces to the search of a set of points $Z$ (which become {\em parameters} of the construction) in such a way that 
\begin{equation}
F_z =0,  \quad \text{for all} \quad z\in Z.
\label{eq:sysfz}
\end{equation} 
Observe that, {\it a priori} the number of equations and the number of unknowns are both equal to $2\, (n^+ + n^-)$ which gives some hope for the solvability of the system (\ref{eq:sysfz}), even if we will see later on that the story is not that simple. This procedure is what is usually called a Liapunov-Schmidt type argument~: the solvability of a nonlinear partial differential equation is reduced to the solvability of a system of equations in finite dimension. 

As one can suspect, it is not possible to derive the exact expression of the complex numbers $F_z$ in terms of the coordinates of the points of $Z$, but it is nevertheless possible to get a nice expansion of $F_z$ as $\ell$, the minimum of the distances between the points of $Z$, tends to infinity and we find, in essence, that
\begin{equation}
F_z \sim  \sum_{z' \in Z-\{z\}} \eta_z \, \eta_{z'} \, \Upsilon (|z'-z|) \, \frac{z'-z}{|z'-z|} ,
\label{eq:expfz}
\end{equation}
where the {\em interaction function} $\Upsilon$, which will defined later on, is explicitly known and is known to satisfy
\[
\Upsilon (s) \sim e^{-s} \, s^{-1/2},
\]
as $s$ tends to infinity and where $\eta_z =+1$ if, in the definition of $\tilde u$, there is a positive copy of $u_0$ centered at the point $z$ and $\eta_z=-1$ if, in the definition of $\tilde u$, there is negative copy of $u_0$ centered at the point $z$. 

At this stage, even if we assume that $\ell$ is large, finding the sets of points of $Z$ in such a way that $F_z =0$ for all $z \in Z$ seems to be a rather difficult and even hopeless task. However, in view of the asymptotic behavior of $\Upsilon$, one quickly realizes that, in the expression of $F_z$ given by (\ref{eq:expfz}), only the {\em closest neighbors of $z$ in $Z$} are of interest since the influence of the other points will be of higher order and hence, will be negligible. This suggests that we should restrict our attention to the sets of points $Z$ satisfying the following condition~: 

\begin{equation}
\label{eq:restrict}
\begin{array}{lll}
\text{\em There exists $C >0$ and $\delta >0$ such that, if $z \neq z' \in Z$, then}Ê\\[1mm] \hspace{20mm} \text{\em either} \quad  \ell \leq |z'-z|   \leq \ell + C ,\quad \text{\em or} \quad |z'-z|  \geq  (1+\delta) \, \ell.
\end{array}
\end{equation}

Here, $\ell$ is considered as a parameter which will be taken very large, while $C>0$ and $\delta >0$ are constants which are fixed (large enough) independently of $\ell$ (in particular, we assume that $C \ll \delta \, \ell$).  Under this condition, we define, for all $z \in Z$
\[
N_z : = \{ z'\in Z-\{z\} \, : \, |z'-z| \leq \ell +C\},
\]
to be the set of {\em closest neighbors of $z$ in $Z$} and, for each $z' \in N_z$, we define $\lambda_{zz'}Ê\in {\bf R}$ by
\[
|z'-z| = \ell - \lambda_{zz'} . 
\]
Under condition (\ref{eq:restrict}) and  using these notations, we find that, at main order 
\[
e^{\ell} \, \ell^{1/2}  \, F_z \sim  \sum_{z' \in N_z}  \eta_z \, \eta_{z'} \, e^{\lambda_{zz'}} \, \frac{z'-z}{|z'-z|} .
\]
Therefore, in order to find a set of points satisfying (\ref{eq:sysfz}), it is reasonable to perturb a set $Z$ for which 
\begin{equation}
\sum_{z' \in N_z} a_{zz'} \, \frac{z'-z}{|z'-z|} = 0,
\label{eq:bc}
\end{equation}
for all $z \in Z$, where we have defined 
\[
a_{zz'} : = \eta_z \, \eta_{z'} \, e^{\lambda_{zz'}}  \in {\bf R} -\{ 0 \} .
\]

In other words, the question  reduces now to be able to find a set of points $Z$, as well as parameters $a_{zz'} \in {\bf R}-\{0\}$ for each $z,z' \in Z$ such that $z' \in N_z$, in such a way that (\ref{eq:bc}) holds. But, we also need to require that
\begin{equation}
|z'-z| = \ell - \ln |a_{zz'}|,
\label{eq:dist}
\end{equation}
for all $z\neq z'\in Z$ such that $z'\in N_z$.  As we will see, finding a configuration of points $Z$ satisfying (\ref{eq:bc}) and (\ref{eq:dist}) is not an easy task but there is an explicit algorithm that leads to configurations of such points. This is what we will explain in sections~3 and 4 which, in our opinion, constitute the most important and original part of the paper. 

Once the construction of $Z$ is understood, we proceed in the next sections with the proof of Theorem~\ref{th:main} as an application of the material developed in sections~3 and 4. This starts in section~5 with the construction of the approximate solution. In section~6, we proceed with the analysis of the operator $\tilde L$. This analysis is by now standard and in fact, it borrows some elements already present in \cite{MPW}. In section~7, we use this analysis so solve (\ref{eq:nnls}) using a fixed point theorem for contraction mappings. In section~8, we prove that the expansion of $F_z$ as given by (\ref{eq:expfz}) holds.  In section~9, we give the final arguments to complete the proof of a general existence result, Theorem~\ref{th:general-result}, which guaranties the existence of infinitely many solutions of (\ref{eq:nls}). This general result, together with the examples given in section~10, will complete the proof of Theorem~\ref{th:main}.

Let us emphasize that the Liapunov-Schmidt reduction argument we use in this paper has already been used in many constructions in geometry, geometric analysis and nonlinear analysis. In our context, it is close to the arguments already used in \cite{MPW}. The main novelty in the present paper is a general construction of the sets $Z$ satisfying both (\ref{eq:bc}) and (\ref{eq:dist}). To our knowledge this analysis is completely new and it can be used for many constructions which are, in essence, similar to the ones we describe in this paper. Indeed, the material we introduce in sections~3 and 4 is common to the construction of constant mean curvature surfaces in Euclidean 3-space, the construction of solutions to the Ginzburg-Landau equation with magnetic field, the construction of solutions to the Chern-Simons-Higgs model, \ldots We shall return to this issue in section~11 and we shall give more applications of the material of sections ~3 and 4 in forthcoming papers. 

Our main theorem is very much inspired from the construction of compact and complete, non compact constant mean curvature surfaces by Kapouleas \cite{K,K2, K3}. Indeed, the construction of networks $Z$ satisfying both (\ref{eq:bc}) and (\ref{eq:dist}) which we will describe in the next sections can be easily adapted to shed light on the configurations used by Kapouleas  to construct both compact and non compact constant mean curvature surfaces and in fact this provides a systematic construction of flexible graphs used in \cite{K, K3} or $c$-graphs used in sections~2 and  3 of \cite{K2}. More precisely, what we call {\em unbalanced flexible graphs} are graphs which can be used to construct complete, non compact constant mean curvature surfaces and they can also be used to generalize the construction of  infinite energy  solutions of (\ref{eq:nls}) by Malchiodi \cite{M}. While, what we call {\em closable, balanced networks} are the ones which can be used to construct compact constant mean curvature surfaces. 

As we will see, in our case and in contrast with the analysis of \cite{K, K2, K3}, we need to restrict our attention to what we call {\em embedded networks} and we also have to handle some delicate issue which will be described in section~5. These are two additional constraints which are not present in the construction of compact (and complete, non compact) constant mean curvature surfaces. We shall further comment on this in the last section.

We should also mention the work of Traizet on the construction of minimal surfaces which have no symmetry \cite{Tra}. In this paper, finitely many parallel planes are connected together by small catenoids at specific points to produce complete, embedded minimal surfaces which have finitely many ends and in particular to produce minimal surfaces which have no symmetry. Even though the analysis of potential configurations of points is much easier in this context, it has been a source of inspiration when we were looking for a criteria which would ensure the existence of potential configurations of points $Z$ for our construction.

\section{Planar networks}

We provide a general construction of the sets $Z$ introduced in the previous section. The aim being to be able to find a systematic procedure to construct configurations of points $Z$ satisfying both (\ref{eq:bc}) and (\ref{eq:dist}).

\subsection{Definitions and basic properties}
We introduce some definitions concerning planar networks and we also present the basic properties of the objects we introduce.

As already mentioned, it will be convenient to identify ${\bf R}^2$ with the complex plane $\bf C$. The scalar product in $\bf C$ will be denoted by $\langle \, \, , \, \, \rangle_{\bf C}$ so that 
\[
\langle z, z'\rangle_{\bf C} : =  \Re \, (\bar z \, z'),
\]
and the standard symplectic form in ${\bf C}$ will be denoted by $\wedge$ so that 
\[
z \wedge z' = \langle  i \,  z,  z' \rangle_{\bf C} = \Im \, (\bar z\, z'),
\]
for all $z,z' \in {\bf C}$.

By definition, a finite planar network $\mathscr N: = (\mathscr V, \mathscr E)$ in ${\bf C}$ is given by its set of vertices $\mathscr V \subset {\bf C}$ and its set of edges $\mathscr E$ joining the vertices. If $[p,q] \in \mathscr E$, then the points $p,q \in \mathscr V$ are called the {\em end points} of the edge $[p,q]$. Naturally, we identify $[p,q]$ and $[q,p]$. The number of vertices of a given network $\mathscr N$ will be denoted by $n$ and its number of edges will be denoted by $m$ (see Fig. 1).  

For each $p \in \mathscr V$, we denote by $\mathscr V_p \subset \mathscr V$ the set of vertices $q \in \mathscr V$ such that $[p, q] \in \mathscr E$, namely
\begin{equation}
\mathscr V_p := \{ q \in  \mathscr V \, : \, [p,q] \in \mathscr E\}.
\label{eq:Vp}
\end{equation}

We have the obvious~:
\begin{definition}
A network $\mathscr N = (\mathscr V, \mathscr E)$ is said to be {\em connected} if any two of its vertices in $\mathscr V$ can be joined by a sequence of edges of $\mathscr E$, i.e. if, given $p \neq \tilde p \in \mathscr V$, there exist an integer $k \geq 1$ and a sequence $p=q_0, \ldots, q_k=\tilde p$ of points of $\mathscr V$, such that $[q_{j+1}, q_j] \in \mathscr E$, for each $j=0, \ldots, k-1$.
\label{de:net-1.1}
\end{definition}

\begin{center}
\includegraphics[width=8.5cm]{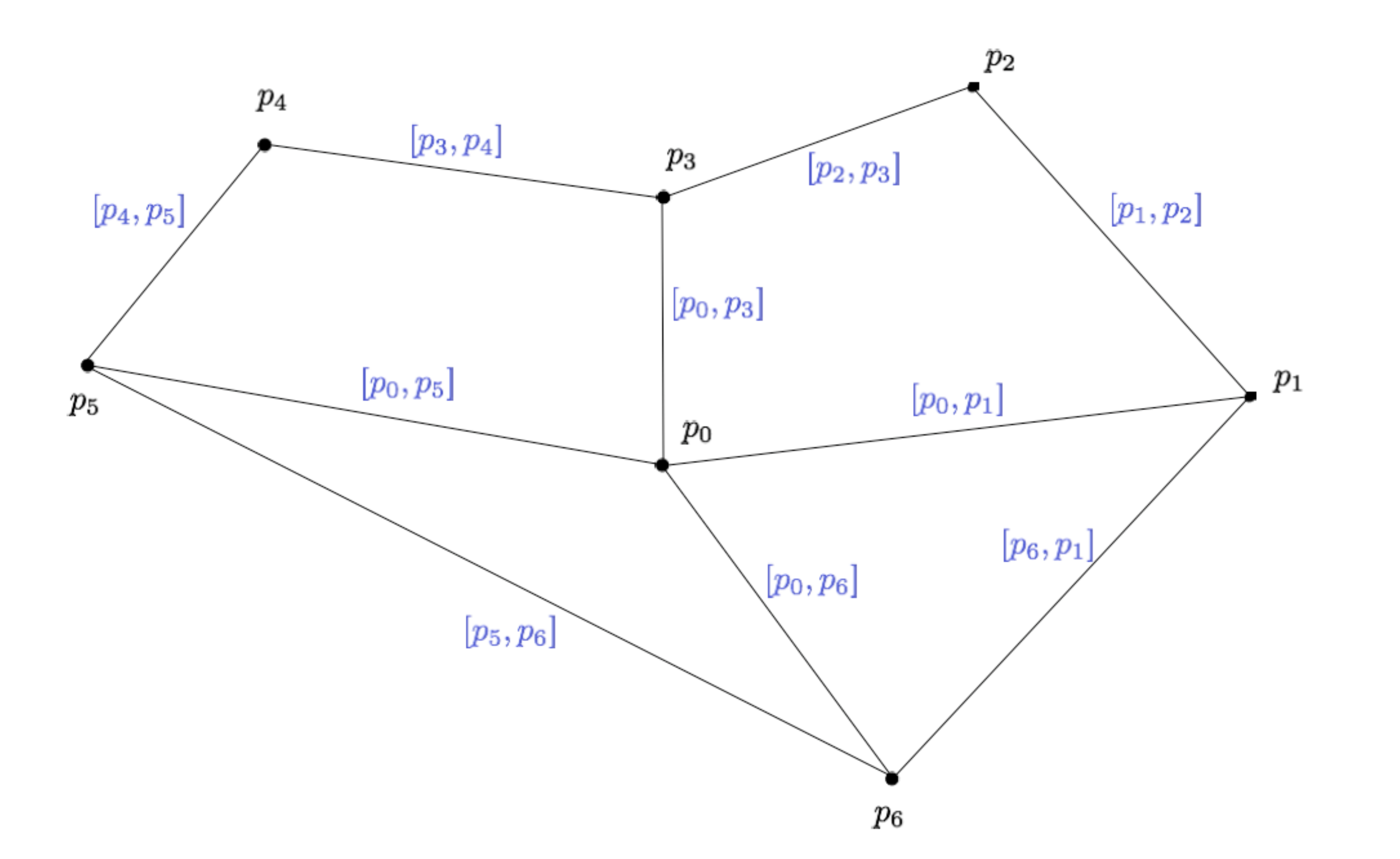} \\
Fig. 1 : An example of a network with $n=7$ vertices and $m=10$ edges.
\end{center}

The second definition is also quite natural~:
\begin{definition}
A network $\mathscr N$ is said to be {\em embedded}, if two edges $[p, q] \neq [\tilde p, \tilde q] \in \mathscr E$ are either disjoint or only intersect at one of  their end points (in which case $\{p,q\} \cap \{\tilde p, \tilde q\} \neq \varnothing$). 
\label{de:net-1.2}
\end{definition}
All the networks we consider in this paper are connected and embedded and we shall not mention these properties anymore. For other applications, for example in the construction of  compact and complete non compact constant mean curvature surfaces, it is also interesting to consider networks which are not embedded. 

The {\em length} of a network $\mathscr N$ is defined to be as the collection of the lengths of the edges of $\mathscr E$, namely
\[
{\bf L}_{\mathscr N} : =\left(  |p-q| \right)_{ [p,q] \in \mathscr E}.
\]
We have the~:
\begin{definition}
A network $\mathscr N$ is said to be {\em unitary} if $|p-q|=1$ for all $[p,q] \in \mathscr E$. 
\label{de:net-1.3}
\end{definition}

If $\mathscr N =(\mathscr V, \mathscr E)$ is a network and if $a :  \mathscr E \rightarrow {\bf R} -\{0\}$ is a function, we will say that  $(\mathscr N,a)$ is a {\em weighted network}. The image of $[p,q] \in \mathscr E$ by $a$ will be denoted by $a_{[p,q]}$. 

For all $p \in \mathscr V$, we define the {\em force of the weighted network $(\mathscr N,a)$ at the vertex $p$} by
\[
{\bf F}_{(\mathscr N,a)} (p)  : = \sum_{q \in \mathscr V_p} a_{[p,q]} \, \frac{q-p}{|q-p|} ,
\]
and 
\[
{\bf  F}_{(\mathscr N,a)}  : = \left( {\bf F}_{(\mathscr N,a)}  (p) \right)_{ p \in \mathscr V} ,
\]
is the collection of all forces at the different vertices of the weighted network $(\mathscr N,a)$. 
When there is no ambiguity, we will drop the index ${(\mathscr N,a)}$ and simply write ${\bf F} (p)$ or ${\bf F}$ instead of ${\bf F}_{(\mathscr N,a)} (p)$ and ${\bf F}_{(\mathscr N,a)}$. Observe that the force is homogeneous of degree $0$ as a function of the coordinates of the vertices and homogeneous of degree $1$ as a function of the weights of the edges. 

The following simple result will be crucial in our analysis. It is a consequence of the definition of the forces of a network.
\begin{lemma}
The following two identities hold~:
\begin{equation}
\sum_{p \in \mathscr V} {\bf F}_{(\mathscr N,a)}  (p) = 0,
\label{eq:net-1}
\end{equation}
and
\begin{equation}
\sum_{p \in \mathscr V} {\bf F}_{(\mathscr N,a)}  (p) \wedge p = 0.
\label{eq:net-2}
\end{equation}
\label{le:net-1.1}
\end{lemma}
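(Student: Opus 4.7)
The plan is to prove both identities by reorganizing each sum from a sum over vertices into a sum over edges, and then showing that each individual edge contributes zero.

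First I would expand the double sum defining $\sum_{p \in \mathscr V} {\bf F}_{(\mathscr N,a)}(p)$ by writing
\[
\sum_{p \in \mathscr V} {\bf F}_{(\mathscr N,a)}(p) = \sum_{p \in \mathscr V} \sum_{q \in \mathscr V_p} a_{[p,q]} \, \frac{q-p}{|q-p|}.
\]
Since $q \in \mathscr V_p$ is equivalent to $[p,q] \in \mathscr E$, and the latter is symmetric in $p$ and $q$ (we identify $[p,q]$ and $[q,p]$), each edge $[p,q] \in \mathscr E$ appears exactly twice in the double sum: once indexed by the ordered pair $(p,q)$ and once by $(q,p)$. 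Since $a_{[p,q]} = a_{[q,p]}$ by definition, the contribution of the edge $[p,q]$ to the total sum is
\[
a_{[p,q]} \, \frac{q-p}{|q-p|} + a_{[p,q]} \, \frac{p-q}{|p-q|} = 0,
\]
which yields (\ref{eq:net-1}).

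For the second identity I would use the same reorganization, so that
\[
\sum_{p \in \mathscr V} {\bf F}_{(\mathscr N,a)}(p) \wedge p = \sum_{p \in \mathscr V} \sum_{q \in \mathscr V_p} a_{[p,q]} \, \frac{q-p}{|q-p|} \wedge p.
\]
Grouping together the two contributions associated with a single edge $[p,q]$ gives
\[
a_{[p,q]} \, \frac{q-p}{|q-p|} \wedge p + a_{[p,q]} \, \frac{p-q}{|p-q|} \wedge q = \frac{a_{[p,q]}}{|q-p|} \, (q-p) \wedge (p-q),
\]
and since $(q-p) \wedge (p-q) = -(q-p)\wedge (q-p) = 0$ by antisymmetry of $\wedge$, the contribution of each edge vanishes, proving (\ref{eq:net-2}).

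There is essentially no obstacle here: both statements are bookkeeping identities that follow from the symmetry of $\mathscr E$ and of the weights $a$ under swapping endpoints, together with the antisymmetry of the symplectic form. The only point to be careful about is to make sure the edge-pairing is well-defined, which is why we work with the unordered identification $[p,q] = [q,p]$; this guarantees that both ordered contributions $(p,q)$ and $(q,p)$ carry the same weight $a_{[p,q]}$.
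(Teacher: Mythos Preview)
Your proof is correct and follows essentially the same approach as the paper: reorganize the double sum over vertices into a sum over edges, use the symmetry $a_{[p,q]}=a_{[q,p]}$, and observe that each edge contributes zero (directly for the first identity, and via $(q-p)\wedge(p-q)=0$ for the second).
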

\begin{proof}
The proofs of both identities make use of the fact that $a_{[p,q]} = a_{[q,p]}$. For example, to prove the first equality, we just compute
\[
\sum_{p \in \mathscr V} \left( \sum_{q\in \mathscr V_p} a_{[p,q]} \, \frac{q-p}{|q-p|}\right) = \sum_{[p,q] \in \mathscr E} \left( a_{[p,q]} \, \frac{q-p}{|q-p|} + a_{[p,q]} \, \frac{p-q}{|p-q|}\right)  =0.
\]
Similarly, we have
\[
\begin{array}{rllll} 
\displaystyle \sum_{p \in \mathscr V} \left( \sum_{q\in \mathscr V_p} a_{[p,q]} \, \frac{q-p}{|q-p|}\right) \wedge p & = & \displaystyle  \sum_{[p,q] \in \mathscr E} \left( a_{[p,q]} \, \frac{q-p}{|p-q|} \wedge p + a_{[p,q]} \, \frac{p-q}{|p-q|} \wedge q \right) \\[3mm]
&=  &   \displaystyle  \sum_{[p,q] \in \mathscr E} \left( a_{[p,q]} \, \frac{q-p}{|q-p|} \wedge (p - q )\right) = 0 ,
\end{array}
\]
which completes the proof of the result.
\end{proof}

We end this section by a last definition~:
\begin{definition}
A weighted network $(\mathscr N,a)$ is said to be {\em balanced} if ${\bf F}_{(\mathscr N,a)} =0$. Otherwise, we say that the weighted network $ (\mathscr N,a)$ is {\em unbalanced}. 
\label{de:net-1.4}
\end{definition}  

\subsection{Perturbed networks} Assuming that we are given a network $\mathscr N$, we would like to describe the possible {\em perturbations} of $\mathscr N$. Obviously, to describe nearby networks it is enough to describe how the vertices of $\mathscr N$ are perturbed. More precisely, we have the~: 
\begin{definition}
Given a function $\Phi : \mathscr V \to {\bf C}$, we define the {\em perturbed network} $\mathscr N_\Phi : = (\mathscr V_\Phi, \mathscr E_\Phi)$ to be the network whose set of vertices is given by
\[
\mathscr V_\Phi:=\{ \Phi_p \, : \, p \in \mathscr V\},
\]
and whose set of edges is given by 
\[
\mathscr E_\Phi : = \left\{ [\Phi_p,  \Phi_q] \, : \, [p,q] \in \mathscr E \right\},
\] 
where we adopt the notation $\Phi_p : =  \Phi (p)$.
\label{de:net-2.1}
\end{definition}
It will be convenient to label the vertices and edges of the perturbed network by the vertices and edges of the original network.  Observe that the notion of connected network is preserved under perturbation and, if a network is embedded, any small perturbation of the network is again an embedded network. 

Next we define the notion of homotopy between networks. 
\begin{definition}
We will say that two networks ${\mathscr N}_0$ and ${\mathscr N}_1$ are homotopic (respectively, unitary homothopic) if, for each $p \in \mathscr V_0$, vertex of $\mathscr N_0$, there exists a continuous function 
$$
\begin{array}{ccccllll}
[0,1] & \to &  {\bf C}\\[3mm]
s & \mapsto & \Phi_p(s) ,
\end{array}
$$
such that, for $s=0$ and $s=1$~:
\begin{itemize}
\item[(i)] the set of vertices of ${\mathscr N}_s$ is given by 
$$ 
\mathscr V_s : = \{ \Phi_p (s) \, : \, p \in \mathscr V_0\} ;
$$
\item[(ii)] the set of edges of ${\mathscr N}_s$ is given by 
$$
\mathscr E_s : = \{ [\Phi_p (s), \Phi_q (s)] \, : \, [p,q] \in \mathscr E_0\}.
$$
\end{itemize}
\end{definition}
This definition being understood, we then have a natural notion of homotopy class in the set of networks as the set of networks which are homothopic to a given network. 

Given a function $\Phi : \mathscr V \to {\bf C}$, we can define (with slight abuse of notation)
\[
{\bf L}_\Phi  : =  {\bf L}_{\mathscr  N_\Phi}  ,
\]
which is the collection of lengths of the edges of the perturbed network $\mathscr N_\Phi$.  The components of ${\bf L}_\Phi$ will be denoted by ${\bf L}_\Phi ([p,q])$ so that 
\[
{\bf L}_\Phi = \left( {\bf L}_\Phi ([p,q]) \right)_{[p,q] \in \mathscr E} .
\]
It should be clear that $\Phi \mapsto {\bf L}_\Phi$ is smooth and, if $s \mapsto \Phi (s)$ is a smooth one parameter family of maps $\Phi (s) : \mathscr V \to {\bf C}$ such that $\Phi (0) = {\rm Id}$, we can identify 
$$
\dot \Phi : = \partial_s \Phi  |_{s=0},
$$ 
with a vector $(\dot \Phi_p )_{p \in \mathscr V} \in {\bf C}^n$  and, with this identification in mind, we can view ${\rm D} {\bf L}_{\rm Id}$, the differential of ${\bf L}$ at $\Phi ={\rm Id}$, as a linear map
\[
\begin{array}{cccllll}
{\bf C}^n &  \to & {\bf R}^m \\[3mm]
\dot \Phi & \mapsto & {\rm D} {\bf L}_{\rm Id} (\dot \Phi)Ê.
\end{array}
\] 

These notations will be illustrated in the proof of the following Lemma which is straightforward and follows at once from the observation that, if $\Phi$ is the restriction  to $\mathscr V$ of an isometry of $\bf C$, then  ${\bf L}_\Phi = {\bf L}_{\rm Id}$. 
\begin{lemma}
The vectors $({\bf e})_{p \in \mathscr V}$, for ${\bf e} \in {\bf C}$, and the vector $(i \, p)_{p \in \mathscr V}$ belong to the kernel of  ${\rm D} {\bf L}_{{\rm Id}}$. 
\label{le:net-2.1}
\end{lemma}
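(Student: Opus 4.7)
The plan is to invoke the hint already highlighted by the authors before the statement: if $\Phi$ is the restriction to $\mathscr V$ of an isometry of ${\bf C}$, then ${\bf L}_\Phi = {\bf L}_{\rm Id}$. I would apply this by exhibiting, for each of the two kinds of tangent vectors, a smooth one-parameter family of isometries $\Phi(s)$ of ${\bf C}$ with $\Phi(0) = {\rm Id}$ whose time derivative at $s = 0$ reproduces the tangent vector in question; since $s \mapsto {\bf L}_{\Phi(s)}$ is then constant, its derivative ${\rm D}{\bf L}_{\rm Id}(\dot\Phi)$ at $s=0$ must vanish, placing $\dot\Phi$ in the kernel.

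Concretely, for a fixed ${\bf e} \in {\bf C}$ I would use the family of translations $\Phi_p(s) := p + s\,{\bf e}$, whose infinitesimal generator at $s = 0$ is the constant vector $\dot\Phi_p = {\bf e}$. For the rotational direction, I would use the family of rotations about the origin $\Phi_p(s) := e^{is}\, p$, whose infinitesimal generator is $\dot\Phi_p = i\, p$. Both families consist of isometries of ${\bf C}$ restricted to $\mathscr V$ and agree with the identity at $s = 0$, so both $({\bf e})_{p \in \mathscr V}$ and $(i\, p)_{p \in \mathscr V}$ lie in $\ker {\rm D}{\bf L}_{\rm Id}$.

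There is really no obstacle here: the lemma is simply the infinitesimal expression of the invariance of Euclidean length under the group of orientation-preserving isometries of ${\bf C}$, whose Lie algebra at the identity is spanned by the constant vector fields and the rotational vector field $p \mapsto i\, p$. As a sanity check (bypassing the isometry argument), one can differentiate $|\Phi_p(s) - \Phi_q(s)|$ at $s = 0$ directly to obtain
\[
{\rm D} {\bf L}_{\rm Id}(\dot\Phi)([p,q]) = \left\langle \frac{p-q}{|p-q|},\, \dot\Phi_p - \dot\Phi_q \right\rangle_{\bf C},
\]
which vanishes trivially when $\dot\Phi_p = {\bf e}$ (the increments cancel) and, when $\dot\Phi_p = i\, p$, reduces to $|p-q|^{-1}\,\Re\bigl(\overline{(p-q)}\,i(p-q)\bigr) = 0$ because $\bar z\, z \in {\bf R}$ for every $z \in {\bf C}$. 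This second viewpoint also makes transparent that ${\rm D}{\bf L}_{\rm Id}$ only sees the longitudinal component of $\dot\Phi_p - \dot\Phi_q$ along each edge, which is why infinitesimal translations and rotations are killed.
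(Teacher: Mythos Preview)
Your proof is correct and follows essentially the same approach as the paper: exhibiting the one-parameter families $\Phi_p(s) = p + s{\bf e}$ and $\Phi_p(s) = e^{is}p$ of isometries, noting that ${\bf L}_{\Phi(s)}$ is constant, and differentiating at $s=0$. The direct edge-by-edge computation you append is a helpful confirmation but not part of the paper's argument.
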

\begin{proof}
The proof follows from the invariance of ${\bf L}$ under the action of translations and rotations in the plane. Indeed, for $s \in {\bf R}$, we define 
$$
\Phi_p (s) : = p+ s \, {\bf e},
$$ 
where ${\bf e}$ is a fixed vector of ${\bf C}$ to be the translation by $s \, {\bf e}$ or we define 
$$
\Phi_p (s) := e^{is} \, p,
$$ 
to be the restriction of the rotation of angle $s$ and center the origin in ${\bf C}$. In both cases ${\bf L}_{\Phi (s)} = {\bf L}_\Phi$, for all $s \in {\bf R}$ and differentiation with respect to $s$ at $s=0$ yields
\[
{\rm D} {\bf L}_{\rm Id}  ( \dot \Phi ) =0,
\]
where, in the former case, $\dot\Phi_p = {\bf e}$, for all $p \in \mathscr V$, while in the latter case $\dot \Phi_p = i \, p$,  for all $p \in \mathscr V$. 
\end{proof}

Similarly, we can define (with slight abuse of notation)  
\[
{\bf F}_{(\Phi , a)}  : =  {\bf F}_{(\mathscr N_\Phi , a)} ,
\]
which is the collection of forces of the weighted network $(\mathscr N_\Phi, a)$. The components of ${\bf F}_{(\Phi, a)}$ will be denoted by ${\bf F}_{( \Phi , a)} (p)$ so that 
\[
{\bf F}_{(\Phi , a)} = \left( {\bf F}_{( \Phi , a)} ( p) \right)_{p \in \mathscr V} .
\]
Again, it should be clear that $(\Phi , a)\mapsto {\bf F}_{(\Phi ,a)}$ is a smooth map and, if $s \mapsto a_s$ is a smooth one parameter family of maps $ a(s) : \mathscr E \to {\bf R}-\{0\}$ satisfying $a(0)=a$, we can identify 
$$
\dot a : = \partial_s a_s |_{s=0},
$$ 
with the vector $(\dot a_{[p,q]} )_{[p,q] \in \mathscr E} \in {\bf R}^m$  and, with this identification together with the identification we have just used in the study of ${\rm D} {\bf L}_{\rm Id}$, we can view ${\rm D} {\bf F}_{({\rm Id}, a)}$ as a linear map
\[
\begin{array}{cccllll}
{\bf C}^n  \times {\bf R}^m &  \to & {\bf C}^n \\[3mm]
 (\dot \Phi , \dot a )  & \mapsto & {\rm D}_\Phi {\bf F}_{({\rm Id}, a)} (\dot \Phi)Ê+ {\rm D}_a {\bf F}_{({\rm Id}, a)} (\dot a) ,
\end{array}
\] 
where ${\rm D}_\Phi {\bf F}_{({\rm Id}, a)}$ and ${\rm D}_a {\bf F}_{({\rm Id}, a)}$ denote the partial differentials of ${\bf F}$ with respect to $\Phi$ and $a$.

Again, the following Lemma is straightforward and follows from the observations that, if $\Phi$ is the restriction of a translation in ${\bf C}$, then ${\bf F}_{(\Phi, a)} = {\bf F}_{({\rm Id}, a)}$. 
\begin{lemma}
The following statements hold~:
\begin{itemize}
\item[(i)] The vectors $({\bf e})_{p \in \mathscr V}$, for any ${\bf e} \in {\bf C}$, and the vector $(p)_{p \in \mathscr V}$ belong to the kernel of  ${\rm D}_{\Phi} {\bf F}_{({\rm Id}, a)}$. 
\item[(ii)]  The image of ${\rm D} {\bf F}_{({\rm Id}, a)}$ is orthogonal to the space spanned by the vectors $({\bf e})_{p \in \mathscr V}$, for all ${\bf e} \in {\bf C}$.
\end{itemize}
\label{le:net-2.2}
\end{lemma}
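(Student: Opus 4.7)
The plan is to mimic the proof of Lemma~\ref{le:net-2.1}: to find kernel vectors for part (i), exhibit explicit one--parameter families of perturbations under which the forces $\mathbf F_{(\Phi,a)}(p)$ remain identically equal to $\mathbf F_{(\mathrm{Id},a)}(p)$, and differentiate at $s=0$; and for part (ii), differentiate the balancing identity (\ref{eq:net-1}) which holds for \emph{any} weighted network.

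For part (i), first consider the translation $\Phi_p(s):=p+s\,\mathbf e$ for a fixed $\mathbf e\in\mathbf C$. Under this perturbation the edge vectors are unchanged, $\Phi_q(s)-\Phi_p(s)=q-p$, so every force $\mathbf F_{(\Phi(s),a)}(p)$ equals $\mathbf F_{(\mathrm{Id},a)}(p)$; differentiating at $s=0$ yields $\dot\Phi=(\mathbf e)_{p\in\mathscr V}\in\ker {\rm D}_\Phi\mathbf F_{(\mathrm{Id},a)}$. Next consider the dilation $\Phi_p(s):=(1+s)\,p$; then $\Phi_q(s)-\Phi_p(s)=(1+s)(q-p)$, and because the force depends only on the \emph{normalized} directions $(q-p)/|q-p|$ it is again invariant. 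Differentiating at $s=0$ shows $(p)_{p\in\mathscr V}\in\ker {\rm D}_\Phi\mathbf F_{(\mathrm{Id},a)}$. (This is precisely the infinitesimal expression of the homogeneity remark already recorded after the definition of $\mathbf F$: the force is homogeneous of degree $0$ in the vertex coordinates.)

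For part (ii), apply the first identity of Lemma~\ref{le:net-1.1} to the perturbed weighted network $(\mathscr N_\Phi,a+\dot a\,s)$ for an arbitrary smooth family of perturbations. Since
\[
\sum_{p\in\mathscr V}\mathbf F_{(\Phi,a)}(p)=0
\]
holds identically in $(\Phi,a)$, differentiating at $(\mathrm{Id},a)$ yields
\[
\sum_{p\in\mathscr V}\bigl({\rm D}\mathbf F_{(\mathrm{Id},a)}(\dot\Phi,\dot a)\bigr)(p)=0
\qquad\text{for every }(\dot\Phi,\dot a)\in\mathbf C^n\times\mathbf R^m.
\]
Endowing $\mathbf C^n$ with the inner product $\langle (v_p),(w_p)\rangle:=\sum_{p\in\mathscr V}\langle v_p,w_p\rangle_{\mathbf C}$, the condition $\sum_p v_p=0$ is equivalent to $\langle (v_p),(\mathbf e)_{p\in\mathscr V}\rangle=\langle\sum_p v_p,\mathbf e\rangle_{\mathbf C}=0$ for every $\mathbf e\in\mathbf C$. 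Hence the image of ${\rm D}\mathbf F_{(\mathrm{Id},a)}$ lies in the orthogonal complement of $\operatorname{span}\{(\mathbf e)_{p\in\mathscr V}:\mathbf e\in\mathbf C\}$, which is exactly (ii).

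There is no real obstacle in the argument; it is essentially bookkeeping of the natural invariances of $\mathbf F$. The only conceptual point worth highlighting, and the reason (i) differs from Lemma~\ref{le:net-2.1}, is that lengths are invariant under isometries (translations and rotations), whereas forces, being of homogeneous degree $0$ in the vertex positions, are invariant under translations and dilations but only covariant (not invariant) under rotations; this is why the rotation vector $(ip)_{p\in\mathscr V}$ does not automatically belong to the kernel of ${\rm D}_\Phi\mathbf F_{(\mathrm{Id},a)}$ and is replaced by the dilation vector $(p)_{p\in\mathscr V}$.
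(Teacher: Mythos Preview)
Your proof is correct and follows exactly the approach sketched in the paper: the kernel statements in (i) come from translation invariance (as in Lemma~\ref{le:net-2.1}) together with the degree--$0$ homogeneity of the force in the vertex coordinates, and (ii) comes from differentiating the identity (\ref{eq:net-1}). Your closing remark on why the rotation vector $(ip)_{p\in\mathscr V}$ is replaced here by the dilation vector $(p)_{p\in\mathscr V}$ is a nice clarification of the difference with Lemma~\ref{le:net-2.1}.
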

\begin{proof}
The statement about the kernel follows as in the proof of Lemma~\ref{le:net-2.1} and also from the fact that the force is homogeneous of degree $0$ as a function of the coordinates of the vertices. While the statement about the image follows from differentiating  (\ref{eq:net-1}).
\end{proof}

By definition, when a weighted network $(\mathscr N, a)$ is balanced,  we have  ${\bf F}_{({\rm Id} , a)} = 0$. Going back to the definition of the forces, we see that ${\bf F}_{(\Phi , a)} = 0$ if $\Phi$ is the restriction to $\mathscr V$ of a rotation of ${\bf C}$ and we check that ${\bf F} (\Phi, \lambda a) = 0$ for any $\lambda\in {\bf R}$. Let us emphasize that these two invariance only hold when the network is balanced. This, together with the previous Lemma, implies the~:
\begin{lemma}
Assume that the weighted network $(\mathscr N, a)$ is balanced, then the following statements hold~:
\begin{itemize}
\item[(i)]  The vectors $({\bf e})_{p \in \mathscr V}$, for any ${\bf e} \in {\bf C}$, the vector $(p)_{p \in \mathscr V}$ and the vector $(i \, p)_{p \in \mathscr V}$ belong to the kernel of ${\rm D}_{\Phi} {\bf F}_{({\rm Id}, a)}$.
\item[(ii)] The vector $(a_{[p,q]})_{[p,q] \in \mathscr E}$ belongs to the kernel of ${\rm D}_{a} {\bf F}_{({\rm Id}, a)}$. 
\item[(iii)] The image of ${\rm D} {\bf F}_{({\rm Id}, a)}$ is orthogonal to the space spanned by the vectors $({\bf e})_{p \in \mathscr V}$, for ${\bf e} \in {\bf C}$, and the vector $(i \, p)_{p \in \mathscr V}$.  
\end{itemize}
\label{le:net-2.3} 
\end{lemma}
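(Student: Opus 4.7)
The plan is to build on Lemma~\ref{le:net-2.2}: the two kernel vectors $({\bf e})_{p \in \mathscr V}$ and $(p)_{p \in \mathscr V}$ in (i), as well as the orthogonality of the image to the constant vectors $({\bf e})_{p \in \mathscr V}$ in (iii), are already contained there without any balancing assumption. What still has to be exhibited is the new kernel element $(ip)_{p \in \mathscr V}$ in (i), the kernel element $(a_{[p,q]})_{[p,q] \in \mathscr E}$ in (ii), and the additional orthogonality to $(ip)_{p \in \mathscr V}$ in (iii). As emphasized in the excerpt, each of these pieces of information stems from an invariance that is only available when the weighted network is balanced.

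For (i), I will use the one parameter family of rotations $\Phi_p(s) := e^{is}\, p$. Directly from the definition of the force,
\[
{\bf F}_{(\Phi(s), a)}(\Phi_p(s)) \;=\; \sum_{q \in \mathscr V_p} a_{[p,q]}\, \frac{e^{is}(q-p)}{|e^{is}(q-p)|} \;=\; e^{is}\, {\bf F}_{({\rm Id}, a)}(p),
\]
and the balancing assumption forces this quantity to vanish identically in $s$. Differentiating at $s=0$, with $\dot\Phi_p = i\, p$, yields ${\rm D}_\Phi {\bf F}_{({\rm Id}, a)}\bigl( (ip)_{p \in \mathscr V} \bigr) = 0$. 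For (ii), I use that the force depends linearly on the weights: setting $a(s) := (1+s)\, a$ gives ${\bf F}_{({\rm Id}, a(s))} = (1+s)\, {\bf F}_{({\rm Id}, a)} = 0$, and differentiating at $s=0$ yields ${\rm D}_a {\bf F}_{({\rm Id}, a)}\bigl( (a_{[p,q]})_{[p,q] \in \mathscr E} \bigr) = 0$.

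For (iii), the plan is to differentiate the moment identity (\ref{eq:net-2}) of Lemma~\ref{le:net-1.1}, applied to the perturbed weighted network, along an arbitrary smooth one parameter deformation $(\Phi(s), a(s))$ of $({\rm Id}, a)$. Differentiating
\[
\sum_{p \in \mathscr V} {\bf F}_{(\Phi(s), a(s))}(\Phi_p(s)) \wedge \Phi_p(s) \;=\; 0
\]
at $s=0$ produces two types of terms: terms of the form ${\bf F}_{({\rm Id}, a)}(p) \wedge \dot\Phi_p$, which vanish because $(\mathscr N, a)$ is balanced, and the residual $\sum_{p \in \mathscr V} \bigl( {\rm D}{\bf F}_{({\rm Id},a)}(\dot\Phi, \dot a) \bigr)(p) \wedge p = 0$. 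Using the identity $z \wedge z' = \langle i\, z, z' \rangle_{\bf C}$, this is precisely the statement that the image of ${\rm D}{\bf F}_{({\rm Id}, a)}$ is orthogonal to the vector $(ip)_{p \in \mathscr V}$, which combined with Lemma~\ref{le:net-2.2} completes (iii).

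I do not anticipate any serious obstacle: the lemma is essentially a repackaging of two extra symmetries (rotation and global weight rescaling) that only become available in the balanced regime, together with the moment identity of Lemma~\ref{le:net-1.1}. The only mild points of care are, first, to index the sums by the fixed set $\mathscr V$ of the base network so that the derivative and the sum commute cleanly, and second, to get the sign right when translating $\sum_p ({\rm D}{\bf F})(p) \wedge p = 0$ into an orthogonality statement through $z \wedge z' = \langle i\, z, z' \rangle_{\bf C}$.
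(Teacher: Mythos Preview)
Your proposal is correct and follows essentially the same route as the paper: the new kernel elements in (i) and (ii) come from the rotation and weight-rescaling invariances that only hold in the balanced case, and the additional orthogonality in (iii) comes from differentiating the moment identity (\ref{eq:net-2}) and using that ${\bf F}_{({\rm Id},a)}=0$ kills the extra term. The paper's own proof is a two-line sketch (``follows the proof of Lemma~\ref{le:net-2.1}'' for the kernel, ``follows from differentiating (\ref{eq:net-1}) and (\ref{eq:net-2})'' for the image); you have simply written out those details, including the sign check $\langle iz,z'\rangle_{\bf C}=-\langle z,iz'\rangle_{\bf C}$ needed to translate $\sum_p ({\rm D}{\bf F})(p)\wedge p=0$ into orthogonality to $(ip)_{p\in\mathscr V}$.
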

\begin{proof}
The statement about the kernel of these linear operators follows the proof of Lemma~\ref{le:net-2.1} and is left to the reader. The statement about the image of the operator  ${\rm D} {\bf F}_{({\rm Id}, a)}$follows from differentiating (\ref{eq:net-1}) and (\ref{eq:net-2}). 
\end{proof}

To summarize the above analysis, we assume that we are given a weighted network $({\mathscr N}, a)$ and we define the linear map
\begin{equation}
\begin{array}{ccccccllll}
\Lambda & :  & {\bf C}^n  \times {\bf R}^m &  \to & {\bf C}^n \times {\bf R}^m  \\[3mm]
&  & (\dot \Phi, \dot a) & \mapsto  & \left( {\rm D} {\bf F}_{({\rm Id}, a)} (\dot \Phi ,\dot a) \, , \, {\rm D} {\bf L}_{\rm Id} (\dot \Phi)  \right).
\end{array}
\label{eq:net-3}
\end{equation}
If the weighted network $({\mathscr N}, a)$ is unbalanced we have proved that $\Lambda$ has kernel of dimension at least $2$ and cokernel of dimension at least $2$, while, if the network $({\mathscr N}, a)$ is balanced, then $\Lambda$ has kernel of dimension at least $4$ and cokernel of dimension at least $3$.

We complete this section by the proof of the following result which will simplify some of the statements to come~:
\begin{proposition}
The following identity holds
\[
\langle  {\rm D}_{a} {\bf F}_{({\rm Id}, a)} (\dot a)  , \dot \Phi  \rangle_{{\bf C}^n}  =  \langle  \dot a , {\rm D} {\bf L}_{{\rm Id}} (\dot \Phi) \rangle_{{{\bf R}}^m}  . 
\]
In other words, the linear maps ${\rm D} {\bf L}_{{\rm Id}}$ and ${\rm D}_{a} {\bf F}_{({\rm Id}, a)}$  are adjoint of each other. 
\label{pr:net-2.1}
\end{proposition}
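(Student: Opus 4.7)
The plan is to verify this adjointness identity by direct computation from the explicit formulas defining $\mathbf{L}$ and $\mathbf{F}$. The only real manipulation involved is the standard regrouping of a double sum over vertices into a sum over edges, exactly as used in the proof of Lemma~\ref{le:net-1.1}. I do not expect any substantive obstacle; the computation is pure bookkeeping and the symmetry $\dot a_{[p,q]} = \dot a_{[q,p]}$ is what makes the two sides match.

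First, I would compute the two linear maps explicitly. Differentiating $|\Phi_p(s) - \Phi_q(s)|$ at $s=0$ gives
\[
{\rm D}{\bf L}_{\rm Id}(\dot\Phi)([p,q]) = \left\langle \frac{q-p}{|q-p|}, \, \dot\Phi_q - \dot\Phi_p \right\rangle_{\bf C}.
\]
Next, since ${\bf F}_{({\rm Id},a)}(p) = \sum_{q\in\mathscr V_p} a_{[p,q]} \frac{q-p}{|q-p|}$ is linear in $a$, its partial derivative with respect to $a$ is simply
\[
{\rm D}_a {\bf F}_{({\rm Id},a)}(\dot a)(p) = \sum_{q\in\mathscr V_p} \dot a_{[p,q]} \, \frac{q-p}{|q-p|}.
\]

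Then I would insert these into $\langle {\rm D}_a{\bf F}_{({\rm Id},a)}(\dot a),\dot\Phi\rangle_{{\bf C}^n}$ and regroup the double sum over $p\in\mathscr V$ and $q\in\mathscr V_p$ edge by edge, just as in the proof of Lemma~\ref{le:net-1.1}. For each unordered edge $[p,q]\in\mathscr E$, the contribution from the endpoint $p$ (with neighbor $q$) combines with the contribution from the endpoint $q$ (with neighbor $p$), using $\dot a_{[p,q]}=\dot a_{[q,p]}$, to give
\[
\dot a_{[p,q]} \left[ \left\langle \frac{q-p}{|q-p|}, \dot\Phi_p\right\rangle_{\bf C} + \left\langle \frac{p-q}{|p-q|}, \dot\Phi_q\right\rangle_{\bf C} \right] = \dot a_{[p,q]} \left\langle \frac{q-p}{|q-p|}, \dot\Phi_p - \dot\Phi_q\right\rangle_{\bf C}.
\]
Summing over $[p,q]\in\mathscr E$ and comparing term by term with
\[
\langle \dot a, {\rm D}{\bf L}_{\rm Id}(\dot\Phi)\rangle_{{\bf R}^m} = \sum_{[p,q]\in\mathscr E} \dot a_{[p,q]} \, {\rm D}{\bf L}_{\rm Id}(\dot\Phi)([p,q])
\]
yields the stated identity.

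An equivalent, more conceptual route would be to introduce the weighted length functional $E(\Phi,a) := \langle a, {\bf L}_\Phi\rangle_{{\bf R}^m} = \sum_{[p,q]} a_{[p,q]} |\Phi_p - \Phi_q|$. Since $-{\bf F}(\Phi,a)$ is the gradient of $E$ with respect to the vertex positions and ${\bf L}_\Phi$ is the gradient with respect to the weights, the adjointness of ${\rm D}{\bf L}_{\rm Id}$ and ${\rm D}_a{\bf F}_{({\rm Id},a)}$ becomes a manifestation of the symmetry of the mixed second partial derivatives $\partial_a \partial_\Phi E = \partial_\Phi\partial_a E$ evaluated at $\Phi={\rm Id}$. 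This framing makes it transparent why the identity must hold, but the proof I would write out is the direct one above.
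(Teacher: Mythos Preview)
Your direct computation is exactly the paper's proof: compute ${\rm D}_a{\bf F}_{({\rm Id},a)}$ and ${\rm D}{\bf L}_{\rm Id}$ explicitly, then regroup the vertex-indexed double sum as an edge-indexed sum using $\dot a_{[p,q]}=\dot a_{[q,p]}$. The variational reformulation via $E(\Phi,a)=\sum a_{[p,q]}|\Phi_p-\Phi_q|$ and equality of mixed partials is a pleasant conceptual gloss not present in the paper; note, however, that since $\nabla_\Phi E=-{\bf F}$ it actually yields the identity with a minus sign, and indeed your edge term $\dot a_{[p,q]}\langle\frac{q-p}{|q-p|},\dot\Phi_p-\dot\Phi_q\rangle_{\bf C}$ is the negative of $\dot a_{[p,q]}\,{\rm D}{\bf L}_{\rm Id}(\dot\Phi)([p,q])$ as you wrote it --- a harmless sign slip that the paper's own proof shares (via a typo in its formula for ${\rm D}_a{\bf F}$) and that is immaterial to the rank statements where the proposition is used.
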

\begin{proof}
We have 
\[
{\rm D}_{a} {\bf F}_{({\rm Id}, a)} (\dot a)  = \displaystyle  \left( \sum_{q \in \mathscr V_p} \dot a_{[p,q]} \frac{p-q}{|p-q|} \right)_{p \in \mathscr V} ,
\]
and 
\[
{\rm D} {\bf L}_{{\rm Id}} (\dot \Phi)   = \left( \frac{ \langle p-q , \dot \Phi(p) - \dot \Phi (q)\rangle_{\bf C}}{|p-q|} \right)_{[p,q] \in \mathscr E}.
\]
The result then follows from the observation that 
\[
\begin{array}{rlllll}
\langle {\rm D}_{a} {\bf F}_{({\rm Id}, a)} (\dot a) , \dot \Phi \rangle_{{\bf C}^n} & = & \displaystyle \sum_{p \in \mathscr V}  \left\langle  \left( \sum_{q \in \mathscr V_p} \dot a_{[p,q]} \frac{p-q}{|p-q|} \right) , \dot \Phi (p) \right\rangle_{\bf C}  \\[3mm]
& = & \displaystyle \sum_{[p,q] \in \mathscr E} \dot a_{[p,q]} \,  \frac{ \langle p-q , \dot \Phi(p) - \dot \Phi (q)\rangle_{\bf C}}{|p-q|} \\[3mm]
& = & \langle  \dot a , {\rm D} {\bf L}_{{\rm Id}} (\dot \Phi) \rangle_{{{\bf R}}^m} ,
\end{array}
\]
and the proof is complete.
\end{proof}

\subsection{Flexible unbalanced networks} There are two different notions of flexible networks which will be needed in our construction depending whether they apply to balanced or unbalanced networks. We first introduce the notion of flexibility for unbalanced network since it is the easiest to understand. We then give examples of networks which are unbalanced, flexible and also unitary, since these are the (only) ones which are useful in applications. We keep the notations introduced in the previous section.

As mentioned above, the first notion of flexibility applies to {\em unbalanced networks}~:
\begin{definition}
An unbalanced network $(\mathscr N, a)$ is said to be {\em flexible} if the mapping $\Lambda$ defined in (\ref{eq:net-3}) has rank $2n + m -2$.
\label{de:net-3.1}
\end{definition}
According to Lemma~\ref{le:net-2.1} and Lemma~\ref{le:net-2.2}, the linear map $\Lambda$ introduced in (\ref{eq:net-3}) has kernel whose dimension is at least $2$ and image whose codimension is at least $2$. Therefore, asking that the unbalanced network is flexible is nothing but asking that the rank of $\Lambda$ is as large as allowed by these lemmas.

For an unbalanced network to be flexible, it is necessary that 
\[
m \leq 2n-3.
\]
That is, the number of edges should not be too large compared to the number of vertices of the network. 
Indeed, the dimension of the image of ${\rm D} {\bf L}_{\rm Id}$ is necessarily less than $2n-3$ since this mapping has at least a $3$-dimensional kernel (see Lemma~\ref{le:net-2.1}). Moreover, the dimension of the image of ${\rm D} {\bf F}_{({\rm Id}, a)}$ is at most $2n-2$ (see (ii) in Lemma~\ref{le:net-2.2}). And hence, the rank of $\Lambda$ is at most equal to $4n-5$.  So, in order for the rank of $\Lambda$ to be equal to $2n+m-2$, it is necessary that $m\leq 2n-3$. 

Before we proceed with examples, let us observe that the property of being flexible is an open property among unbalanced networks. More precisely, we have the~:
\begin{proposition}
\label{pr:Zd}
The set of flexible unbalanced networks in a given homotopy class (or in a given unitary homotopy class) is  Zariski open.
\end{proposition}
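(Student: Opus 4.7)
The plan is to interpret the space of weighted networks in a given (or unitary) homotopy class as a real algebraic (or semi-algebraic) variety and to express both unbalancedness and flexibility as the non-vanishing of finitely many polynomial functions on a suitable parameter space.

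Once a homotopy class is fixed, the combinatorial type (vertex count $n$, edge graph, edge count $m$) is fixed, and a weighted network in the class is determined by the positions $(p_1,\dots,p_n) \in {\bf C}^n$ of its vertices together with the weights $a \in ({\bf R}\setminus\{0\})^m$. The space of such weighted networks is thus an open semi-algebraic subset of ${\bf C}^n \times {\bf R}^m$, cut out by the open conditions that the vertices be distinct, the edges not cross, and the weights be nonzero. In the unitary case, one further imposes the polynomial equations $|p_i-p_j|^2 = 1$ for each edge $[p_i,p_j]$, so that the parameter space becomes a real algebraic subvariety.

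Using the formulas from the proof of Proposition~\ref{pr:net-2.1}, together with a direct computation of ${\rm D}_\Phi {\bf F}_{({\rm Id},a)}$ (which brings in the factors $a_{[p,q]}/|p-q|$ and $a_{[p,q]}(p-q)/|p-q|^3$), every entry of the matrix of $\Lambda$ is algebraic in $(p_1,\dots,p_n,a)$, the only non-polynomial contributions coming from positive powers of the distances $|p_i-p_j|$. Passing to the enlarged parameter space in which one adjoins the positive lengths $r_{ij} := |p_i-p_j|$ subject to the algebraic relations $r_{ij}^2 = |p_i-p_j|^2$, and then multiplying each row of the matrix by a sufficiently large monomial in the $r_{ij}$'s, produces an equivalent matrix $\tilde\Lambda$ with the same rank as $\Lambda$ but whose entries are now polynomial in $(p_i, a, r_{ij})$.

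By Lemmas~\ref{le:net-2.1} and~\ref{le:net-2.2}, the rank of $\Lambda$ is a priori at most $2n+m-2$, so flexibility is equivalent to the non-vanishing of at least one $(2n+m-2) \times (2n+m-2)$ minor of $\tilde\Lambda$; likewise, unbalancedness amounts to the non-vanishing of at least one component of ${\bf F}_{({\rm Id},a)}$, again a polynomial condition after clearing the $r_{ij}$ denominators. Each such condition cuts out a Zariski open subset of the enlarged parameter space, and the flexible-unbalanced locus is their intersection, hence Zariski open. In the unitary case, the same argument applies, the parameter space being restricted further by the polynomial relations $r_{ij} = 1$. The argument is formal; the only point requiring a little care, and what I see as the main (though modest) bookkeeping obstacle, is to set up the algebraic framework above and to check that the row rescalings by positive monomials in the $r_{ij}$'s do preserve the rank of $\Lambda$.
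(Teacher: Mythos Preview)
Your proof is correct and follows essentially the same approach as the paper: both argue that flexibility amounts to the non-vanishing of at least one $(2n+m-2)\times(2n+m-2)$ minor of $\Lambda$, which is an algebraic condition on the parameters, hence cuts out a Zariski open set. You are in fact somewhat more careful than the paper in two respects: you explicitly handle the square-root dependence of the entries of $\Lambda$ on the edge lengths $|p_i-p_j|$ by passing to an enlarged parameter space with the $r_{ij}$ adjoined (the paper simply calls the sum of squares of minors an ``algebraic function'' without addressing this), and you explicitly treat unbalancedness as a separate Zariski-open condition (the paper's proof mentions only flexibility).
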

\begin{proof}
Indeed, requiring that a network (or a unitary network) is flexible amounts to say that the rank of $\Lambda$ is equal to $2n+m-3$ and, in coordinates, this can be translated into the fact that one of the square sub-matrix of $\Lambda$ of size $2n+m-3$ has non-zero determinant. Computing the sum of the squares of all square sub-matrices of $\Lambda$ of size $2n+m-3$ give an algebraic functions of the coordinates of the points of the network and  the coefficients of the weights.  The set of weighted networks (or unitary networks) which are not flexible correspond to the zero set of this algebraic function and hence its complement is (by definition) Zariski open. \end{proof}

As a consequence, we see that the set of unitary networks which are unitary homotopic to a given unitary, flexible network, is non empty and Zariski open. 

We now give a series of examples of flexible, unbalanced networks which have in addition the property of being unitary. 

\medskip

\noindent {\bf Example 3.1 :}  \label{ex:3.1}
Given $n\geq 2$, the simplest unbalanced,flexible network one can imagine is the network $\mathscr N_I$ whose set of vertices is given by 
$$
\mathscr V_I : =\{ z_0, \ldots, z_{n-1} \}\subset {\bf C},
$$
where we assume that $|z_{j+1}-z_j|=1$ for all $j=0, \ldots, n-2$ and whose set of edges is defined by
\[
\mathscr E_I := \{ [z_j, z_{j+1}] \, : \, j=0, \ldots, n-2\}.
\]

\begin{center}
\includegraphics[width=10cm]{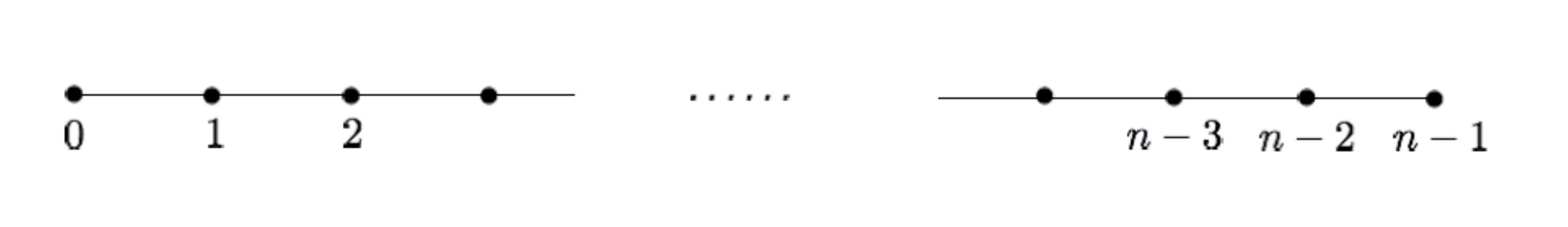}\\
Fig 2 : Example of a network $\mathscr N_I$ (here $z_j = j\in {\bf C}$, for $j=0, \ldots, n-1$).
\end{center}

This is by definition a unitary network and, if we are given $a : \mathscr E_I \to {\bf R}-\{0\}$, this provides an example of an unbalanced network. Indeed, the force at the point $z_0 \in {\mathscr V}_I$ or at the point $z_{n-1}\in {\mathscr V}_I$ are given respectively by
\[
{\bf F}_{({\mathscr N}_I, a )} (z_0) = a_{[0,1]} \, (z_1-z_0)  \qquad \text{and} \qquad  {\bf F}_{({\mathscr N}_I, a)} (z_{n-1}) = a_{[n-2,n-1]} \, (z_{n-1} -z_{z-2}) ,
\] 
and they are not equal to $0$ by definition of the weight function $a$, hence, the network $({\mathscr N}_I , a)$ is always unbalanced. 

\begin{lemma}
We claim that the weighted unbalanced network $({\mathscr N}_I , a)$ defined above is flexible in the sense of Definition~\ref{de:net-3.1}.
\end{lemma}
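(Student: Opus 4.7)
The plan is to prove flexibility by a direct dimension count: since $m=n-1$ for the path, both the source and the target of $\Lambda$ have real dimension $2n+m = 3n-1$, and flexibility amounts to $\operatorname{rank}\Lambda = 3n-3$, i.e.\ $\dim \ker \Lambda = 2$. By Lemma~\ref{le:net-2.2}(i), the $2$-dimensional space $\{({\bf e})_{p\in \mathscr V}:{\bf e}\in {\bf C}\}$ of infinitesimal translations always lies in $\ker\Lambda$ (translations preserve edge lengths as well), so the whole point is to establish the reverse inequality $\dim\ker\Lambda\le 2$.

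The key simplification is that the length-preservation constraint built into $\Lambda$ decouples the two contributions to $\dot{\bf F}$. Fix $(\dot\Phi,\dot a)\in \ker\Lambda$ and write $e_j:=z_{j+1}-z_j$. The condition ${\rm D} {\bf L}_{\rm Id}(\dot\Phi)=0$ reads $\langle e_j,\dot\Phi_{z_{j+1}}-\dot\Phi_{z_j}\rangle_{\bf C}=0$ for every $j$, so $\dot\Phi_{z_{j+1}}-\dot\Phi_{z_j}$ is perpendicular to $e_j$. Differentiating the unit edge vectors $(\Phi(z_{j+1})-\Phi(z_j))/|\Phi(z_{j+1})-\Phi(z_j)|$ at $\Phi={\rm Id}$ and discarding the (vanishing) radial piece, the equation $\dot{\bf F}(z_j)=0$ at an interior vertex becomes
\[
\dot a_{[j,j+1]}\,e_j - \dot a_{[j-1,j]}\,e_{j-1} + a_{[j,j+1]}(\dot\Phi_{z_{j+1}}-\dot\Phi_{z_j}) - a_{[j-1,j]}(\dot\Phi_{z_j}-\dot\Phi_{z_{j-1}})=0,
\]
with the obvious one-sided truncations at the endpoints $z_0$ and $z_{n-1}$.

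I would then run an induction from the endpoint $z_0$. There the equation above collapses to $\dot a_{[0,1]}\,e_0 + a_{[0,1]}(\dot\Phi_{z_1}-\dot\Phi_{z_0})=0$. Since the first summand lies along $e_0$ while the second lies along $ie_0$ by the length constraint, and since $a_{[0,1]}\neq 0$, both terms must vanish separately; hence $\dot a_{[0,1]}=0$ and $\dot\Phi_{z_1}=\dot\Phi_{z_0}$. Feeding this into the equation at $z_1$ kills the two terms associated with the edge $[z_0,z_1]$ and leaves exactly the same kind of equation for the next edge, namely $\dot a_{[1,2]}\,e_1+a_{[1,2]}(\dot\Phi_{z_2}-\dot\Phi_{z_1})=0$, which by the same orthogonality argument yields $\dot a_{[1,2]}=0$ and $\dot\Phi_{z_2}=\dot\Phi_{z_1}$. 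Iterating through $j=2,\ldots,n-2$ gives $\dot a=0$ and $\dot\Phi_{z_j}=\dot\Phi_{z_0}$ for every $j$, so $(\dot\Phi,\dot a)$ is a pure translation and $\ker\Lambda$ is exactly the $2$-dimensional translation subspace.

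There is really no serious obstacle here: the argument is driven entirely by the fact that a path graph has no loops, that its endpoint has a single neighbor, and that the assumption $a_{[p,q]}\neq 0$ lets one divide through. The only point that needs a moment of care is the orthogonal decomposition $e_j\perp (\dot\Phi_{z_{j+1}}-\dot\Phi_{z_j})$ coming from the length constraint, which is what makes the force equation split cleanly into a weight component and a transverse component at each step.
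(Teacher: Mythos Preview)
Your proof is correct and follows essentially the same route as the paper's: both arguments quotient out the translation freedom, use the length constraint to make the differences $\dot\Phi_{z_{j+1}}-\dot\Phi_{z_j}$ purely transverse to the edge, and then peel off the equations from the endpoint $z_0$ inward by induction. The only cosmetic difference is that the paper encodes the tangential/normal split via the complex substitution $\dot\Phi_{z_{j+1}}-\dot\Phi_{z_j}=(z_{j+1}-z_j)\,\dot w_j$ and reads off $\Re\,\dot w_j$ and $\Im\,\dot w_j$, whereas you phrase the same split as the orthogonal decomposition along $e_j$ and $ie_j$.
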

\begin{proof}
In this example, $m =n-1$ and hence, we need to check that the mapping $\Lambda$ defined in (\ref{eq:net-3}) has rank equal to $2n+m-2 = 3n-3$. To keep the notations short, it is convenient to write
\[
\dot \Phi_{z_{j+1}} - \dot \Phi_{z_j}  := (z_{j+1}-z_j) \, \dot w_j ,
\] 
where $\dot w_j \in {\bf C} $ for $j=0, \ldots, n-2$ and we agree that $\dot w_{-1} =  \dot w_{n-1}  = 0$.  Also, we agree that $\dot a_{[z_{-1},z_0]}$ and $\dot a_{[z_{n-1}, z_n]}$ are both equal to $0$.  

With these notations, we find that
\begin{equation}
{\rm D} {\bf L}_{{\rm Id}} (\dot \Phi) = \left(  \Re \, \dot w_{j} \right)_{j=0, \ldots, n-2}.
\label{eq:dl}
\end{equation}
Also, we have a nice expression for 
\begin{equation}
{\rm D}_\Phi{\bf F}_{({\rm Id}, a)} ( \dot \Phi ) = \left( i \,  \left(a_{[z_{j}, z_{j+1}]} \, (z_{j+1}- z_j) \,  \Im \, \dot w_j- a_{[z_{j-1}, z_{j}]} \, (z_{j} -z_{j-1})) \, \Im \, \dot w_{j-1} \right) \right)_{j=0, \ldots, n-1},
\label{eq:df1}
\end{equation}
and
\begin{equation}
{\rm D}_a{\bf F}_{({\rm Id}, a)} ( \dot a ) = \left(   \dot a_{[z_j, z_{j+1}]} \, (z_{j+1}- z_j) -  \dot a_{[z_{j-1}, z_{j}]} \, (z_{j} - z_{j-1})  \right)_{j=0, \ldots, n-1}.
\label{eq:df2}
\end{equation}
Now, if ${\rm D}{\bf L}_{{\rm Id}} (\dot \Phi) =0$, then $\Re w_j=0$ for all $j=0, \ldots, n-2$. Next, if ${\rm D}{\bf F}_{({\rm Id}, a)} (\dot \Phi ,  \dot a ) =0$, looking at the component at the vertex $z_0$, we get $\dot a_{[z_0, z_1]} =0$ and $\Im \dot w_{0}=0$. Arguing recursively, one concludes that $\dot a_{[z_{j+1}, z_j]} =0$ for all $j=0, \ldots, n-2$ and $\dot w_j=0$ for $j=0, n-2$. Therefore, as a function of $\dot w_j$ and $\dot a_{[z_{j+1},z_j]}$, the mapping $\Lambda$ is injective and this implies that $\Lambda$ has rank $3n-3$. 
\end{proof}

\medskip

\noindent {\bf Example 3.2 :}
\label{ex:3.2}
Given $n\geq 3$, we consider the network $\mathscr N_{Pol}$ defined by an embedded polygon with $n$ sides of size $1$. Hence, the set of vertices of this network is given by
\[
\mathscr V_{Pol} :=\left\{ z_j  \, : \, j=0, \ldots, n-1 \right \}.
\]
We agree to extend the sequence $z_0, \ldots, z_{n-1}$ as a $n$ periodic sequence $(z_j)_{j\in {\bf Z}}$. The set of edges of this network is defined to be 
\[
\mathscr E_{Pol} : = \{[z_j, z_{j+1}] \, : \, j=0, \ldots, n-1\}.
\]
being understood that $[z_{n-1}, z_n] = [z_{n-1}, z_0]$ in agreement with the fact that we have extended periodically the sequence $z_0, \ldots, z_{n-1}$.

\begin{center}
\includegraphics[width=4cm]{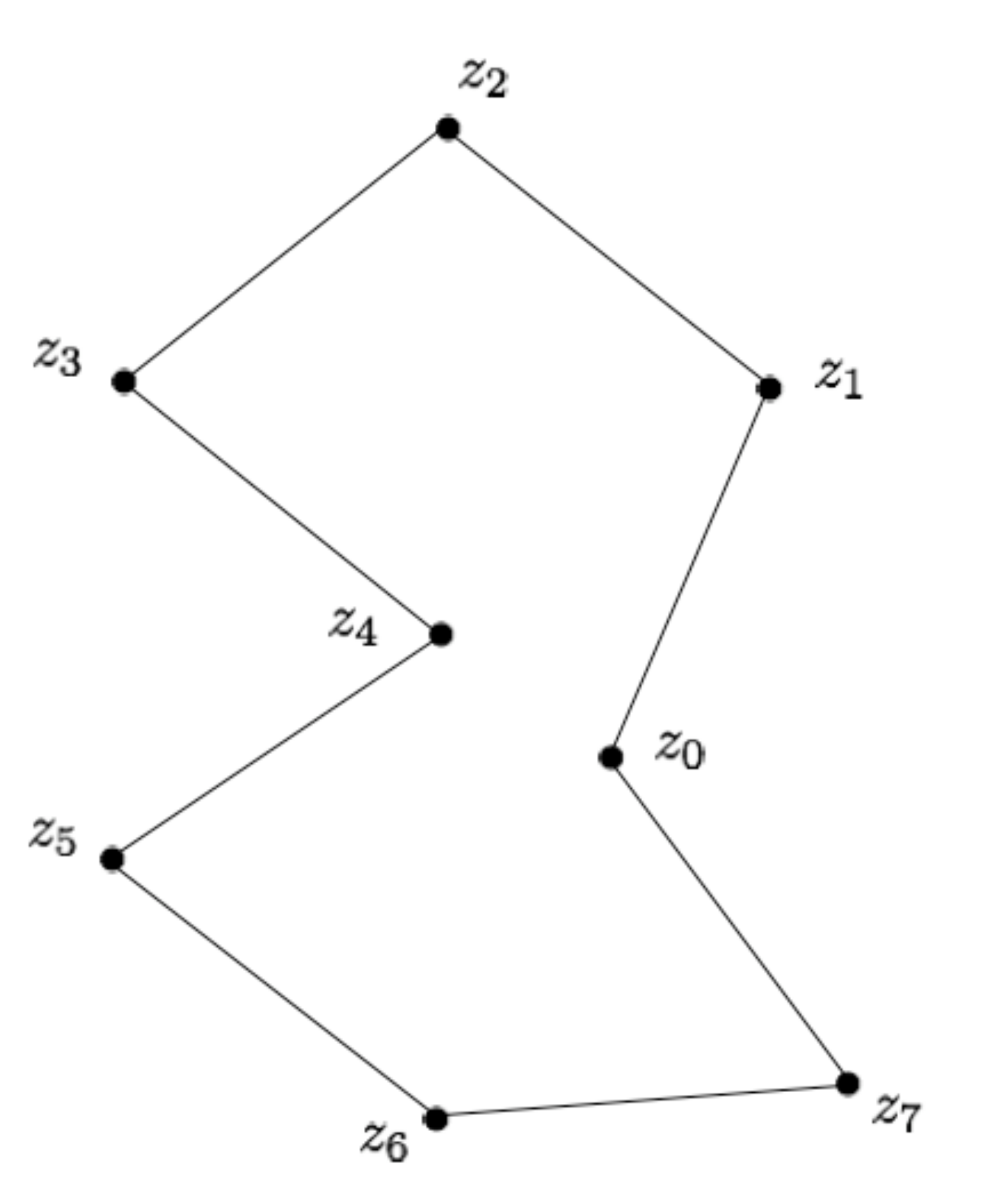}\\
Fig 3 : Example of a network $\mathscr N_{Pol}$ with $n=8$. All edges have length $1$.
\end{center}

This is clearly a unitary network and, in this example, the number of vertices  and the number of edges are both equal to $n$. If we are given $a : \mathscr E_{Pol} \to {\bf R}- \{0\}$, this provides another example of an unbalanced network. Indeed, the force at the point $z_j \in {\mathscr V}$ is given by
\[
{\bf F}_{({\mathscr N}_{Pol}, a )} (z_j) =  a_{[z_j, z_{j+1}]} \,  (z_{j+1}-z_j) - a_{[z_{j-1}, z_{j}]} \,  (z_{j}-z_{j-1}) ,
\] 
which cannot all be equal to $0$ and this implies that the network is not balanced. In this case, we have the~:
\begin{lemma}
The unbalanced network $(\mathscr N_{Pol}, a)$ is flexible in the sense of Definition~\ref{de:net-3.1} if and only if 
\[
A : = \sum_{j=0}^{n-1} \frac{\Re \, (z_{j+1}-z_j)}{a_{[z_j, z_{j+1}]}} \, (z_{j+1} -z_j)  \qquad \text{and} \qquad  B : = \sum_{j=0}^{n-1} \frac{\Im \, (z_{j+1}-z_j)}{a_{[z_j, z_{j+1}]}} \, (z_{j+1} -z_j) .
\]
are ${\bf R}$ linearly independent. 
\label{lemm:degeneral}
\end{lemma}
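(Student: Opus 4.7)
\medskip

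\noindent\textbf{Plan for Lemma~\ref{lemm:degeneral}.} Since for the polygon $m = n$, the target of $\Lambda$ has real dimension $3n$, so the condition that $\Lambda$ have rank $2n + m - 2 = 3n - 2$ is equivalent to $\ker \Lambda$ having dimension exactly $2$. Because the network is unbalanced, Lemma~\ref{le:net-2.1} and Lemma~\ref{le:net-2.2} already produce the $2$--dimensional kernel coming from translations of ${\bf C}$; so the task is to show that there is no additional element in $\ker \Lambda$ if and only if $A$ and $B$ are ${\bf R}$--linearly independent. I would set $\zeta_j := z_{j+1} - z_j$ (so $|\zeta_j|=1$) and $a_j := a_{[z_j, z_{j+1}]}$, with indices taken modulo $n$, and reuse the parametrization of Example~3.1: write $\dot \Phi_{z_{j+1}} - \dot \Phi_{z_j} = \zeta_j \, \dot w_j$. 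The closure of the polygon now imposes the constraint
\[
\sum_{j=0}^{n-1} \zeta_j \, \dot w_j = 0,
\]
and formulas analogous to (\ref{eq:dl}), (\ref{eq:df1}), (\ref{eq:df2}) apply, the only difference being that the ``boundary'' indices wrap around cyclically rather than being absent.

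The condition ${\rm D} {\bf L}_{{\rm Id}} (\dot \Phi) = 0$ then forces $\Re \dot w_j = 0$ for all $j$, i.e. $\dot w_j = i \, t_j$ with $t_j \in {\bf R}$, still subject to $\sum_j t_j \, \zeta_j = 0$. Next I would plug this into the equation ${\rm D} {\bf F}_{({\rm Id}, a)} (\dot \Phi , \dot a) = 0$ at the vertex $z_j$, using the cyclic analogues of (\ref{eq:df1}) and (\ref{eq:df2}); the equation at $z_j$ takes the form
\[
(\dot a_j + i \, a_j\, t_j)\, \zeta_j \;-\; (\dot a_{j-1} + i\, a_{j-1}\, t_{j-1})\, \zeta_{j-1} \;=\; 0.
\]
Setting $\alpha_j := \dot a_j + i \, a_j \, t_j$, this says that $\alpha_j \, \zeta_j$ is independent of $j$; call this common value $\beta \in {\bf C}$. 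Since $|\zeta_j|=1$, one recovers $\alpha_j = \beta \, \bar \zeta_j$, and separating real and imaginary parts yields the explicit formulas
\[
\dot a_j = \Re (\beta \, \bar \zeta_j) \qquad \text{and} \qquad t_j = \frac{\Im (\beta \, \bar \zeta_j)}{a_j}.
\]

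Finally I would reinject these formulas into the closure constraint $\sum_j t_j \, \zeta_j = 0$. Writing $\beta = x + i y$ with $x,y\in {\bf R}$ and using $\Im (\beta\,\bar \zeta_j) = y \, \Re \zeta_j - x \, \Im \zeta_j$, a direct computation gives
\[
\sum_{j=0}^{n-1} t_j \, \zeta_j \;=\; y \, A \;-\; x \, B,
\]
where $A$ and $B$ are precisely the two complex numbers in the statement. Consequently the extra piece of $\ker \Lambda$ (beyond translations) is parametrized by the pairs $(x,y) \in {\bf R}^2$ satisfying the single ${\bf C}$--valued equation $y\, A - x\, B = 0$. If $A$ and $B$ are ${\bf R}$--linearly independent, the only solution is $\beta = 0$, in which case every $\alpha_j$ vanishes, so $\dot a = 0$ and $t_j = 0$ for all $j$; hence $\dot \Phi$ is a translation and $\ker \Lambda$ has dimension exactly $2$. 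Conversely, if $A$ and $B$ are ${\bf R}$--linearly dependent, some $\beta \neq 0$ is admissible, producing a non-trivial $(\dot \Phi, \dot a)$ in $\ker \Lambda$ linearly independent of the translations (since $\alpha_j = \beta\, \bar\zeta_j \neq 0$ forces either some $\dot a_j \neq 0$ or some $t_j \neq 0$, and hence some $\dot w_j \neq 0$).

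The only delicate point is the bookkeeping that turns the vertex equations into the telescoping identity $\alpha_j\,\zeta_j = \alpha_{j-1}\,\zeta_{j-1}$; once this is recognized, the reduction to a single complex parameter $\beta$ is immediate and the rest of the argument is linear algebra. I do not expect any serious obstacle beyond this, since the unitarity $|\zeta_j|=1$ and the invariance properties already recorded in Lemma~\ref{le:net-2.1} and Lemma~\ref{le:net-2.2} make the dimension count tight.
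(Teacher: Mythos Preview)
Your proposal is correct and follows essentially the same route as the paper's own proof: both parametrize $\dot\Phi_{z_{j+1}}-\dot\Phi_{z_j}=\zeta_j\,\dot w_j$, use $\mathrm{D}{\bf L}_{\rm Id}=0$ to get $\dot w_j=i\,t_j$, observe that $\mathrm{D}{\bf F}_{({\rm Id},a)}=0$ forces $(\dot a_j+i\,a_j\,t_j)\,\zeta_j$ to be a constant $\beta$ (the paper calls it $\eta$, related by $\eta=-i\beta$), and then feed this into the closure constraint to obtain the single ${\bf C}$-linear relation $yA-xB=0$. The only addition in your write-up is that you spell out the converse direction explicitly, whereas the paper leaves it implicit in the analysis.
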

\begin{proof}
In this example, $n=m$ and hence we need to check that the mapping $\Lambda$ has rank $3m-2$. When studying the rank of $\Lambda$, it is convenient to write
\[
\dot \Phi_{z_{j+1}}-  \dot \Phi_{z_j} =  (z_{j+1} - z_j) \,  \dot w_j ,
\]
where $\dot w_j  \in {\bf C}$. We agree that we extend $\dot w_{j}$ and $\dot a_{[z_j, z_{j+1}]}$ periodically to all indices $j \in {\bf Z}$. Observe that 
\begin{equation}
\sum_{j=0}^{n-1} (z_{j+1} -z_j) \, \dot w_j  =0.
\label{eq:fg}
\end{equation}
Therefore, to show that $\Lambda$ has rank $3m-2$, it is enough to prove that $\Lambda$, as a function of $\dot w_j$ and $\dot a_{[z_j, z_{j+1}]}$, is injective.  So let us assume that 
\[
{\rm D}{\bf L}_{\rm Id} (\dot \Phi) =0 \qquad \text{and} \qquad {\rm D}{\bf F}_{({\rm Id}, a)} (\dot \Phi ,  \dot a ) =0.
\]

With the above notations, (\ref{eq:dl}), (\ref{eq:df1}) and (\ref{eq:df2}) still hold. Now, if ${\rm D}{\bf F}_{({\rm Id}, a)} (\dot \Phi ,  \dot a ) =0$, we see from the above expression that  
\[
\left( \dot a_{[z_j, z_{j+1}]}  +  i \, a_{[z_{j}, z_{j+1}]} \, \Im \, \dot w_j  \right) \, (z_{j+1} -z_j) ,
\]
does not depend on $j$.  This implies that there exists $\eta \in {\bf C}$ such that
\begin{equation}
 a_{[z_{j}, z_{j+1}]}  \, \Im \, \dot w_j =  \Re \left(  \frac{\eta}{z_{j+1}-z_j}  \right) ,
\label{les}
\end{equation}
for $j=1, \ldots, n-1$. 

Now, using these expression into (\ref{eq:fg}), yields 
\[
 A \, \Re \, \eta + B \, \Im \, \eta = 0,
\]
where
\[
A : = \sum_{j=0}^{n-1} \frac{\Re \, (z_{j+1}- z_j)}{a_{[z_j, z_{j+1}]}} \, (z_{j+1} -z_j) \qquad \text{and} \qquad  B : = \sum_{j=0}^{n-1} \frac{\Im  \, (z_{j+1}- z_j)}{a_{[z_j, z_{j+1}]}} \, (z_{j+1} -z_j).
\]
If $A$ and $B$ are ${\bf R}$ linearly independent, we conclude that $\eta =0$ and this proves that the rank of $\Lambda$ is equal to $3m-2$. 
\end{proof}

\medskip

Let us consider the special case where  the network is a {\em regular} polygon with $n$ edges of length $1$. Hence, the set of vertices of the network $\mathscr N_{Reg Pol}$ is given by
\[
\mathscr V_{Reg Pol} :=\left\{ z_j : = \frac{\xi^j}{|1-\xi|}  \in {\bf C} \, : \, j=0, \ldots, n-1 \right \},
\]
where  $\xi := e^{2 i\pi/n}$. The set of edges of this network is defined to be 
\[
\mathscr E_{Reg Pol} : = \{[z_j, z_{j+1}] \, : \, j=0, \ldots, n-1\},
\]
where as usual $z_n:=z_0$. We choose the weight function to be given by
\[
a_{[z_j, z_{j+1}]} =1,
\]
for all $j=0, \ldots, n-1$.

\begin{center}
\includegraphics[width=5cm]{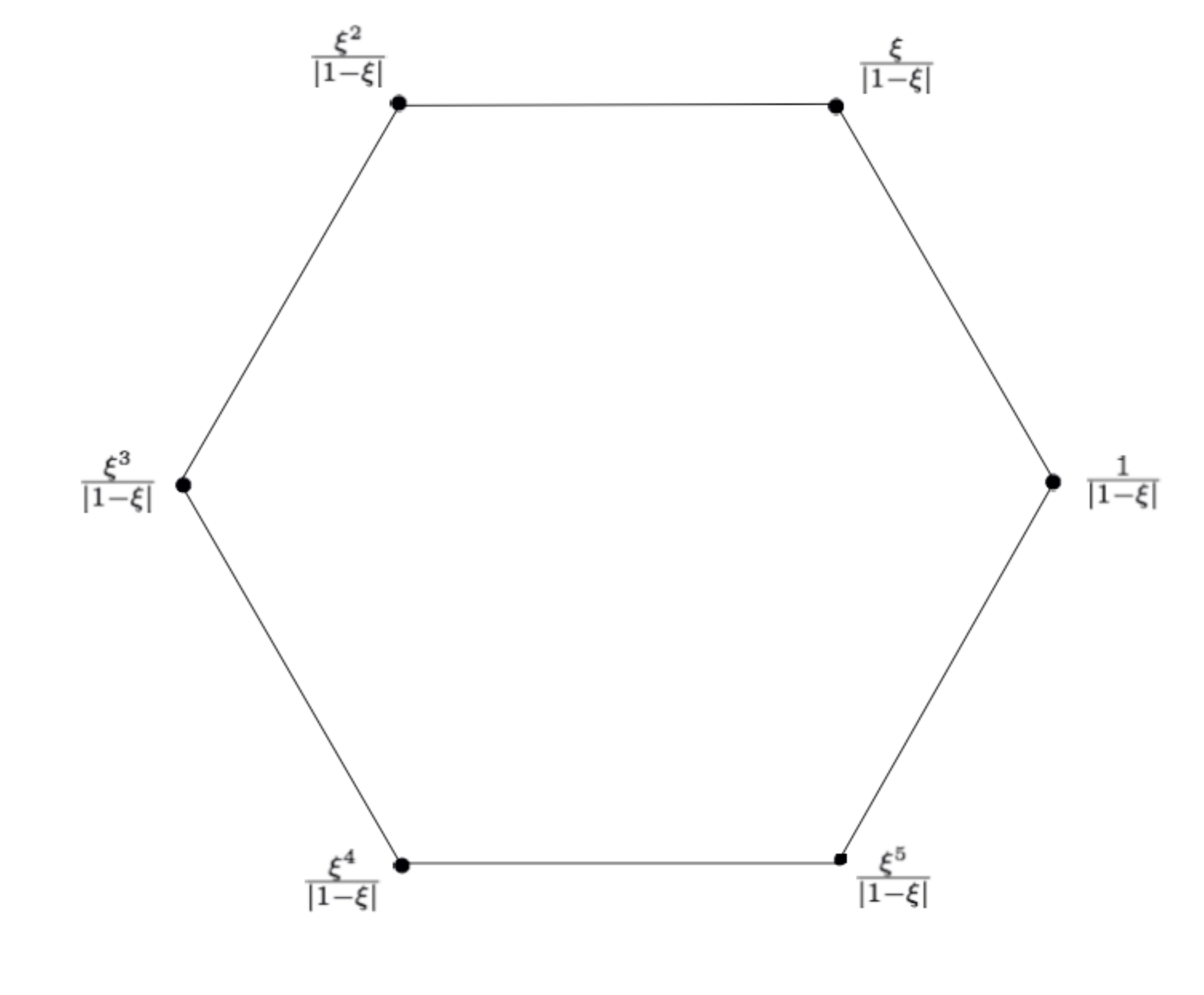}\\
Fig 4 : Example of a network $\mathscr N_{Reg Pol}$ when $n=6$ (i.e. a regular hexagon with edges of length $1$).
\end{center}

In this case, Lemma~\ref{lemm:degeneral} reads
\begin{corollary}
The unbalanced network $(\mathscr N_{RegPol}, a)$, when the weight function $a$ is constant, is flexible in the sense of Definition~\ref{de:net-3.1}.
\label{lemm:de}
\end{corollary}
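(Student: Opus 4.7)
The plan is to apply Lemma~\ref{lemm:degeneral} directly, and verify that the two vectors $A, B \in {\bf C}$ are ${\bf R}$-linearly independent by means of a short explicit computation exploiting the unit-modulus structure of the edges of a regular polygon.

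First I would introduce the shorthand $w_j := z_{j+1} - z_j = \xi^j \, (\xi -1)/|1-\xi|$, and note that $|w_j|=1$ for every $j=0,\ldots, n-1$ since $|z_{j+1}-z_j|=1$ by construction. Because the weights are $a_{[z_j,z_{j+1}]} = 1$, the quantities appearing in Lemma~\ref{lemm:degeneral} simplify to
\[
A = \sum_{j=0}^{n-1} \Re(w_j)\, w_j, \qquad B = \sum_{j=0}^{n-1} \Im(w_j)\, w_j.
\]
Using $|w_j|^2 = w_j \, \bar w_j = 1$, one has $\Re(w_j)\, w_j = \tfrac{1}{2}(w_j^2 + 1)$ and $\Im(w_j)\, w_j = -\tfrac{i}{2}(w_j^2 - 1)$, so
\[
A = \frac{n}{2} + \frac{1}{2}\sum_{j=0}^{n-1} w_j^2, \qquad B = \frac{i\, n}{2} - \frac{i}{2}\sum_{j=0}^{n-1} w_j^2.
\]

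Next I would compute the remaining sum. Since $w_j^2 = \bigl((\xi-1)^2/|1-\xi|^2\bigr) \, \xi^{2j}$, one has
\[
\sum_{j=0}^{n-1} w_j^2 = \frac{(\xi-1)^2}{|1-\xi|^2}\, \sum_{j=0}^{n-1} \xi^{2j}.
\]
For $n \geq 3$ the number $\xi^2 = e^{4i\pi/n}$ satisfies $\xi^2 \neq 1$, so the geometric sum $\sum_{j=0}^{n-1} \xi^{2j} = (\xi^{2n}-1)/(\xi^2 - 1) = 0$. Therefore $A = n/2 \in {\bf R}$ and $B = i\, n/2 \in i\, {\bf R}$.

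Finally, since $A$ is a non-zero real number and $B$ is a non-zero purely imaginary number, they are obviously ${\bf R}$-linearly independent in ${\bf C}$. Lemma~\ref{lemm:degeneral} then yields that $({\mathscr N}_{Reg Pol}, a)$ is flexible, which is the claim of the corollary. I do not expect any real obstacle here: the proof is essentially a one-line computation once one observes the $|w_j|=1$ trick that converts the real/imaginary parts into quadratic expressions summable as a geometric series.
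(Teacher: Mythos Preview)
Your proof is correct and follows essentially the same approach as the paper: both invoke Lemma~\ref{lemm:degeneral} and verify directly that $A$ and $B$ are ${\bf R}$-linearly independent. The paper merely writes down expressions for $A$ and $B$ and asserts the independence is ``easy to check,'' whereas your $|w_j|=1$ trick and geometric-series evaluation make the verification fully explicit and arguably cleaner.
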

\begin{proof}
In this special case where the network is a regular polygon and where the weight function is constant, we have 
\[
A : = \sum_{j=0}^{n-1} \Re \, \xi^{n-j} \, \xi^j \qquad \text{and} \qquad  B : = \sum_{j=0}^{n-1} \Im \, \xi^{n-j}  \, \xi^j,
\]
and is easy to check that $A$ and $B$ are ${\bf R}$ linearly independent. According to Lemma~\ref{lemm:degeneral}, this shows that the corresponding unbalanced network is flexible. 
\end{proof}

Another interesting application is the one where,  given $n\geq 3$ and $k \geq 1$, we consider the network $\mathscr N_{RegPol, k}$ defined to be a regular regular polygon with $n$ edges of length $k$. Observe that $\mathscr N_{RegPol, 1}$ corresponds to $\mathscr N_{RegPol}$. The set of vertices of this network is given by
\[
\begin{array}{rlll}
\mathscr V_{RegPol,k} & := & \Big\{ z_{j, j'} : = \displaystyle \frac{1}{|1-\xi|}Ê(k \, \xi^j + j' \, (\xi^{j+1} - \xi^j))  \in {\bf C} \, : \, j=0, \ldots, n-1, \\[3mm]
&  & \hspace{68mm}  j'=0, \ldots, k-1 \Big\},
\end{array}
\]
where  $\xi := e^{2 i\pi/n}$. The set of edges of this network is defined to be
\[
\mathscr E_{RegPol,k} : = \{[z_{j,j'} , z_{j,j'+1}] \, : \, j=0, \ldots, n-1 , \, j'=0, \ldots, k-1 \}.
\]

\begin{center}
\includegraphics[width=5cm]{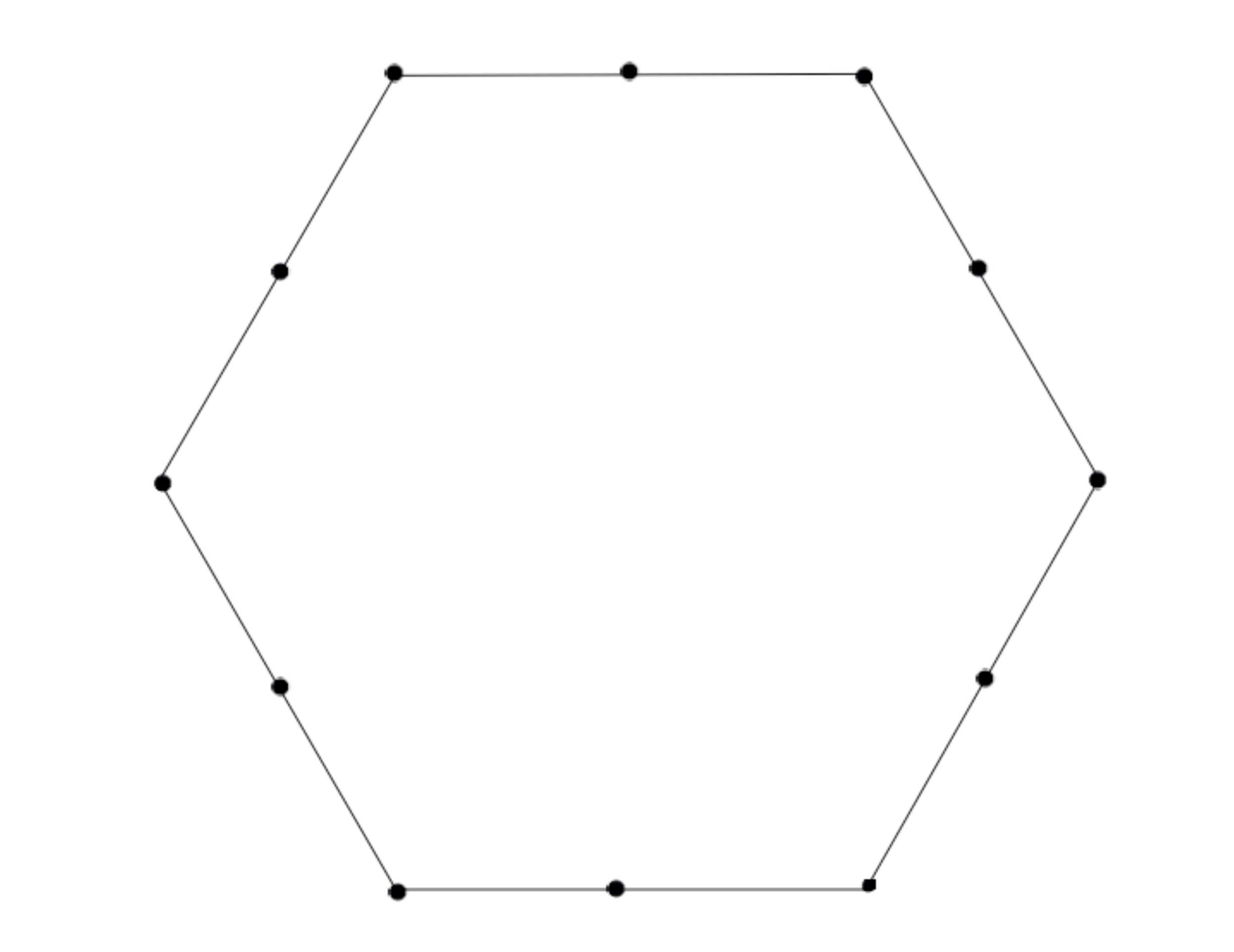}\\
Fig 5 : When $n=6$ and $k=2$, we get a unitary network which is an hexagon whose edges have length $2$.
\end{center}

This is clearly a unitary network and, in this example, the number of vertices  and the number of edges  are both equal to $k \, n$. If we are given $a : \mathscr E_{RegPol,k} \to {\bf R}- \{0\}$, this provides another example of an unbalanced network.  To simplify the discussion, let us assume that the weight function is chosen to be
\[
a_{[z_{j,j'} , z_{j,j'+1}] } = 1,
\]
for all $j=1, \ldots, n-1$ and for all $j'=0, \ldots, k-1$. Then, we have the following result whose proof is left to the reader~:
\begin{corollary}
The unbalanced network $(\mathscr N_{RegPol,k}, a)$ is flexible in the sense of Definition~\ref{de:net-3.1}.
\end{corollary}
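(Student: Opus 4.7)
The plan is to follow closely the strategy of Example~\ref{ex:3.1} and Lemma~\ref{lemm:degeneral}, exploiting the fact that each of the $n$ sides of the polygon is subdivided into $k$ collinear unit edges of weight $1$. I parameterize infinitesimal motions by
\[
\dot\Phi_{z_{j,j'+1}}-\dot\Phi_{z_{j,j'}}=(z_{j,j'+1}-z_{j,j'})\,\dot w_{j,j'},\qquad \dot w_{j,j'}\in{\bf C},
\]
and extend $\dot a$ and $\dot w$ cyclically (so that $z_{j,k}=z_{j+1,0}$). Since there are $n_{\rm vert}=kn$ vertices and $m=kn$ edges, flexibility amounts to $\Lambda$ having two-dimensional kernel, which in the above parameterization reduces to showing that vanishing of $\Lambda$ forces $\dot w_{j,j'}=0$ and $\dot a_{[z_{j,j'},z_{j,j'+1}]}=0$ throughout.

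First, ${\rm D}{\bf L}_{\rm Id}(\dot\Phi)=0$ gives $\Re\dot w_{j,j'}=0$ everywhere, just as in (\ref{eq:dl}). Next, I look at interior vertices $z_{j,j'}$ with $1\le j'\le k-1$: the two incident edges are parallel to the unit vector $\hat e_j:=(\xi^{j+1}-\xi^j)/|\xi-1|$, so the vanishing of ${\rm D}{\bf F}_{({\rm Id},a)}(\dot\Phi,\dot a)$ at such a vertex, computed from (\ref{eq:df1})--(\ref{eq:df2}) with $a\equiv 1$, reduces to
\[
\hat e_j\,\Bigl[(\dot a_{[z_{j,j'},z_{j,j'+1}]}-\dot a_{[z_{j,j'-1},z_{j,j'}]})+i\,(\Im\dot w_{j,j'}-\Im\dot w_{j,j'-1})\Bigr]=0.
\]
Hence $\dot a$ and $\Im\dot w$ are constant along each side of the polygon; call these common values $\dot A_j$ and $\dot\omega_j$.

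The remaining unknowns $(\dot A_j,\dot\omega_j)_{j=0,\ldots,n-1}$ are then governed by the $n$ corner equations at the vertices $z_{j,0}$. Substituting the constants just derived into the force balance at $z_{j,0}$ yields
\[
(\dot A_j+i\dot\omega_j)\,\hat e_j-(\dot A_{j-1}+i\dot\omega_{j-1})\,\hat e_{j-1}=0,\qquad j=0,\ldots,n-1,
\]
which is precisely the corner system appearing in the proof of Lemma~\ref{lemm:degeneral} for the regular $n$-gon $\mathscr N_{RegPol}$ with constant weight $1$. Moreover, the global closing identity $\sum_{j,j'}(z_{j,j'+1}-z_{j,j'})\dot w_{j,j'}=0$, which is forced because $\dot\Phi$ is single-valued on the vertex set, collapses (using $\Re\dot w=0$ and the observation that each side contributes $k$ identical summands) to $k\,i\sum_{j}\hat e_j\,\dot\omega_j=0$, i.e.\ the same closing relation as for $\mathscr N_{RegPol}$ up to the overall factor $k$.

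The flexibility question has thereby been reduced verbatim to the regular $n$-gon with unit weight, which is settled by Corollary~\ref{lemm:de}: the ${\bf R}$-linear independence of the vectors $A$ and $B$ of Lemma~\ref{lemm:degeneral} established there forces $\dot A_j=\dot\omega_j=0$ for all $j$, and then $\dot w_{j,j'}=0$ and $\dot a_{[z_{j,j'},z_{j,j'+1}]}=0$ throughout, so $\dot\Phi$ is merely a constant translation, giving a two-dimensional kernel and hence rank $\Lambda=3kn-2=2n_{\rm vert}+m-2$. The main obstacle conceptually is the interior-vertex reduction: once one observes that collinearity of consecutive edges decouples the real and imaginary parts of the force equation and forces both $\dot a$ and $\Im\dot w$ to be constant along each side, the rest is a direct transcription of the regular polygon case already analyzed in the paper.
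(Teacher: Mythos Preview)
Your proof is correct. The paper actually leaves this corollary as an exercise (``whose proof is left to the reader''), so there is no proof in the text to compare against; your argument is exactly the one the surrounding material invites. You collapse the interior vertices using collinearity to reduce to per-side constants $(\dot A_j,\dot\omega_j)$, observe that the corner equations and the closing relation then reproduce verbatim the system of Lemma~\ref{lemm:degeneral} for $\mathscr N_{RegPol}$ with constant weight, and conclude via Corollary~\ref{lemm:de}. This is the intended route.
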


Again, Proposition~\ref{pr:Zd}  implies that the set of unitary, unbalanced networks which are unitary homotopic to the network $\mathscr N_{RegPol,k}$ is non empty and Zariski open and this is in agreement with the result of Lemma~\ref{lemm:degeneral} which gives a general condition to ensure the flexibility of such weighted networks.

\medskip

\noindent {\bf Example 3.3 :} \label{ex:3.3} To illustrate further the result of Proposition~\ref{pr:Zd}, let us focus our attention on the network ${\mathscr N}_{RegPol}$ when $n=3$. In this case there is only one homotopy class of unitary networks corresponding to the equilateral triangle. Therefore, the question which remains is the following~: for which weight function $a$ is the equilateral triangle  flexible ? 

We consider the unitary network $\mathscr N_{Tri}$ defined by an equilateral triangle with vertices 
\[
\mathscr V_{Tri} :=\left\{ z_0: = \frac{1}{\sqrt 3} , z_1 : = \frac{\zeta}{\sqrt 3}, z_2 : = \frac{\zeta^2}{\sqrt 3} \right\},
\]
where  $\zeta := e^{2 i\pi/3}$. The set of edges of this network is defined to be 
\[
\mathscr E_{Tri} : = \{[z_0, z_1] , [z_1 , z_2] , [z_2, z_0] \}.
\]
Given $a : \mathscr E_{Tri} \to {\bf R}- \{0\}$, this provides an unbalanced network. We claim that~:
\begin{lemma}
The unbalanced network $(\mathscr N_{Tri}, a)$ is flexible, in the sense of Definition~\ref{de:net-3.1} if and only if 
\[
a_{[z_0, z_1]} + a_{[z_1, z_2]} + a_{[z_2, z_0]} \neq 0.
\]
\label{le:refd}
\end{lemma}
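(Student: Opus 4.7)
The approach is to apply the general polygon criterion from Lemma~\ref{lemm:degeneral} to the special case $n = m = 3$. That lemma asserts that $(\mathscr N_{Tri}, a)$ is flexible precisely when the two complex quantities
\[
A = \sum_{j=0}^{2} \frac{\Re(w_j)}{a_j}\, w_j, \qquad B = \sum_{j=0}^{2} \frac{\Im(w_j)}{a_j}\, w_j,
\]
are $\mathbf{R}$-linearly independent, where I abbreviate $w_j := z_{j+1}-z_j$ and $a_j := a_{[z_j,z_{j+1}]}$ (indices mod $3$).

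First I would record the explicit algebra of the equilateral triangle: $|w_j|=1$, $w_1 = \zeta\, w_0$, $w_2 = \zeta^2 w_0$, and $w_0 + w_1 + w_2 = 0$.  Using $\zeta = -\tfrac12 + i\tfrac{\sqrt 3}{2}$ one reads off $w_0 = -\tfrac{\sqrt 3}{2}+i\tfrac12$, $w_1 = -i$, $w_2 = \tfrac{\sqrt 3}{2}+i\tfrac12$, which in particular gives $\Re(w_1) = 0$, so the middle term of $A$ drops out.  Expanding $A$ and $B$ in real and imaginary parts then yields, with the shorthand $x := 1/a_0$, $y := 1/a_1$, $z := 1/a_2$,
\[
A = \Bigl(\tfrac{3}{4}(x+z),\; \tfrac{\sqrt 3}{4}(z-x)\Bigr), \qquad B = \Bigl(\tfrac{\sqrt 3}{4}(z-x),\; \tfrac{1}{4}(x+4y+z)\Bigr),
\]
viewed as elements of $\mathbf{R}^2$ in the basis $\{1,i\}$.

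Then I would compute the $2\times 2$ real determinant
\[
\det(A\,|\,B) \;=\; \tfrac{3}{16}\bigl[(x+z)(x+4y+z) - (z-x)^2\bigr].
\]
Applying the identity $(x+z)^2 - (z-x)^2 = 4xz$ collapses this to $\tfrac{3}{4}(xy+yz+zx)$.  Multiplying through by $a_0 a_1 a_2$, one has $xy+yz+zx = (a_0+a_1+a_2)/(a_0 a_1 a_2)$, so the determinant vanishes if and only if $a_0+a_1+a_2 = 0$.  Combined with Lemma~\ref{lemm:degeneral}, this yields the claimed equivalence.

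The argument is entirely computational; the only place requiring care is the bookkeeping in the expansion of $A$ and $B$, and the mild algebraic simplification via $(x+z)^2-(z-x)^2=4xz$ that reveals the symmetric polynomial $xy+yz+zx$.  There is no conceptual obstacle since Lemma~\ref{lemm:degeneral} has already reduced flexibility to a linear-independence check on two explicit vectors in $\mathbf{C}$.
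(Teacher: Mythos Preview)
Your proof is correct. You apply Lemma~\ref{lemm:degeneral} as a black box and reduce the question to whether the $2\times 2$ real determinant $\det(A\,|\,B)$ vanishes; your computation of $A$, $B$, and the determinant is accurate, and the final identification $xy+yz+zx=(a_0+a_1+a_2)/(a_0a_1a_2)$ gives the claimed equivalence.

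The paper takes a closely related but slightly different path: rather than invoking the \emph{statement} of Lemma~\ref{lemm:degeneral}, it re-enters the \emph{proof} of that lemma, keeping the variables $\dot w_j$ and the auxiliary complex number $\eta$ from~(\ref{les}), and derives directly that $\Im\dot w_0=\Im\dot w_1=\Im\dot w_2$ and then $(a_{[z_0,z_1]}+a_{[z_1,z_2]}+a_{[z_2,z_0]})\,\Im\dot w_0=0$. Your route is a bit more self-contained and mechanical (a determinant computation), while the paper's route makes the extra kernel element visible as the common value $\Im\dot w_j$ when the sum of weights vanishes. Either way the content is the same.
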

\begin{proof} We keep the notations of the proof of Lemma~\ref{lemm:degeneral}. Starting from the fact that 
\[
\left( \dot a_{[z_j, z_{j+1}]}  +  i \, a_{[z_{j}, z_{j+1}]} \, \Im \, \dot w_j  \right) \, (z_{j+1} -z_j) ,
\]
does not depend on $j$,  we get
\[
\left\{
\begin{array}{rllll}
 a_{[z_1, z_2]} \, \Im \, \dot w_1 & =  & \Im \, \zeta^2 \, \dot a_{[z_0, z_1]}  + a_{[z_0, z_1]} \, \Re \, \zeta^2 \, \Im \, \dot w_0\\[3mm]
 a_{[z_2, z_0]} \, \Im \, \dot w_2 & =  & \Im \, \zeta \, \dot a_{[z_0, z_1]} +  a_{[z_0, z_1]} \, \Re \, \zeta \, \Im \, \dot w_0.
\end{array}
\right.
\]
and taking the sum of these two identifies and using the fact that $\Im (\zeta  + \zeta^2)  =0$, we get 
\[
a_{[z_1 , z_2]} \, \Im \, \dot w_1+ a_{[z_2,z_0]} \, \Im \, \dot w_2  =  - a_{[z_0, z_1]} \,  \Im \, \dot w_0 ,
\]
since $\Re \, \zeta^2 = \Re \, \zeta = -1/2$.  Moreover, (\ref{eq:fg}) implies that 
\[
\Im \, \dot w_0 + \zeta \, \Im \, \dot w_1  + \zeta^2 \,  \Im \, \dot w_2 = 0.
\]
Taking the real part and imaginary part of this last equation, we conclude that $\Im \,\dot w_0 =\Im \, \dot w_1 =\Im \, \dot w_2$. Hence, we have
\[
 \left( a_{[z_0, z_1]}  + a_{[z_1, z_2]} + a_{[z_2, z_0]}  \right) \, \Im \, \zeta \, \dot w_0 =0.
\] 
 Therefore, we have proven that $\Lambda$ has rank $7$ if and only if $a_{[z_0, z_1]} + a_{[z_1, z_2]} + a_{[z_2, z_0]} \neq 0$. 
\end{proof}

Given $\theta \in {\bf R}$, we define the network $\mathscr N_{Tri ,\theta}$ which is obtained from $\mathscr N_{Tri}$ after a rotation of angle $\theta \in {\bf R}$. Hence, the vertices of $\mathscr N_{Tri, \theta}$ are given by $e^{i\theta} z_j$ where $z_j$ are the vertices of $\mathscr N_{Tri}$.  A natural question is the following~: {\em Given ${\bf f}_0, {\bf f}_1$ and ${\bf f}_2 \in {\bf C}$, is it possible to find an angle $\theta \in {\bf R}$ and a weight function $a :  \mathscr E_{Tri, \theta} \to {\bf R}-\{0\}$ such that 
\[
F_{(\mathscr N_{Tri ,\theta}, a)} (e^{i\theta} z_j) = {\bf f}_j , 
\]
for all $j=0,1,2$ ?} 

The answer to this question is given by the~:
\begin{proposition}
Assume that ${\bf f}_0+ {\bf f}_1 + {\bf f}_2 =0$. Then, there exit $\theta \in {\bf R}$ and a weight function $a : \mathscr E_{Tri} \to {\bf R}-\{0\}$ such that 
\[
F_{(\mathscr N_{Tri, \theta }, a)} (e^{i\theta} \, z_j) = {\bf f}_j ,
\]
for $j=0,1$ and $2$. Moreover, the choice of $\theta$ and $a$ is unique if and only if
\[
{\bf f}_j \neq \zeta^2 \, {\bf f}_{j-1},
\]
for  $j=0,1,2$ (observe that inequality for some $j$ implies the inequality for all $j$). 
\label{pr:rfd}
\end{proposition}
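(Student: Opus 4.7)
My plan is to translate the prescribed-force condition into a linear system for the weights $a_j$ with $\theta$ appearing as a parameter, then to extract a single real compatibility equation in $\theta$ whose analysis immediately yields both the existence statement and the uniqueness dichotomy.

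Write $e_j := z_{j+1} - z_j$ (indices mod $3$) and $a_j := a_{[z_j, z_{j+1}]}$. Since the triangle is equilateral with unit sides, $|e_j| = 1$ and $e_{j+1} = \zeta \, e_j$. Setting ${\bf g}_j := e^{-i\theta} \, {\bf f}_j$, the prescribed-force equation $F_{(\mathscr N_{Tri,\theta}, a)}(e^{i\theta} z_j) = {\bf f}_j$ becomes
\[
a_j \, e_j - a_{j-1} \, e_{j-1} = {\bf g}_j, \quad j = 0, 1, 2 \pmod{3}.
\]
Summing over $j$ gives $e^{-i\theta}\,({\bf f}_0 + {\bf f}_1 + {\bf f}_2) = 0$, automatic by assumption; hence only two of the three complex equations are independent, i.e. four real equations in the four real unknowns $\theta, a_0, a_1, a_2$.

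I would then solve these two independent equations in cascade. From the $j=0$ equation, $a_2 = a_0 \, \zeta - {\bf g}_0 / e_2$; imposing $a_2 \in {\bf R}$ and using $\Im \, \zeta = \sqrt{3}/2$ yields $a_0 = \tfrac{2}{\sqrt 3} \, \Im (e^{-i\theta} \, {\bf f}_0/e_2)$. Similarly, from the $j=1$ equation $a_1 = a_0 \, \zeta^2 + {\bf g}_1/e_1$, imposing $a_1 \in {\bf R}$ yields $a_0 = \tfrac{2}{\sqrt 3} \, \Im (e^{-i\theta} \, {\bf f}_1/e_1)$. Equating the two expressions for $a_0$, and using $1/e_2 = \zeta/e_0$ and $1/e_1 = \zeta^2/e_0$, the whole system reduces to the single real compatibility condition
\[
\Im \left( \frac{e^{-i\theta} \, (\zeta \, {\bf f}_0 - \zeta^2 \, {\bf f}_1)}{e_0} \right) = 0.
\]

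The proposition then follows from a case analysis. If $\zeta \, {\bf f}_0 = \zeta^2 \, {\bf f}_1$, i.e. ${\bf f}_1 = \zeta^2 \, {\bf f}_0$, the condition holds for every $\theta$ and for each $\theta$ the formulas above determine a unique triple $(a_0, a_1, a_2)$, producing a one-parameter family of solutions so that uniqueness fails. The cyclic equivalence of the condition ${\bf f}_j = \zeta^2 \, {\bf f}_{j-1}$ over $j=0,1,2$ (the parenthetical remark) is a one-line check using ${\bf f}_0 + {\bf f}_1 + {\bf f}_2 = 0$ together with $1 + \zeta + \zeta^2 = 0$. Otherwise the compatibility condition has the form $\Im (e^{-i\theta} \, c) = 0$ with $c \in {\bf C}^*$, which has exactly two solutions in ${\bf R}/2\pi {\bf Z}$, related by the evident involution $(\theta, a) \mapsto (\theta + \pi, -a)$; uniqueness holds modulo this ${\bf Z}/2$ symmetry of the problem.

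The main obstacle is really the bookkeeping with the cube roots of unity when deriving the compatibility condition; once this is in hand, the proof reads off directly. One also needs to verify that the determined $a_j$ lie in ${\bf R} - \{0\}$, which follows generically from the explicit formulas for $a_0, a_1, a_2$; in the degenerate case, the freedom to choose $\theta$ within the continuous family allows all three weights to be kept nonzero.
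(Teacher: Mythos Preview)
Your argument is correct and follows essentially the same route as the paper's own proof: both write out the three force equations, use that their sum vanishes by hypothesis, solve two of them for the weights in terms of $\theta$, and extract a single compatibility condition of the form $\Im(e^{-i\theta}c)=0$ whose (non)triviality governs the uniqueness dichotomy. The paper works with the $j=1,2$ equations and lands on $\Im\bigl(e^{-i\theta}(\zeta^2{\bf f}_1-{\bf f}_2)/(1-\zeta^2)\bigr)=0$, which is the same condition as yours up to multiplication by a nonzero complex constant. Your explicit mention of the involution $(\theta,a)\mapsto(\theta+\pi,-a)$ and of the residual issue that the $a_j$ must be nonzero are points the paper passes over in silence.
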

\begin{proof}
We have to find $\theta$ and $a$ such that 
\[
\left\{ \begin{array}{rllll}
e^{i\theta} \left(  a_{[1,\zeta^2]} \, (\zeta^2-1) + a_{[\zeta, 1]} \, (\zeta -1) \right) & = & |1-\zeta| \, {\bf f}_0,\\[3mm]
e^{i\theta} \left(  a_{[\zeta, 1]} \, (1 -\zeta) + a_{[\zeta^2, \zeta]} \, (\zeta ^2-\zeta) \right) & = & |1-\zeta| \, {\bf f}_1,\\[3mm]
e^{i\theta} \left(  a_{[1, \zeta^2]} \, (1 -\zeta^2) + a_{[\zeta^2, \zeta]} \, (\zeta -\zeta^2) \right) & =&  |1-\zeta| \, {\bf f}_2.
\end{array}
\right.
\]
Using the second and third equations, we get
\[
\left\{ \begin{array}{rllll}
 a_{[\zeta, 1]}  -  a_{[\zeta^2, \zeta]} \, \zeta   & =  &\displaystyle   |1-\zeta| \, {\bf f}_1\, \frac{e^{-i\theta}}{1-\zeta} ,\\[3mm]
 \displaystyle a_{[1, \zeta^2]}  + a_{[\zeta^2, \zeta]} \, \frac{\zeta}{1+ \zeta} & = &\displaystyle  |1-\zeta| \, {\bf f}_2  \, \frac{e^{-i\theta}}{1-\zeta^2} .
\end{array}
\right.
\]
Taking the real part of each equation gives the formula for $a_{[\zeta, 1]}$ and $a_{[1, \zeta^2]}$ in terms of $a_{[\zeta^2, \zeta]}$ and $\theta$. Next, taking the imaginary part of both equations we get
\[
 a_{[\zeta^2, \zeta]} \, \Im \, \zeta   = - \displaystyle   |1-\zeta| \, \Im \left( {\bf f}_1\, \frac{e^{-i\theta}}{1-\zeta}\right) ,
\]
which gives $a_{[\zeta^2, \zeta]}$ as a function of $\theta$. But we also get 
\[
\Im \left( e^{-i\theta} \, \frac{\zeta^2 \, {\bf f}_1 - {\bf f}_2}{1-\zeta^2} \right) =0,
\]
which determines the value of $\theta$. Observe that this last equation is uniquely solvable if and only if ${\bf f}_2\neq \zeta^2 \, {\bf f}_1$.  \end{proof}

\begin{remark}
It is interesting to compare the result of Lemma~\ref{le:refd} and the result of Proposition~\ref{pr:rfd}. In the above Proposition, one can check that, if ${\bf f}_j = \zeta^2 \, {\bf f}_{j-1}$ for  $j=0,1,2$, then
\[
a_{[e^{i\theta} z_0, e^{i\theta}z_1]}  + a_{[e^{i\theta}z_1, e^{i\theta}z_2]} + a_{[e^{i\theta}z_2,e^{i\theta} z_0]} =0,
\]
and the non uniqueness of $\theta$ and $a$ in Proposition~\ref{pr:rfd} is in agreement with the result of Lemma~\ref{le:refd}.
\end{remark}
 
\subsection{Flexible balanced networks} Let us now focus on {\em balanced weighted networks} for which we also introduce the notion of flexibility. 
\begin{definition}
A balanced weighted network $(\mathscr N, a)$ is said to be flexible if the mapping $\Lambda$, defined in (\ref{eq:net-3}), has rank $2n + m -4$.
\label{de:net-3.2}
\end{definition}
Again, according to Lemma~\ref{le:net-2.1} and Lemma~\ref{le:net-2.3}, the linear map $\Lambda$ has kernel whose dimension is at least $4$ and image whose codimension is at least $3$. Therefore, asking that the weighted balanced network is flexible amounts to require that the rank of $\Lambda$ is as large as allowed by these Lemma.

For a balanced weighted network to be flexible, it is necessary that 
\[
m \leq 2n-2.
\]
Indeed, the dimension of the image of ${\rm D} {\bf L}_{\rm Id}$ is necessarily less than $2n-3$ since this mapping has at least a $3$-dimensional kernel (see Lemma~\ref{le:net-2.1}). Moreover, the dimension of the image of ${\rm D} {\bf F}_{({\rm Id}, a)}$ is at most $2n-3$ (see (iii) in Lemma~\ref{le:net-2.2}). Therefore, the rank of $\Lambda$ is at most $4n-6$. We conclude that, in order for the rank of $\Lambda$ to be equal to $2n+m-4$, it is necessary that $m \leq 2n-2$. 

\begin{center}
\includegraphics[width=7cm]{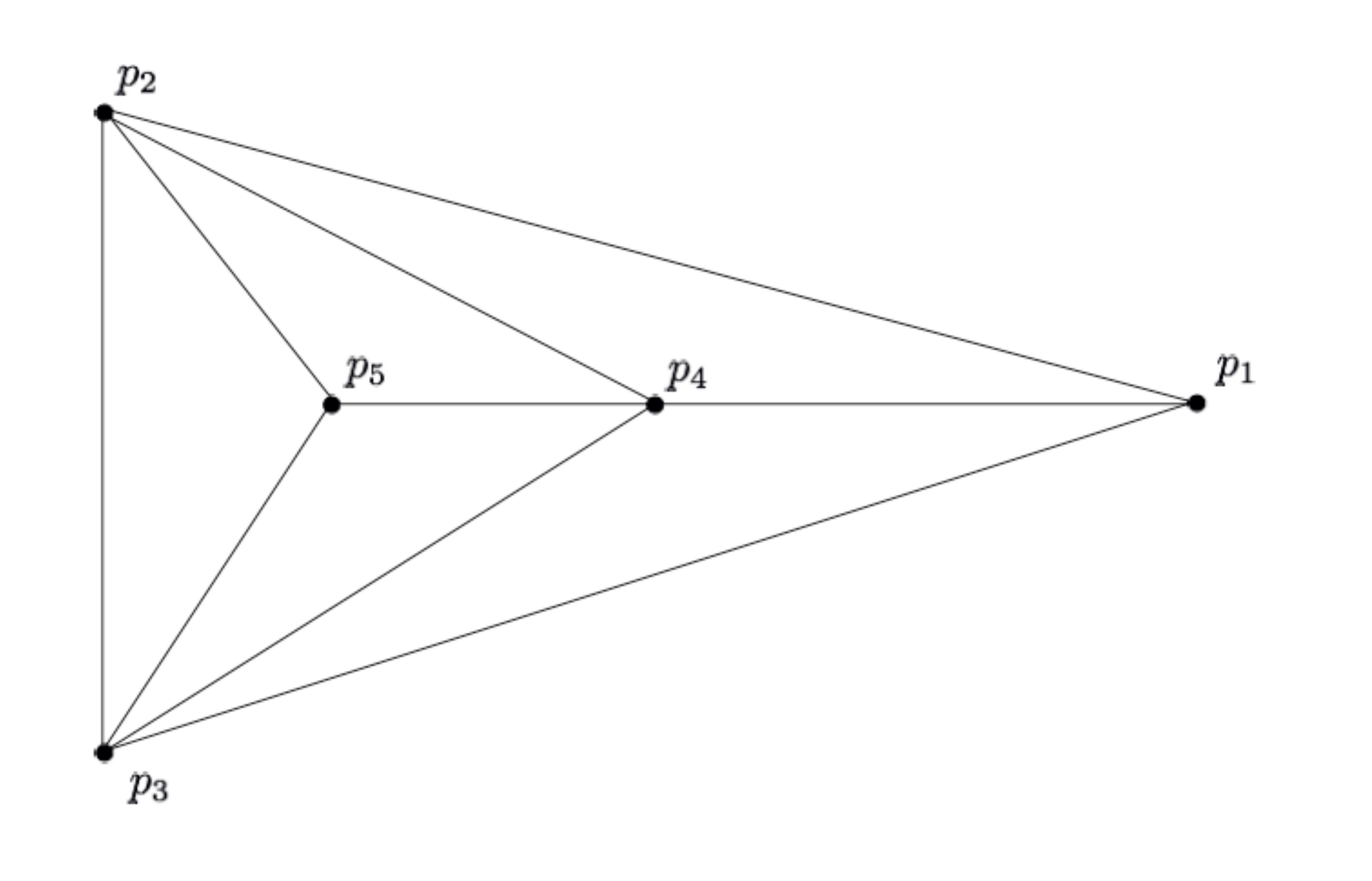}\\
Fig 6 : Example of a network which is not flexible. Here $n=5$ and $m=10$ and hence $m > 2n-2$.
\end{center}

Now, the key observation is that, if we have a balanced, weighted network $(\mathscr N, a)$ which is flexible in the sense of Definition~\ref{de:net-3.2}, then $\Lambda$ has a $4$ dimensional kernel and hence, the image of $\Lambda$ has codimension $4$. But, according to the result of Lemma~\ref{le:net-2.3}, we know that the image of $\Lambda$ is orthogonal to the three vectors which appear in (iii) of Lemma~\ref{le:net-2.3}. So, if the balanced network is flexible, then the image of $\Lambda$ will have codimension $1$ in the orthogonal complement of the space spanned by the vectors $({\bf e})_{p \in \mathscr V}$, for ${\bf e} \in {\bf C}$, and the vector $(i \, p)_{p \in \mathscr V}$.  

In the applications, one of the important cases are the ones where we have a balanced network for which $m= 2n-2$. In this case, we show the~:
\begin{proposition}
Assume that $m=2n-2$. Then, the balanced network $(\mathscr N, a)$ is flexible if and only if ${\rm D}_a {\bf F}_{({\rm Id}, a)}$ (or equivalently ${\rm D} {\bf L}_{\rm Id}$) has rank $m-1$. 
\label{pr:net-3.1}
\end{proposition}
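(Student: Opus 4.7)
The plan is to compute $\dim\ker\Lambda$ under the constraint $m=2n-2$ and compare it with $4$, since flexibility of a balanced network amounts to $\mathrm{rank}\,\Lambda = 2n+m-4$, i.e.\ $\dim\ker\Lambda = 4$. Setting $L := {\rm D}{\bf L}_{{\rm Id}}$, $A := {\rm D}_\Phi {\bf F}_{({\rm Id},a)}$ and $B := {\rm D}_a {\bf F}_{({\rm Id},a)}$, Proposition~\ref{pr:net-2.1} says that $B = L^*$, so $L$ and $B$ have the same rank; this immediately yields the equivalence of the two rank conditions in the statement and allows me to work only with $L$ and its adjoint $B$.

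A pair $(\dot\Phi,\dot a)$ lies in $\ker \Lambda$ iff $L\dot\Phi = 0$ and $A\dot\Phi = -B\dot a$. Since $B = L^*$, one has $\mathrm{image}\,B = (\ker L)^\perp$, so the second equation is solvable for $\dot a$ iff $A\dot\Phi \in (\ker L)^\perp$, in which case the admissible $\dot a$ form an affine translate of $\ker B$. Projecting $\ker \Lambda$ onto the $\dot\Phi$-factor thus gives
\[
\dim \ker \Lambda \,=\, \dim K' + \dim \ker B, \qquad K' := \bigl\{\dot\Phi \in \ker L \,:\, A\dot\Phi \in \mathrm{image}\,B\bigr\}.
\]

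By Lemma~\ref{le:net-2.1}, $\ker L$ contains the $3$-dimensional (real) span $S$ of the vectors $({\bf e})_{p \in \mathscr V}$ for ${\bf e} \in {\bf C}$, together with $(i\,p)_{p \in \mathscr V}$; by Lemma~\ref{le:net-2.3}(i), $A$ annihilates $S$, so $S \subseteq K'$ and $\dim K' \geq 3$. By Lemma~\ref{le:net-2.3}(ii), $(a_{[p,q]})_{[p,q]\in \mathscr E}$ lies in $\ker B$, so $\dim \ker B \geq 1$; these reproduce the a priori bound $\dim \ker \Lambda \geq 4$. I then distinguish two cases. If $\mathrm{rank}\,L = m-1$, then $\dim \ker L = 2n - (m-1) = 3$ (using $m = 2n-2$), which forces $\ker L = S$ and hence $K' = S$ of dimension exactly $3$; together with $\dim \ker B = m - (m-1) = 1$, this gives $\dim \ker \Lambda = 4$, and the network is flexible. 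If instead $\mathrm{rank}\,L \leq m-2$, then $\dim \ker B = m - \mathrm{rank}\,L \geq 2$ while still $\dim K' \geq 3$, so $\dim \ker \Lambda \geq 5$ and flexibility fails.

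I do not expect any significant obstacle. The structural input that drives the argument is the adjoint identity $B = L^*$ supplied by Proposition~\ref{pr:net-2.1}: it collapses the solvability condition $A\dot\Phi \in \mathrm{image}\,B$ to orthogonality against $\ker L$ and, more importantly, ties $\dim \ker B$ directly to $\mathrm{rank}\,L$. Everything else is elementary dimension counting using items (i) and (ii) of Lemma~\ref{le:net-2.3} together with the identity $m = 2n-2$.
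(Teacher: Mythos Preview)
Your proof is correct and is precisely the ``simple linear algebra together with Proposition~\ref{pr:net-2.1}'' that the paper invokes without details: you use the adjoint relation $B=L^*$ to tie $\dim\ker B$ to $\mathrm{rank}\,L$, fiber $\ker\Lambda$ over the $\dot\Phi$-component, and count dimensions using Lemmas~\ref{le:net-2.1} and~\ref{le:net-2.3}. There is nothing to add.
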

\begin{proof} Simple linear algebra together with the result of Proposition~\ref{pr:net-2.1}.
\end{proof}

Let us insist on the fact that, thanks to Proposition~\ref{pr:net-2.1}, the linear maps ${\rm D}_a {\bf F}_{({\rm Id}, a)}$ and ${\rm D} {\bf L}_{\rm Id}$ have the same rank and hence, in the case where $m=2n-2$, it is enough to check that one of them has the desired rank to check flexibility of the network.

We now describe some interesting flexible balanced networks. Further examples will be given in section 10. 

\medskip

\noindent {\bf Example 3.4 :} 
\label{ex:3.4}
Given $k\geq 3$, we consider the network $\mathring{\mathscr N}_{Pol}$ defined by a regular polygon with $k$ sides, whose vertices are linked to the origin. Hence, the set of vertices of this network is given by
\[
\mathring{\mathscr V}_{Pol} :=\{0\} \cup \{\xi^j \in {\bf C}  \, : \, j=1, \ldots, k\},
\]
where $\xi : = e^{2i\pi/k}$. The set of edges of this network is defined to be 
\[
\mathring{\mathscr E}_{Pol} : = \{ [0, \xi^j] \, : \, j=1, \ldots, k\} \cup \{[\xi^{j+1}, \xi^j] \, : \, j=1, \ldots, k\}
\]
In this example, the number of vertices is $n=k+1$ and the number of edges is $m= 2k$. Hence we have 
\[
m = 2n -2.
\]
If we define $a : \mathring{\mathscr E}_{Pol} \to {\bf R} - \{0\}$ by 
\[
a_{[\xi^{j}, \xi^{j+1}]} =  1, \qquad \mbox{and} \qquad  a_{[0,\xi^j]} = - 2 \, \sin (\pi/k).
\]
we obtain a balanced network $(\mathring{\mathscr N}_{Pol}, a)$. 

\begin{center}
\includegraphics[width=6.5cm]{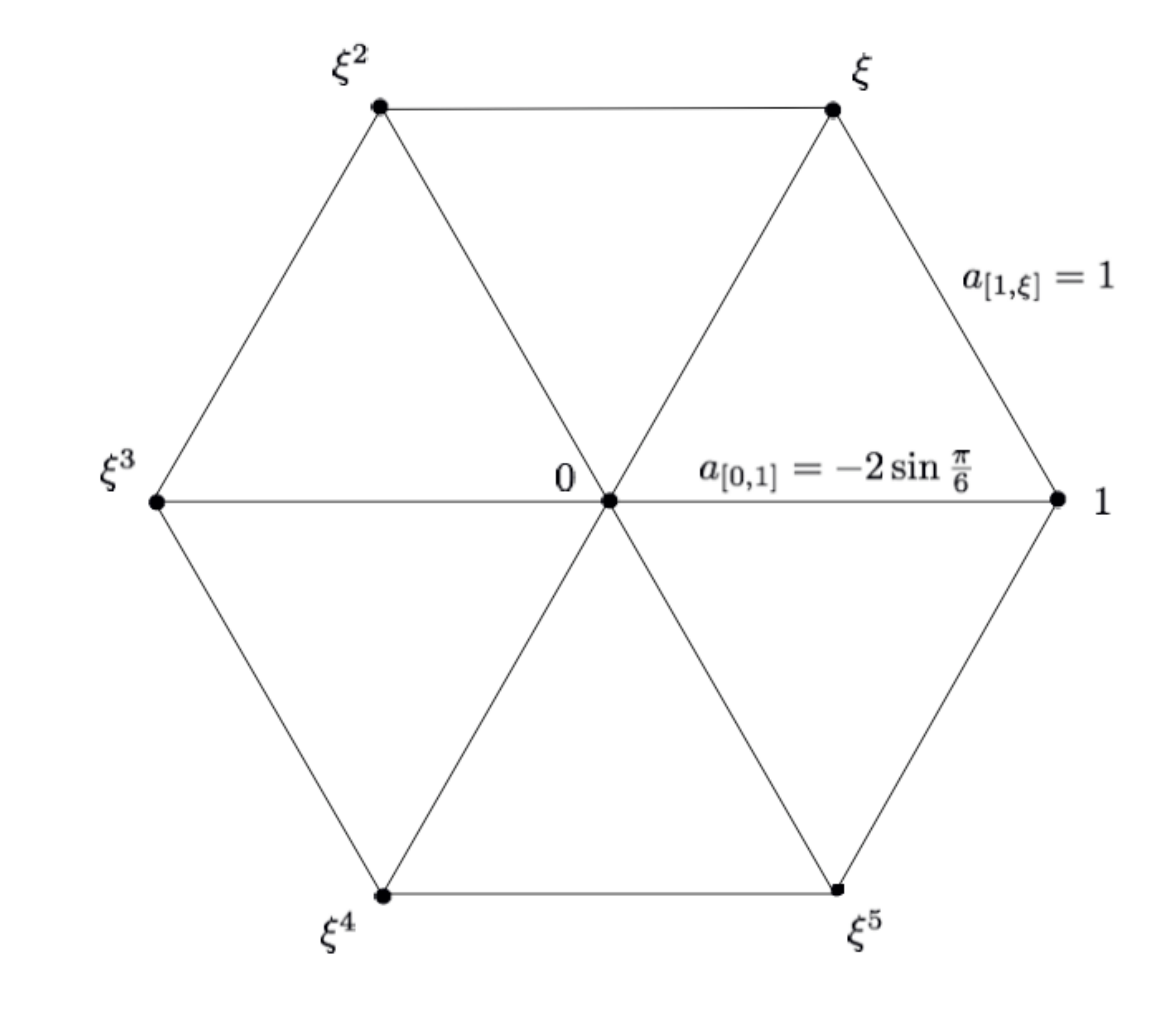}\\
Fig 7 : Example of a network $\mathring{\mathscr N}_{Pol}$. Because of dihedral symmetry the weights along all edges can be determined from the weights along $[0, 1]$ and $[1, \xi]$.
\end{center}

We claim that~:
\begin{lemma}
The map ${\rm D}_{a} {\bf F}_{({\rm Id}, a)}$ has rank $2k-1$. 
\label{le:net-5.1}
\end{lemma}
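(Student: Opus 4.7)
The plan is to show directly that the kernel of ${\rm D}_a {\bf F}_{({\rm Id}, a)}$ is one-dimensional; combined with the rank-nullity theorem and the fact that $m = 2k$, this gives that the rank equals $2k-1$. By Lemma~\ref{le:net-2.3}(ii), the vector $(a_{[p,q]})_{[p,q] \in \mathscr E}$ already sits in the kernel, so at least one dimension is accounted for; the real content is the upper bound.

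Parametrize a tangent vector $\dot a$ by the $2k$ real numbers $b_j := \dot a_{[0,\xi^j]}$ and $c_j := \dot a_{[\xi^j,\xi^{j+1}]}$, $j=1,\ldots,k$, with the cyclic convention $c_0 = c_k$. Using $|\xi^{j+1}-\xi^j| = |\xi-1| = 2\sin(\pi/k)$ and setting
\[
\mu := \frac{\xi-1}{|\xi-1|} = -\sin(\pi/k) + i\cos(\pi/k),
\]
a short computation gives that the component of ${\rm D}_a {\bf F}_{({\rm Id}, a)}(\dot a)$ at $\xi^j$ equals $\xi^j \bigl(-b_j + c_j\mu + c_{j-1}\bar\mu\bigr)$, while the component at the origin equals $\sum_{j=1}^k b_j \xi^j$. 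So the kernel conditions read
\[
b_j = c_j\mu + c_{j-1}\bar\mu \quad (j=1,\ldots,k), \qquad \sum_{j=1}^k b_j\xi^j = 0.
\]

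Now I would exploit the reality constraint $b_j \in {\bf R}$: taking imaginary parts of the first equation yields $(c_j - c_{j-1})\Im\mu = 0$. Since $k \geq 3$ one has $\Im\mu = \cos(\pi/k) \neq 0$, hence $c_j$ is independent of $j$; call it $c$. Then $b_j = 2c\,\Re\mu = -2c\sin(\pi/k)$ is also independent of $j$, and the origin equation $\sum_j b_j\xi^j = -2c\sin(\pi/k)\sum_j \xi^j = 0$ is automatically satisfied. Comparing with the prescribed weights $a_{[\xi^j,\xi^{j+1}]}=1$ and $a_{[0,\xi^j]}=-2\sin(\pi/k)$, the resulting $\dot a$ is exactly $c$ times the original weight vector, so the kernel is one-dimensional and the rank is $2k-1$.

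I do not anticipate a serious obstacle beyond getting the phases right in the identification $\mu = -\sin(\pi/k) + i\cos(\pi/k)$ and the companion $(\xi^{-1}-1)/|\xi-1| = \bar\mu$ (which uses $|\xi|=1$, hence $\overline{\xi-1} = \xi^{-1}-1$). The key structural point is that, thanks to the dihedral symmetry of $\mathring{\mathscr N}_{Pol}$, the whole kernel computation decouples into a single two-step recursion $c_j = c_{j-1}$, so the dimension count collapses immediately once the reality of $b_j$ is invoked.
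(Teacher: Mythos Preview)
Your proof is correct and slightly more complete than the paper's. The paper argues differently: it restricts ${\rm D}_a{\bf F}_{({\rm Id},a)}$ to the hyperplane $\dot a_{[1,\xi]}=0$ and then propagates vertex by vertex around the polygon, showing recursively that the component at $\xi^j$ forces $\dot a_{[\xi^j,0]}=\dot a_{[\xi^j,\xi^{j+1}]}=0$; this yields rank $\geq 2k-1$, and the upper bound is left implicit via Lemma~\ref{le:net-2.3}(ii). Your route instead writes the kernel equations globally and uses the single observation $\Im\mu=\cos(\pi/k)\neq 0$ to collapse the recursion $c_j=c_{j-1}$ in one stroke, thereby identifying the kernel explicitly as ${\bf R}\cdot a$. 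What your approach buys is a cleaner use of the symmetry and an explicit description of the kernel; what the paper's approach buys is a template that transfers verbatim to less symmetric networks (as in the later proofs of Lemma~\ref{le:clso} and its analogues), where no uniform phase $\mu$ is available and one really must chase edges one at a time.
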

\begin{proof}
We have
\[
{\rm D}_a {\bf F}_{({\rm Id}, a)} (\dot a) = \left(    \sum_{q \in \mathring{\mathscr V}_p}  \dot a_{[p,q]} \, \frac{q-p}{|q-p|}\right)_{p \in \mathring{\mathscr V}}.
\]

Assume that ${\rm D}_a {\bf F}_{({\rm Id}, a)} (\dot a) = 0$ and also that $\dot a_{[1, \xi ]}=0$. Then, looking at the component of ${\rm D}_a {\bf F}_{({\rm Id}, a)} (\dot a)$ at the vertex $\xi$, we find that 
\[
\dot  a_{[\xi ,0]} \, \xi + \dot  a_{[\xi , \xi^2]} \, \frac{\xi - \xi^2}{|1 - \xi |} = 0,
\]
since the vectors $\xi$ and $\xi - \xi^2$ are not $\bf R$-collinear, we conclude that $\dot  a_{[\xi, 0]} = \dot a_{[ \xi, \xi^2]}=0$. 

To proceed, one looks at the component of ${\rm D}_a {\bf F}_{({\rm Id}, a)} (\dot a)$ at the point $\xi^j$, which gives (after simplification by $\xi^{j-1}$)
\[
\dot a_{[\xi^j,0]} \, \xi + \dot  a_{[\xi^j , \xi^{j-1}]}  \, \frac{\xi - 1}{|1 - \xi|} + \dot  a_{[\xi^j , \xi^{j+1}]} \, \frac{\xi - \xi^2}{|1 - \xi |} = 0.
\]
One then proves by induction that  $\dot  a_{[\xi^j,0]} = \dot a_{[\xi^j,\xi^{j+1}]}=0$ for all $j=1, \ldots, k$, following the arguments given in the case where $j=1$.  

We conclude that ${\rm D}_{a} {\bf F}_{({\rm Id}, a)}$, restricted to the hyperplane $\dot a_{[1, \xi ]}=0$ is injective, and hence we have proven that this map has rank at least $2k-1$. \end{proof}

As a consequence, we have the~:
\begin{corollary}
The balanced network $(\mathring{\mathscr N}_{Pol}, a)$ is flexible in the sense of Definition~\ref{de:net-3.2}.
\label{co:net-5.1}
\end{corollary}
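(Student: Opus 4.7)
The plan is to observe that this corollary is a direct combination of the two results that immediately precede it. First I would count dimensions: the network $\mathring{\mathscr N}_{Pol}$ has $n = k+1$ vertices and $m = 2k$ edges, so we are exactly in the borderline regime
\[
m = 2n - 2
\]
to which Proposition~\ref{pr:net-3.1} applies.

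Next I would invoke Proposition~\ref{pr:net-3.1}, which asserts that in this regime the balanced network $(\mathring{\mathscr N}_{Pol}, a)$ is flexible in the sense of Definition~\ref{de:net-3.2} if and only if ${\rm D}_a {\bf F}_{({\rm Id}, a)}$ has rank exactly $m - 1$. With $m = 2k$ this amounts to checking that the rank equals $2k-1$, which is precisely the content of Lemma~\ref{le:net-5.1}. There is no further obstacle: the preceding lemma was the real work, and the corollary simply records the application of Proposition~\ref{pr:net-3.1} to package the conclusion.

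One small sanity check I would include is to note that the rank of ${\rm D}_a {\bf F}_{({\rm Id}, a)}$ cannot exceed $m - 1 = 2k - 1$ in the balanced case, since by Proposition~\ref{pr:net-2.1} this map is adjoint to ${\rm D}{\bf L}_{\rm Id}$, and the latter is known (via Lemma~\ref{le:net-2.1}, combined with the fact that on balanced networks the extra symplectic invariance in Lemma~\ref{le:net-2.3} forces an additional codimension) to have rank at most $m-1$. Thus Lemma~\ref{le:net-5.1}'s lower bound actually saturates, and the flexibility criterion of Proposition~\ref{pr:net-3.1} is met, completing the proof.
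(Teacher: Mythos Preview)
Your proposal is correct and follows essentially the same route as the paper: both arguments note that $m=2n-2$, invoke Proposition~\ref{pr:net-3.1} to reduce flexibility to the rank condition on ${\rm D}_a{\bf F}_{({\rm Id},a)}$, and then cite Lemma~\ref{le:net-5.1}. Your extra sanity-check paragraph is not needed (and the appeal to Lemma~\ref{le:net-2.3} there is slightly misplaced, since Lemma~\ref{le:net-2.1} alone already gives the three-dimensional kernel of ${\rm D}{\bf L}_{\rm Id}$ and hence the upper bound $2n-3=m-1$), but it does no harm.
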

\begin{proof}
According to Proposition~\ref{pr:net-3.1} and Proposition~\ref{pr:net-2.1}, it is enough to prove that ${\rm D}_{a} {\bf F}_{({\rm Id}, a)}$ has rank $2k-1$ and this is just what we have proven in the previous Lemma. \end{proof}

\subsection{Construction of non symmetric balanced networks} All the examples of balanced networks we have seen so far are invariant under the action of a non trivial group of isometries in the plane. More generally, constructing balanced networks can be quite a difficult task since the equation 
\[
{\bf F}_{(\mathscr N, a)} =0,
\]
is highly nonlinear,  specially when one is looking for balanced networks which have no symmetry. Hopefully,  the implicit function theorem comes to the rescue and allows one to deform a given network keeping it balanced. More precisely, we have the~:
\begin{proposition}
Assume that $(\mathscr N, a)$ is a balanced network and further assume that $m = 2n-2$ and that ${\rm D}_a {\bf F}_{({\rm Id}, a)}$ has rank $2n-3$. Then, for all $\Phi :\mathscr V \to {\bf C}$ close enough to ${\rm Id}$, there exists $a_\Phi : \mathscr E \to {\bf R} - \{0\}$ such that the network $(\mathscr N_\Phi, a_\Phi)$ is balanced.
\label{pr:net-6.1}
\end{proposition}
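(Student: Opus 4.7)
The plan is to prove this by the implicit function theorem, solving for $a_\Phi$ as a smooth function of $\Phi$ in a neighborhood of the base point $(\Phi,a_\Phi) = ({\rm Id}, a)$. Concretely, I would consider the smooth map
\[
G : \mathscr W \times \mathscr A \longrightarrow {\bf C}^n, \qquad (\Phi, a') \longmapsto {\bf F}_{(\mathscr N_\Phi,\, a')},
\]
where $\mathscr W$ is a neighborhood of ${\rm Id}$ in the space of maps $\mathscr V\to{\bf C}$ and $\mathscr A$ is a neighborhood of $a$ in $({\bf R}-\{0\})^m$. We have $G({\rm Id}, a)=0$ by assumption, and we wish to produce a smooth function $\Phi\mapsto a_\Phi$ with $G(\Phi, a_\Phi)=0$.

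The first step would be to identify the correct target space. By Lemma~\ref{le:net-1.1}, for any $(\Phi, a')$ the vector $G(\Phi, a')\in {\bf C}^n$ satisfies $\sum_p G(\Phi,a')(p)=0$ and $\sum_p G(\Phi,a')(p)\wedge \Phi_p=0$. Thus $G$ lands in a (real) $(2n-3)$-dimensional affine subspace $\mathscr K_\Phi\subset{\bf C}^n$ whose direction depends continuously on $\Phi$. To get an honest equation in ${\bf R}^{2n-3}$, I would fix once and for all a real-linear projection $\pi:{\bf C}^n\to{\bf R}^{2n-3}$ whose kernel is a fixed $3$-dimensional subspace transverse to $\mathscr K_{{\rm Id}}$; by continuity, $\ker\pi$ remains transverse to $\mathscr K_\Phi$ for $\Phi$ near ${\rm Id}$, so for such $\Phi$ the condition $G(\Phi,a')=0$ is equivalent to $H(\Phi,a'):=\pi\circ G(\Phi,a')=0$.

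The main step is then the surjectivity of $\partial_{a'}H({\rm Id},a)$. By construction $\partial_{a'}H({\rm Id},a) = \pi\circ {\rm D}_a{\bf F}_{({\rm Id},a)}$; the hypothesis says that ${\rm D}_a{\bf F}_{({\rm Id},a)}$ has rank $2n-3$, and since its image already lies in $\mathscr K_{{\rm Id}}$ on which $\pi$ is an isomorphism, the composition is surjective onto ${\bf R}^{2n-3}$. Since $m=2n-2$, the kernel of $\partial_{a'}H({\rm Id},a)$ is $1$-dimensional. This is the one real subtlety: to apply the usual implicit function theorem we need an isomorphism, not merely a surjection. By Lemma~\ref{le:net-2.3}(ii), this one-dimensional kernel is spanned by the vector $(a_{[p,q]})_{[p,q]\in\mathscr E}$, i.e.\ it corresponds exactly to the scaling freedom $a\mapsto \lambda a$. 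I would kill this freedom by restricting $a'$ to the affine hyperplane $\mathscr H:=\{a'\in{\bf R}^m : a'_{[p_0,q_0]}=a_{[p_0,q_0]}\}$ for some fixed edge $[p_0,q_0]\in\mathscr E$ (the vector $a$ does not lie in the direction of this hyperplane, so the restriction of the derivative to $\mathscr H$ is now a bijection onto ${\bf R}^{2n-3}$).

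Applying the standard implicit function theorem to the restricted map $H\big|_{\mathscr W\times(\mathscr H\cap\mathscr A)}$ produces the desired smooth branch $\Phi\mapsto a_\Phi$ with $a_{{\rm Id}}=a$ and ${\bf F}_{(\mathscr N_\Phi,a_\Phi)}=0$. Finally, since the condition $a_\Phi\in({\bf R}-\{0\})^m$ is open and holds at $\Phi={\rm Id}$, it persists for $\Phi$ sufficiently close to ${\rm Id}$, and the proof is complete. The only genuinely nontrivial point, beyond the standard IFT machinery, is the bookkeeping about the target dimension $2n-3$ and the scaling kernel, and this is precisely why the statement requires the sharp hypothesis $m=2n-2$ together with the maximal rank $2n-3$ for ${\rm D}_a{\bf F}_{({\rm Id},a)}$.
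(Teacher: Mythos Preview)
Your proof is correct. The key ingredients --- that the image of ${\rm D}_a{\bf F}_{({\rm Id},a)}$ lies in the $(2n-3)$-dimensional subspace $\mathscr K_{\rm Id}$ cut out by Lemma~\ref{le:net-1.1}, that the rank hypothesis forces this image to be all of $\mathscr K_{\rm Id}$, and that the one-dimensional kernel is exactly the scaling direction from Lemma~\ref{le:net-2.3}(ii) --- are all identified and used correctly. (One cosmetic remark: $\mathscr K_\Phi$ is a linear subspace, not merely affine, since the constraints from Lemma~\ref{le:net-1.1} are linear in ${\bf F}$; this does not affect the argument.)

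The paper's proof takes a dual route. Rather than projecting the target down to $\mathscr K_\Phi\cong{\bf R}^{2n-3}$, it \emph{enlarges the domain} by introducing auxiliary parameters ${\bf e}\in{\bf C}$ and $t\in{\bf R}$ and considering
\[
{\bf G}(\Phi,a',{\bf e},t):={\bf F}_{(\Phi,a')}+({\bf e}+i\,t\,\Phi_p)_{p\in\mathscr V}.
\]
The added directions span a complement of $\mathscr K_{\rm Id}$, so the partial differential in $(a',{\bf e},t)$ is surjective onto ${\bf C}^n$; the implicit function theorem then yields $a_\Phi,{\bf e}_\Phi,t_\Phi$ solving ${\bf G}=0$. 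A second step, pairing ${\bf G}=0$ against $({\bf e}_\Phi+i\,t_\Phi\,\Phi_p)_p$ and invoking (\ref{eq:net-1})--(\ref{eq:net-2}), shows ${\bf e}_\Phi=0$ and $t_\Phi=0$, hence ${\bf F}_{(\Phi,a_\Phi)}=0$. Your approach uses Lemma~\ref{le:net-1.1} \emph{a priori} to shrink the target; the paper uses it \emph{a posteriori} to kill the Lagrange-multiplier terms. Your version is slightly more direct and also handles the scaling kernel explicitly (the paper relies on the surjective form of the implicit function theorem without isolating it). The paper's augmentation pattern, on the other hand, recurs later in the construction (e.g.\ the Pohozaev argument in Section~9), so it is introduced here partly as a template.
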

\begin{proof}
By assumption, ${\rm D}_a {\bf F}_{({\rm Id}, a)}$ has rank $2n-3$ and, according to Lemma~\ref{le:net-2.3}, the image of ${\rm D} {\bf F}_{({\rm Id}, a)}$ is orthogonal to $({\bf e} +  i \, t\,  p )_{p\in \mathscr V}$ for all ${\bf e} \in {\bf C}$ and $t \in {\bf R}$. 

We define the mapping
\[
{\bf G} (\Phi, a, {\bf e}, t): =  {\bf F}_{(\Phi, a)} + ({\bf e} + i \, t \, \Phi(p))_{p\in \mathscr V} ,
\]
where ${\bf e} \in {\bf C}$ and $t \in {\bf R}$. By assumption the differential of this mapping with respect to $a$, ${\bf e}$ and $t$, computed at $({\rm Id}, a, 0, 0)$, is onto and hence, the implicit function theorem implies that, for all $\Phi$ close to ${\rm Id}$, there exists $a_\Phi$, ${\bf e}_\Phi$ and $t_\Phi$ such that 
$$
{\bf G} (\Phi, a_{\Phi} , {\bf e}_\Phi , t_\Phi ) =0.
$$ 
In other words
\[
{\bf F}_{(\Phi, a_\Phi)} + ({\bf e}_\Phi + i \, t_\Phi  \, \Phi(p))_{p\in \mathscr V} =0.
\]
In particular, 
\[
\langle {\bf F}_{(\Phi, a_\Phi)} + ({\bf e}_\Phi + i \, t_\Phi \, \Phi(p))_{p\in \mathscr V} , ({\bf e}_\Phi + i \, t_\Phi \, \Phi(p) )_{p\in \mathscr V} \rangle_{{\bf C}^n} =0.
\]
But, using (\ref{eq:net-1}) and (\ref{eq:net-2}), one gets
\[
\langle {\bf F}_{(\Phi, a_\Phi)} , ({\bf e}_\Phi + i \, t_\Phi \, \Phi(p))_{p\in \mathscr V} \rangle_{{\bf C}^n} =0,
\]
hence ${\bf e}_\Phi + i \, t_\Phi  \, \Phi(p)=0$ for all $p \in \mathscr V$. This implies that ${\bf e}_\Phi =0$ and $t_\Phi =0$ and hence ${\bf F}_{(\Phi, a_\Phi)} =0$. This completes the proof of the result.
\end{proof}

This last result, combined with the result of Lemma~\ref{le:net-5.1}, implies that~:
\begin{corollary}
Any network which is close to the network  $(\mathring{\mathscr N}_{Pol}, a)$ defined in Example~\ref{ex:3.4}, can be balanced and gives rise to a flexible balanced network.
\label{co:net-6.1}
\end{corollary}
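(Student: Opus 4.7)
The plan is to invoke Proposition~\ref{pr:net-6.1} on the balanced weighted network $(\mathring{\mathscr N}_{Pol}, a)$ of Example~\ref{ex:3.4}, which will produce, for each small perturbation $\Phi$ of its vertex set, a new weight function $a_\Phi$ making $(\mathring{\mathscr N}_{Pol, \Phi}, a_\Phi)$ balanced; then to deduce flexibility of these perturbed balanced networks from an openness argument anchored on Proposition~\ref{pr:net-3.1}.

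First I would check that the hypotheses of Proposition~\ref{pr:net-6.1} are met by $(\mathring{\mathscr N}_{Pol}, a)$. Since $n = k+1$ and $m = 2k$, the equality $m = 2n-2$ is immediate. Moreover, Lemma~\ref{le:net-5.1} asserts that ${\rm D}_a {\bf F}_{({\rm Id}, a)}$ has rank $2k - 1 = 2n - 3$. Proposition~\ref{pr:net-6.1} then provides, for every $\Phi : \mathring{\mathscr V}_{Pol} \to {\bf C}$ close enough to the identity, a weight function $a_\Phi : \mathring{\mathscr E}_{Pol} \to {\bf R} - \{0\}$, depending continuously on $\Phi$, such that ${\bf F}_{(\Phi, a_\Phi)} = 0$. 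This settles the first half of the claim, namely that every nearby network can be balanced.

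For flexibility, I would notice that the identity $m = 2n-2$ is preserved by perturbation, so Proposition~\ref{pr:net-3.1} reduces the flexibility of $(\mathring{\mathscr N}_{Pol, \Phi}, a_\Phi)$ to the rank condition ${\rm rank}\, {\rm D}_a {\bf F}_{(\Phi, a_\Phi)} = m-1 = 2n-3$. The assignment $(\Phi, b) \mapsto {\rm D}_a {\bf F}_{(\Phi, b)}$ is continuous and $(\Phi, a_\Phi)$ tends to $({\rm Id}, a)$ as $\Phi$ tends to the identity, so by lower semicontinuity of the rank one has ${\rm rank}\, {\rm D}_a {\bf F}_{(\Phi, a_\Phi)} \geq 2n-3$ for $\Phi$ close enough to the identity. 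Since the perturbed network is already balanced, part (iii) of Lemma~\ref{le:net-2.3} constrains the image of ${\rm D} {\bf F}_{({\rm Id}, a_\Phi)}$, and a fortiori of ${\rm D}_a {\bf F}_{(\Phi, a_\Phi)}$, to lie in the orthogonal complement of a three-dimensional subspace of ${\bf C}^n$, giving the matching upper bound $2n-3$. Equality therefore holds, and Proposition~\ref{pr:net-3.1} delivers flexibility.

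The only delicate point I anticipate is the openness argument for the rank: it leans on the continuity of $\Phi \mapsto a_\Phi$ granted by the implicit function theorem inside the proof of Proposition~\ref{pr:net-6.1}, combined with the standard lower semicontinuity of rank. Together with the explicit computation of Lemma~\ref{le:net-5.1} these ingredients are enough, and I do not foresee any genuine obstacle beyond this bookkeeping.
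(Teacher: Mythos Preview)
Your proposal is correct and follows the same approach as the paper, which simply invokes Proposition~\ref{pr:net-6.1} together with Lemma~\ref{le:net-5.1}. You have merely filled in the openness argument for flexibility (via lower semicontinuity of the rank and the upper bound from Lemma~\ref{le:net-2.3}) that the paper leaves implicit; one small notational slip is that your reference to ${\rm D}{\bf F}_{({\rm Id}, a_\Phi)}$ should really be ${\rm D}{\bf F}_{(\Phi, a_\Phi)}$, but the intended meaning and the argument are clear.
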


In particular, there exists balanced networks which are flexible and which have no symmetry. In fact, more is true and, in the spirit of the result of Proposition~\ref{pr:Zd}, we have the~:
\begin{proposition}
The set of flexible balanced networks in a given homotopy class is Zariski open. 
\end{proposition}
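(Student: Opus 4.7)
The plan is to adapt verbatim the strategy of Proposition~\ref{pr:Zd}, with only the change that ``flexible'' for balanced networks means rank $2n+m-4$ instead of rank $2n+m-3$. The essence of both proofs is the same: flexibility is a maximal-rank condition on $\Lambda$, and such conditions are Zariski-open by general principles.

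Concretely, I would first observe that each entry of the matrix of $\Lambda$ (with respect to the natural bases of ${\bf C}^n\times{\bf R}^m$ and ${\bf C}^n\times{\bf R}^m$) is a rational function of the coordinates $(\Phi,a)$ once one clears the denominators $|p-q|$ coming from the normalisations $(q-p)/|q-p|$ in the definitions of ${\bf F}_{(\Phi,a)}$ and ${\bf L}_{\Phi}$; equivalently, upon multiplying the $[p,q]$-row by the positive quantity $|p-q|$ (or an appropriate power thereof), every entry of $\Lambda$ becomes polynomial in the real and imaginary parts of $\Phi$ and in the weights $a_{[p,q]}$. This rescaling is harmless because $|p-q|$ is bounded away from zero on any embedded network and in particular in a neighbourhood of the given homotopy class.

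Next, requiring that the rank of $\Lambda$ equals $2n+m-4$ is equivalent to requiring that at least one $(2n+m-4)\times(2n+m-4)$ minor of this polynomial matrix is non-zero. Equivalently, the \emph{non}-flexible balanced networks in $\mathcal H$ are exactly those at which all such minors simultaneously vanish. As in the proof of Proposition~\ref{pr:Zd}, summing the squares of all these minors produces a single algebraic function $P(\Phi,a)$ of the coordinates and of the weights whose zero set, intersected with the homotopy class $\mathcal H$ and with the algebraic subvariety $\{{\bf F}_{(\Phi,a)}=0\}$ cut out by the balancing condition, is precisely the set of non-flexible balanced networks in $\mathcal H$. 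This zero set is Zariski-closed, so its complement — the set of flexible balanced networks in $\mathcal H$ — is Zariski-open.

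The only delicate point, and the one I would spend most care on, is bookkeeping: one must make sure that the bound $2n+m-4$ is the correct maximal rank in the balanced setting, and this is exactly the content of Lemmas~\ref{le:net-2.1} and \ref{le:net-2.3}, which give the obligatory $4$-dimensional kernel and $3$-dimensional cokernel for balanced weighted networks. No genuinely new input is required beyond these lemmas and the algebraic-geometry fact that ``rank at least $r$'' is an open condition on a matrix whose entries depend polynomially on parameters.
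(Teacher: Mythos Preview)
Your proposal is correct and follows exactly the approach the paper intends: the paper does not actually supply a proof for this proposition, implicitly deferring to the argument of Proposition~\ref{pr:Zd} with the rank adjusted to $2n+m-4$, and that is precisely what you do. One small terminological slip: after rescaling rows or columns by powers of $|p-q|$, the entries of $\Lambda$ do not literally become \emph{polynomial} in the real coordinates, since $|p-q|$ itself is a square root; they are algebraic (semi-algebraic) functions, which is also the wording the paper uses in the proof of Proposition~\ref{pr:Zd}. This does not affect the conclusion, as the non-vanishing locus of a finite collection of such functions is still Zariski open in the sense used throughout the paper.
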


We will see in section 10, some explicit networks which are balanced and which have no symmetry. 

\section{Applications}

We now explain how the previous framework can be used to construct approximate solutions to (\ref{eq:nls}). 

\subsection{The interaction function}

We define the {\em interaction function} $\Upsilon$ by
\begin{equation}\label{martin}
\Upsilon (s) := - \iint_{{\bf C}} u_0 (z -s {\bf e}) \, \mbox{\rm div} \, \left( u_0^3 (z) \, {\bf e} \right) \, dx\, dy,
\end{equation}
where ${\bf e} \in {\bf C}$ is {\em any} unit vector (and $z=x+iy$). Since the function $u_0$ is radial, it is easy to check that $\Upsilon (s)$ does not depend on the choice of ${\bf e}$. If the exact formula for $\Upsilon$ is not known, its asymptotic behavior as $s$ tends to infinity is well understood and, for example, we know that there exists a constant $C \in {\bf R}$ such that
\begin{equation}
- \ln \Upsilon (t) = t + \frac{1}{2} \ln t + C + \mathcal O \left( \frac{1}{t}\right) ,
\label{eq:upsilon1}
\end{equation}
at infinity and also that
\begin{equation}
- \frac{\Upsilon (t)}{\Upsilon'(t)} = 1 - \frac{1}{2t}  + \mathcal O \left( \frac{1}{t^2}\right)  ,
\label{eq:upsilon2}
\end{equation}
at infinity.

For all $\ell >0$ and $a\in {\bf R}-\{0\}$, we define, if it exists, $\alpha : = \alpha_\ell (a)  \in {\bf R}$ by the identity
\begin{equation}
\Upsilon \left( \ell (1 - \alpha ) \right)=  |a| \,  \Upsilon (\ell ).
\label{eq:net-4}
\end{equation}
The asymptotic behavior of the function $\Upsilon$ at infinity implies that the function $a \mapsto \alpha_\ell (a)$ is well defined for all $\ell >0$ large enough. Moreover, we have the expansion
\begin{equation}
\alpha_\ell (a)  =  \frac{\ln |a|}{\ell} + \mathcal O \left( \frac{1}{\ell^2}\right)  ,
\label{eq:net-5}
\end{equation}
which holds for all $a$ in a given compact of ${\bf R}-\{0\}$ and for all $\ell >0$ large enough. Finally, differentiating (\ref{eq:net-4}) yields 
\begin{equation}
\begin{array}{rllll}
 \partial_a  \ln (1 - \alpha_\ell )   = - \displaystyle \left( \frac{2\ell-1}{2\ell^2} +\frac{\alpha_\ell}{\ell} + \mathcal O \left( \frac{1}{\ell^3}\right)  \right) \, \frac{1}{a} ,
\end{array}
\label{eq:net-6}
\end{equation}
when $a$ is in a given compact of ${\bf R}-\{0\}$ and for all $\ell >0$ large enough. In both (\ref{eq:net-5}) and (\ref{eq:net-6}), $\mathcal O (\epsilon^k)$ are smooth functions of $a$ and $\epsilon >0$ such that $ \epsilon^{-k} \, \mathcal O (\epsilon^{k})$ extends smoothly at $\epsilon = 0$.

\subsection{Perturbations of unbalanced networks}

We assume here that we have a flexible, unbalanced unitary network $(\mathscr N, a)$. Recall that the fact that the network is unitary just means that the lengths of the edges are all equal to $1$. Everything applies to networks which are not unitary but it turns out that, in applications, only unitary networks are used.  The results of this section will not be directly used in the paper but should be understood as a warm-up. 

As usual, we agree that $n$ denotes the number of vertices and $m$ the number of edges of the network $\mathscr N$. To begin with, let us prove the following result which states that, modifying slightly the vertices and the weights of the network $(\mathscr N, a)$ it is possible to perturb the forces at the vertices of the network and it is also possible to change the lengths of the edges of the network.  More precisely, we have the~:
\begin{proposition}
There exists $\epsilon_* >0$ such that, for all $\alpha : = (\alpha_{[p,q]})_{[p,q] \in \mathscr E} \in {\bf R}^m$, for all ${\bf f}  : = ({\bf f}_p)_{p \in \mathscr V} \in {\bf C}^n$, satisfying 
\[
|{\bf f}_p| + |\alpha_{[p,q]}| \leq \epsilon_*,
\] 
there exists $\Phi : \mathscr V \to {\bf C}$, $\tilde a : \mathscr E \to {\bf R}-\{0\}$ and ${\bf e} \in {\bf C}$,  all depending smoothly on the ${\bf f}_p$ and the $\alpha_{[p,q]}$, such that
\[
\left\{
\begin{array}{rllll}
{\bf F}_{(\Phi, \tilde a)} (p) & = & {\bf F}_{({\rm Id}, a)} (p)  + {\bf f}_p + {\bf e}, &\qquad \text{for all} \quad p \in \mathscr V ,\\[3mm]
{\bf L}_\Phi ([p,q]) & = & 1 - \alpha_{[p,q]} ,  &  \qquad  \text{for all} \quad  [p,q] \in \mathscr E, \\[3mm]
\displaystyle \sum_{p \in \mathscr V}  (\Phi_p - p) & = & 0 . & 
\end{array}
\right.
\]
Moreover, $\Phi = {\rm Id}$ and $\tilde a =a$ when the ${\bf f}_p =0$ and the $\alpha_{[p,q]}=0$.
\label{pr:net-7.1}
\end{proposition}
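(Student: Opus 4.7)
The plan is to set up a map whose zero set parametrises the required perturbations and then invoke the implicit function theorem, with the flexibility hypothesis on $(\mathscr N,a)$ providing invertibility of the linearisation once the translation invariance is pinned down by an auxiliary Lagrange multiplier $\mathbf e$ and by the affine constraint $\sum_p (\Phi_p - p) = 0$.

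Concretely, I would consider the smooth map
\[
G : {\bf C}^n \times {\bf R}^m \times {\bf C} \times {\bf C}^n \times {\bf R}^m \longrightarrow {\bf C}^n \times {\bf R}^m \times {\bf C}
\]
defined by
\[
G(\Phi, \tilde a, {\bf e}; {\bf f}, \alpha) := \Bigl({\bf F}_{(\Phi,\tilde a)} - {\bf F}_{({\rm Id},a)} - {\bf f} - ({\bf e})_{p\in\mathscr V}, \; {\bf L}_\Phi - {\bf L}_{{\rm Id}} + \alpha, \; \sum_{p\in\mathscr V} (\Phi_p - p)\Bigr).
\]
Since the network is unitary, ${\bf L}_{{\rm Id}}$ is the vector with all entries equal to $1$, so the vanishing of the second component of $G$ reproduces the length condition ${\bf L}_\Phi([p,q]) = 1 - \alpha_{[p,q]}$. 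Clearly $G({\rm Id}, a, 0; 0, 0) = 0$. Note that the dimensions of source factor $(\Phi,\tilde a,{\bf e})$ and of the target both equal $2n + m + 2$, so a bijective linearisation will give a local diffeomorphism.

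The key step is to show that the partial differential of $G$ with respect to $(\Phi,\tilde a,{\bf e})$ at the reference point $({\rm Id},a,0)$ is an isomorphism. This partial differential sends $(\dot\Phi,\dot a,\dot{\bf e})$ to
\[
\Bigl({\rm D}{\bf F}_{({\rm Id},a)}(\dot\Phi,\dot a) - (\dot{\bf e})_{p\in\mathscr V}, \; {\rm D}{\bf L}_{{\rm Id}}(\dot\Phi), \; \sum_{p\in\mathscr V} \dot\Phi_p\Bigr).
\]
Assume the triple lies in the kernel. Summing the first component over $p\in\mathscr V$ and using the identity (\ref{eq:net-1}) differentiated with respect to $(\Phi,\tilde a)$ yields $n\,\dot{\bf e} = 0$, hence $\dot{\bf e} = 0$. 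The remaining relations ${\rm D}{\bf F}_{({\rm Id},a)}(\dot\Phi,\dot a) = 0$ and ${\rm D}{\bf L}_{{\rm Id}}(\dot\Phi) = 0$ say that $(\dot\Phi,\dot a)\in\ker\Lambda$, where $\Lambda$ is the map of (\ref{eq:net-3}). Flexibility of the unbalanced network means $\Lambda$ has rank $2n+m-2$, so by Lemma \ref{le:net-2.1} and Lemma \ref{le:net-2.2} its kernel is precisely $2$-dimensional and spanned by the translations $(\dot\Phi_p,\dot a) = (({\bf e}')_{p\in\mathscr V}, 0)$ with ${\bf e}'\in{\bf C}$. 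The constraint $\sum_p \dot\Phi_p = n\,{\bf e}' = 0$ forces ${\bf e}' = 0$, proving injectivity and hence bijectivity by the dimension count.

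The implicit function theorem then produces, for all $({\bf f},\alpha)$ in a small neighbourhood of the origin, a unique smooth $({\Phi},\tilde a,{\bf e})$ depending smoothly on $({\bf f},\alpha)$ which satisfies $G(\Phi,\tilde a,{\bf e};{\bf f},\alpha) = 0$ and reduces to $({\rm Id},a,0)$ when $({\bf f},\alpha) = (0,0)$; this is exactly the conclusion of the proposition. The only mildly delicate point, and the one I would double-check carefully, is the bookkeeping that ensures $\dot{\bf e} = 0$ in the kernel computation; once the identity $\sum_p{\rm D}{\bf F}_{({\rm Id},a)}(\dot\Phi,\dot a) = 0$ is invoked, the rest is a direct consequence of flexibility and the translation-fixing constraint.
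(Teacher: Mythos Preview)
Your proof is correct and follows essentially the same approach as the paper's: both set up a nonlinear map encoding the force and length conditions together with the Lagrange multiplier ${\bf e}$, verify that flexibility forces the linearisation to have exactly the $2$-dimensional translational kernel, and then fix this kernel via the constraint $\sum_p(\Phi_p-p)=0$ before applying the implicit function theorem. The only cosmetic difference is that you build the barycentre constraint directly into the map $G$ as an extra ${\bf C}$-valued component (yielding a square linearisation whose injectivity you check), whereas the paper first shows the linearisation $\Lambda^\flat$ is onto with $2$-dimensional kernel and then restricts the domain; the two packagings are equivalent.
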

\begin{proof}
We define the mapping
\[
\begin{array}{lllll}
\displaystyle {\bf G} \left(\Phi, \tilde a , {\bf e} \, ; {\bf f} , \alpha \right)  : =  \displaystyle \left(  {\bf F}_{ (\Phi, \tilde a)}  - {\bf F}_{ ({\rm Id}, a)}  - \left({\bf e} \right)_{p \in \mathscr V} - {\bf f} \, ;     {\bf L}_\Phi -  \left( 1 \right)_{[p,q] \in \mathscr E}  - \alpha \right), 
\end{array}
\]
where $\Phi : {\mathscr V} \to {\bf C}$, $\tilde a : \mathscr E \to {\bf R}-\{0\}$ and ${\bf e} \in {\bf C}$. Certainly, 
\[
\displaystyle {\bf G} \left( {\rm Id} , a , 0 \,  ; 0, 0\right) = 0.
\]
The fact that the network is flexible in the sense of Definition~\ref{de:net-2.1} implies that 
\[
\Lambda (\dot \Phi, \dot a ) : = \left(  {\rm D} {\bf F}_{({\rm Id}, a)} (\dot \Phi , \dot a)  ;  {\rm D} {\bf L}_{{\rm Id}} (\dot \Phi) \right), 
\]
the differential of ${\bf G}$ with respect to $\Phi$ and $\tilde a$, computed at $\Phi ={\rm Id}$ and $a=\tilde a$,  has rank $2n+m-2$ and this, together with (ii) in Lemma~\ref{le:net-2.2} implies that 
\[
\Lambda^\flat (\dot \Phi, \dot a , \dot {\bf e} ) : = \left(  {\rm D} {\bf F}_{({\rm Id}, a)} (\dot \Phi , \dot a) + (\dot {\bf e})_{p \in \mathscr V} ;  {\rm D} {\bf L}_{{\rm Id}} (\dot \Phi) \right), 
\]
the differential of ${\bf G}$ with respect to $\Phi$, $\tilde a$ and ${\bf e}$, computed at $\Phi ={\rm Id}$, $\tilde a=a$ and ${\bf e} =0$, is onto and has kernel of dimension $2$ spanned by the $( ({\bf e})_{p\in \mathscr V}, 0, 0) \in {\bf C}^n \times {\bf R}^m \times {\bf C}$, for all ${\bf e} \in {\bf C}$. In particular, when trying to solve 
$$ 
{\bf G} \left(\Phi, \tilde a , {\bf e} \, ; {\bf f} , \alpha \right) =0,
$$ 
it is enough to restrict our attention to space of mappings $\Phi$ such that 
\[
\sum_{p \in \mathscr V} (\Phi_p - p ) = 0 .
\]
since $\Lambda^\flat$ is an isomorphism from the space of $(\dot \Phi, \dot a, \dot {\bf e}) \in {\bf C}^n \times {\bf R}^m \times {\bf C}$ such that 
\[
\sum_{p \in \mathscr V} \dot \Phi_p = 0,
\]
into ${\bf C}^n \times {\bf R}^m \times {\bf C}$. The application of the implicit function theorem implies that there exists $\Phi : \mathscr V \to {\bf C}$, $\tilde a : \mathscr E \to {\bf R}-\{0\}$ and ${\bf e} \in {\bf C}$,  all depending smoothly on the ${\bf f}_p$ and the $\alpha_{[p,q]}$, such that $\Phi = {\rm Id}$ and $\tilde a =a$ when the ${\bf f}_p =0$ and the $\alpha_{[p,q]}=0$,
\[
{\bf F}_{(\Phi, \tilde a)} (p) = {\bf F}_{({\rm Id}, a)} (p)  + {\bf e} +  {\bf f}_p,
\]
for all $p \in \mathscr V$, and
\[
{\bf L}_\Phi ([p,q]) = 1 - \alpha_{[p,q]} , 
\]
for all $[p,q] \in \mathscr E$.
\end{proof}

In applications, it turns out that the $\alpha_{[p,q]}$ are parameters which are not independent of the other parameters but rather depend on the $a_{[p,q]}$. More precisely, in applications, we have
\[
\alpha_{[p,q]} : =  \alpha_\ell (a_{[p,q]})  ,
\]
where $a \mapsto \alpha_\ell (a)$ is the function defined in (\ref{eq:net-4}) and hence the $\alpha_{[p,q]}$ are functions of the $a_{[p,q]}$ (and of the parameter $\ell >0$). A straightforward modification of the proof of the previous result, yields~:
\begin{proposition}
There exists $\ell_* >0$ and $\epsilon_* >0$ such that, for all $\ell\geq \ell_*$ and for all ${\bf f} : = ({\bf f}_p)_{p \in \mathscr V} \in {\bf C}^n$, such that 
\[
 |{\bf f}_p|\leq \epsilon_*,
\] 
there exists $\Phi : \mathscr V \to {\bf C}$, $\tilde a : \mathscr E \to {\bf R}-\{0\}$ and ${\bf e} \in {\bf C}$, all depending smoothly on the ${\bf f}_p$ and the $\alpha_{[p,q]}$, such that
\[
\left\{
\begin{array}{rllll}
{\bf F}_{ (\Phi, \tilde a)} (p) & = & {\bf F}_{({\rm Id}, a)} (p)  + {\bf f}_p + {\bf e}, &\qquad \text{for all} \quad p \in \mathscr V ,\\[3mm]
{\bf L}_\Phi ( [p,q]) & = & 1 - \alpha_\ell (\tilde a_{[p,q]}) ,  &  \qquad  \text{for all} \quad  [p,q] \in \mathscr E, \\[3mm]
\displaystyle \sum_{p \in \mathscr V}  (\Phi_p -p) & = & 0 . & 
\end{array}
\right.
\]
Moreover, $\Phi = {\rm Id}$ and $\tilde a =a$ when $\ell =+\infty$ and when all the ${\bf f}_p =0$, for $p \in \mathscr V$.
\label{pr:net-7.111}
\end{proposition}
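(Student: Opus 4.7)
The plan is to adapt the implicit function theorem argument used in the proof of Proposition~\ref{pr:net-7.1}, treating $\epsilon := 1/\ell$ as a small auxiliary parameter and incorporating the self-consistency constraint $\alpha_{[p,q]} = \alpha_\ell(\tilde a_{[p,q]})$ directly into the operator being inverted. More precisely, I would introduce
\[
{\bf G}_\ell \left( \Phi , \tilde a , {\bf e} \, ; \, {\bf f} \right)  :=  \left( {\bf F}_{(\Phi, \tilde a)} -  {\bf F}_{({\rm Id}, a)} - \left( {\bf e} \right)_{p \in \mathscr V} - {\bf f} \, ;\,  {\bf L}_\Phi - \left( 1 - \alpha_\ell (\tilde a_{[p,q]}) \right)_{[p,q] \in \mathscr E} \right),
\]
acting on the space of triples $(\Phi, \tilde a, {\bf e})$ with $\Phi : \mathscr V \to {\bf C}$ constrained by $\sum_{p \in \mathscr V} (\Phi_p - p) = 0$, $\tilde a : \mathscr E \to {\bf R}-\{0\}$, and ${\bf e} \in {\bf C}$.

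The first step is to note, thanks to (\ref{eq:net-5}), that $\alpha_\ell (\tilde a_{[p,q]}) = \mathcal O (1/\ell)$ uniformly for $\tilde a$ in any compact subset of $({\bf R}-\{0\})^m$. Consequently ${\bf G}_\ell$ extends smoothly to $\epsilon = 0$, and at this limit the base point $({\rm Id}, a, 0; 0)$ is an exact solution of ${\bf G}_\infty = 0$, since $(\mathscr N, a)$ is unitary and since ${\bf F}_{({\rm Id}, a)} - {\bf F}_{({\rm Id}, a)} = 0$.

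The second step is to differentiate ${\bf G}_\ell$ at $({\rm Id}, a, 0; 0)$ with respect to $(\Phi, \tilde a, {\bf e})$. By (\ref{eq:net-6}), the contribution of $\partial_{\tilde a} \alpha_\ell$ to this differential is $\mathcal O (1/\ell)$, so the limiting linearization as $\ell \to +\infty$ is precisely the operator
\[
\Lambda^\flat (\dot \Phi , \dot a, \dot {\bf e} ) := \left( {\rm D} {\bf F}_{({\rm Id}, a)} (\dot \Phi, \dot a) + (\dot {\bf e})_{p \in \mathscr V} \, ; \, {\rm D} {\bf L}_{\rm Id} (\dot \Phi) \right),
\]
encountered in the proof of Proposition~\ref{pr:net-7.1}. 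Because $(\mathscr N, a)$ is flexible and unbalanced, by Definition~\ref{de:net-3.1} together with Lemma~\ref{le:net-2.1} and Lemma~\ref{le:net-2.2}, the restriction of $\Lambda^\flat$ to the subspace $\{\dot \Phi \in {\bf C}^n : \sum \dot \Phi_p = 0\} \times {\bf R}^m \times {\bf C}$ is an isomorphism onto ${\bf C}^n \times {\bf R}^m$.

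The final step is to apply the implicit function theorem in a neighborhood of $({\rm Id}, a, 0)$ in this constrained space, with parameters $(\ell^{-1}, {\bf f})$ close to $(0,0)$. Since the linearization at the base point is an isomorphism at $\ell = +\infty$ and perturbs by $\mathcal O(1/\ell)$ for finite $\ell$, it remains an isomorphism for every $\ell \geq \ell_*$ with $\ell_*$ large enough, with inverses uniformly bounded. One then extracts $\Phi$, $\tilde a$ and ${\bf e}$ depending smoothly on $(\ell^{-1}, {\bf f})$, equal to $({\rm Id}, a, 0)$ at the base point, and satisfying the three conditions of the proposition. The only delicate point is the uniformity in $\ell$, which is ensured by the explicit asymptotics (\ref{eq:net-5}) and (\ref{eq:net-6}) asserting that $\alpha_\ell$ and $\partial_a \alpha_\ell$ both tend to zero smoothly as $\ell \to +\infty$, so that $\ell^{-1}$ can legitimately be treated as a smooth parameter for the implicit function theorem.
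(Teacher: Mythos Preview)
Your proposal is correct and follows essentially the same approach as the paper, which simply states that the result is ``a straightforward modification of the proof of the previous result'' (Proposition~\ref{pr:net-7.1}) and gives no further details. Your treatment of $\epsilon = 1/\ell$ as a smooth auxiliary parameter, using the asymptotics (\ref{eq:net-5}) and (\ref{eq:net-6}) to show that the self-consistency term $\alpha_\ell(\tilde a_{[p,q]})$ and its $\tilde a$-derivative contribute only $\mathcal O(1/\ell)$ perturbations of the operator $\Lambda^\flat$, is precisely the modification the paper has in mind, and it has the virtue of making the uniformity in $\ell$ explicit.
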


As explained above, we will not directly make use of this result in this paper. However, this result can, for example, be used to generalize the examples of solutions of (\ref{eq:nls}) which infinite energy constructed by Malchiodi in \cite{M} or it can also be used to construct complete non compact constant mean curvature surfaces in the spirit of \cite{K, K3}.

\subsection{Perturbation of balanced networks} We assume now that we have a flexible, balanced network $(\mathscr N, a)$. As usual, $n$ denotes the number of vertices and $m$ the number of edges of the network $\mathscr N$. Again, as a warm up, we would like to modify slightly the vertices and the weights of the weighted network $(\mathscr N, a)$, in such a way that, as above, the forces at the vertices of the perturbed network are prescribed (small vectors) and also we would like to slightly alter the size of the edges of the network in such a way that, the length of each edge $[p,q]$ of the perturbed network, dilated by a factor $\kappa \gg 1$, is an integer multiple of $1- \alpha_{[p,q]}$. 

More precisely, we assume that we are given 
\[
\alpha : =  (\alpha_{[p,q]})_{[p,q] \in {\mathscr E}} \in {\bf R}^m,
\]
small enough, 
\[
{\bf f} : =  ({\bf f}_p)_{p \in \mathscr V},
\]
small enough and $\kappa \gg 1$. For all $[p,q] \in \mathscr E$, we define the integer $m_{[p,q]} \in {\bf N}$  by
\[
\kappa \, \frac{|p-q|}{1- \alpha_{[p,q]}} \leq   2 \, m_{[p,q]}  <  \kappa \, \frac{|p-q|}{1- \alpha_{[p,q]}} +Ê2.
\]

We would like the perturbed network close to $(\mathscr N, a)$ to satisfy the following properties~:

\begin{itemize}
\item[(i)] the forces at the vertices of the perturbed network are given by
\[
{\bf F}_{(\Phi, \tilde a)} (p)  = {\bf f}_p ;
\]

\item[(ii)] the lengths of the edges of the perturbed network satisfy 
\[
\kappa \, {\bf L}_\Phi  ([p,q]) =  2 \, m_{[p,q]} \,  (1 - \alpha_{[p,q]} ) .
\]
\end{itemize}
In other words, the forces at the vertices are prescribed and the lengths of the edges of the original network, which is dilated by $\kappa$, are integer multiple of a prescribed quantity close to $1$. 

As in the previous section, we start with the definition of a nonlinear map
\[
\begin{array}{rlllll}
\displaystyle \mathring{\bf G} \left(\Phi, \tilde a , {\bf e} , t \, ; {\bf f} , \alpha \right)  & : = &  \displaystyle \Big( {\bf F}_{(\Phi, \tilde a)} - \left({\bf e} + t \, i \, \Phi_p\right)_{p \in \mathscr V} - {\bf f} \, ; \\[3mm]
&  &  \displaystyle \hspace{30mm} {\bf L}_\Phi -  \left(  \frac{2 m_{[p,q]}}{\kappa} \left( 1- \alpha_{[p,q]}  \right)\right)_{[p,q] \in \mathscr E} \Big), 
\end{array}
\]
where $\Phi : {\mathscr V} \to {\bf C}$, $\tilde a : \mathscr E \to {\bf R}-\{0\}$, ${\bf e} \in {\bf C}$ and $t \in {\bf R}$. This time $\displaystyle \mathring {\bf G} \left( {\rm Id} , a , 0 ,0 \, ; 0 , 0\right)$ is not equal to $0$ but is close to $0$ (at least when $\kappa$ is large). Indeed, by definition of $m_{[p,q]}$, we have 
\[
\left|  |p-q| -  \frac{2m_{[p,q]}}{\kappa} \left( 1- \alpha_{[p,q]}  \right) \right| \leq \frac{2}{\kappa},
\]
which is small since we assume that $\kappa \gg 1$. As in the previous section we would like to apply some implicit function theorem or more likely  some fixed points argument for contraction mappings, to solve 
\[
\mathring{\bf G} \left(\Phi, \tilde a , {\bf e} , t \, ; {\bf f} , \alpha \right)  =0,
\]
for all $\bf f$ and $\alpha$ small enough and for all $\kappa$ large enough. Unfortunately, this time, the situation is more complicated since the flexibility of the network $(\mathscr N, a)$ implies that the linear map
\[
\Lambda (\dot \Phi, \dot a  ) : = \left( {\rm D} {\bf F}_{({\rm Id}, a)} (\dot \Phi , \dot a)  \, ;  {\rm D} {\bf L}_{{\rm Id}} (\dot \Phi )\right), 
\]
has rank $2n+m-4$ and it also implies that the linear map
\[
\Lambda^\sharp (\dot \Phi, \dot a , \dot {\bf e}, \dot t ) : = \left( {\rm D} {\bf F}_{({\rm Id}, a)} (\dot \Phi , \dot a) - \left(\dot {\bf e} + \dot t \, i \, p \right)_{p \in \mathscr V} \, ;  {\rm D} {\bf L}_{{\rm Id}} (\dot \Phi )\right), 
\]
which is the differential of $\mathring {\bf G}$ with respect to $\Phi$, $\tilde a$, ${\bf e}$ and $t$, computed at $\Phi ={\rm Id}$, $a=\tilde a$, ${\bf e} =0$ and $t=0$, has rank $2n+m-1$. In particular, $\Lambda^\sharp$ is not onto and this prevents us from applying any fixed point theorem for contraction mappings to solve the above equation. 

In some sense, the fact that $\Lambda^\sharp$ has rank $2n+m-1$ can be interpreted by saying that, by perturbing the weighted network  $({\mathscr N}, a)$ we can ensure that ${\bf f}_p$ is indeed the force at the vertex $\Phi_p$ and we can also ensure that the lengths of the edges of the perturbed network are exactly what we want them to be, except for one of them. Hence, we are missing one extra degree of freedom to ensure that all the lengths of the perturbed network are what we want them to be.
 
The problem seems to be hopeless since we have exhausted all possible parameters to perturb the weighted network. Surprisingly, the solution comes from the fact that, in applications, the parameters $\alpha_{[p,q]}$ are not arbitrary but are functions of the weights $a_{[p,q]}$. Moreover, dilation of the weight is in the kernel of $\Lambda$. These two facts combine and turn out to be the key to our problem. 

To explain this further, we need to introduce the notion of {\em closable network}.  Given a weighted network $(\mathscr N, a)$, we define
\begin{equation}
{\bf T} : = \left( |p-q|  \, \ln |a_{[p,q]}| \right)_{[p,q] \in \mathscr E} \in {\bf R}^m.
\label{eq:TT}
\end{equation}
We have the~:
\begin{definition}
A flexible, balanced network $(\mathscr N, a)$ is said to be {\em closable} if
\[
\mathring \Lambda  (\dot \Phi, \dot a , \dot s ) : = \left( {\rm D} {\bf F}_{({\rm Id}, a)} (\dot \Phi , \dot a)  \, ;  {\rm D} {\bf L}_{{\rm Id}} (\dot \Phi ) + \dot s \, {\bf T}\right), 
\]
has rank $2n+m-3$.
\label{de:clos}
\end{definition}
Observe that the notion of closable network, just like the notion of flexibility, only depends on the network $\mathscr N$ and on the weight function $a$. 

\begin{remark}
While the definition of a closable network is independent of the problem we are looking at, the definition of ${\bf T}$ depends on the problem we are studying. For example, in the study of constant mean curvature surfaces in Euclidean $3$ space, the definition of ${\bf T}$ would rather be ${\bf T} = \left( |p-q|  \, a_{[p,q]} \right)_{[p,q] \in \mathscr E}$ instead of (\ref{eq:TT}).
\end{remark}

\medskip

\noindent {\bf Example 4.1 :} 
\label{ex:4.1}
Given $k\geq 3$, we have already considered the network $\mathring{\mathscr N}_{RegPol}$ defined by a regular polygon with $k$ sides, whose vertices are linked to the origin. Its set of vertices is given by
\[
\mathring{\mathscr V}_{RegPol} :=\{0\} \cup \{\xi^j \in {\bf C}  \, : \, j=1, \ldots, k\}.
\]
where $\xi : = e^{2i\pi/k}$.  Let us now check that this network is also closable in the sense of Definition~\ref{de:clos} provided $k\neq 6$. We need to check that ${\bf T}$ is not in the image of ${\rm D} {\bf L}_{\rm Id}$.  Therefore, we need to check that there does not exist $\dot \Phi \neq 0$ such that 
\[
\langle \xi^{j+1} - \xi^j , \dot \Phi_{\xi^{j+1}} - \dot \Phi_{\xi^{j}}\rangle_{\bf C} = 0 
\]
for $j=0, \ldots, k-1$ and 
\[
\langle \xi^{j} , \dot \Phi_{\xi^{j}} - \dot \Phi_{0}\rangle_{\bf C} =  \ln |2 \sin (\pi/k)|. 
\]
Observe that $(|p-q|)_{[p,q] \in \mathring{\mathscr E}_{RegPol}}$ is always in the image of ${\rm D} {\bf L}_{\rm Id}$ since it is the image of $\dot \Phi$ defined by $\dot \Phi_p =p$ for all $p \in \mathring{\mathscr V}_{RegPol}$.  Therefore, by linearity, it is enough to check that there does not exist $\dot \Phi$ such that 
\[
\langle \xi^{j+1} - \xi^j , \dot \Phi_{\xi^{j+1}} - \dot \Phi_{\xi^{j}}\rangle_{\bf C} =  1,
\]
for $j=0, \ldots, k-1$ and 
\[
\langle \xi^{j} , \dot \Phi_{\xi^{j}} - \dot \Phi_{0}\rangle_{\bf C} =  0. 
\]
Observe that we have implicitly used the fact that $\ln |2 \sin (\pi/k)| \neq 0$ for $k\neq 6$. Now, the second equation implies that 
\[
\dot \Phi_{\xi^{j}} - \dot \Phi_{0} = i \,  x_j \, \xi^j ,
\]
for some $x_j \in {\bf R}$. Inserting this information in the first set of equations, we conclude that 
\[
\sin (\pi/k) \, \left( x_{j+1} - x_j \right) = 1.
\]
Summing these equalities from $j=0$ to $j=k-1$ and remembering that we identify $x_{k}=x_0$, we reach a contradiction. Therefore, the network is closable and so are all nearby networks. 

\begin{remark}
The above example corresponds to the construction in \cite{MPW} where the condition $k\neq 6$ also appears in to be a necessary condition for the construction to be successful. Indeed, at the end of section 5 in \cite{MPW}, one needs $D_0$, the determinant of some $2$ by $2$ system, not to be too close to zero. It is also shown that the leading order in the expression of $D_0$ is equivalent to $\ln | 2 \sin (\pi/k)| \neq 0$ and hence one concludes that  $D_0$ is not too close to $0$ precisely when $k\neq 6$. 
\end{remark}

To see how the notion of closable network enters in our analysis, let us recall that, in applications, the parameters $\alpha_{[p,q]}$ are not independent of the other parameters but rather depend on the $a_{[p,q]}$, namely
\[
\alpha_{[p,q]} = \alpha_\ell (a_{[p,q]}) ,
\]
where $a \to \alpha_\ell (a)$ is the function defined in (\ref{eq:net-4}). This time, given $\kappa >0$ very large, for all $[p,q] \in {\mathscr E}$, we define $m_{[p,q]} \in {\bf N}$ by
\[
\kappa \, \frac{| q -p |}{ 1 - \alpha_\ell (a_{[p,q]}) } \leq  2 m_{[p,q]}  < \kappa \, \frac{| q -p |}{ 1 - \alpha_\ell (a_{[p,q]}) } + 2.
\]

We consider the nonlinear map 
\[
\begin{array}{rlllll}
\displaystyle \hat {\bf G} \left(\Phi, \tilde a , {\bf e} , t \, ; {\bf f}  \right)  & : = &  \displaystyle \Big( {\bf F}_{(\Phi, \tilde a)} - \left({\bf e} + t \, i \, \Phi_p\right)_{p \in \mathscr V} - {\bf f} \, ; \\[3mm]
&  & \hspace{30mm} \displaystyle {\bf L}_\Phi -  \left(  \frac{2m_{[p,q]}}{\kappa} \left( 1- \alpha_\ell (a_{[p,q]}) \right)\right)_{[p,q] \in \mathscr E}  \Big), 
\end{array}
\]
where $\Phi : {\mathscr V} \to {\bf C}$, $\tilde a : \mathscr E \to {\bf R}-\{0\}$, ${\bf e} \in {\bf C}$ and $t \in {\bf R}$.  Observe that $\mathring G$ which was introduced above and $\hat G$ are related by the identity 
\[
\hat {\bf G} \left(\Phi, \tilde a , {\bf e} , t \, ; {\bf f}  \right) = \mathring  {\bf G} \left(\Phi, \tilde a , {\bf e} , t \, ; {\bf f} , \alpha \right)
\]
where, on the right hand side, $\alpha_{[p,q]} = \alpha_\ell (a_{[p,q]})$.

Again, $\hat G ({\rm Id}, a, 0,0 \, ; 0)$ is not equal to $0$ but it is close to $0$ when $\kappa$ is large and we would like to apply some fixed point theorem for construction mappings to solve $\hat {\bf G} \left(\Phi, \tilde a , {\bf e} , t \, ; {\bf f}  \right)  =0$ at least when ${\bf f}$ is small and when $\kappa$ and $\ell$ are large enough.

The differential of $\hat {\bf G}$ with respect to $\Phi$, $\tilde a$, ${\bf e}$ and $t$, computed at $\Phi ={\rm Id}$, $a=\tilde a$, ${\bf e} =0$ and $t=0$, is given by the formula
\[
\hat \Lambda (\dot \Phi, \dot a , \dot {\bf e}, \dot t ) : = \left( {\rm D} {\bf F}_{({\rm Id}, a)} (\dot \Phi , \dot a) - \left(\dot {\bf e} + \dot t \, i \, p \right)_{p \in \mathscr V} \, ;  {\rm D} {\bf L}_{{\rm Id}} (\dot \Phi )  + {\bf S} (\dot a) \right)  , 
\]
where 
\[
{\bf S} (\dot a) : = \left( S_{[p,q]} \, \dot a_{[p,q]} \right)_{[p,q] \in \mathscr E} ,
\]
and where 
\[
S_{[p,q]} : =  \frac{2m_{[p,q]}}{\kappa} \partial_a \alpha_\ell (a_{[p,q]}) .
\]

It follows from the definition of $m_{[p,q]}$ that 
\[
\frac{2m_{[p,q]}}{\kappa}  = \frac{|p-q|}{1- \alpha_\ell (a_{[p,q]})} + \mathcal O\left( \frac{1}{\kappa}\right) ,
\]
and, using the expansion of $\partial_a \alpha_\ell$ given in (\ref{eq:net-6}) together with (\ref{eq:net-5}), we conclude that 
\[
S_{[p,q]} = -  |p-q| \, \left( \frac{2\ell-1}{2\ell^2} + \frac{\ln |a_{[p,q]}|}{\ell^2} + \mathcal O \left( \frac{1}{\ell^3}\right) + \mathcal O \left( \frac{1}{\kappa \ell}\right) \right)\, \frac{1}{a_{[p,q]}}.
\]

Now, it is convenient to decompose 
\begin{equation}
\dot a_{[p,q]} = \dot a^\perp_{[p,q]} - \dot c \, \ell^2 \, a_{[p,q]} ,
\label{decomp-1}
\end{equation}
where $\dot c \in {\bf R}$ and where $\dot a^\perp$ and $a$ are orthogonal. Similarly, we decompose
\begin{equation}
\dot \Phi_p =  \dot \Phi_p^\perp + \left(\dot d - \frac{2\ell-1}{2} \, \dot c\right)  \, p ,
\label{decomp-2}
\end{equation}
where $\dot d \in {\bf R}$ and  where $\dot \Phi^\perp$ and $(p)_{p \in \mathscr V}$ are orthogonal. With these decompositions at hand, we have 
\[
 {\rm D} {\bf F}_{({\rm Id}, a)} (\dot \Phi , \dot a)  =  {\rm D} {\bf F}_{({\rm Id}, a)} (\dot \Phi^\perp , \dot a^\perp) ,
\]
since $a$ is in the kernel of $D_a{\bf F}_{(\rm Id, a)}$ and $(p)_{p \in {\mathscr V}}$ is in the kernel of $D_\Phi {\bf F}_{(Id, a)}$,  while 
\[
{\rm D} {\bf L}_{{\rm Id}} (\dot \Phi) + {\bf S} (\dot a )  = {\rm D} {\bf L}_{{\rm Id}} (\dot \Phi^\perp) + {\bf S} (\dot a^\perp )  + \dot d \, {\bf L}_{\rm Id} + \dot c \, {\bf T} + \mathcal O \left( \frac{|\dot c|}{\ell} \right) + \mathcal O \left( \frac{\ell \, |\dot c|}{\kappa } \right),
\]
where the vector ${\bf T}$ is the one defined in (\ref{eq:TT}).  Now, by assumption, the mapping
\[
\begin{array}{rllll}
\Lambda_0 (\dot \phi^\perp, \dot a^\perp, \dot c , \dot d, \dot {\bf e}, \dot t) & : = & \Big( {\rm D} {\bf F}_{({\rm Id}, a)} (\dot \Phi^\perp , \dot a^\perp)  - \left(\dot {\bf e} + \dot t \, i \, p \right)_{p \in \mathscr V}  \, ;   \\[3mm]
&& \hspace{30mm} {\rm D} {\bf L}_{{\rm Id}} (\dot \Phi^\perp) + {\bf S} (\dot a^\perp )  + \dot d \, {\bf L}_{\rm Id} + \dot c \, {\bf T} \Big),
\end{array}
\]
has full rank and we are in position to apply some fixed point theorem to solve $\hat {\bf G} \left(\Phi, \tilde a , {\bf e} , t \, ; {\bf f}  \right)  =0$.  This leads to the~:

\begin{proposition}
There exists $\ell_* >0$, $\kappa_* >0$ and $\epsilon_* >0$ such that, for all $\ell\geq \ell_*$, for all $\kappa \geq \kappa_* \, \ell^3$ and for all $({\bf f}_p)_{p \in \mathscr V} \in {\bf C}^n$, such that 
\[
\ell^3 \, |{\bf f}_p|\leq  \epsilon_*,
\] 
there exists $\Phi : \mathscr V \to {\bf C}$, $\tilde a : \mathscr E \to {\bf R}-\{0\}$, ${\bf e} \in {\bf C}$ and $t \in {\bf R}$ all  depending smoothly on the ${\bf f}_p$ such that
\[
\left\{ 
\begin{array}{rlllll}
{\bf F}_{(\Phi, \tilde a)} (p) & = &  {\bf f}_p + {\bf e}Ê+  i \, t \, \Phi_p, &\qquad \text{for all} \quad p \in \mathscr V,\\[3mm]
\kappa \, {\bf L}_{\Phi} ( [p,q]) & =  & 2 \, m_{[p,q]} \, \left(1 - \alpha_\ell  (\tilde a_{[p,q]}) \right), &\qquad \text{for all} \quad [p,q] \in \mathscr E, \\[3mm]
\displaystyle \sum_{p \in \mathscr V} (\Phi_p  -p )&  = & 0 
\qquad \text{and} \qquad 
\displaystyle \sum_{p \in \mathscr V} (\Phi_p - p) \wedge p =0 .
\end{array}
\right.
\]
Moreover
\[
\ell \, \sup_{p \in \mathscr V} |\Phi_p-p| + \sup_{[p,q] \in \mathscr E}| \tilde a_{[p,q]} -a_{[p,q]} | \leq C \, \ell^2 \, \left( \sup_{p \in \mathscr V} |{\bf f}_p| + \frac{1}{\kappa} \right),
\]
for some constant $C >0$. 
\label{pr:net-7.44}
\end{proposition}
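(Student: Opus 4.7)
The plan is to recast the system as a fixed point problem
\[
\hat{\bf G}(\Phi,\tilde a,{\bf e},t\,;{\bf f})=0,
\]
restricted to the affine slice $\sum_{p\in\mathscr V}(\Phi_p-p)=0$ and $\sum_{p\in\mathscr V}(\Phi_p-p)\wedge p=0$, which eliminates the translation and rotation directions inherent in the kernel of the flexibility operator, and to perturb around $({\rm Id},a,0,0)$.

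First I would verify that the residual at the base point is small: by the very definition of $m_{[p,q]}$,
\[
\bigl| |p-q|-\tfrac{2 m_{[p,q]}}{\kappa}(1-\alpha_\ell(a_{[p,q]}))\bigr|\leq \tfrac{2}{\kappa},
\]
whence $\hat{\bf G}({\rm Id},a,0,0\,;{\bf f})=(-{\bf f}\,;\,\mathcal O(1/\kappa))$ in the natural componentwise norm.

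Next I would invert the differential $\hat\Lambda$. Using the decomposition (\ref{decomp-1})--(\ref{decomp-2}) already introduced in the text, one rewrites
\[
\hat\Lambda(\dot\Phi,\dot a,\dot{\bf e},\dot t)=\Lambda_0(\dot\Phi^\perp,\dot a^\perp,\dot c,\dot d,\dot{\bf e},\dot t)+\mathcal R(\dot c),
\]
with $\mathcal R(\dot c)=\mathcal O(|\dot c|/\ell)+\mathcal O(\ell|\dot c|/\kappa)$. The operator $\Lambda_0$ is an isomorphism: by flexibility $\Lambda$ has rank $2n+m-4$ and its cokernel is spanned by $({\bf e})_{p\in\mathscr V}$ and $(ip)_{p\in\mathscr V}$ (Lemma~\ref{le:net-2.3}), while the two extra columns $\dot{\bf e},\dot t$ fill exactly those two directions; the column $\dot d\,{\bf L}_{\rm Id}$ is already in the image of ${\rm D}{\bf L}_{\rm Id}$ via $\dot\Phi_p=p$; finally, closability (Definition~\ref{de:clos}) guarantees that ${\bf T}$ is not in the image of ${\rm D}{\bf L}_{\rm Id}$, so the $\dot c$-column accounts for the last remaining direction. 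Tracking the scaling $\dot c\,\ell^2 a$ in the decomposition then yields the quantitative bound $\|\Lambda_0^{-1}\|=\mathcal O(\ell^2)$. Provided $\kappa\ge\kappa_*\ell^3$, the remainder $\mathcal R$ is absorbed by $\Lambda_0$, so $\hat\Lambda$ is invertible with inverse of norm $\mathcal O(\ell^2)$, and a contraction mapping argument on a ball of radius $C\ell^2(\sup_p|{\bf f}_p|+1/\kappa)$ around $({\rm Id},a,0,0)$ produces the desired solution together with the stated estimate; the two vanishing conditions are imposed as part of the slice, and pairing $\hat{\bf G}=0$ with $({\bf e})_p$ and $(ip)_p$ together with Lemma~\ref{le:net-1.1} verifies the consistency of ${\bf e}$ and $t$ as Lagrange-type multipliers.

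The main obstacle is the sharp bound $\|\Lambda_0^{-1}\|=\mathcal O(\ell^2)$ together with the verification that $\mathcal R$ is a genuine perturbation of $\Lambda_0$. This is exactly where the lower bound $\kappa\ge\kappa_*\ell^3$ enters: the $\mathcal O(\ell|\dot c|/\kappa)$ piece of $\mathcal R$ must be beaten by the $\mathcal O(1)$ coefficient of $\dot c$ in the ${\bf T}$-direction of $\Lambda_0$, and the factor $\ell^2$ coming from the inversion must be absorbed with enough margin, which forces $\kappa$ to exceed a multiple of $\ell^3$.
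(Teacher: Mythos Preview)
Your proposal is correct and follows essentially the same approach as the paper: both restrict to the affine slice killing the translation/rotation directions, use the decompositions (\ref{decomp-1})--(\ref{decomp-2}) to isolate the $\dot c,\dot d$ variables, identify $\Lambda_0$ as the relevant model operator whose invertibility follows from flexibility plus closability, and then run a contraction mapping with the residual $\mathcal O(1/\kappa)$ and the remainder $\mathcal R(\dot c)$ controlled via $\kappa\ge\kappa_*\ell^3$. You supply more explanation than the paper does (which simply asserts ``by assumption, $\Lambda_0$ has full rank''), in particular your account of why $\dot{\bf e},\dot t$ fill the three cokernel directions in ${\bf C}^n$ and $\dot c\,{\bf T}$ fills the remaining one in ${\bf R}^m$; this is exactly the content of closability and is the right way to unpack that assertion.
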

\begin{proof}
This time, we apply a fixed point theorem for contraction mapping. The proof does not offer any difficulty but we shall nevertheless comment on a couple of issues. 

Since we start with a balanced network, the kernel of the linearized map $\hat \Lambda$ contains $({\bf e})_{pÊ\in \mathscr V}$ and $(i \, p)_{pÊ\in \mathscr V}$, therefore, it is enough to restrict our attention to the space of mappings $\Phi$ satisfying 
\[
\sum_{p \in \mathscr V} (\Phi_p  -p ) =0 \qquad \mbox{and} \qquad \sum_{p \in \mathscr V} (\Phi_p - p) \wedge p =0 ,
\]
since $\hat \Lambda$ is an isomorphism from the space of $(\dot \Phi, \dot a, \dot {\bf e})$ such that 
\[
\sum_{p \in \mathscr V} \dot \Phi_p =0 \qquad \mbox{and} \qquad \sum_{p \in \mathscr V} \dot \Phi_p \wedge p =0 .
\]
In geometric terms, this amounts to require that we do not translate or rotate the initial configuration before we prescribe the small forces.  

In agreement with the decomposition of $\dot a$ and $\dot \Phi$ given in (\ref{decomp-1}) and (\ref{decomp-2}), we write 
\[
\Phi_p  = \left( 1+ d - \frac{2\ell-1}{2} \, c\right)  \, p + \Phi^\perp_p, 
\]
where $\phi^\perp$ and $(p)_{p\in \mathscr V}$ are orthogonal and $c,d \in {\bf R}$, and
\[
\tilde a_{[p,q]} = (1- c \, \ell^2) \, a_{[p,q]} + a^\perp_{[p,q]}, 
\]
where $a$ and $a^\perp$ are orthogonal. Hence, the unknowns are now $\Phi^\perp$, $a^\perp$, $c$ and $d$. 

We now apply a fixed point theorem for contraction mappings, to obtain a solution of  $\hat {\bf G} \left(\Phi, \tilde a , {\bf e} , t \, ; {\bf f}  \right)  =0$. Inspection of the nonlinearities shows that, in order to obtain a contraction mapping, we need to assume that $\ell^3  \ll \kappa $ and $\ell^3 \, |{\bf f}|\ll 1$, and then we obtain a solution which satisfies
\[
\ell \, |\Phi_p-p| + |\tilde a-a| \leq C \, \ell^2 \, \left( |{\bf f}| + \frac{1}{\kappa} \right),
\]
for some $C >0$.  
\end{proof}

Some important comment is due on the parameters of the construction which are free {\em continuous} parameters. At first glance it might appear that $\ell$ and $\kappa$ are continuous parameters which are free to be specified close to a given value and hence, Proposition~\ref{pr:net-7.44} provides a $2$-dimensional (smooth) family of solutions. This is not the case and to explain this one needs to go back to the decomposition of $\dot \Phi$ and $\dot a$. Indeed, infinitesimal modification of $\kappa$ amounts to apply some dilation (with factor close to $1$) to the set of vertices of the network and close inspection of the expression of $\dot \Phi$ given in (\ref{decomp-2}) shows that we need to allow dilations of the set of points in the fixed point argument. Therefore, in some sense, it is not possible to consider $\kappa$ as a free continuous parameter since a slight change of $\kappa$ will be counterbalanced by a dilation of the vertices of the network. Similarly, to understand why $\ell$ is not a free continuous parameter, we refer to (\ref{decomp-1}) where one can see that an infinitesimal change in the value of $\ell$ will be counterbalanced by a dilation of the weight function. Therefore, the parameters $\ell$ and $\kappa$ are somehow {\em quantized} by the choices of the $m_{[p,q]}$.

In principle, thanks to the above result, we should be close to the end of the construction of the set of points $Z^+$ and $Z^-$ which are mentioned in section 2. 

Indeed, we can now dilate the network $\mathring {\mathscr N}$ by $\kappa \, \ell$ and, since the perturbed network is constructed in such a way that
\[
\kappa \, | \Phi_p - \Phi_q|  = 2m_{[p,q]} \,  \left( 1 - \alpha_\ell (\tilde a_{[p,q]} ) \right),
\]
we can insert exactly  $2m_{[p,q]} -1$ points between $\kappa \, \ell \, \Phi_p$ and $\kappa \, \ell \, \Phi_q$, in such a way that the distances between two consecutive points are exactly equal to $\ell \left( 1 - \alpha_\ell (\tilde a_{[p,q]} ) \right)$. In the case where $\tilde a_{[p,q]} >0$ we decide that these points, together with the end points $\kappa \, \ell \, \Phi_p$ and $\kappa \, \ell \, \Phi_q$ will be points where we center copy of $+ u_0$ and hence these points will belong to $Z^+$. While, if $\tilde a_{[p,q]} < 0$ we decide to put copies of $\pm u_0$ with alternative signs at these points. More precisely, we can label the points we evenly distribute along the edge $\kappa \, \ell \, [\Phi_p, \Phi_q]$ as
\[
z_j^{[p,q]} : = \kappa \, \ell \, \Phi_p + j \,  \ell \, \left( 1 - \alpha_\ell (a_{[p,q]} ) \right) \, \frac{\Phi_q-\Phi_p}{|\Phi_q-\Phi_p|},
\]
for $j=0, \ldots, 2m_{[p,q]}$ (observe that  $ z_{2m_{[p,q]}}^{[p,q]}  : = \kappa \, \ell \, \Phi_q$). Then, we decide to  put copies of $(-1)^j u_0$ centered  at the points $z^{[p,q]}_{j}$, for $j=0, \ldots, 2m_{[p,q]}$.

This is not the end of the story since there is yet another issue we need to take care of. It should be clear that $\kappa \, \ell \, \Phi_p$ has as many closest neighbors, in the sense defined in section $2$ as the number of vertices meeting at $\Phi_p$. In fact, the set of closest neighbors of $\kappa \, \ell \, \Phi_p$ is explicitly given by
\[
N_{\kappa \, \ell \, \Phi_p} = \left\{ \kappa \, \ell \, \Phi_p +  \ell \, \left( 1 - \alpha_\ell (\tilde a_{[p,q]} ) \right) \, \frac{\Phi_q-\Phi_p}{|\Phi_q-\Phi_p|}\, : \,  qÊ\in \mathscr V_p\right\}
\]

We would like to guarantee that the points we evenly distribute along the edges $[\Phi_p, \Phi_q]$, dilated by $\kappa \, \ell$, have exactly $2$ closest neighbors, in the sense described in section 2. Namely, we would like to guarantee that, for $j=1, \ldots , 2m_{[p,q]}-1$,  the only closest neighbors of $z_j^{[p,q]}$ are $z_{j-1}^{[p,q]}$ and $z_{j+1}^{[p,q]}$. It is easy to see that this is only possible if the angles between two different edges meeting at the same vertex is larger than $\pi/3$.  

Unfortunately, given a balanced network, it never happens that all the angles between edges meeting at a common vertex are larger than $\pi/3$, for all the vertices. This is the reason why we need to alter the previous construction by replacing vertices of the network by more complicated structures which turn out to be unbalanced networks. We explain this extra construction in the next section. 

\section{Construction of approximate solutions}

\subsection{Networks and sub-networks} \label{se:dsds} Assume that we are given a closable, flexible balanced network $({\mathscr N} , a)$ and  two parameters $\kappa , \ell \gg 1$. 

For each $p \in {\mathscr V}$, we assume that we are given either a flexible, unitary network $({\mathscr N}^{p}, a^{p})$ or we define a ${\mathscr N}^{p}$ to be the network reduced to $\{0\}$ (in which case we agree that the set of edges is empty). These networks $({\mathscr N}^{p}, a^{p})$ which we call  {\em sub-networks}, should be chosen to satisfy certain properties we now describe carefully. First, for each edge $[p, q] \in {\mathscr E}$, we assume that we have identified a vertex $r^{p}_{q} \in {\mathscr V}^{p}$ (one should not confuse $\mathscr V^p$ which is the set of vertices of $\mathscr N^p$ with $\mathscr V_p$ which is the set of vertices $q \in \mathscr V$ such that $[p,q] \in \mathscr E$ and which has been defined in (\ref{eq:Vp})) and a vertex $r^{q}_{p} \in {\mathscr V}^{q}$. Since a given vertex $r \in {\mathscr V}^p$ might be associated to many edges of ${\mathscr E}$, we define 
\[
{\mathscr V}_{p,r} : = \left\{q \in {\mathscr V} \, : \, r = r^p_q \right\} ,
\]
which can be either empty, in which case we call such a vertex an {\em internal vertex} of the sub-network $({\mathscr N}^p, a^p)$, or can contain only one point or can contain many points, in which case we call such a vertex an {\em external vertex} of the sub-network $({\mathscr N}^p, a^p)$. 

For all  $p \in {\mathscr V}$, we define
\[
{\mathscr E}^p_{ext} : = {\mathscr E}^p \, \cup \{ {\rm R}^p_{r,q}\, : \, \forall r \in {\mathscr V}^p, \quad  \forall  q \in {\mathscr V}_{p,r}\} ,
\]
where the ray ${\rm R}^p_{q,r}$ is defined by
\[
{\rm R}^{p}_{r,q} : = \left\{ r+ t \, \frac{q-p}{|q-p|} \, : \,  t >0 \right\} 
\]

\begin{center}
\includegraphics[width= 7cm]{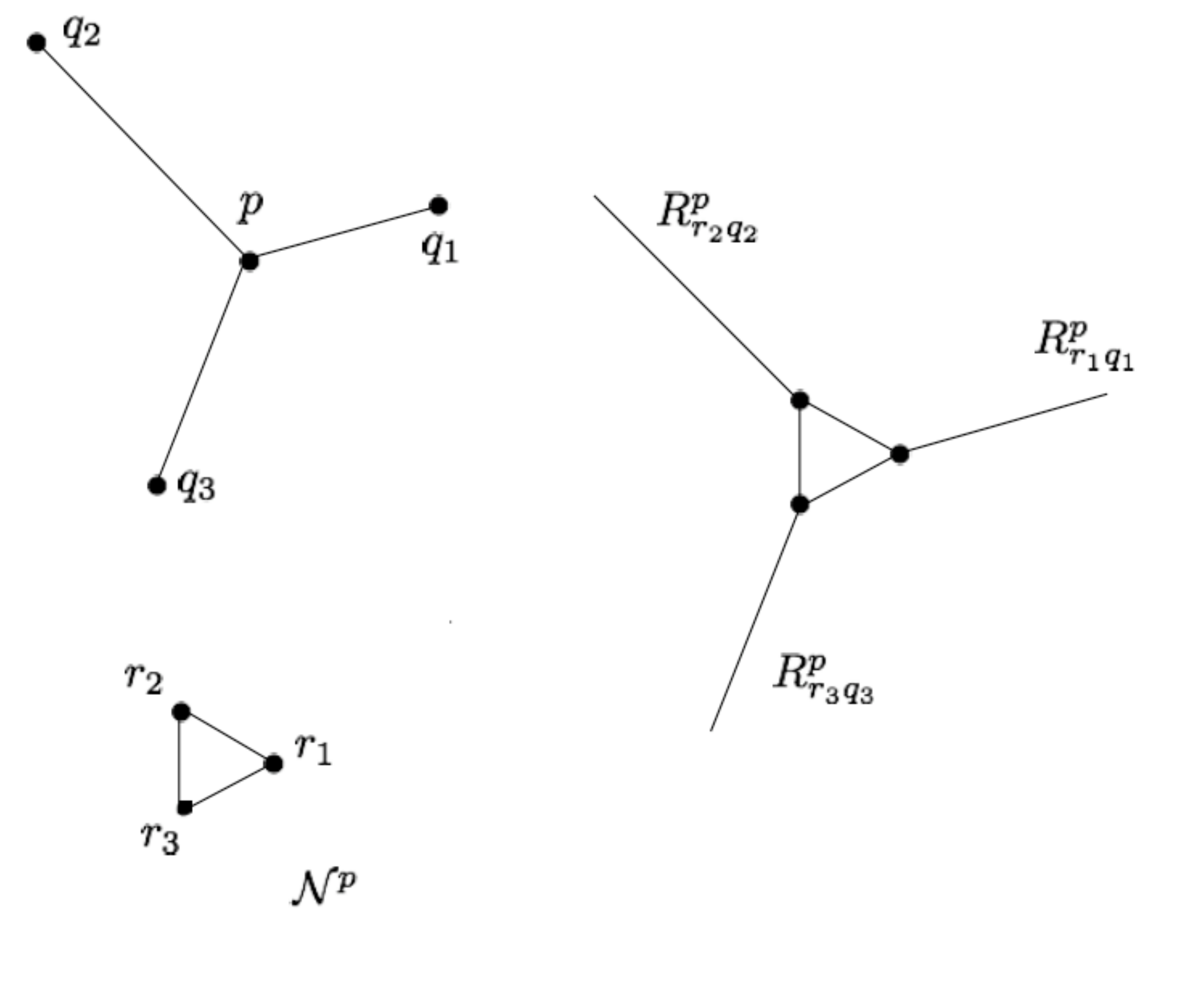}\\
Fig 8 : Example of a vertex $p \in \mathscr V$ with the edges of $\mathscr E$ ending at $p$, a sub-network $\mathscr N^p$ and the same sub-network where the rays are drawn. 
\end{center}

\begin{center}
\includegraphics[width=8cm]{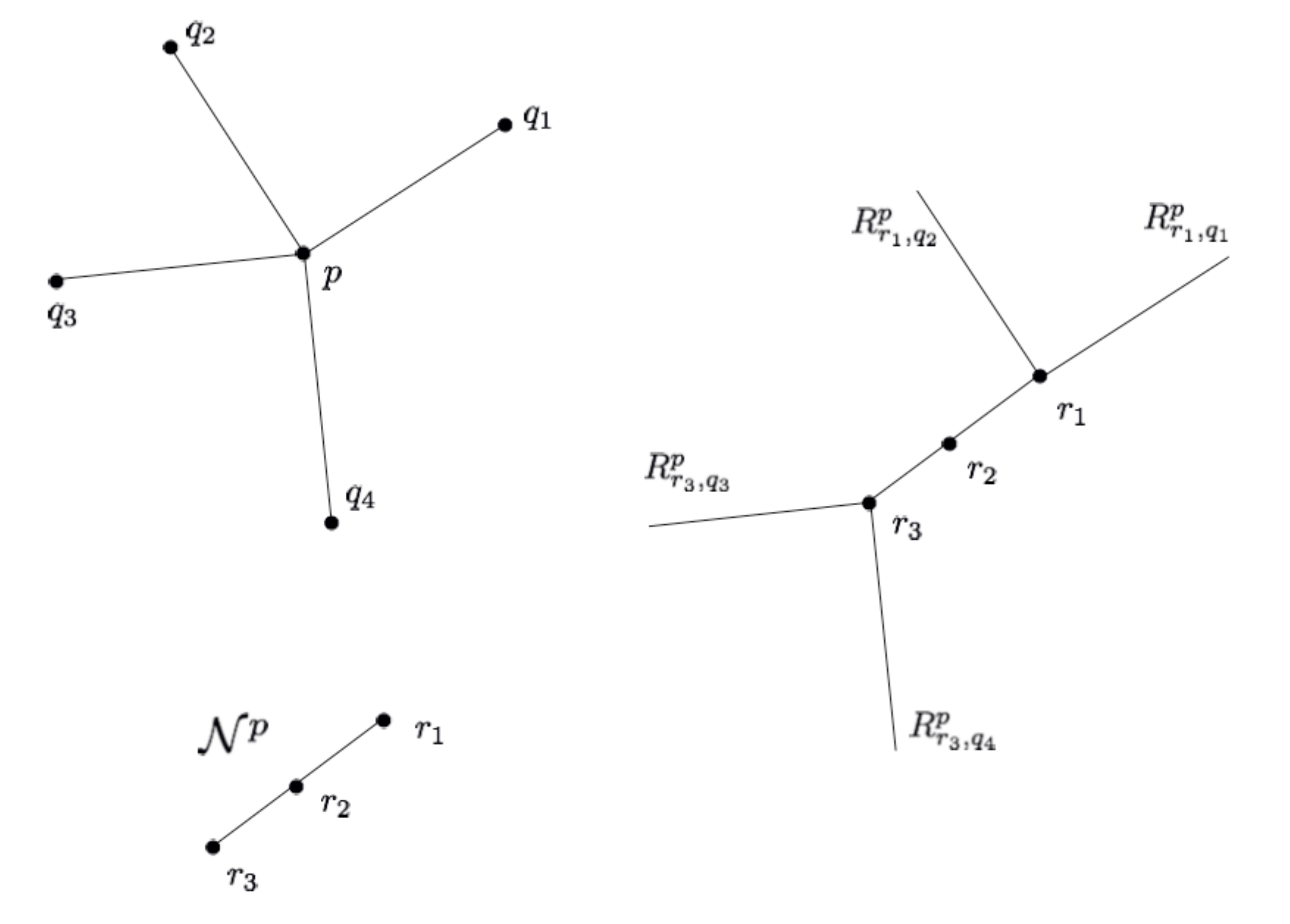}\\
Fig 9 : Example of a vertex $p \in \mathscr V$ with the edges of $\mathscr E$ ending at $p$, a sub-network $\mathscr N^p$ and the same subnetwork where the rays are drawn. 
\end{center}

We require that the following properties hold~:
\begin{itemize}

\item[(i)] The barycenter of the vertices of ${\mathscr N}^{p}$ is equal to $0$, namely
\[
\sum_{r \in {\mathscr V}^{p}} r = 0 .
\]

\item[(ii)] (Embeddedness of the sub-netkorks with rays) For each $p \in {\mathscr V}$, any two distinct elements of ${\mathscr E}^p_{ext}$ (which might be edges or rays) are either disjoint or intersect at their end points. 

\item[(iii)] (Internal vertices are balanced)   If $r \in {\mathscr V}^p$ is an internal vertex, i.e. is not equal to any of the $r^{p}_q$, then 
\[
\sum_{r' \in {\mathscr V}_r^{p}} a^{p}_{[r'r]} \, \frac{r'-r}{|r'-r|} = 0,
\]
where ${\mathscr V}_r^{p}$ is the set of vertices $ r' \in \mathscr V^p$ such that $[r',r] \in \mathscr E^p$. 

\item[(iv)] (Balancing conditions for external vertices)  If $r = r^{p}_q \in {\mathscr V}^{p}$ is an external vertex, then  
\[ 
\sum_{r' \in {\mathscr V}_r^{p}} a^{p}_{[r'r]} \, \frac{r'-r}{|r'-r|}  +  \sum_{q' \in {\mathscr V}_{p,r}} a_{[p,q']} \frac{q'-p}{|q'-p|} =0 .
\]

\item[(v)] (No other closest neighbor conditions) If $r \neq r' \in {\mathscr V}^p$ and if $|r'-r|\leq 1$, then $[r, r'] \in {\mathscr E}^p$ and hence $|r'-r|=1$.

\item[(vi)] (No other closest neighbor conditions for rays)  For all $[p,q] \in {\mathscr E}$, 
\[
\min_{r' \in {\mathscr V}^p, \, r' \neq r} \min_{j\in {\bf N}-\{0\}} \, \left| r'-r- j\frac{q-p}{|q-p|} \right| >1.
\] 
and we also require that, for all $[p, q'] \in {\mathscr E}$ distinct from $[p,q]$, we have
\[
\min_{j, j' \in {\bf N}-\{0\}} \, \left| r^p_{q'} + j' \frac{q'-p}{|q'-p|} - r^p_q - j\, \frac{q-p}{|q-p|} \right| >1.
\] 

\item[(vii)] (Sign compatibility) It is possible to define a function $\eta^p : {\mathscr V}^p \to \{\pm 1\}$ in such a way that 
\[
\eta^p_r \, \eta^p_{r'} = \text{sign} (a_{[r,r']}^p) ,
\]
for all $r,r' \in  {\mathscr V}^p$ and 
\[
\eta^p_{r^p_q} = \eta^q_{r^q_p}  ,
\]
for all $[p,q] \in {\mathscr E}$. 
\end{itemize} 

Let us give a couple of examples of such configurations. 

\medskip

\noindent
{\bf Example 5.1 :} \label{ex:5.1} We assume that the network $({\mathscr N}, a)$ is the one described in Example~\ref{ex:3.4}. Namely, the regular polygon with $k$ sides together with the origin and the edges joining the origin to the vertices of the polygon. The vertices of this network are given by
\[
{\mathscr V} : =\{0\} \cup \left\{ \xi^j \, : \, j = 0, \ldots, k-1\right\},
\]
where $\xi : =  e^{2i\pi/k}$ and the weight function $a$ is chosen to be 
\[
a_{[0, \xi^j]} = 2 \, \sin \pi/k ,
\]
and 
\[
a _{[\xi^j, \xi^{j+1}]} =-1.
\] 
The angle between the edges $[0,1]$ and $[1, \xi]$ is given by $\pi/2-\pi/k$ and hence, when $k \geq 7$ this angle is larger than $\pi/3$. In particular, if we chose the sub-network ${\mathscr N}^{\xi^j}$ to be equal to $\{0\}$,  conditions (vi) will be fulfilled.     

In contrast, at the origin, the angle between the edges $[0,1]$ and $[0, \xi]$ is less than $\pi/3$ when $k\geq 7$ and condition (vi) will not be fulfilled if we chose the sub-network ${\mathscr N}^{0}$ to be equal to $\{0\}$. This is the reason why, we  choose the sub-network $( {\mathscr N}^{0},  a^0)$ to be the polygon described in Example~\ref{ex:3.2}. Namely, the network whose set of vertices is given by
\[
{\mathscr V}^0 : =\left\{ z_j : = \frac{\xi^j}{|1-\xi|}  \, : \, j = 0, \ldots, k-1 \right\},
\]
and where the weight function $a^0$ is chosen to be 
\[
a _{[ z_j, z_{j+1}]} =1.
\] 
This time (vi) is fulfilled. 

\begin{center}
\includegraphics[width=10cm]{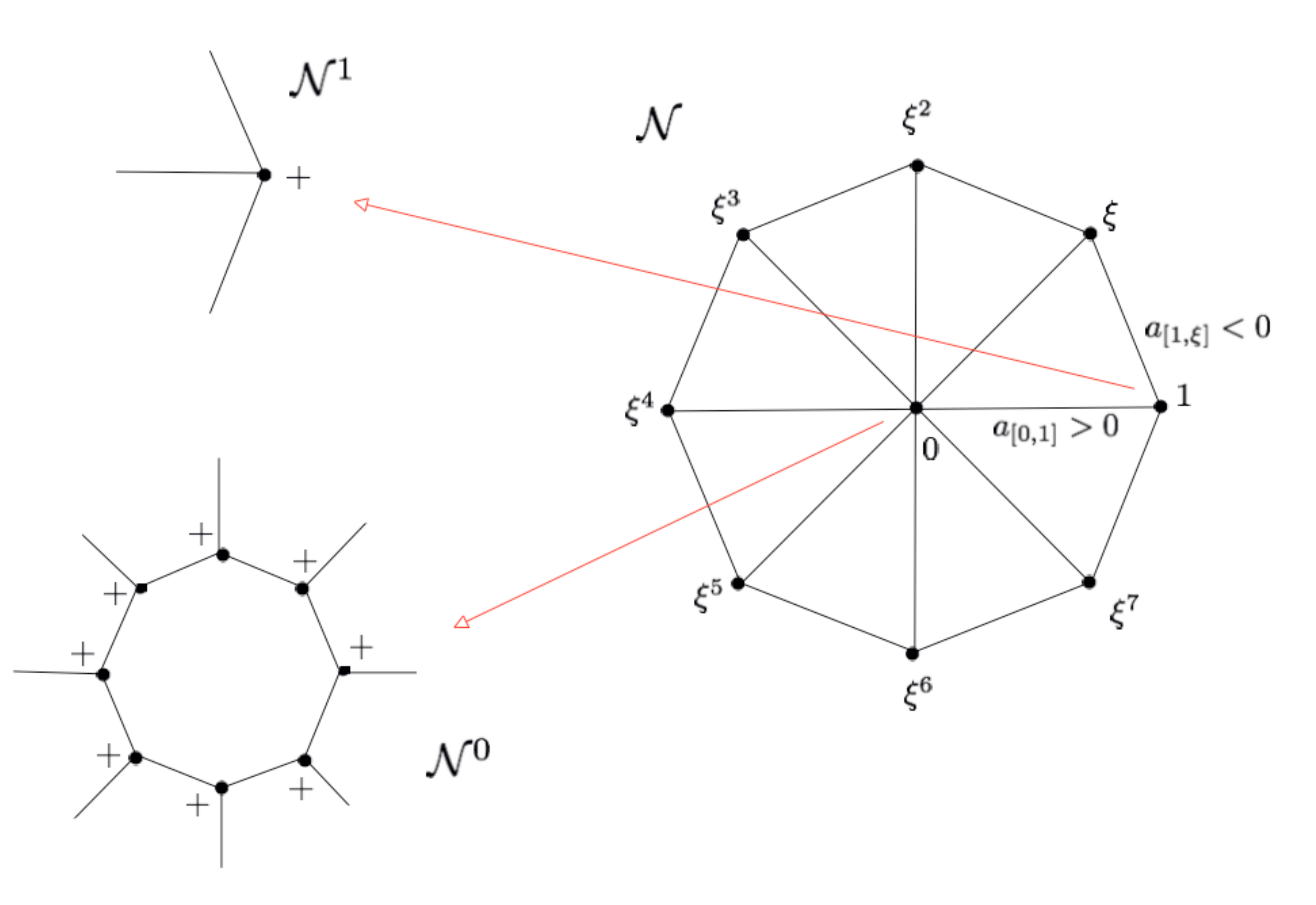}\\
Fig 10 : Example of a network $\mathscr N$ and sub-networks at the points $0$ and $1$. The sign of the weight function is mentioned as well as the signs associated to the vertices of the sub-networks.
\end{center}

We leave to the reader to check that all properties (i) to (vii) are fulfilled with these choices of sub-networks. This is the example which was originally considered in \cite{MPW}.

\medskip

\noindent
{\bf Example 5.2 :} \label{ex:5.2} Again, we start with the network $({\mathscr N}, a)$ which is the regular polygon with $k$ sides together with the origin and the edges joining the origin to the vertices of the polygon. This time  we assume that $k=4$ or $k=5$ to ensure that the angle between the edges $[0,1]$ and $[0, \xi]$ is larger than $\pi/3$. Hence, we can choose  the sub-network $({\mathscr N}^{0}, a^0)$ to be equal to $\{0\}$ and (vi) will be fulfilled with this choice. 

However, since $k\leq 5$, the angle between the edges $[0,1]$ and $[1, \xi]$ is less than $\pi/3$ and we cannot take  ${\mathscr N}^{\xi^j}$ to be equal to $\{0\}$ since (vi) would not be fulfilled. Therefore, this time, to construct a sub-network ${\mathscr N}^{\xi^j}$ satisfying (vi), we consider the example described in Example~\ref{ex:3.3}, with vertices $z_0 : = 1/\sqrt{3}, z_1 : = \zeta/\sqrt{3}, z_2 : = \zeta^2/\sqrt{3}$ where $\zeta$ is the $3$-rd  root of unity. 

\begin{center}
\includegraphics[width=10cm]{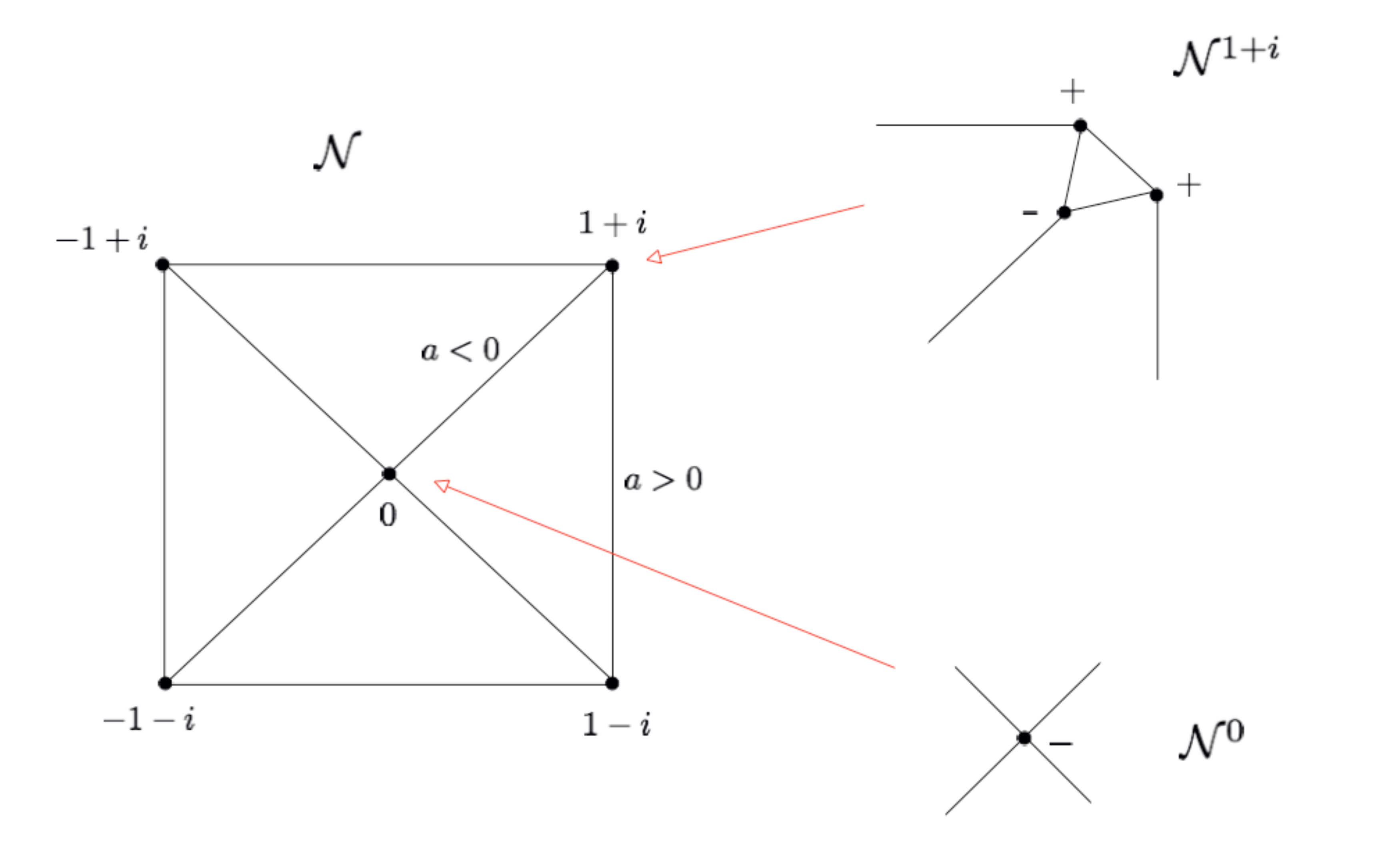}\\
Fig 11 : Example of a network $\mathscr N$ and sub-networks at the points $0$ and $1+i$. The sign of the weight functions are mentioned as well as the signs associated to the vertices of the sub-network.
\end{center}

We define the weight 
\[
a_{[z_0, z_1]} = a_{[z_2,z_0]} = - \frac{1}{\sqrt 3} \, \sin (\pi/k) ,
\]
and
\[
a_{[z_1, z_2]} = \cos (\pi/k) +  \frac{1}{\sqrt 3} \, \sin (\pi/k).
\] 
Then, we define the sub-network $({\mathscr N}^{\xi^j}, a^{\xi^j})$ to be the network $({\mathscr N}_{Tri}, a)$ which is rotated by $\pi+ j\, 2\pi/k$.  Observe that (vi) requires that $k\geq 4$ and flexibility of the unbalanced triangle requires that $k\neq 3$. We leave to the reader to check that all properties (i) to (vii) are fulfilled with these choices of sub-networks. 

\begin{remark}
These  two examples are particularly interesting because, according to the result of Corollary~\ref{co:net-6.1}, any small perturbation of $({\mathscr N}, a)$ can also be balanced and it is easy to see that for small perturbations, one can deform $({\mathscr N}^{\xi^j}, a^{\xi^j})$ in such a way that the networks still fulfill assumptions (i) to (vii). In particular, this leads to configurations which have fewer or even which have absolutely no symmetry (for example, one can just move the vertex $0$ to $\epsilonÊ\, e^{i\theta}$ for some $\theta \in (0, \pi/k)$ and some $\epsilon >0$ small, to produce networks which have no symmetry). 
\end{remark}

Given $\kappa \gg 1$ and $[p,q] \in {\mathscr E}$, we define $m_{[p,q]} \in {\bf N}$ by
\begin{equation}
\kappa \, \frac{| q -p |}{ 1 - \alpha_\ell (a_{[p,q]}) } \leq  2 m_{[p,q]}  < \kappa \, \frac{| q -p |}{ 1 - \alpha_\ell (a_{[p,q]}) } + 2.
\label{eq:mpq}
\end{equation}

We have the~:
\begin{proposition}
Assume that $({\mathscr N} , a )$ is a closable, flexible network, and, for each $p \in {\mathscr V}$,  assume that $({\mathscr N}^p , a^p )$  is a flexible unitary network, such that properties (i)-(vii) in \S~\ref{se:dsds} are fulfilled. Then, there exists $\ell_* >0$, $\kappa_* >0$ and $\epsilon_* >0$ such that, for all $\ell\geq \ell_*$, for all $\kappa \geq \kappa_* \, \ell^3$ and for all sets of forces  $({\bf f}_r^p)_{ r \in {\mathscr V}^p} \in {\bf C}^{n_p}$,  where $n_p$ is the number of vertices of $\mathscr V^p$, such that 
\[
\ell^3 \, |{\bf f}_r^p|\leq  \epsilon_*,
\] 
there exists~:
\begin{enumerate}
\item[(i)] $\Phi : \mathscr V \to {\bf C}$ and  $\tilde a : \mathscr E \to {\bf R}-\{0\}$ ;
\item[(ii)] $\Phi^p : \mathscr V^p \to {\bf C}$ and  $\tilde a^p : \mathscr E^p \to {\bf R}-\{0\}$, for each $p \in \mathscr V$ ; 
\item[(iii)] ${\bf e} \in {\bf C}$ and $t \in {\bf R}$ close to $0$,
\end{enumerate}
all smoothly  depending on the ${\bf f}_r^p$ such that~:
\begin{enumerate}

\item[(a)] For all $p\in \mathscr V$ and for all $[r, r'] \in {\mathscr E}^{p}$, we have
\[
| \tilde r-  \tilde r'| = 1 - \alpha_\ell (\tilde a^p_{[ r , r']}) ,
\]
where  $\tilde r : =  \Phi^p_r$ and $\tilde r' : =  \Phi^p_{r'}$.

\item[(b)] For all $[p,q] \in \mathscr E$, we have
\[
|  (\kappa \,  \tilde q  + \tilde r^q_p ) - (\kappa \, \tilde p  + \tilde r^p_q) | = 2 \, m_{[p,q]} \, \left( 1 - \alpha_\ell (\tilde a_{[ p,q]}) \right),
\]
where $\tilde p := \Phi_p$, $\tilde q : =  \Phi_q$, $\tilde r^q_p : =  \Phi^q_{r^q_p} $ and $\tilde r^p_q  : =  \Phi^p_{r^p_q}$.

\item[(c)] If $r \in {\mathscr N}^{p}$ is an internal point of $\mathscr V^p$, then  
\[
\sum_{r' \in {\mathscr V}_r^{p}} \tilde a^{p}_{[r'r]} \, \frac{ \tilde r' -  \tilde r}{| \tilde r' -  \tilde r |}  = {\bf f}^p_r + \frac{{\bf e} + i \, t \, p}{n_p} ,
\]
where  $\tilde r : =  \Phi^p_r$ and $\tilde r' : =  \Phi^p_{r'}$.

\item[(d)] If $r \in {\mathscr N}^{p}$ is an external point of $\mathscr V^p$, then 
\[
\sum_{r' \in {\mathscr V}_r^{p}} \tilde a^{p}_{[r'r]} \, \frac{ \tilde r' -  \tilde r}{| \tilde r'  -  \tilde r|}  +  \sum_{q \in {\mathscr V}_{p,r}} \tilde a_{[p,q]} \, \frac{ (\kappa \,  \tilde q  + \tilde r^q_p ) - (\kappa \, \tilde p  + \tilde r^q_p) }{| (\kappa \,  \tilde q  + \tilde r^q_p ) - (\kappa \, \tilde p  + \tilde r^p_q) |} =  {\bf f}^{p}_r + \frac{{\bf e} + i \, t \, p}{n_p} ,
\]
where $\tilde r : = \Phi^p_r $, $\tilde r' : = \Phi^p_{r'}$,  $\tilde p := \Phi_p$, $\tilde q : =  \Phi_{q}$, $\tilde r^p_q : =  \Phi^p_{r^p_q} $ and $\tilde r^q_p  : =  \Phi^q_{r^q_p}$.

\item[(e)] For all $p \in {\mathscr V}$, 
\[
\sum_{r \in \mathscr V^p} \tilde r =0 ,
\]
where $\tilde r : =  \Phi^p_r$.

\item[(f)] Finally
\[
\sum_{p \in \mathscr V} (\tilde p  - p ) =0 ,
\qquad \text{and} 
\qquad 
\sum_{p \in \mathscr V} (\tilde p - p) \wedge \tilde p =0 .
\]
where $\tilde p :=  \Phi_p$ and 
\[
\begin{array}{rlllll}
\displaystyle \ell \sup_{p \in \mathscr V}  |\tilde p - p| + \sup_{[p,q] \in \mathscr E} |a_{[p,q]}  - \tilde a_{[p,q]}| + \sup_{p \in {\mathscr V}}  \left( \sup_{r \in {\mathscr V}^p} |r -  \tilde r| + \sup_{[r,r'] \in {\mathscr E}^p} | a^p_{[r,r']} - \tilde a^{p}_{[r,r']} |\right) \qquad \qquad \\[3mm]
\displaystyle  \leq C \, \ell^2 \, \left( |\bf f| + \frac{1}{\kappa}\right).
\end{array}
\]
\end{enumerate}
\label{pr:thisisit}
\end{proposition}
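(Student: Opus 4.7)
The plan is to combine the two previous fixed-point arguments (Proposition~\ref{pr:net-7.111} for the unbalanced sub-networks and Proposition~\ref{pr:net-7.44} for the balanced closable main network) into a single contraction-mapping scheme. The natural unknowns are $(\Phi,\tilde a)$ perturbing the main network, $(\Phi^p,\tilde a^p)_{p\in\mathscr V}$ perturbing each sub-network, and the auxiliary parameters $({\bf e},t)\in{\bf C}\times{\bf R}$ accounting for the translation/rotation degeneracy coming from Lemma~\ref{le:net-2.3}. I would collect all conditions (a)--(e) into a single nonlinear map
\[
{\bf G}\bigl(\Phi,\tilde a,(\Phi^p,\tilde a^p)_{p},{\bf e},t\,;{\bf f}\bigr),
\]
whose components encode the length equations of~(a) and~(b) and the force equations of~(c) and~(d), each shifted by the appropriate target datum $({\bf f}^p_r)$ and by $({\bf e}+it\,p)/n_p$. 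The quotient conditions in~(e) and~(f) are used to restrict the domain of ${\bf G}$ to the orthogonal complement of the obvious kernel directions (translations of each sub-network, translations and rotations of the main network), as in the proofs of Propositions~\ref{pr:net-7.1} and~\ref{pr:net-7.44}.

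Evaluating ${\bf G}$ at the unperturbed configuration yields a vector which is not zero but is of size $\mathcal O(1/\kappa)$: the balancing conditions~(c),(d) and the sub-network length conditions~(a) are satisfied exactly (by the hypotheses on each $({\mathscr N}^p,a^p)$ and by the balanced character of $(\mathscr N,a)$), while the main-network length conditions~(b) fail by at most $2/\kappa$ by the definition~(\ref{eq:mpq}) of $m_{[p,q]}$. The key observation is that the differential ${\rm D}{\bf G}$ at the unperturbed configuration is block triangular up to lower order terms of size $\mathcal O(1/\kappa)$: the sub-network blocks involve the operators $\Lambda$ associated with each flexible unbalanced $({\mathscr N}^p,a^p)$ (augmented by their own Lagrange parameter), while the main-network block is exactly the map $\hat\Lambda$ analyzed in the proof of Proposition~\ref{pr:net-7.44}, with the crucial closability hypothesis used to absorb the ${\bf T}$-direction. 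The coupling terms, which arise from the contribution of $\Phi_p$ to the external-vertex balancing~(d) and to the edge direction $\tilde q-\tilde p$ in~(b), are multiplied by $1/\kappa$ relative to the diagonal terms, so they may be treated perturbatively.

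With this structure in hand, I would decompose $\dot a$ and $\dot\Phi$ exactly as in (\ref{decomp-1})--(\ref{decomp-2}) to liberate the dilation directions $\dot c$ (rescaling of the weights $a_{[p,q]}$) and $\dot d$ (rescaling of the vertices of $\mathscr N$), which together with ${\bf e}$ and $t$ provide the four extra degrees of freedom needed to match the codimension-four image of $\Lambda$ for the balanced network. On the sub-network side, flexibility in the sense of Definition~\ref{de:net-3.1} directly provides an inverse of size $\mathcal O(1)$ for each $({\mathscr N}^p,a^p)$-block (after restricting to the orthogonal complement of the translations, by the analogue of the augmentation done with ${\bf e}$ in Proposition~\ref{pr:net-7.1}). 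Combining these block inverses and treating the $\mathcal O(1/\kappa)$ coupling as a perturbation gives a right inverse of the full differential whose norm is $\mathcal O(\ell^{2})$, exactly as in the proof of Proposition~\ref{pr:net-7.44}. A standard Banach fixed-point argument in the product of the spaces of $(\Phi,\tilde a,(\Phi^p,\tilde a^p),{\bf e},t)$, under the smallness assumptions $\kappa\geq\kappa_*\ell^3$ and $\ell^3|{\bf f}|\leq\epsilon_*$, produces the solution satisfying the claimed estimate.

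The main obstacle I anticipate is the careful verification that the block triangular structure really holds up to the claimed $\mathcal O(1/\kappa)$ coupling. The delicate point is that the external-vertex balancing~(d) mixes sub-network weights $\tilde a^p$ with the direction vectors of the main network edges, and these direction vectors depend on $\Phi_p$, $\Phi_q$, $\Phi^p_{r^p_q}$, $\Phi^q_{r^q_p}$ simultaneously. However, because the main edges are dilated by $\kappa$, a perturbation $\delta\Phi^p$ of size $\mathcal O(1)$ changes the unit vector $((\kappa\tilde q+\tilde r^q_p)-(\kappa\tilde p+\tilde r^p_q))/|\cdot|$ only by $\mathcal O(1/\kappa)$, and the same applies to the length condition~(b). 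Properties (v), (vi) and the embeddedness condition (ii) guarantee that throughout the iteration the combinatorial structure of closest neighbors is preserved, which is what ultimately allows condition (b) to be imposed independently at each edge of $\mathscr E$. Once these coupling estimates are established, the contraction mapping estimate reduces to the one already worked out in Proposition~\ref{pr:net-7.44}, and yields both existence and the uniform bound stated in~(f).
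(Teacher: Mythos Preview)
Your proposal is correct and follows essentially the same approach as the paper. The paper's own proof is in fact only a two-line sketch: it observes that summing the force equations in (c) and (d) over all $r\in\mathscr V^p$ collapses (in the limit $\kappa\to\infty$) to the main-network force equation ${\bf F}_{(\Phi,\tilde a)}(p)=\sum_{r\in\mathscr V^p}{\bf f}^p_r+{\bf e}+it\,p$, which is precisely your ``block triangular up to $\mathcal O(1/\kappa)$'' decoupling; your write-up supplies the details (the $\mathcal O(1/\kappa)$ size of the coupling, the use of the decompositions (\ref{decomp-1})--(\ref{decomp-2}), and the role of properties (ii), (v), (vi)) that the paper leaves implicit.
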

\begin{proof}
The proof is a simple modification of the proofs of the previous related results. The key observation is that, letting $\kappa$ tend to infinity and summing the equations in (c) and (d), we get
\[
{\bf F}_{(\Phi, \tilde a)} (p) =  \sum_{r\in \mathscr V^{p}} \, {\bf f}^p_r  +  {\bf e} + i \, t \, p ,
\]
which shows that the system in the main networks and the sub-networks is somehow in diagonal form. 
\end{proof}

Let us briefly comment on this result. Starting from a balanced network $( {\mathscr N}, a)$, we first replace each vertex $p \in \mathscr V$ by a subnetwork $({\mathscr V}^p, a^p)$ and build a network whose set of vertices is the union of the vertices of each subnetwork ${\mathscr N}$. The result of proposition~\ref{pr:thisisit} asserts that we can move the vertices of the subnetworks in such a way that the resulting force at each $z^p_r$ is given by ${\bf f}^p_r$ (modulo $\frac{{\bf e} + t \, i \, p}{n_p}$).

\subsection{Construction of the approximate solution}

We build on the result of Proposition~\ref{pr:thisisit}. As in the statement of this Proposition, we assume that $(\mathring {\mathscr N} , \mathring a )$ is a closable, flexible network, and, for each $p \in \mathring {\mathscr V}$, we also assume that $( \mathring{\mathscr N}^{p} , \mathring a^{p} )$  is a flexible unitary network, satisfying properties (i)-(vii) in \S\ref{se:dsds}. For all $\ell \geq \ell_*$, for all $\kappa \geq \kappa_*\, \ell^3$ and for all sets of forces ${\bf f}_{r}^{p} \in {\bf C}$,  such that 
\[
\ell^2 \, |{\bf f}_{r}^{p}|\leq  \epsilon_*,
\] 
we denote  by $({\mathscr N}, a)$  and $({\mathscr N}^p, a^p)$, the weighted network and sub-networks whose existence follow from the result of Proposition~\ref{pr:thisisit} (with slight abuse of notations, we have used the same notations for the vertices of $\mathring{\mathscr V}$ and $\mathscr V$). Let us insist that these networks and subnetworks do depend on the choice of ${\bf f}^p_r$. 

We dilate the network ${\mathscr N}$ by a factor $\kappa \, \ell$ and, for each $p \in \mathscr V$, we replace the vertex $\kappa \, \ell \, p$  by the sub-network $\mathscr N^p$ which in turn is dilated by a factor $\ell$ and translated by  $\kappa \, \ell \, p$. We get a new network whose vertices are given by
\[
z_r^p : = \ell \, (\kappa \, p + r) ,
\]
for $p \in \mathscr V$ and $r \in \mathscr V^p$ and whose edges are either of the form $[z_r^p, z_{r'}^p] $ for some $r , r' \in \mathscr V^p$ and some $p \in \mathscr V$ or of the form $[ z_{r}^p, z_{r'}^q]$ for some external vertex $r = r^p_q \in \mathscr V^p$ and some external vertex  $r'=r^q_p \in \mathscr V^q$ for some $p\neq q \in \mathscr V$. Therefore, the number of vertices of our new network is equal to the sum over $p \in \mathscr V$ of the number of vertices of each $\mathscr N^p$, while the number of edges of our new network is equal to the sum of the number of edges of $\mathscr N$ plus the sum over $p \in \mathscr V$ of the number of edges of each $\mathscr N^p$.

Observe that, by construction,  the length of the edge $[z_r^p, z_{r'}^p] $ where $r , r' \in \mathscr V^p$ for some $p \in \mathscr V$ is given by
\[
| z_r^p - z_{r'}^p |  = \ell - \lambda^p_{[r,r']} ,
\]
where 
\[
\lambda_{[r,r']}^p :  = \ell \,  \alpha_\ell (a^p_{[r,r']} ), 
\]
while the length of the edge $[ z_{r}^p, z_{r'}^q]$ where $r = r^p_q \in \mathscr V^p$ and where  $r'=r^q_p \in \mathscr V^q$ for some $p\neq q \in \mathscr V$ is given by
\[
| z_{r}^p, z_{r'}^q  |  = 2 \, m_{[p,q]} \, ( \ell- \lambda_{[p,q]} ) ,
\]
where 
\[
\lambda_{[p,q]} :  = \ell \,  \alpha_\ell (a_{[p,q]} ) .
\]
In particular, we can insert exactly  $2\, m_{[p,q]} -1$ points between $z_{r^p_q}^p$ and $z_{r^q_p}^q$, in such a way that the distance between two consecutive points is exactly equal to $\ell  - \lambda_{[p,q]}$. More precisely, if we define 
\[
{\bf e}_{pq} : =  \frac{r^q_p - r^p_q}{|r^q_p - r^p_q|},
\]
we can label the points we evenly distribute along the edge $[ z_{r^p_q}^p, z_{r^q_p}^q]$ by
\[
z_j^{pq} : = z_{r^p_q}^p + j \,  (\ell  - \lambda_{[p,q]} ) \, {\bf e}_{pq},
\]
for $j=0, \ldots, 2m_{[p,q]}$. Observe that, by definition $z_0^{pq} = z^p_{r^p_q}$ and $z_{2m_{pq}}^{pq} = z_{r^q_p}^q$. Moreover, since ${\bf e}_{pq} = - {\bf e}_{qp}$, we have
\[
z_{m_{[p,q]} + j}^{pq}  = z_{m_{[p,q]} - j}^{qp} ,
\]
for $j=-m_{[p,q]} , \ldots, m_{[p,q]}$. 

We define the set $Z$ as the union of the sets of vertices we have just defined
\[
Z : = \left\{ z^p_r \, : \, \forall  p \in \mathscr V^p,  \, \forall r \in  \mathscr V^p \right\} \cup \left\{z_j^{pq}  \, : \, \forall  [p,q] \in \mathscr E , \quad \forall j=1, \ldots, 2m_{[p,q]} -1 \right\}.
\]

\begin{center}
\includegraphics[width=10cm]{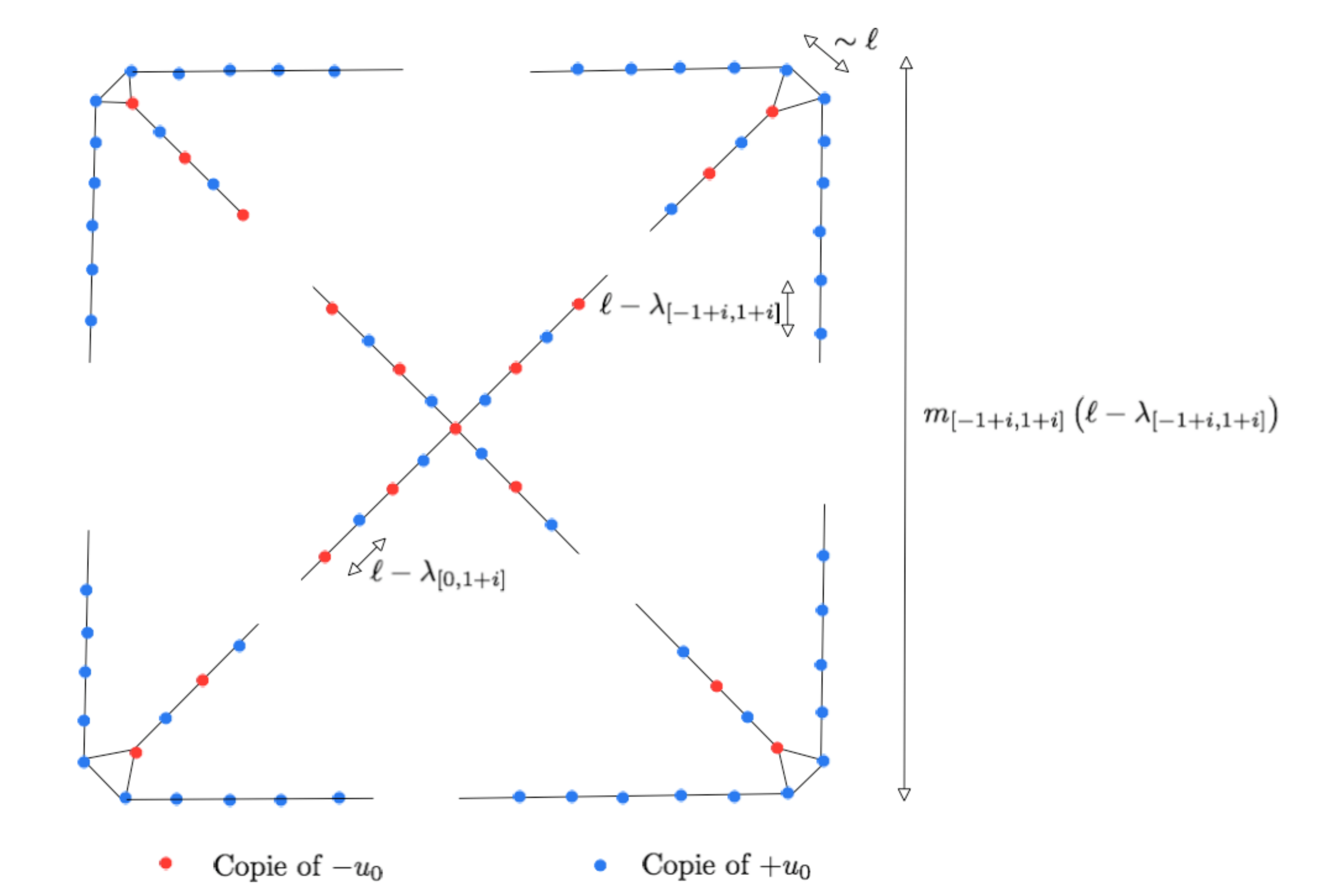}\\
Fig 12 : The blue and red dots correspond to the set of points $Z$ which one obtains starting from the network and sub-network described in Fig 9. 
\end{center}

We now need to distinguish, among the points of $Z$, which are the points that belong to $Z^+$ and the points that belong to $Z^-$.  Recall that, each sub-networks $\mathring{\mathscr N}^p$ enjoy property (vii) and we can also equip the sub-networks  $\mathscr N^p$ with a function $\eta^p : \mathscr V^p \to \{\pm 1\}$ satisfying (vii) by choosing that $\eta^p$ at the vertex $r \in \mathscr V^p$ is equal to the value it had at the corresponding vertex in $\mathring{\mathscr V}^p$.  Hence, for each $p \in \mathscr V$ and for each $r \in \mathscr V^p$, we define
\[
\eta_{z^p_r} := \eta^p_r.
\]
Now, for each $[p,q] \in \mathscr E$ and each $j=0, \ldots, 2m_{[p,q]}$, we define 
\[
\eta_{z_j^{pq}} : = (-1)^j \, \eta^p_{r^p_q} ,
\]
when $a_{[p,q]} <0$, while we define 
\[
\eta_{z_j^{pq}} : =  \eta^p_{r^p_q} ,
\]
when $a_{[p,q]} > 0$.  Observe that property (vii) implies that this is well defined. In particular, when $a_{[p,q]} <0$ we have
\[
\eta_{z_{m_{[p,q]} +j}^{pq}} = \eta_{z_{m_{[p,q]} -j}^{qp}} ,
\]
for $j=-m_{[p,q]} , \ldots, m_{[p,q]}$.

By definition 
\[
Z^\pm : =  \{ z \in Z \, : \, \eta_z = \pm 1\}. 
\]
We recall that, when constructing the approximate solution, we will center copy of $+u_0$ at each of the points of $Z^+$ and copies of $-u_0$ at the points of $Z^-$. By construction, the points of $Z$ which belong to the edge $[ z_{r^p_q}^p, z_{r^q_p}^q ]$ and which are not the end points, are balanced in the sense that
\begin{equation}
\sum_{z' \in N_z} \eta_z \, \eta_{z'} \, \Upsilon (|z'-z|) \, \frac{z'-z}{|z'-z|} =0 ,
\label{eq:balj}
\end{equation}
where, as in section 2, $N_z$ is the set of closest neighbors of $z$ in $Z$. In fact each of such a point has only two closest neighbors $z'$ and $z''$ such that $z'-z= z-z''$ and the identity follows at once. 

While, at points $z\in Z$ of the form $z = z^p_r \in \mathscr V^p$, we have 
\[
\sum_{z' \in N_z} \eta_z \, \eta_{z'} \, \Upsilon (|z'-z|) \, \frac{z'-z}{|z'-z|} =  \Upsilon (\ell) \, \left( {\bf f}_r^p + \frac{{\bf e} + i \, t \, p}{n_p} \right).
\]
where $n_p$ is the cardinal of $\mathscr V^p$. Indeed, it follows from the definition of $\alpha_\ell$ given in (\ref{eq:net-4}) that, if $z= z_r^p$ and $z' = z^p_{r'}$ are closest neighbors, where $r,r' \in \mathscr V^p$ , then 
\[
\eta_z \, \eta_{z'} \, \Upsilon (\ell - \lambda_{[r,r']})  =  \Upsilon (\ell) \, a^p_{[p,q]}
\]
while, if $z = z^{pq}_0$ and $z' = z^{pq}_1$, then 
\[
\eta_z \, \eta_{z'} \,  \Upsilon (|z'-z|) = \Upsilon(\ell) \, a_{[p,q]}
\]
and the identity follows from (c) and (d) in Proposition~\ref{pr:thisisit}. 

Now, for each $[p,q] \in \mathscr E$ and for each $ j=1, \ldots, 2m_{[p,q]} -1$, we choose a point $\tilde z_j^{[p,q]}$ close to the point $z_j^{[p,q]}$ and we define 
\[
\tilde Z : = \left\{ z^p_r \, : \, \forall  p \in \mathscr V^p,  \, \forall r \in  \mathscr V^p \right\} \cup \left\{ \tilde z_j^{[p,q]}  \, : \, \forall [p,q] \in \mathscr E , \quad \forall j=1, \ldots, 2m_{[p,q]} -1 \right\},
\]
We will assume that, for all $[p,q] \in \mathscr E$ and for all $j=1, \ldots, 2m_{[p,q]}-1$, we have 
\begin{equation}
| \tilde z_j^{[p,q]} - z_j^{[p,q]}   | \leq e^{- \gamma_0 \, \ell} ,
\label{eqdsz}
\end{equation} 
for some $\gamma_0 >0$ which will be fixed later on. Observe that we do not modify the points $z \in Z$ of the form $z^p_r$ but we only modify the points on the edges $[z^p_{r^p_q}, z^q_{r^q_p}]$.  We define a function $\tilde \eta : \tilde Z \to \{\pm 1\}$ by requiring that $\tilde \eta_{z} = \eta_z$ if $z=z^p_r$ for some $r \in \mathscr V^p$ and $\tilde \eta_{\tilde z_j^{[p,q]}} = \eta_{z_j^{[p,q]}}$.

The approximate solution $\tilde u$ is then given by
\begin{equation}
\tilde u : = \sum_{z \in \tilde Z} \tilde \eta_z \, u_0 (\cdot - z) ,
\label{eq:tildeu}
\end{equation}
where $\tilde \eta_z$ is the sign assigned to the vertex $z \in \tilde Z$. The set of closest neighbors of $z$ in $\tilde Z$ will be denoted by $\tilde N_z$.

\section{Linear analysis}

In this section, we keep the assumptions and notations introduced in \S \ref{se:dsds} and we study the operator
\[
\tilde L : =  \Delta -1 + 3\, \tilde u,
\]
where $\tilde u$ is defined in (\ref{eq:tildeu}).

The mapping properties of all the linear operators we will consider  rely on the following~:
\begin{lemma}
\label{le:injL0}
The bounded kernel of $L_0 : = \Delta -1+ 3u_0$ is spanned by $\partial_x u_0$ and $\partial_y u_0$.
\end{lemma}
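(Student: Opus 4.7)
The plan is the standard reduction to a family of radial ODEs by separation of variables, followed by mode-by-mode analysis. Since $u_0$ is radial, $L_0 = \Delta - 1 + 3u_0^2$ commutes with planar rotations, so any bounded $\phi \in \ker L_0$ admits a Fourier decomposition
\[
\phi(r,\theta) = \sum_{k \in \mathbb{Z}} \phi_k(r)\, e^{ik\theta},
\]
in which each $\phi_k$ is bounded on $(0,\infty)$ and satisfies the radial equation
\[
\phi_k'' + \frac{1}{r}\phi_k' + \left( 3 u_0^2 - 1 - \frac{k^2}{r^2} \right)\phi_k = 0. \qquad (*)
\]
It suffices to identify the bounded solutions of $(*)$ for each $k \in \mathbb{Z}$.

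For $k = \pm 1$, differentiating $\Delta u_0 - u_0 + u_0^3 = 0$ in $r$ shows that $u_0'$ solves $(*)$. At infinity this ODE is an exponentially small perturbation of the modified Bessel equation of order one, whose two independent solutions are $K_1$ (decaying) and $I_1$ (growing like $e^r/\sqrt{r}$); any second linearly independent solution of $(*)$ therefore grows and is unbounded. Hence the bounded solutions of the mode $|k|=1$ are exactly $\mathbb{R}\, u_0'$, which, reassembled in the plane, yields precisely $\partial_x u_0 = \cos\theta\, u_0'(r)$ and $\partial_y u_0 = \sin\theta\, u_0'(r)$.

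For $|k| \geq 2$, I would exploit the known $k=1$ solution $u_0'$, which is strictly negative on $(0,\infty)$, and set $\psi_k := \phi_k / u_0'$. Using both ODEs one computes directly
\[
\bigl(r\,(u_0')^2\,\psi_k'\bigr)' + (1 - k^2)\, r^{-1}\,(u_0')^2\,\psi_k = 0.
\]
Multiplying by $\psi_k$ and integrating over $(0,\infty)$, the boundary terms vanish thanks to $\phi_k = O(r^{|k|})$ near $0$ (Frobenius) and the exponential decay of $\phi_k$ and $u_0'$ at infinity, giving
\[
\int_0^\infty r\,(u_0')^2 (\psi_k')^2\, dr \; = \; (1-k^2)\int_0^\infty r^{-1}(u_0')^2\, \psi_k^2\, dr.
\]
For $|k| \geq 2$ the right-hand side is nonpositive while the left-hand side is nonnegative, so both vanish; thus $\psi_k$ is constant. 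Since $C u_0'$ satisfies $(*)$ with parameter $k$ only when $(1-k^2)C/r^2 \cdot u_0' \equiv 0$, i.e.\ $C = 0$, we conclude $\phi_k \equiv 0$.

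The genuine obstacle is the mode $k = 0$, the radial nondegeneracy of $u_0$. The preceding Wronskian trick fails because there is no positive radial solution of the $k=0$ linearized equation readily available to divide by, and the equation carries no scaling invariance from which a kernel element could be read off. I would invoke the classical ODE analysis: the unique solution of $(*)$ with $k=0$ that is regular at the origin decomposes at infinity as $\alpha K_0(r) + \beta I_0(r)$, and the claim reduces to showing $\beta \neq 0$. This is proved by the shooting/Wronskian method of Kwong and McLeod, which in the two-dimensional cubic case combines a Pohozaev-type identity for $u_0$ with the strict inequality $3 u_0^2 > u_0^2$ that distinguishes $L_0$ from the operator linearizing the nonlinear equation at its positive radial solution. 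Combining the three cases shows that the bounded kernel of $L_0$ is exactly the two-dimensional space spanned by $\partial_x u_0$ and $\partial_y u_0$.
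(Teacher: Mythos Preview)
The paper does not give a proof of this lemma at all; it simply writes ``We refer to [Ni--Takagi] for a proof of this result,'' and later (in the section on general nonlinearities) points specifically to Appendix~C of that reference for the nondegeneracy condition. Your argument is therefore strictly more than what the paper supplies.

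Your approach is the standard one and is correct. The Fourier decomposition is justified by the radiality of $u_0$; the identification of the $|k|=1$ modes with $u_0'$ is immediate; and your reduction for $|k|\geq 2$ via $\psi_k=\phi_k/u_0'$ leading to
\[
\bigl(r\,(u_0')^2\,\psi_k'\bigr)' + (1-k^2)\,r^{-1}(u_0')^2\,\psi_k = 0
\]
is a clean computation that checks out (the boundary terms do vanish: boundedness of $\phi_k$ forces exponential decay at infinity since the asymptotic equation has one growing and one decaying solution, and the Frobenius behaviour $\phi_k=O(r^{|k|})$ at the origin handles the other endpoint). For $k=0$ you rightly isolate the genuine difficulty---radial nondegeneracy of the ground state---and defer to the literature, which is exactly what the paper itself does. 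So your proposal is sound and in the same spirit as the cited proof, only spelled out.
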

We refer to \cite{nitakagi} for a proof of this result. 

Given $\delta \in {\bf R}$, we define the weighted space
\[
L^\infty_\delta ({\bf C}) : =  e^{Ê\delta \, \sqrt{1+|z|^2}} \, L^\infty ({\bf C}),
\]
and agree that 
\[
\| v \|_{L^\infty_\delta (\bf C)} : =  \left\| e^{Ê- \delta \, \sqrt{1+|z|^2}}\, v \right\|_{L^\infty (\bf C)}.
\] 
We have the~:
\begin{proposition}
\label{pr:invl0}
Assume that $\delta \in (-1,0)$. Then, for all $f \in L^\infty_\delta ({\bf C})$ there exists a unique $v \in L^\infty_\delta ({\bf C}) $ and $c \in {\bf C}$ such that 
\[
L_0 \, v + \langle c, \nabla u_0 \rangle_{\bf C} = f ,
\]
in ${\bf C}$ and 
\[
\iint_{\bf C} v \, \partial_x u_0 \, dx \, dy = \iint_{\bf C} v \, \partial_y u_0 \, dx \, dy = 0 .
\]
Moreover, 
\[
\| v \|_{L^\infty_\delta (\bf C)} + |c |  \leq C \, \| f \|_{L^\infty_\delta (\bf C)},
\]
for some constant $C >0$ which does not depend on $f$.
\end{proposition}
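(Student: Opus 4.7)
I would first establish the \emph{a priori} estimate (which yields uniqueness at once), and then deduce existence via the Fredholm alternative together with a weighted bootstrap. The complex number $c$ is pinned down by testing the equation against $\partial_x u_0$ and $\partial_y u_0$: since $L_0 \partial_i u_0 = 0$ by Lemma~\ref{le:injL0} and both functions lie in $L^1({\bf C})$ by (\ref{eq:exuo}), integration by parts forces $c_i\,\|\partial_i u_0\|_{L^2}^2 = \iint_{\bf C} f\,\partial_i u_0\, dx\, dy$, so $|c| \leq C\,\|f\|_{L^\infty_\delta}$. For the bound on $v$, I would argue by contradiction: if it fails, one extracts sequences $v_n, f_n, c_n$ with $\|v_n\|_{L^\infty_\delta} = 1$, $\|f_n\|_{L^\infty_\delta}\to 0$, each $v_n$ satisfying the orthogonality conditions, and $|c_n| \to 0$ by the step above. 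Interior elliptic regularity gives a $C^2_{\mathrm{loc}}$-limit $v_\infty$ solving $L_0 v_\infty = 0$; Lemma~\ref{le:injL0} places $v_\infty$ in the span of $\partial_x u_0, \partial_y u_0$, the orthogonality survives the limit, and so $v_\infty \equiv 0$. Hence the unit $L^\infty_\delta$-mass of $v_n$ must escape to infinity.

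The main obstacle, and the only delicate step, is ruling out this escape at infinity by a weighted maximum principle. The natural barrier is $W(z) := e^{\delta \sqrt{1+|z|^2}}$, which satisfies
\[
(\Delta - 1) W = \bigl(\delta^2 - 1 + \mathcal{O}(|z|^{-1})\bigr) \, W,
\]
so, using the exponential decay of $u_0^2$, one obtains $L_0 W \leq -\tfrac{1-\delta^2}{2}\,W$ outside some large disc $D(0, R_0)$ --- here the upper bound $\delta^2 < 1$ is used in an essential way. It is convenient to divide out the weight: setting $w_n := v_n\,e^{-\delta \sqrt{1+|z|^2}}$, we have $\|w_n\|_{L^\infty({\bf C})} = 1$, and $w_n$ satisfies a second-order linear equation whose zeroth-order coefficient tends to $\delta^2 - 1 < 0$ at infinity. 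Since $-1 + 3 u_0^2 < 0$ outside $D(0, R_0)$ as well, the standard maximum principle on unbounded domains gives
\[
\|w_n\|_{L^\infty({\bf C})} \leq C \, \bigl(\|w_n\|_{L^\infty(D(0, R_0))} + \|f_n\|_{L^\infty_\delta}\bigr),
\]
and both terms on the right tend to $0$ (the first by local convergence of $v_n$ to $0$, the second by hypothesis), contradicting $\|w_n\|_{L^\infty({\bf C})} = 1$.

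For existence, with $c$ chosen as above, the function $\tilde f := f - \langle c, \nabla u_0\rangle_{\bf C}$ is $L^2$-orthogonal to the kernel of $L_0$. Since $L_0 : H^2({\bf C}) \to L^2({\bf C})$ is self-adjoint and Fredholm of index $0$ --- its essential spectrum is $(-\infty, -1]$ and $0$ is an isolated eigenvalue with eigenspace spanned by $\partial_x u_0, \partial_y u_0$ (Lemma~\ref{le:injL0}) --- the Fredholm alternative produces a unique $v \in H^2$ with $L_0 v = \tilde f$ that is $L^2$-orthogonal to the kernel. Interior $L^p$ and Schauder estimates give $v$ locally bounded, and the same barrier comparison as above upgrades this to $v \in L^\infty_\delta$ with the bound $\|v\|_{L^\infty_\delta} \leq C\,\|f\|_{L^\infty_\delta}$. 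The lower bound $\delta > -1$ enters only to ensure that the orthogonality pairings $\iint_{\bf C} v\,\partial_i u_0\, dx\, dy$ are well-defined on $L^\infty_\delta$, while the upper bound $\delta < 0$ is what makes $W$ decay at infinity and enables the maximum principle comparison.
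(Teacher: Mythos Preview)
Your proof is correct and follows a genuinely different route from the paper. The paper proceeds constructively: it first solves $(\Delta-1)v + \langle c,\nabla u_0\rangle = h$ in the $H^1$-space orthogonal to $\partial_x u_0,\partial_y u_0$ via Lax--Milgram, then writes the full equation as $(I+K)(v,c)=\text{data}$ with $K$ compact (coming from multiplication by $3u_0^2$), invokes Fredholm alternative together with Lemma~\ref{le:injL0} for injectivity, and finally upgrades to the weighted space by comparison with the barrier $v_\epsilon(r)=e^{\delta r}+\epsilon e^{-\delta r}$, letting $\epsilon\to 0$. You instead separate the a~priori estimate (by the standard blow-up/contradiction argument, using Lemma~\ref{le:injL0} to kill the limit and the barrier to forbid escape of mass) from existence (self-adjoint Fredholm theory for $L_0$ on $L^2$, using that the essential spectrum is $(-\infty,-1]$). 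Your approach has the advantage of being the template that scales to the multi-bump operator $\tilde L$ in Proposition~\ref{pr:lana}; the paper's approach is slightly more self-contained and avoids spectral-theoretic input.

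Two small remarks. First, since $f$ is only assumed $L^\infty_\delta$, interior estimates give $W^{2,p}_{\mathrm{loc}}$ compactness rather than $C^2_{\mathrm{loc}}$; the limit $v_\infty$ is then smooth by regularity for the homogeneous equation, so nothing changes. Second, your closing sentence slightly misattributes the role of the endpoints: the barrier step already consumes the full condition $\delta^2<1$ (hence both $\delta>-1$ and $\delta<1$), while $\delta<0$ is what puts $L^\infty_\delta$ into $L^2$ so that the $H^2$--$L^2$ Fredholm framework applies; the orthogonality integrals $\iint v\,\partial_i u_0$ are in fact finite for any $\delta<1$. Neither point affects the argument.
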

\begin{proof}
We consider the Hilbert space
\[
H := \left\{ v \in H^1 ({\bf C} ) \, : \, \iint_{{\bf C}} \partial_{x} u_0  \, v \, dx\, dy  =  \iint_{{\bf C}}  \partial_{y} u_0  \, v \, dx\, dy   =  0\right\} .
\]
Assume that we are given $h \in L^2 ({\bf C})$. Standard arguments (i.e. Lax-Milgram's Theorem) imply that
\[
v \in H \longmapsto \frac{1}{2} \, \iint_{\bf C} \left( |\nabla v|^2 + v^2 - v \, h \right) \, dx\, dy  , 
\]
has a unique minimizer  $v \in H$ (here we implicitly use the fact that $\delta < 0$ so that the last term is a continuous linear functional defined in $H$). Then, $v$ is the unique weak solution of
\[
\Delta \, v  - v - h  \in \displaystyle   \mbox{Span} \, \left\{ \partial_x u_0, \partial_y u_0 \right\}  , 
\]
which belongs to $H$. In other words, if we define the operator 
\[
L_0^\flat  (v, c) : = \Delta v - v + \langle c , \nabla  u_0\rangle_{\bf C}, 
\]
we have obtained the existence and uniqueness of a solution of
\[
L_0^\flat (v, c) = h  , 
\]
with $v \in H$ and $c \in {\bf C}$.  The solvability of
\[
\Delta v - v + 3 \, u_0^2 \, v  + \langle c , \nabla  u_0\rangle_{\bf C} = h  , 
\]
in $H \times {\bf C}$ can then by rephrased in the invertibility of the operator $I + K$, where by definition
\begin{equation}
\label{eq:ft}
	K (v , c) : = (L_0^\flat) ^{-1}  (3 \, u_0^2Ê\, v)  . 
\end{equation}
Using the fact that $u_0$ decays exponentially at infinity, it is easy to check that the operator $K$ is compact, hence the invertibility of (\ref{eq:ft}) follows from the application of Fredholm theory. Since injectivity follows from the results of Lemma~\ref{le:injL0}. Fredholm alternative implies that $I + K$ is
therefore an isomorphism.

So far, we have obtained a function $v$  solution of $L_0 \, v + \langle c , \nabla  u_0\rangle_{\bf C}  = h$ which belongs to $H^1({\bf C})$ but elliptic regularity implies that $v \in L^\infty
({\bf C})$ and that 
\[
\| v\|_{L^\infty (\bf C)} \leq  C \, \| f\|_{L^\infty_\delta (\bf C)} ,
\] 
for some constant $C >0$.  We need to check that the solution $v$ has the correct behavior at infinity. To this aim, just remark that if we define
\[
v_\epsilon (r) : = e^{\delta r} + \epsilon \, e^{-\delta r},
\]
then 
\[
L_0 \, v_\epsilon \geq  - \frac{(1 - \delta^2)}{2} \, v_0 
\]
on the complement of the ball of radius $r_0$, provided is fixed large enough.  Hence, the function $\left( \|v\|_{L^\infty} + \frac{2}{1-\delta^2} \, \|Êf\|_{L^\infty_\delta} \right) \, v_\epsilon$  is certainly a super-solution for our problem on the complement of the disc of radius $r_0 >0$ and, passing to the limit as $\epsilon$ tends to $0$, this proves that there exists a constant $C >0$ such that 
\[
\| v\|_{L^\infty_\delta  (\bf C)} \leq  C \, \| f\|_{L^\infty_\delta (\bf C)} ,
\] 
This completes the proof of the existence of the solution. The uniqueness and the corresponding 
estimate follow at once from the result of Lemma~\ref{le:injL0}.
\end{proof}

Building on the previous result, we prove a similar result for the operator $\tilde L$ (see also  \cite{MPW} for more details). First we need to define weighted spaces adapted to $\tilde L$.  Given $\delta <0$, we define the weighted space
\[
{\mathbb L}^\infty_\delta ({\bf C}) : =  \left( \sum_{z\in  Z} e^{ \delta \, \sqrt{1+|\cdot - z|^2}}\right) \, L^\infty ({\bf C}),
\]
with the natural associated norm which is defined to be
\[
\| v \|_{{\mathbb L}^\infty_\delta ({\bf C})} : =  \left\| \left( \sum_{z\in  Z} e^{\delta \, \sqrt{1+|\cdot - z|^2}}\right)^{-1} \, v \right\|_{L^\infty (\bf C)}.
\] 
Observe that we could have used the points of $\tilde Z$ instead of the points of $Z$ to define these spaces and this would not have changed anything since the respective norms would have been uniformly equivalent independently of $\ell \gg 1$. 

We define a cutoff function 
\[
\chi (s) :=\left\{
\begin{array}{rllll}
1 \quad \text{if} & \quad s\leq - 1\\[3mm]
0 \quad \text{if} & \quad s\geq 1, 
\end{array}
\right.
\]
and, for all $\bar s >0$ we define  
\[
\chi_{\bar s} (s) : =   \chi \left( s - \bar s \right).
\]
We also define for all $z \in \tilde Z$, the vector field 
\begin{equation}
\Xi_z  :  = \chi_{\ell/4}  (|\cdot -z|) \, \nabla u_0 (\cdot -z),
\label{eq:xiz}
\end{equation}
being understood that we identify vectors in ${\bf R}^2$ with complex numbers. 

The main result of this section reads~:
\begin{proposition}
Assume that $\delta \in (-1,0)$. Then, there exists $\ell_*>0$ (larger than or equal to the one defined in Proposition~\ref{pr:thisisit}) and, for all $\ell \geq \ell_*$, there exists a linear operator 
\[
\tilde G : {\mathbb L}^\infty_\delta ({\bf C})  \to {\mathbb L}^\infty_\delta ({\bf C}) \times {\bf C}^n ,
\]
where $n$ is the cardinal of $Z$, such that, for all $f \in {\mathbb L}^\infty_\delta ({\bf C})$, $\tilde G\, f = : (v , (c_z)_{z \in Z})  $ satisfies 
\[
\tilde L \, v + \sum_{z\in \tilde Z} \langle c_z, \Xi_z \rangle_{\bf C} = f ,
\]
in ${\bf C}$. Moreover, 
\[
\| v \|_{{\mathbb L}^\infty_\delta ({\bf C})} + \sup_{z \in \tilde Z} |c_z|  \leq C \, \| f \|_{{\mathbb L}^\infty_\delta ({\bf C})},
\]
for some constant $C >0$ which does not depend on $f$ and, if, for $i=1,2$, $\tilde G^{(i)}$ is the right inverse corresponding to $\tilde z_j^{[p,q], (i)}$, we have
\begin{equation}
\interleave \tilde G^{(2)} - \tilde G^{(1)} \interleave \leq C \, \sup_{[p,q] \in \mathscr E} \, \sup_{j=1, \ldots, 2m_{[p,q]}-1}   \left| \tilde z_j^{[p,q], (2)} - \tilde z_j^{[p,q], (1)} \right|,
\label{eq:G2mG1}
\end{equation}
for some constant $C>0$.
\label{pr:lana}
\end{proposition}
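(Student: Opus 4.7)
The plan is to build a right inverse for $\tilde L$ by a standard gluing/Neumann-series argument, using Proposition~\ref{pr:invl0} as the building block. Throughout, we exploit the fact that near each $z \in \tilde Z$ the potential $3\tilde u^2$ differs from $3u_0^2(\cdot-z)$ by an exponentially small term, because the other bumps are at distance at least $\ell - \mathcal O(1)$ and $u_0$ decays like $e^{-r}r^{-1/2}$; moreover, squaring kills the signs $\tilde \eta_z$.

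First, I would fix a smooth partition of unity $(\chi_z)_{z \in \tilde Z}$, with $\chi_z$ supported in the ball of radius, say, $\ell/2$ around $z$ and equal to $1$ on the ball of radius $\ell/4$ around $z$, together with a cutoff $\chi_\infty$ supported away from $\bigcup_z B(z, \ell/3)$, chosen so that $\sum_z \chi_z + \chi_\infty = 1$. Given $f \in \mathbb L^\infty_\delta(\mathbf C)$, I would decompose $f = \sum_z \chi_z f + \chi_\infty f$, apply Proposition~\ref{pr:invl0} to $\chi_z f(\cdot + z)$ to produce $(v_z, c_z)$ with $L_0 v_z + \langle c_z, \nabla u_0\rangle_{\mathbf C} = \chi_z f(\cdot+z)$ and orthogonality to $\partial_x u_0, \partial_y u_0$, and handle $\chi_\infty f$ by solving $(\Delta - 1) w = \chi_\infty f$ (which is trivially invertible on $L^\infty_\delta$ for $\delta \in (-1,0)$, since $3\tilde u^2$ is exponentially small on the support of $\chi_\infty$). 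Setting $v := \sum_z \tilde\chi_z v_z(\cdot - z) + w$ with a slightly larger cutoff $\tilde \chi_z$ and accepting the vectors $c_z$ as outputs, I would obtain an approximate solution
\[
\tilde L v + \sum_{z\in\tilde Z} \langle c_z, \Xi_z\rangle_{\mathbf C} = f + \mathcal R(f),
\]
where the remainder $\mathcal R(f)$ collects (a) commutators $[\Delta, \tilde\chi_z] v_z$, (b) differences $(3\tilde u^2 - 3u_0^2(\cdot -z))\, \tilde\chi_z v_z$, (c) truncation errors from replacing $\nabla u_0(\cdot - z)$ by $\Xi_z$, and (d) the cross term $3\tilde u^2 \, w$.

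The main estimate to establish is $\|\mathcal R\|_{\mathbb L^\infty_\delta \to \mathbb L^\infty_\delta} \le C\, e^{-\gamma \ell}$ for some $\gamma > 0$. Terms (a) and (c) are localized in annuli where $v_z$ and $\nabla u_0$ are exponentially small in $\ell$; term (b) is estimated using the fact that on $B(z,\ell/2)$ one has $|\tilde u - \tilde\eta_z u_0(\cdot-z)| \le C e^{-\ell/2}$, so $|\tilde u^2 - u_0^2(\cdot -z)| \le C e^{-\ell/2}$; term (d) uses exponential smallness of $\tilde u$ off the bumps. Combining these with $\|v_z\|_{L^\infty_\delta} + |c_z| \le C \|\chi_z f\|_{L^\infty_\delta}$ from Proposition~\ref{pr:invl0} yields the claim for $\ell$ large. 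Then $I + \mathcal R$ is invertible by Neumann series on $\mathbb L^\infty_\delta(\mathbf C)$, and the desired right inverse is $\tilde G := \mathcal A \circ (I + \mathcal R)^{-1}$, where $\mathcal A$ is the approximate-inverse map $f \mapsto (v, (c_z))$ just constructed; the bounds on $\|v\|_{\mathbb L^\infty_\delta}$ and $|c_z|$ follow. The hard part will be bookkeeping the cutoff geometry carefully enough to ensure that the constants are uniform in $\ell$ and in the small perturbations $\tilde z_j^{[p,q]}$ of $z_j^{[p,q]}$ allowed by \eqref{eqdsz}.

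Finally, for the Lipschitz dependence \eqref{eq:G2mG1}, I would observe that when one replaces the configuration $\tilde Z^{(1)}$ by $\tilde Z^{(2)}$, the approximate inverse $\mathcal A$ changes by an operator whose norm is controlled by $\sup |\tilde z_j^{[p,q],(2)} - \tilde z_j^{[p,q],(1)}|$: the building blocks from Proposition~\ref{pr:invl0} are fixed (translates of the same local inverse), and only the translation points, the cutoffs $\chi_z$ and $\tilde\chi_z$, and the cross-terms involving $\tilde u$ move. Each of these dependencies is smooth with derivative bounded uniformly in $\ell$ thanks to the separation $\ell \gg 1$ of the points, so $\|\mathcal A^{(2)} - \mathcal A^{(1)}\|$ and $\|\mathcal R^{(2)} - \mathcal R^{(1)}\|$ are both $\le C\sup |\tilde z^{(2)} - \tilde z^{(1)}|$. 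Writing $\tilde G^{(2)} - \tilde G^{(1)} = (\mathcal A^{(2)} - \mathcal A^{(1)})(I+\mathcal R^{(1)})^{-1} - \mathcal A^{(2)}(I+\mathcal R^{(2)})^{-1}(\mathcal R^{(2)} - \mathcal R^{(1)})(I+\mathcal R^{(1)})^{-1}$ and applying the uniform bounds gives \eqref{eq:G2mG1}.
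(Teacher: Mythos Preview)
Your approach is correct and essentially the same as the paper's: both build an approximate right inverse by localizing $f$ near each $z\in\tilde Z$ and applying Proposition~\ref{pr:invl0}, inverting $\Delta-1$ on the remaining piece, gluing with cutoffs, and then closing by a Neumann-series/perturbation argument since the error operator has norm $\mathcal O(e^{-\gamma\ell})$. The only cosmetic differences are that the paper uses cutoffs at radii $\ell/8,\ell/4,\ell/2$ (so the local supports stay well separated) and feeds the commutator terms $[L_0,\chi_{\ell/2}]v_z$ directly into the right-hand side of the $(\Delta-1)$ equation rather than leaving them in the remainder; your more explicit treatment of the Lipschitz estimate \eqref{eq:G2mG1} via $\tilde G^{(2)}-\tilde G^{(1)} = (\mathcal A^{(2)}-\mathcal A^{(1)})(I+\mathcal R^{(1)})^{-1} - \mathcal A^{(2)}(I+\mathcal R^{(2)})^{-1}(\mathcal R^{(2)}-\mathcal R^{(1)})(I+\mathcal R^{(1)})^{-1}$ is fine and in fact more detailed than what the paper writes.
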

\begin{proof}
We decompose $f$ as
\[
f = \left( 1- \sum_{z \in Z} \chi_{\ell/4} (|\cdot -z|) \right) \, f + \sum_{z \in Z} \chi_{\ell/4} (|\cdot - z|) \, f
\]
For each $z \in Z$, we use the result of Proposition~\ref{pr:invl0} to solve
\[
L_0 v_z +  \langle c_z, \nabla u_0  \rangle_{\bf C} = \chi_{\ell/4}  \, f(\cdot + z).
\]
We know that we have
\[
\| v_z \|_{L^\infty_\delta (\bf C)} +  |c_z|  \leq C \, \| f \|_{L^\infty_\delta (\bf C ; Z)},
\]
with similar estimates for the first partial derivatives of $v_z$.

Next, we solve
\[
(\Delta -1) v_\infty = \left( 1- \sum_{z \in Z} \chi_{\ell/4} (|\cdot -z|) \right) \, f  -  \sum_{z \in Z} [ÊL_0, \chi_{\ell/2}] \, v_z (\cdot -z),
\]
where $[A,B]$ denotes the commutator of $A$ and $B$.  Since $(\Delta -1) \, 1 =-1$, the maximum principle implies that 
\[
\| v_\infty\|_{L^\infty ({\bf C})} \leq C \, e^{\delta \ell/4}Ê\, \| f\|_{{\mathbb L}^\infty_\delta ({\bf C})}.
\]
Now, observe that, provided $\ell$ is chosen large enough, the function 
\[
v_1 (z) : =  \sum_{z' \in Z} \, e^{\delta \sqrt{1+|z - z'|^2}} ,
\]
satisfies 
\[
(\Delta -1) \, v_1 \leq - \frac{(1 - \delta^2)}{2} \, v_1,
\]
away from the discs of radius $\ell/4$ centered at the points of $Z$ and the maximum principle implies that 
\[
e^{ -\delta \ell/4}  \, \| v_\infty\|_{L^\infty ({\bf C})}  +  \| v_\infty\|_{{\mathbb L}^\infty_\delta ({\bf C})} \leq C \, \| f\|_{{\mathbb L}^\infty_\delta ({\bf C})},
\]
with similar estimates for the first partial derivatives of $v_\infty$.

We then define 
\[
v : =  \left( 1- \sum_{z \in Z} \chi_{\ell/8} (|\cdot -z|) \right)  \, v_\infty + \sum_{z \in Z} \chi_{\ell/2} (|\cdot - z|) \, v_z (\cdot + z).
\]
Using the equations satisfied by $v_z$ and $v_\infty$, one gets
\begin{eqnarray*}
\tilde L v-\sum_{z\in \tilde Z}\langle c_z,\Xi_z\rangle_{\bf C}-f =[\tilde L,\chi_{\ell/8}]v_\infty+3\tilde u^2(1-\chi_{\ell/8})v_\infty+\sum_{z\in\tilde Z}3(\tilde u^2-u^2_z)\chi_{\ell/2}v_z,
\end{eqnarray*}
and, using the estimates satisfied by $v_z$ and $v_\infty$, one checks that 
\[
\left\| \, \tilde L v -\sum_{z\in \tilde Z}  \langle c_z, \Xi_z \rangle_{\bf C}  - f \, \right\|_{{\mathbb L}^\infty_\delta ({\bf C})} \leq C \, e^{-\kappa \ell} \, \| f\|_{{\mathbb L}^\infty_\delta ({\bf C})},
\]
for some $\kappa >0$ and also  that 
\[
\| v \|_{{\mathbb L}^\infty_\delta ({\bf C})} + \sup_{z \in \tilde Z} |c_z|  \leq C \, \| f \|_{{\mathbb L}^\infty_\delta ({\bf C})},
\]
for some constant $C >0$ which does not depend on $f$. The result then follows from a simple perturbation argument, provided $\ell$ is taken large enough.
\end{proof}

\section{Perturbation of the approximate solution}

In this section, we keep the assumptions and notations introduced in \S \ref{se:dsds} and we assume that $\ell \geq \ell_*$ and $\kappa \geq \kappa_*\, \ell^3$ so that the results of the previous sections do hold. The solution to (\ref{eq:nls}) we are looking for has the  form $u=\tilde u + v$, where $v$ is a small function, in a sense to be made precise later on and where $\tilde u$ is defined in (\ref{eq:tildeu}).  We have already defined 
\[
\tilde L := \Delta -1 + 3 \, \tilde u^2 ,
\]
and we now define the error
\[
\tilde E := \Delta \tilde u- \tilde u +\tilde u^3,
\]
as well as the nonlinear functional
\[
\tilde Q (v) := (\tilde u +v)^3 - \tilde u^3-  3 \, \tilde u^2 \, v ,
\]
which, given our nonlinearity simplifies into 
\[
\tilde Q (v) := 3\, \tilde u \, v^2 + v^3 .
\]
With these notations, the solvability of (\ref{eq:nls}) reduces to find  a function $v$ and complex numbers $c_z$, for $z\in \tilde Z$, solutions of the nonlinear problem
\[
\tilde L \, v+ \tilde E + \tilde Q (v) = \sum_{z\in \tilde Z}  \langle c_z, \Xi_z \rangle_{\bf C} .
\]
where $\Xi_z$ has been defined in (\ref{eq:xiz}). Then, we will explain how to find the points $\tilde z_j^{[p,q]} $ as defined in (\ref{eqdsz}) and the forces ${\bf f}_r^p\in {\bf C}$ so that $F_z=0$ for all $z \in \tilde Z$.

For the time being, the main purpose of this section is to prove the~:
\begin{proposition}
There exists $\ell_* >0$ (larger than or equal to the one defined in Proposition~\ref{pr:lana})  such that for all $\ell \geq \ell_*$, there exists $v \in {\mathbb L}^\infty_\delta ({\bf C} ; Z)$ and, for each $z\in \tilde Z$ there exits $F_z \in {\bf C}$ such that the function $u :=  \tilde u + v$ solves
\[
\Delta u -u + u^3 =  \sum_{z\in \tilde Z}  \langle F_z, \Xi_z \rangle_{\bf C},
\]
and 
\[
\| v \|_{{\mathbb L}^\infty_\delta ({\bf C})} + \sup_{z \in \tilde Z} \, |F_z| \leq  C \,  \Upsilon (\ell),
\]
for some constant $C>0$.  Moreover the function $v$  and the vectors $F_z$ depend continuously on the forces ${\bf f}^p_r$ given in the statement of Proposition~\ref{pr:thisisit} and depend smoothly on the points $\tilde z_j^{[p,q]}$ satisfying (\ref{eqdsz}). In particular, if  the function $v^{(i)}$ is the solution corresponding to the points $\tilde z_j^{[p,q], (i)}$, we have
\begin{equation}
\| v^{(2)} - v^{(1)} \|_{{\mathbb L}^\infty_\delta ({\bf C})} \leq C \,  \Upsilon (\ell) \, \sup_{[p,q] \in \mathscr E} \, \sup_{j=1, \ldots, 2m_{[p,q]}-1}   \left| \tilde z_j^{[p,q], (2)} - \tilde z_j^{[p,q], (1)} \right|,
\label{eq:v2mv1}
\end{equation}
for some constant $C>0$.
\label{pr:nla}
\end{proposition}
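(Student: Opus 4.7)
The plan is to recast the equation as a fixed point problem and invoke a contraction mapping argument using the right inverse $\tilde G$ built in Proposition~\ref{pr:lana}. Writing $\tilde G (h) =: (G_1 h, (G_{2,z} h)_{z \in \tilde Z})$, solving
\[
\tilde L\, v + \tilde E + \tilde Q (v) = \sum_{z \in \tilde Z} \langle c_z, \Xi_z \rangle_{\bf C}
\]
amounts to finding $v \in {\mathbb L}^\infty_\delta({\bf C})$ with
\[
v = - G_1 \bigl( \tilde E + \tilde Q(v) \bigr), \qquad F_z = - G_{2,z} \bigl( \tilde E + \tilde Q(v) \bigr).
\]

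The first ingredient is the a priori estimate $\|\tilde E\|_{{\mathbb L}^\infty_\delta({\bf C})} \leq C\, \Upsilon(\ell)$. This is obtained from the construction of $\tilde u$ in (\ref{eq:tildeu}): near any $z_0 \in \tilde Z$, one writes $\tilde u = \tilde \eta_{z_0} u_0(\cdot - z_0) + w$ with $w$ controlled, in a neighborhood of $z_0$, by a sum of $u_0(\cdot - z')$ for the other points $z' \in \tilde Z$. Using that $u_0$ solves (\ref{eq:nls}) and expanding $\tilde u^3$, the leading contribution of $\tilde E$ near $z_0$ is a sum of interaction terms $u_0^2(\cdot - z_0)\, u_0(\cdot - z')$. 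Since the mutual distances between points of $\tilde Z$ are at least $\ell - O(1)$, the asymptotic (\ref{eq:exuo}) together with the definition (\ref{martin}) of the interaction function $\Upsilon$ yields the stated bound.

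The second ingredient is nonlinear. Since $\tilde u$ is uniformly bounded on $\bf C$ and $\delta < 0$, the weighted norm dominates the sup norm, and a direct computation gives
\[
\|\tilde Q(v)\|_{{\mathbb L}^\infty_\delta} \leq C \bigl( \|v\|_{{\mathbb L}^\infty_\delta}^2 + \|v\|_{{\mathbb L}^\infty_\delta}^3 \bigr),
\]
together with the corresponding Lipschitz estimate on differences $\tilde Q(v_2) - \tilde Q(v_1)$ with coefficient $C(\|v_1\|+\|v_2\|+\|v_1\|^2+\|v_2\|^2)$. Setting $R := C_\star\, \Upsilon(\ell)$ with $C_\star$ a sufficiently large multiple of the operator norm of $\tilde G$, the map $v \mapsto - G_1(\tilde E + \tilde Q(v))$ sends the closed ball of radius $R$ in ${\mathbb L}^\infty_\delta({\bf C})$ into itself and is a contraction provided $\ell \geq \ell_*$ is chosen large enough so that $R$ is small. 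The Banach fixed point theorem then delivers $v$, and the definition of $F_z$ combined with the estimate on $\tilde G$ yields the bound on $\sup_z |F_z|$.

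The main obstacle is the dependence on the parameters, in particular the Lipschitz estimate (\ref{eq:v2mv1}). Continuity in the forces ${\bf f}^p_r$ comes for free, since the network, the subnetworks, and hence $\tilde u$ and $\tilde E$, depend continuously on ${\bf f}^p_r$ through Proposition~\ref{pr:thisisit}. Smoothness in the extra parameters $\tilde z_j^{[p,q]}$ is clear since these enter only through smooth translations $u_0(\cdot - \tilde z_j^{[p,q]})$. To obtain (\ref{eq:v2mv1}), let $v^{(i)}$ denote the two solutions and subtract the fixed point identities to get
\begin{align*}
v^{(2)} - v^{(1)} = {}& - G_1^{(2)} \bigl( \tilde E^{(2)} - \tilde E^{(1)} \bigr) \\
& - \bigl( G_1^{(2)} - G_1^{(1)} \bigr) \bigl( \tilde E^{(1)} + \tilde Q(v^{(1)}) \bigr) \\
& - G_1^{(2)} \bigl( \tilde Q(v^{(2)}) - \tilde Q(v^{(1)}) \bigr).
\end{align*}
The last term is a contraction contribution which is absorbed on the left since $\|v^{(i)}\| \leq R$ is small. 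The first two terms are estimated using (\ref{eq:G2mG1}) and the pointwise observation that shifting a point $z_j^{[p,q]}$ by $\varepsilon$ changes $u_0(\cdot - z_j^{[p,q]})$ by $O(\varepsilon\, |\nabla u_0|)$, so that near every $z_0 \in \tilde Z$ the error $\tilde E^{(2)} - \tilde E^{(1)}$ is, at main order, of size $\Upsilon(\ell)$ times the displacement. This yields the desired bound.
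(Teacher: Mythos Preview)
Your proof is correct and follows essentially the same route as the paper: the paper isolates the estimate $\|\tilde E\|_{{\mathbb L}^\infty_\delta}\le C_0\,\Upsilon(\ell)$ and the Lipschitz bound for $\tilde Q$ as two short lemmas, then invokes the contraction mapping principle in the ball of radius $2C_0\,\Upsilon(\ell)$ exactly as you do, and obtains (\ref{eq:v2mv1}) by differencing the fixed point equations and using (\ref{eq:G2mG1}). One minor point you glossed over is that $\tilde Q$ itself depends on $\tilde u$ and hence on the points $\tilde z_j^{[p,q]}$, so the last term in your difference should really be $\tilde Q^{(2)}(v^{(2)})-\tilde Q^{(1)}(v^{(1)})$; the extra piece $3(\tilde u^{(2)}-\tilde u^{(1)})(v^{(1)})^2$ is harmless since it is of order $\Upsilon(\ell)^2$ times the displacement.
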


We begin with the~:
\begin{lemma}
Assume that $\delta \in (-1, 0)$ is fixed. Then, there exists a constant $C_0 >0$, independent of $\ell\geq \ell_*$ and all parameters of the construction, such that 
\[
\| \tilde E\|_{{\mathbb L}^\infty_\delta ({\bf C})} \leq C_0 \, \Upsilon (\ell) .
\]
\end{lemma}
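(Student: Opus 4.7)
The plan is to first observe that the expression for $\tilde{E}$ simplifies dramatically. Each summand $u_z := \tilde\eta_z\, u_0(\cdot - z)$ satisfies $(\Delta -1) u_z + u_z^3 = 0$, because $\tilde\eta_z \in \{\pm 1\}$ gives $\tilde\eta_z^3 = \tilde\eta_z$. Summing these equations,
\[
\tilde E = \Delta \tilde u - \tilde u + \tilde u^3 = \tilde u^3 - \sum_{z\in \tilde Z} \tilde\eta_z\, u_0^3(\cdot - z),
\]
so the error is purely an algebraic interaction term, not a differential one. Note also that the norms $\mathbb L^\infty_\delta$ built from $Z$ and from $\tilde Z$ are uniformly equivalent by \eqref{eqdsz}, so I may work with $\tilde Z$ throughout.

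Next I would carry out a Voronoi decomposition: for each $x \in {\bf C}$, pick $z_0=z_0(x) \in \tilde Z$ minimizing $|x-z|$ and set $w := \tilde u - u_{z_0} = \sum_{z \neq z_0} u_z$. Using the identity above and $(u_{z_0}+w)^3 = u_{z_0}^3 + 3u_{z_0}^2 w + 3 u_{z_0} w^2 + w^3$,
\[
\tilde E = 3\, u_{z_0}^2 \, w  + 3\, u_{z_0}\, w^2 + w^3 - \sum_{z \neq z_0} \tilde \eta_z\, u_0^3(\cdot - z).
\]
The dominant piece is $3 u_{z_0}^2 w$; the three other pieces will turn out to be of strictly smaller order.

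The key pointwise estimate, in the Voronoi cell of $z_0$, is for a single term $u_{z_0}^2(x)\, u_z(x)$ with $z \in \tilde Z \setminus \{z_0\}$. Using \eqref{eq:exuo}, $|u_0(r)| \leq C\, e^{-r}(1+r)^{-1/2}$, and the triangle inequality $|x-z_0|+|x-z| \geq |z-z_0| \geq \ell - C$,
\[
|u_{z_0}^2(x)\, u_z(x)| \leq C\, e^{-|x-z_0|}\, e^{-(|x-z_0|+|x-z|)}\, (1+|x-z|)^{-1/2}.
\]
Since $x$ lies in the Voronoi cell, $|x-z| \geq |x-z_0| \geq (\ell-C)/2$ when $z$ is a closest neighbor, so $(1+|x-z|)^{-1/2} \leq C \ell^{-1/2}$; this is exactly where the half-power $\ell^{-1/2}$ that matches $\Upsilon(\ell) \sim e^{-\ell} \ell^{-1/2}$ comes from. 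Summing over the bounded number of closest neighbors of $z_0$ and using \eqref{eq:restrict} to see that the non-closest neighbors contribute at most $O(e^{-(1+\delta)\ell})$ (hence of strictly lower order), I obtain
\[
|3\, u_{z_0}^2\, w(x)| \leq C\, e^{-|x-z_0|}\, \Upsilon(\ell).
\]
The remaining terms $u_{z_0} w^2$, $w^3$ and $\sum_{z \neq z_0} u_z^3$ pick up at least one additional factor of $u_{z'}(x)$ with $z' \neq z_0$, which is bounded by $C\, e^{-(\ell-|x-z_0|)/2}$ in the Voronoi cell; these contribute at a strictly lower exponential order in $\ell$, so they are absorbed.

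To conclude, I would divide the pointwise bound $|\tilde E(x)| \leq C\, e^{-|x-z_0|}\, \Upsilon(\ell)$ by the weight $\sum_{z\in Z}e^{\delta\sqrt{1+|x-z|^2}} \geq e^{\delta \sqrt{1+|x-z_0|^2}}$. Since $\delta \in (-1,0)$ gives $1+\delta > 0$, the quotient $e^{-|x-z_0|}\, e^{-\delta\sqrt{1+|x-z_0|^2}}$ is uniformly bounded, yielding $\|\tilde E\|_{\mathbb L^\infty_\delta} \leq C_0\, \Upsilon(\ell)$. The main delicacy is not losing the $\ell^{-1/2}$ factor: it is crucial that $x$'s Voronoi-cell membership forces $|x-z| \geq (\ell-C)/2$ for all other vertices, which is what pins the estimate to $\Upsilon(\ell)$ rather than the weaker $e^{-\ell}$ one would get by triangle inequality alone.
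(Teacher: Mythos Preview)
Your argument is correct and follows essentially the same route as the paper: both proofs begin with the algebraic simplification $\tilde E=\tilde u^3-\sum_{z}\tilde\eta_z u_0^3(\cdot-z)$, then localize near a point of $\tilde Z$ and use the exponential decay of $u_0$ together with the $\ell$-separation of the points. The paper localizes in balls of radius $\ell/2$ and treats the complement separately; your Voronoi decomposition is a minor (and slightly cleaner) variant of the same idea, and you are more explicit about where the crucial $\ell^{-1/2}$ factor comes from.

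One small slip to fix: the chain ``$|x-z|\geq |x-z_0|\geq(\ell-C)/2$'' is not correct as written, since $|x-z_0|$ can be arbitrarily small in the Voronoi cell (take $x=z_0$). What you actually want, and what suffices for your conclusion $(1+|x-z|)^{-1/2}\leq C\ell^{-1/2}$, is the inequality $|x-z|\geq(\ell-C)/2$ alone. This follows from the Voronoi property $|x-z|\geq|x-z_0|$ combined with the triangle inequality $|x-z|+|x-z_0|\geq|z-z_0|\geq\ell-C$, so that $2|x-z|\geq\ell-C$.
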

\begin{proof}
We start from the fact that
\[
\tilde E =  \left( \sum_{z' \in \tilde Z} \eta_{z'}  u_0 (\cdot -z')\right)^3 - \sum_{z' \in \tilde Z} \left( \eta_{z'}  u_0 (\cdot -z')\right)^3.
\]

We then estimate $\tilde E$ near a given point $z\in\tilde Z$. In a ball of radius $\ell/2$ centered at $z$, we can write
\[
\tilde E = \left( \eta_{z}  u_0 (\cdot -z) +  \sum_{z' \neq z} \eta_{z'}  u_0 (\cdot -z') \right)^3 - \left( \eta_{z}  u_0 (\cdot -z) \right)^3  - \sum_{z' \neq z} \left( \eta_{z'}  u_0 (\cdot -z')\right)^3, 
\]
and hence, we get
\[
| \tilde E |\leq  C \, \Upsilon(\ell) \, e^{\delta |\cdot-z|} \leq C \, \Upsilon(\ell) \, \sum_{z'\in \tilde Z}e^{\delta |\cdot-z'|} ,
\]
for some constant $C >0$. While, away from the balls of radius $\ell/2$ centered at the points of $\tilde Z$, we take the advantage that $u_0$ decays exponentially fast to $0$ at infinity, to prove that
\begin{eqnarray*}
|\tilde E| \leq  C \sum_{z\in \tilde Z}\ell^{-3/2}e^{-3|\cdot-z|} \leq  C \Upsilon(\ell )\sum_{z\in \tilde Z}e^{\delta |\cdot-z|},
\end{eqnarray*}
for some constant $C >0$. The estimate for $\tilde E$ then follows at once. Observe that the estimate is achieved near the points of $\tilde Z$. 
\end{proof}

We will also need the 
\begin{lemma}
Assume that $\delta \in (-1, 0)$ is fixed. Then, there exists a constant $C_1 >0$, independent of $\ell\geq \ell_*$ and all parameters of the construction, such that 
\[
\| \tilde Q(v') - \tilde Q (v) \|_{{\mathbb L}^\infty_\delta ({\bf C})} \leq C_1 \,  \Upsilon (\ell) \, \| v'-v \|_{{\mathbb L}^\infty_\delta ({\bf C})},
\]
provided $\| v' \|_{{\mathbb L}^\infty_\delta ({\bf C})} \leq  2 \, C_0 \,  \Upsilon (\ell)$,
\end{lemma}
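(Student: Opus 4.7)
The plan is to exploit the fact that $\tilde Q$ is cubic and use the algebraic identity
\[
\tilde Q(v') - \tilde Q(v) = 3\tilde u\,(v'-v)(v'+v) + (v'-v)(v'^2+v'\,v+v^2),
\]
so that the proof reduces to pointwise bounds of each factor against the weight
\[
W(x) := \sum_{z \in Z} e^{\delta \sqrt{1+|x-z|^2}},
\]
which defines the $\mathbb{L}^\infty_\delta$ norm. The first preliminary step I would carry out is to verify that $W$ is \emph{uniformly bounded} in $\ell$. This uses the separation property of $Z$: points lie at distance at least $\ell-C$ apart (or further), so counting in annuli of width $\ell$ about any fixed $x$ shows that at distance $k\ell$ there are $O(k)$ points of $Z$, each contributing $e^{-|\delta| k\ell}$ to $W(x)$. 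The resulting sum $\sum_k k\, e^{-|\delta|k\ell}$ is $O(\ell\, e^{-|\delta|\ell})$ and hence tends to $0$ as $\ell \to \infty$. Thus, for $\ell$ large, $W(x) \le C_W$ uniformly, and in particular $W^k \le C_W^k$ for any fixed $k$.

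The second preliminary is the pointwise bound $|\tilde u(x)| \le C\, W(x)$, which follows from the exponential decay (\ref{eq:exuo}) of $u_0$: for $\delta\in(-1,0)$, $e^{-|x-z|} \le C\, e^{\delta\sqrt{1+|x-z|^2}}$, and summing over $z\in\tilde Z$ gives the claim. By definition of the weighted norm, $|v(x)| \le \|v\|_{\mathbb{L}^\infty_\delta} W(x)$ and likewise for $v'$. Combining these yields, for the quadratic piece,
\[
\bigl|3\tilde u\,(v'-v)(v'+v)\bigr| \;\le\; 3C\, \|v'-v\|_{\mathbb{L}^\infty_\delta}\, \bigl(\|v'\|_{\mathbb{L}^\infty_\delta} + \|v\|_{\mathbb{L}^\infty_\delta}\bigr)\, W(x)^3,
\]
and for the cubic piece,
\[
\bigl|(v'-v)(v'^2+v'v+v^2)\bigr| \;\le\; \bigl(\|v'\|_{\mathbb{L}^\infty_\delta}+\|v\|_{\mathbb{L}^\infty_\delta}\bigr)^2 \|v'-v\|_{\mathbb{L}^\infty_\delta}\, W(x)^3.
\]
Dividing by $W(x)$ and using $W^2 \le C_W^2$, we obtain estimates in the $\mathbb{L}^\infty_\delta$ norm of the form $C\,(\|v\|+\|v'\|)\,\|v'-v\|$ and $C\,(\|v\|+\|v'\|)^2\,\|v'-v\|$, respectively.

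Finally, plug in the a priori bound $\|v\|_{\mathbb{L}^\infty_\delta}, \|v'\|_{\mathbb{L}^\infty_\delta} \le 2C_0\,\Upsilon(\ell)$ (the hypothesis, understood to apply symmetrically to both arguments): the quadratic term contributes $C\,\Upsilon(\ell)\,\|v'-v\|$, while the cubic term contributes $C\,\Upsilon(\ell)^2\,\|v'-v\|$, which is absorbed into the former for $\ell$ large since $\Upsilon(\ell) \to 0$. This yields the claimed Lipschitz estimate with a constant $C_1$ independent of $\ell \ge \ell_*$. There is no real obstacle here; the computation is routine once the two preliminaries (uniform boundedness of $W$ via the separation of $Z$, and domination $|\tilde u| \le C W$) are in place. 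These preliminaries are the only place where the specific geometry of the construction enters.
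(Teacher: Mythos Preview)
Your proof is correct and is precisely the routine computation the paper has in mind: the paper's own proof simply records $\tilde Q(v)=v^3+3\tilde u\,v^2$ and leaves the details to the reader. You have supplied those details, including the two ingredients that make the estimate go through---the uniform bound on the weight $W$ via $\ell$-separation of $Z$, and the domination $|\tilde u|\le C\,W$---both of which are used implicitly throughout this section of the paper.
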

\begin{proof}
The estimate follows from the expression 
\[
\tilde Q (v) =  v^3 + 3 \tilde u \, v^2,
\]
we leave the details to the reader. 
\end{proof}

The result of Proposition~\ref{pr:nla} then follows from these two results, the result of Proposition~\ref{pr:lana} and a simple application of a fixed point theorem for contraction mappings in the closed ball of radius $2 \, C_0 \,  \Upsilon (\ell)$ in ${\mathbb L}^\infty_\delta ({\bf C})$, provided $\ell$ is chosen large enough. Proofs with all details are given in \cite{MPW}. The estimate (\ref{eq:v2mv1}) follows from taking the difference between the equations satisfied by the two solutions and using (\ref{eq:G2mG1}).

\section{Projection of the error}

Again, we keep the assumptions and notations introduced in \S \ref{se:dsds} and we assume that $\ell \geq \ell_*$ and $\kappa \geq \kappa_*\, \ell^3$ so that the results of the previous sections do hold. As explained in the introduction, we now give the expansion of the vectors $F_z$ as $\ell$ tends to infinity.  In the above statements, quantities of the form $ {\mathcal O} (e^{- \gamma \ell})$ depend continuously on the forces ${\bf f}^p_r$ and depend smoothly on the points $\tilde z_j^{[p,q]}$.

We start with the general~:
\begin{lemma}
There exists $\gamma_1 >0$ such that, for all $z \in \tilde Z$, we have 
\[
F_z =  - C_* \, \sum_{z' \in \tilde N_z} \eta_{z'}  \, \Upsilon (|z'-z|) \, \frac{z'-z}{|z'-z|} + \Upsilon (\ell) \,  {\mathcal O} (e^{- \gamma_1 \ell}) ,
\]
where $\tilde N_z$ denotes the set of closest neighbors of $z$ in $\tilde Z$ and $C_* >0$ is explicitly given by
\[
\frac{1}{C_*}  : =  \iint_{\bf C} | \partial_x u_0|^2 \, dx \, dy.
\]
\label{le:mlml}
\end{lemma}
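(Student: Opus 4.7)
The plan is to test the equation satisfied by $u$ against the functions $\partial_i u_0(\cdot - z)$, $i=1,2$, which span the bounded kernel of the translated linearised operator $L_0(\cdot - z) = \Delta - 1 + 3 u_0^2(\cdot - z)$. The cut-offs $\chi_{\ell/4}$ built into the $\Xi_{z'}$ make the supports of $\Xi_{z'}$ and $\partial_i u_0(\cdot - z)$ overlap only in the annulus $|x-z| \geq 3\ell/4$, where $\partial_i u_0$ is exponentially smaller than $\Upsilon(\ell)$, so only the term $z' = z$ contributes to the projection of the source on the right-hand side; there the radial symmetry of $u_0$ gives $\int \partial_i u_0 \, \partial_j u_0 \, dx\, dy = C_*^{-1} \delta_{ij}$, producing exactly $F_z^i / C_*$ at main order.

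The task is then to compute the left-hand side at main order. Two integrations by parts together with the identity $\Delta u_0 - u_0 + u_0^3 = 0$ transform the Laplacian on $u$ into one on $u_0$ and give the cleaner expression
\[
\int (\Delta u - u + u^3) \, \partial_i u_0(\cdot - z) \, dx\, dy = \int \bigl(u^3 - 3 u \, u_0^2(\cdot - z) \bigr) \, \partial_i u_0(\cdot - z) \, dx \, dy .
\]
Writing $u = \tilde \eta_z \, u_0(\cdot - z) + \tilde u^{\ast} + v$ with $\tilde u^{\ast} := \sum_{z' \neq z} \tilde \eta_{z'} \, u_0(\cdot - z')$, and using $\tilde \eta_z^2 = 1$, the integrand rearranges into $-2 \tilde \eta_z u_0^3(\cdot - z) + 3 \tilde \eta_z u_0(\cdot - z) (\tilde u^{\ast} + v)^2 + (\tilde u^{\ast} + v)^3$. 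The first summand integrates to zero since $u_0^3 \, \partial_i u_0 = \tfrac14 \partial_i (u_0^4)$. The second summand is weighted by $u_0(\cdot - z) \partial_i u_0(\cdot - z)$, a function concentrated in a fixed neighbourhood of $z$ where $\tilde u^{\ast} + v = \mathcal O(\Upsilon(\ell))$, so it contributes an $\mathcal O(\Upsilon(\ell)^2)$ error.

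All the main-order content is therefore carried by the diagonal pieces of $(\tilde u^{\ast})^3$, namely $\sum_{z' \neq z} \tilde \eta_{z'} u_0^3(\cdot - z')$. A direct computation --- change of variable $y = x - z'$, then an integration by parts and radial symmetry to split $\partial_i u_0$ into its components parallel and orthogonal to the unit vector ${\bf e} = (z'-z)/|z'-z|$ (the orthogonal component integrates to zero by parity) --- identifies
\[
\int u_0^3(\cdot - z') \, \partial_i u_0(\cdot - z) \, dx \, dy = - \Upsilon(|z'-z|) \, \frac{(z'-z)_i}{|z'-z|}
\]
with the definition (\ref{martin}) of $\Upsilon$. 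Summing over $z' \in \tilde Z \setminus \{z\}$ and using the gap condition (\ref{eq:restrict}) to discard non closest neighbours (which contribute at worst $\Upsilon((1+\delta)\ell) = \Upsilon(\ell) \mathcal O(e^{-\delta \ell})$), one recovers the main term of the lemma. The remaining cross contributions are controlled by the estimate $\|v\|_{{\mathbb L}^\infty_\delta} \leq C \, \Upsilon(\ell)$ from Proposition~\ref{pr:nla}, together with the fact that every other cubic monomial $u_0(\cdot-z') u_0(\cdot-z'') u_0(\cdot-z''')$ with a repeated or triple-distinct center-set has its mass concentrated where $\partial_i u_0(\cdot-z)$ is of order at most $\Upsilon(\ell)$ and the remaining factors decay.

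The main obstacle is the bookkeeping of the exponential rates in the last step: one must check that every cross term appearing in the expansions of $(\tilde u^{\ast} + v)^2$ and $(\tilde u^{\ast})^3$ is genuinely of size $\Upsilon(\ell) \, \mathcal O(e^{-\gamma_1 \ell})$ rather than only $\mathcal O(\Upsilon(\ell))$, which requires the triangle inequality applied to the exponents together with the gap condition (\ref{eq:restrict}) to separate the closest-neighbour distances $\ell - \lambda_{zz'}$ from the strictly larger distances $\geq (1+\delta)\ell$. The constant $\gamma_1 > 0$ is then obtained by taking the minimum of $|\delta|$ and the various exponential rates (involving $-1 < \delta < 0$) produced in the process.
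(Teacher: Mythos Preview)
Your argument is correct and reaches the same formula as the paper, but the route is genuinely different in two places.

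First, the paper tests against the \emph{cut-off} vector field $\Xi_z=\chi_{\ell/4}\nabla u_0(\cdot-z)$ rather than the full $\nabla u_0(\cdot-z)$, and it works from the decomposition $\tilde L v+\tilde E+\tilde Q(v)=\sum\langle F_{z'},\Xi_{z'}\rangle$. The three pieces are handled separately: $\int\tilde Q(v)\,\Xi_z$ is small because $|v|\le C\Upsilon(\ell)$; $\int\tilde L v\,\Xi_z=\int v\,\tilde L\Xi_z$ is small because $L_0\nabla u_0=0$ leaves only $3(\tilde u^2-u_0^2(\cdot-z))$ and commutator terms on the support of $\chi_{\ell/4}$; and the main contribution comes from $\tilde E$, which near $z$ expands as $3u_0^2(\cdot-z)\sum_{z'\in\tilde N_z}\eta_{z'}u_0(\cdot-z')$ plus lower order. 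So the paper's leading integral is $\int 3u_0^2(\cdot-z)\,u_0(\cdot-z')\,\partial_i u_0(\cdot-z)$, i.e.\ three factors at $z$ and one at $z'$. Your leading integral is the ``dual'' one $\int u_0^3(\cdot-z')\,\partial_i u_0(\cdot-z)$ with three factors at $z'$; both equal $-\Upsilon(|z'-z|)\,(z'-z)_i/|z'-z|$ precisely because the definition (\ref{martin}) of $\Upsilon$ is an integration-by-parts identity relating the two. This is a pleasant observation.

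Second, the cutoff in the paper localises every estimate to the disc $D(z,\ell/4+1)$, so the bookkeeping of tails is automatic. Your choice to test against the uncut $\partial_i u_0(\cdot-z)$ forces you to control global integrals, and one sentence of yours is imprecise: the weight $u_0(\cdot-z)\partial_i u_0(\cdot-z)$ is \emph{not} concentrated in a fixed neighbourhood of $z$ where $w=\tilde u^\ast+v$ is $\mathcal O(\Upsilon(\ell))$; in fact $w$ is only $\mathcal O(\Upsilon(\ell))$ at $z$ itself, and grows to $\mathcal O(e^{-\ell/2})$ at distance $\ell/2$. The correct reason the term $\int u_0(\cdot-z)\,w^2\,\partial_i u_0(\cdot-z)$ is $\mathcal O(\Upsilon(\ell)^2)$ is a weighted estimate: writing $|w|\le C\sum_{z''\ne z}e^{-|x-z''|}+C\Upsilon(\ell)$ and $|u_0\partial_iu_0|\le Ce^{-2|x-z|}$, the triangle inequality on the exponents gives $\int e^{-2|x-z|}e^{-|x-z'|-|x-z''|}\le Ce^{-(1-\epsilon)(|z-z'|+|z-z''|)}$, which sums to $\mathcal O(e^{-2(1-\epsilon)\ell})$. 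The same mechanism handles the off-diagonal monomials in $(\tilde u^\ast)^3$ and the cross terms with $v$. With this correction your proof goes through; the paper's route simply avoids having to write this step.
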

\begin{proof}
We start from the fact that, by construction, the solution $u$ given by the result of Proposition~\ref{pr:nla} can be decomposed as $u= \tilde u +v$ where $\tilde u$ is defined in (\ref{eq:tildeu}) and where $v$ is a solution of
\[
\tilde L \, v+ \tilde E + \tilde Q (v) = \sum_{z'\in \tilde Z}  \langle F_{z'}, \Xi_{z'} \rangle_{\bf C} .
\]
To obtain the expansion of $F_z$, it is enough to integrate the above equation against $\Xi_z$, for some given $z \in \tilde Z$.  One immediately gets from Proposition~\ref{pr:nla}, that there exists $\gamma >0$ such that 
\[
\iint_{\bf C} \tilde Q (v) \, \langle  c ,  \Xi_z \rangle_{\bf C} \, dx \, dy = \Upsilon (\ell) \, \mathcal O ( e^{- \gamma \ell } ).
\]
for any unit vector $c \in {\bf C}$. Next, an integration by parts leads to
\[
\iint_{\bf C} \tilde L v \,  \langle  c ,  \Xi_z \rangle_{\bf C} \, dx \, dy = \iint_{\bf C}  v \,  \tilde L \langle  c ,  \Xi_z \rangle_{\bf C} \, dx \, dy  .
\]
Since $L_0 \langle  c ,  \nabla u_0 \rangle_{\bf C} =0$, we can write 
\[
\begin{array}{rllll}
 \displaystyle  \iint_{\bf C} \tilde L v \,  \langle  c ,  \Xi_z \rangle_{\bf C} \, dx \, dy  & = & 3 \,  \displaystyle  \iint_{\bf C}  v \, (\tilde u^2 - u_0^2 (\cdot -z)) \,  \langle  c ,  \chi_{\ell/4} \, \nabla u_0 (\cdot -z ) \rangle_{\bf C} \, dx \, dy  \\[3mm]
&  + & \displaystyle \iint_{\bf C}  v \,  \tilde L \left( (1- \chi_{\ell/4}) \, \langle  c ,  \nabla u_0 (\cdot -z) \rangle_{\bf C} \right) \, dx \, dy  ,
\end{array}
\]
and it is then easy to conclude that there exists $\gamma >0$ such that
\[
\iint_{\bf C} \tilde L v \,  \langle  c ,  \Xi_z \rangle_{\bf C} \, dx \, dy =  \Upsilon (\ell) \, \mathcal O ( e^{- \gamma \ell } ) ,
\]
for any unit vector $c \in {\bf C}$.  

Finally, to estimate the last term, we write
\[
\tilde E =  \left( \sum_{z' \in \tilde Z} \eta_{z'}  u_0 (\cdot -z')\right)^3 - \sum_{z' \in \tilde Z} \left( \eta_{z'}  u_0 (\cdot -z')\right)^3
\]
Since $\Xi_z$ is supported in the disc of radius $\ell/4+1$, centered at $z$, we distinguish the closest neighbors of $z$ and the other points of $Z$. Hence, we can write
\[
\tilde E =  3 \, \, u_0^2 (\cdot -z) \, \sum_{z' \in \tilde N_z} \eta_{z'}  u_0 (\cdot -z')  + \Upsilon (\ell) \, \mathcal O ( e^{- \gamma \ell } ) ,
\]
in $D(z, \ell/2)$, for some $\gamma >0$. The result then follows from the definition of $\Upsilon$. Then $\gamma_1$ in the statement of the result is the least of the $\gamma$ which appear in the above estimates.
\end{proof}

There are two different consequences according to whether $z \in \tilde Z$ is one of the vertices of $z^p_r$ for some $r \in \mathscr V^p$ or one of the $\tilde z^{[p,q]}_j$ for some $[p,q] \in \mathscr V$ and some $j=1, \ldots , 2m_{[p,q]}-1$. In the former case, we have~:
\begin{corollary}
There exists $\gamma_1 >0$ such that, if $z \in \tilde Z$ is one of the $z^p_r$ for some $r \in \mathscr V^p$ and some $p \in \mathscr V$, then 
\[
F_z = - C_* \, \eta_z \, \Upsilon (\ell) \, \left( {\bf f}_r^p  + \frac{{\bf e} + i t \, p}{n_p}\right)  +\Upsilon (\ell) \, \left( \mathcal O ( e^{- \gamma_0 \ell }) + \mathcal O ( e^{- \gamma_1 \ell } ) \right) . 
\]
\end{corollary}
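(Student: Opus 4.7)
\medskip

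\noindent\textbf{Proof proposal.} The plan is to apply Lemma~\ref{le:mlml} at $z = z^p_r$ and then carefully identify the resulting sum with the balancing equation (c) or (d) of Proposition~\ref{pr:thisisit}.

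\emph{Step 1: Identify the closest neighbors.} By the construction of $\tilde Z$ in \S5 and properties (v)--(vi) of the sub-networks, the set $\tilde N_z$ splits into two groups: the ``internal'' neighbors $z^p_{r'}$ where $r' \in \mathscr V^p_r$, which sit at distance $\ell - \lambda^p_{[r,r']}$ from $z^p_r$; and, only when $r = r^p_q$ is an external vertex of $\mathscr V^p$, the perturbed points $\tilde z_1^{[p,q]}$ for each $q \in \mathscr V_{p,r}$, which sit at distance $\ell - \lambda_{[p,q]} + \mathcal O(e^{-\gamma_0 \ell})$ by (\ref{eqdsz}). Condition~(vi) guarantees no other point of $\tilde Z$ is within distance $\ell + C$ of $z^p_r$.

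\emph{Step 2: Evaluate $\Upsilon$ and the unit directions.} By the defining identity (\ref{eq:net-4}), $\Upsilon(\ell - \lambda^p_{[r,r']}) = |\tilde a^p_{[r,r']}|\,\Upsilon(\ell)$ and $\Upsilon(\ell - \lambda_{[p,q]}) = |\tilde a_{[p,q]}|\,\Upsilon(\ell)$. For the external neighbors, the asymptotic relation (\ref{eq:upsilon2}) implies $\Upsilon'(\ell)/\Upsilon(\ell) = -1 + \mathcal O(1/\ell)$, so a displacement of order $e^{-\gamma_0 \ell}$ in the argument of $\Upsilon$ only produces a relative error of the same order:
\[
\Upsilon\bigl(|\tilde z_1^{[p,q]} - z^p_r|\bigr) = |\tilde a_{[p,q]}|\,\Upsilon(\ell)\,\bigl(1 + \mathcal O(e^{-\gamma_0 \ell})\bigr).
\]
Similarly, the unit direction from $z^p_r$ to $\tilde z_1^{[p,q]}$ equals $\mathbf{e}_{pq} + \mathcal O(e^{-\gamma_0 \ell}/\ell)$, where $\mathbf{e}_{pq}$ is the unit vector defined in \S5.

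\emph{Step 3: Extract the common sign.} Property~(vii) of \S5, combined with the alternating (resp.\ constant) sign convention along edges of $\mathscr E$ for $a_{[p,q]} < 0$ (resp.\ $> 0$), yields $\eta_{z^p_{r'}} = \eta_{z^p_r}\,\operatorname{sign}(\tilde a^p_{[r,r']})$ and $\eta_{\tilde z_1^{[p,q]}} = \eta_{z^p_r}\,\operatorname{sign}(\tilde a_{[p,q]})$. Using $\operatorname{sign}(a)\,|a| = a$ and substituting into the sum in Lemma~\ref{le:mlml}, I obtain
\[
F_z = -C_*\,\eta_z\,\Upsilon(\ell)\,\Biggl[\sum_{r' \in \mathscr V^p_r} \tilde a^p_{[r,r']}\,\frac{\tilde r' - \tilde r}{|\tilde r' - \tilde r|} + \sum_{q \in \mathscr V_{p,r}} \tilde a_{[p,q]}\,\mathbf{e}_{pq}\Biggr] + \Upsilon(\ell)\,\bigl(\mathcal O(e^{-\gamma_0 \ell}) + \mathcal O(e^{-\gamma_1 \ell})\bigr).
\]

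\emph{Step 4: Recognize the balancing relation.} The bracketed quantity is exactly the left-hand side of equation~(c) of Proposition~\ref{pr:thisisit} when $r$ is an internal vertex (in which case $\mathscr V_{p,r} = \varnothing$), and exactly the left-hand side of~(d) when $r$ is an external vertex. In either case this quantity equals ${\bf f}^p_r + \frac{{\bf e} + i\,t\,p}{n_p}$, yielding the claimed expansion.

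The main obstacle is to verify that the perturbation $|\tilde z_j^{[p,q]} - z_j^{[p,q]}| \le e^{-\gamma_0 \ell}$ only contributes at order $\Upsilon(\ell)\,\mathcal O(e^{-\gamma_0 \ell})$ to $F_z$; this is where (\ref{eq:upsilon2}) is used crucially, since it guarantees that $\Upsilon$ and $\Upsilon'$ have the same size up to a multiplicative constant, so perturbing the argument of $\Upsilon$ does not amplify the error. A secondary point is that $\tilde a$ (produced by Proposition~\ref{pr:thisisit}) appears in place of $a$ throughout, but this is absorbed entirely into the definition of the forces ${\bf f}^p_r$ via the statement of (c)--(d), so no additional error is incurred.
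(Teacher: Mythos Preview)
Your proof is correct and follows essentially the same approach as the paper: combine Lemma~\ref{le:mlml} with the balancing identity derived in \S5.2 (which is precisely your Step~4), and absorb the displacement $|\tilde z_j^{[p,q]} - z_j^{[p,q]}| \le e^{-\gamma_0 \ell}$ into the $\Upsilon(\ell)\,\mathcal O(e^{-\gamma_0\ell})$ error. The paper states the corollary without proof, treating it as immediate from these two ingredients; you have simply made the intermediate steps explicit.
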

Observe that, in this expansion, according to the result of Proposition~\ref{pr:thisisit}, the ${\bf f}^p_r$ are vectors which can be prescribed arbitrarily while ${\bf e} \in {\bf C}$ and $t \in {\bf R}$ cannot be prescribed. Also, $n_p$ is the number of vertices of $\mathscr V^p$.  

Now, when $z \in \tilde Z$ is one of the $z = z^{[p,q]}_j$ for some $[p,q] \in \mathscr V$, then, because of (\ref{eq:balj}) and (\ref{eqdsz}), the estimate in Lemma~\ref{le:mlml} reduces to
\[
F_z = \Upsilon(\ell) \, \left( \mathcal O (e^{- \gamma_0  \ell}) + \mathcal O (e^{- \gamma_1 \ell}) \right),
\]
where $\gamma_0$ is the constant used in (\ref{eqdsz}). Hence, in this case we need to be more precise and expand the first term in the estimate of Lemma~\ref{le:mlml}.

Recall that we have defined in section 5.2
\[
{\bf e}_{pq} : = \frac{r^p_q-r^q_p}{|r^p_q-r^q_p|} .
\]
We decompose
\[
\tilde z^{[p,q]}_j - z^{[p,q]}_j =  \dot z_j   \,  {\bf e}_{pq} ,
\]
where $\dot z_j \in {\bf C}$.  We set $\dot z_0 = \dot z_{2m_{[p,q]}} =0$ in agreement with the fact that we do not want to modify the end points $r^p_q$ and $r^q_p$. Finally, we set 
\[
\ell_{[p,q]} : = \ell \, (1- \alpha_\ell (a_{[p,q]})).
\]
Then we have the~:
\begin{corollary}
There exists $\gamma_1 >0$ and $C >0$, such that, if we assume that $z \in \tilde Z$ is one of the $\tilde z^{[p,q]}_j$ for some $[p,q] \in \mathscr V$ and some $j=1, \ldots , 2m_{[p,q]}-1$, then 
\[
\begin{array}{rllll}
F_z & = &  \pm \displaystyle C_* \,  \left(  \Upsilon'(\ell_{[p,q]}) \, \Re \, \left( \dot z_{j+1} - 2 \dot z_j + \dot z_{j-1} \right)  + i \, \frac{\Upsilon (\ell_{[p,q]})}{\ell_{[p,q]}} \,  \Im \, \left( \dot z_{j+1} - 2 \dot z_j + \dot z_{j-1} \right)  \right) \, {\bf e}_{pq}  \\[3mm]
&   +  & \Upsilon (\ell) \, \left(  \mathcal O ( e^{- \gamma_1 \ell } ) +  \mathcal O ( e^{-2 \gamma_0 \ell } )\right) ,
\end{array}
\]
where the $\pm$ depends on the sign of $\eta_{z'}$ where $z'$ is one of the closest neighbors of $z$ in $\tilde Z$. 
\end{corollary}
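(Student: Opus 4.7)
The plan is to apply Lemma~\ref{le:mlml} at $z = \tilde z_j^{[p,q]}$ and then perform a first order Taylor expansion of each summand in the small parameters $\dot z_{j\pm 1} - \dot z_j$. As a first step I would verify that the set of closest neighbors $\tilde N_z$ in $\tilde Z$ consists precisely of the two points $\tilde z_{j-1}^{[p,q]}$ and $\tilde z_{j+1}^{[p,q]}$. This follows from the embeddedness conditions (ii) and (vi) of \S\ref{se:dsds}, which produce a definite gap between $z_j^{[p,q]}$ and any third candidate; combined with the smallness estimate $|\dot z_i| \leq e^{-\gamma_0 \ell}$ of (\ref{eqdsz}), this gap survives the perturbation. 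The sign conventions of section~5.2 then give $\eta_{\tilde z_{j-1}^{[p,q]}} = \eta_{\tilde z_{j+1}^{[p,q]}}$ (both equal to $\eta_{\tilde z_j^{[p,q]}}$ when $a_{[p,q]}>0$, both equal to its opposite when $a_{[p,q]}<0$); call this common value $\eta$, so that $-\eta$ is the sign appearing as $\pm$ in the statement.

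Writing $\delta_\pm := \dot z_{j\pm 1} - \dot z_j \in {\bf C}$ and $w_\pm := \pm\ell_{[p,q]} + \delta_\pm$, the two vectors from $\tilde z_j^{[p,q]}$ to its neighbors take the form $\tilde z_{j\pm 1}^{[p,q]} - \tilde z_j^{[p,q]} = w_\pm\,{\bf e}_{pq}$. A direct computation in the complex variable $w$ gives
\[
|w_\pm| = \ell_{[p,q]} \pm \Re\delta_\pm + {\mathcal O}(|\delta_\pm|^2/\ell_{[p,q]}), \qquad \frac{w_\pm}{|w_\pm|} = \pm 1 + i\,\frac{\Im\delta_\pm}{\ell_{[p,q]}} + {\mathcal O}(|\delta_\pm|^2/\ell_{[p,q]}^2),
\]
the crucial algebraic point being that the imaginary correction enters with sign $+i$ in \emph{both} cases, regardless of the sign of the leading real part. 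Using (\ref{eq:upsilon2}) to compare $\Upsilon'(\ell_{[p,q]})$ with $\Upsilon(\ell_{[p,q]})/\ell_{[p,q]}$ (both being of the order of $\Upsilon(\ell_{[p,q]})$), one obtains
\[
\Upsilon(|w_\pm|)\,\frac{w_\pm}{|w_\pm|} = \pm\Upsilon(\ell_{[p,q]}) + \Upsilon'(\ell_{[p,q]})\,\Re\delta_\pm + i\,\frac{\Upsilon(\ell_{[p,q]})}{\ell_{[p,q]}}\,\Im\delta_\pm + \Upsilon(\ell)\,{\mathcal O}(e^{-2\gamma_0\ell}),
\]
where the quadratic remainder is controlled by the a priori bound $|\delta_\pm| \leq 2\,e^{-\gamma_0 \ell}$.

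Summing over $\pm$, the leading terms $\pm\Upsilon(\ell_{[p,q]})$ cancel, which is simply the linearised version of the unperturbed balancing relation (\ref{eq:balj}) for edge midpoints, and the linear corrections collect into $\Upsilon'(\ell_{[p,q]})\,\Re(\delta_+ + \delta_-) + i\,(\Upsilon(\ell_{[p,q]})/\ell_{[p,q]})\,\Im(\delta_+ + \delta_-)$ with $\delta_+ + \delta_- = \dot z_{j+1} - 2\dot z_j + \dot z_{j-1}$. Substituting into Lemma~\ref{le:mlml} and pulling out the common factor $-C_*\,\eta\,{\bf e}_{pq}$ yields the announced expansion, with combined error $\Upsilon(\ell)\bigl({\mathcal O}(e^{-\gamma_1\ell}) + {\mathcal O}(e^{-2\gamma_0\ell})\bigr)$. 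The main technical ingredient is the complex linearisation of the map $w \mapsto \Upsilon(|w|)\,w/|w|$ at $w = \pm\ell_{[p,q]}$; the cancellation of the zeroth order part relies only on the sign identity $\eta_{\tilde z_{j-1}^{[p,q]}} = \eta_{\tilde z_{j+1}^{[p,q]}}$ established at the outset.
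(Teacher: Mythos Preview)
Your proof is correct and follows essentially the same approach as the paper: apply Lemma~\ref{le:mlml}, note that $z$ has exactly the two closest neighbors $\tilde z_{j\pm 1}^{[p,q]}$ with equal sign, and Taylor-expand $\Upsilon(|w|)\,w/|w|$ at $w=\pm\ell_{[p,q]}$. The paper's own proof is terser, simply stating that ``the result follows at once from the expansions of $\Upsilon$ given in (\ref{eq:upsilon1}) and (\ref{eq:upsilon2})'' and referring to \cite{MPW} for details, whereas you spell out the linearisation explicitly; in substance the arguments coincide.
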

\begin{proof}
Observe that $z$ has only two closest neighbors which we denote by $z' = \tilde z^{[p,q]}_{j-1}$ and $z'' : = \tilde z^{[p,q]}_{j+1}$. According to Lemma~\ref{le:mlml}, we have 
\[
F_z = -  \displaystyle C_* \,  \left( \eta_{z'} \, \Upsilon (|z'-z|) \, \frac{z'-z}{|z'-z|} + \eta_{z''} \, \Upsilon (|z''-z|) \, \frac{z''-z}{|z''-z|} \right) + \Upsilon (\ell) \, \mathcal O ( e^{- \gamma_1 \ell } ) .
\]
The result follows at once from the expansions of $\Upsilon$ given in (\ref{eq:upsilon1}) and (\ref{eq:upsilon2}), the $\pm$ which appears in the statement of the Lemma depends on the sign of $\eta_{z'}$. For a more detailed proof of this expansion, we refer to \cite{MPW}, Section 5.
\end{proof}

As a consequence, the set of equations $F_z =0$, for $z= \tilde z_1^{[p,q]}, \ldots ,  \tilde z_{2m_{[p,q]}-1}^{[p,q]}$, reduces to solving a system of the form
\[
\dot z_{j+1} - 2 \dot z_j + \dot z_{j-1}   =  {\mathcal O} (\ell \, e^{- \gamma_1 \ell}) +  {\mathcal O} (\ell \, e^{- 2\gamma_0 \ell}) ,
\]
where we recall that, by assumption, $\dot z_0 = \dot z_{2m_{[p,q]}} =0$.

For all $m \geq 2$, we define the $m \times m$ matrix
\begin{equation}
\label{defA0} {\mathbb T} : =
\begin{pmatrix}
2 & - 1 &  0 & \ldots  & 0 \\
-1 & 2 & \ddots  & \ddots &\vdots \\
0& \ddots    &  \ddots    & \ddots  &0 \\
\vdots    & \ddots & \ddots & 2 &  -1  \\
 0 & \ldots &  0&-1& 2 \\
\end{pmatrix}
\in M_{m \times m}  . 
\end{equation}
It is easy to check that the inverse of $\mathbb T$ is the matrix ${\mathbb T}^{-1}$ whose entries are given by
\[
{\mathbb T}^{ij} : =   \min (i,j) - \frac{ij}{m+1}   . 
\]
Hence, the above system of equation can also be written as 
\[
\dot z_j =  {\mathcal O} (m \, \ell \, e^{- \gamma_1 \ell})  + {\mathcal O} (m \, \ell \, e^{- 2 \, \gamma_0 \ell}) .
\]
where $$m := \max_{[p,q] \in \mathscr E} m_{[p,q]}.$$

We choose 
\[
\gamma_0  :=  \gamma_1/4.
\]
As a consequence, it is easy to apply a fixed point theorem for contraction mappings to prove the~:
\begin{proposition}
\label{pr:z}
There exists $\ell_* >0$ (larger than or equal to the $\ell_*$ which appears in Proposition~\ref{pr:nla}) such that if $\ell \geq \ell_*$ and if $m \leq e^{\gamma_1 \ell/4}$, there exist $\tilde z^{[p,q]}_j$, for $[p,q] \in \mathscr E$ and $j=1, \ldots, 2m_{[p,q]}-1$, such that 
\[
F_z =0,   
\]
for all $z \in \tilde Z$ of the form $\tilde z^{[p,q]}_j$ for some $[p,q] \in \mathscr E$ and some $j=1, \ldots, 2m_{[p,q]}-1$.
Moreover, 
\[
\left| \tilde z^{[p,q]}_j - z^{[p,q]}_j \right| \leq e^{-\gamma_0 \ell}.
\]
and the $\tilde z^{[p,q]}_j$ depend continuously on the ${\bf f}^p_r$. 
\end{proposition}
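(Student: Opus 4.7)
The plan is to set up a fixed-point problem for the perturbation parameters $\dot z_j^{[p,q]} \in {\bf C}$ (where $\tilde z^{[p,q]}_j - z^{[p,q]}_j = \dot z_j \, {\bf e}_{pq}$) and solve the system $F_z = 0$ for $z = \tilde z^{[p,q]}_j$ by inverting $\mathbb T$ edge by edge, treating the remaining terms as a small nonlinear perturbation. Fix an edge $[p,q] \in \mathscr E$. The previous corollary writes $F_{\tilde z^{[p,q]}_j}$, for $j=1,\ldots,2m_{[p,q]}-1$, as the tridiagonal expression in $\dot z_{j+1} - 2\dot z_j + \dot z_{j-1}$ plus a remainder of size $\Upsilon(\ell)\bigl(\mathcal O(e^{-\gamma_1\ell}) + \mathcal O(e^{-2\gamma_0\ell})\bigr)$. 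Splitting real and imaginary parts with respect to the basis $\{{\bf e}_{pq}, i\,{\bf e}_{pq}\}$, and imposing the boundary values $\dot z_0 = \dot z_{2m_{[p,q]}} = 0$, the equations $F_{\tilde z^{[p,q]}_j}=0$ become two decoupled real tridiagonal systems
\[
\mathbb T\,(\Re \dot z) = g^R, \qquad \mathbb T\,(\Im \dot z) = g^I,
\]
where $g^R$ and $g^I$ are obtained by dividing the remainder by $\Upsilon'(\ell_{[p,q]})$ and $\Upsilon(\ell_{[p,q]})/\ell_{[p,q]}$ respectively. By the asymptotics (\ref{eq:upsilon1})--(\ref{eq:upsilon2}), both coefficients are comparable to $\Upsilon(\ell)$ up to an $\mathcal O(\ell)$ factor, so $\|g^R\|_\infty + \|g^I\|_\infty \leq C\,\ell\,(e^{-\gamma_1\ell} + e^{-2\gamma_0\ell})$.

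Next I would use the explicit inverse $\mathbb T^{ij} = \min(i,j) - ij/(m_{[p,q]}+1)$, whose entries are uniformly bounded by $m_{[p,q]}$, to obtain $\|\mathbb T^{-1}g\|_\infty \leq m_{[p,q]}\,\|g\|_\infty$. This gives the \emph{a priori} bound $|\dot z_j| \leq C\, m\,\ell\,(e^{-\gamma_1\ell} + e^{-2\gamma_0\ell})$. With the choice $\gamma_0 = \gamma_1/4$ and the hypothesis $m \leq e^{\gamma_1\ell/4}$, this is dominated by $C\,\ell\,e^{-\gamma_1\ell/4}$, which after enlarging $\ell_*$ if necessary to absorb the polynomial factor into the exponential decay is bounded by $e^{-\gamma_0\ell}$, matching the required estimate (\ref{eqdsz}).

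Now I would run a fixed-point argument for contraction mappings in the product ball
\[
\mathcal B := \Bigl\{ (\dot z_j^{[p,q]})_{[p,q] \in \mathscr E, \, 1 \leq j \leq 2m_{[p,q]}-1} \, : \, |\dot z_j^{[p,q]}| \leq e^{-\gamma_0 \ell}\Bigr\},
\]
endowed with the sup norm. The operator $\mathcal K$ is defined by: given a configuration in $\mathcal B$, form the corresponding points $\tilde z_j^{[p,q]}$ and the approximate solution $\tilde u$, apply Proposition~\ref{pr:nla} to obtain $v$ and the resulting $F_z$'s, then set $\mathcal K(\dot z)$ to be the configuration obtained by solving the two tridiagonal systems above with the remainder computed at the current $\dot z$. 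The estimate of the previous paragraph shows $\mathcal K(\mathcal B) \subset \mathcal B$. To verify contractivity, I would combine the Lipschitz dependence (\ref{eq:v2mv1}) of $v$ on the $\tilde z_j^{[p,q]}$ with the smoothness of the remainder in the corollary (whose Lipschitz constant is of the same exponential order as its size), and then apply $\mathbb T^{-1}$, which only loses the factor $m$. Under the standing assumptions on $\ell$ and $m$, this gives a contraction constant strictly less than one.

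The main obstacle is tracking how the global corrector $v$ couples the edges: a perturbation of the points on one edge modifies $v$ on the whole plane and hence the remainders $F_z$ on every other edge. The Lipschitz estimate (\ref{eq:v2mv1}) is precisely what is needed to control this coupling, and the fact that only the sup norm of $\dot z$ enters the right-hand side (together with the $m$-bound on $\mathbb T^{-1}$ and the hypothesis $m \leq e^{\gamma_1\ell/4}$) ensures the contraction closes. The fixed point of $\mathcal K$ then provides the desired $\tilde z^{[p,q]}_j$ with $F_{\tilde z^{[p,q]}_j} = 0$ and $|\tilde z^{[p,q]}_j - z^{[p,q]}_j| \leq e^{-\gamma_0 \ell}$, and continuity in the ${\bf f}^p_r$ follows from the continuity provided by Proposition~\ref{pr:nla} and the smoothness of $\mathcal K$.
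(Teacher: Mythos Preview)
Your approach is essentially the same as the paper's, which reduces the system $F_{\tilde z^{[p,q]}_j}=0$ to the tridiagonal system, inverts $\mathbb T$, and appeals to a fixed point theorem for contraction mappings; the paper's own proof is a two-line sketch and you have supplied the natural details. One small slip worth noting: from the entrywise bound $|\mathbb T^{ij}|\leq m_{[p,q]}$ you cannot conclude $\|\mathbb T^{-1}g\|_\infty\leq m_{[p,q]}\|g\|_\infty$; the $\ell^\infty\!\to\!\ell^\infty$ norm is the maximal row sum $\sum_j \mathbb T^{ij}=i(2m_{[p,q]}-i)/2$, which is of order $m_{[p,q]}^2$. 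This (and the companion issue that $C\ell e^{-\gamma_1\ell/4}\leq e^{-\gamma_0\ell}$ does not follow by ``enlarging $\ell_*$'' when $\gamma_0=\gamma_1/4$ exactly) is easily fixed by taking $\gamma_0$ slightly smaller than $\gamma_1/4$, which is precisely what the paper's proof hints at when it says ``It is enough to choose $\gamma_0>0$ close enough to $0$.''
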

\begin{proof}
It is enough to choose $\gamma_0 >0$ close enough to $0$. This result is then a consequence of a fixed point theorem for contraction mappings. 
\end{proof}

\section{The existence of infinitely many solutions of (\ref{eq:nls})}

As usual, we keep the assumptions and notations introduced in \S \ref{se:dsds} and we assume that $\ell \geq \ell_*$ and $\kappa \geq \kappa_*\, \ell^3$ so that the results of the previous sections do hold.  Building on the previous analysis, we prove the~:
\begin{proposition}
There exist ${\bf f}^p_r$ for all $p \in \mathscr V$ and all $r \in \mathscr V^p$ and there exists ${\bf e} \in {\bf C}$ and $t \in \bf R$, such that 
\[
F_z =C_* \, \eta_z \, \Upsilon (\ell) \,  \left( \frac{{\bf e} + i t \, p}{n_p}\right).
\]
for all $z \in \tilde Z$ of the form $z^p_r \in \mathscr V^p$ for some $p \in \mathscr V$. Moreover, 
\[
| {\bf f}^p_r| \leq e^{-\gamma_3 \ell} ,
\]
for some constant $\gamma_3 >0$. 
\end{proposition}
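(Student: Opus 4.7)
The plan is to substitute the expansion of $F_z$ given by the second corollary following Lemma~\ref{le:mlml} into the desired identity, reduce the Proposition to a fixed-point equation in the unknown ${\bf f} = ({\bf f}^p_r)_{p,r}$, and solve it by a contraction mapping argument.

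By Proposition~\ref{pr:z} the points $\tilde z^{[p,q]}_j$ have already been fixed so that $F_z = 0$ at every edge vertex, and only the equations at the vertices of the form $z^p_r$ remain. The second corollary following Lemma~\ref{le:mlml} gives
\[
F_{z^p_r} = -\,C_*\,\eta_{z^p_r}\,\Upsilon(\ell)\left( {\bf f}^p_r + \frac{{\bf e}({\bf f}) + i\,t({\bf f})\,p}{n_p}\right) + \Upsilon(\ell)\,R^p_r({\bf f}),
\]
where $R^p_r({\bf f}) = \mathcal{O}(e^{-\gamma_0\ell}) + \mathcal{O}(e^{-\gamma_1\ell})$ depends continuously on ${\bf f}$ via Proposition~\ref{pr:nla}, the bound (\ref{eq:v2mv1}) and Proposition~\ref{pr:z}, while ${\bf e}({\bf f}), t({\bf f})$ are the values produced by Proposition~\ref{pr:thisisit}. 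Equating this with the prescribed right-hand side $C_*\eta_{z^p_r}\Upsilon(\ell)\,\frac{{\bf e}({\bf f}) + i\,t({\bf f})\,p}{n_p}$ and dividing by $C_*\eta_{z^p_r}\Upsilon(\ell)$ recasts the Proposition as the fixed-point equation
\[
{\bf f}^p_r = -\,\frac{2\,({\bf e}({\bf f}) + i\,t({\bf f})\,p)}{n_p} + \frac{\eta_{z^p_r}\,R^p_r({\bf f})}{C_*} =: T({\bf f})_{p,r}.
\]

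I would then seek a fixed point of $T$ in the closed ball $B(0,\rho) \subset {\bf C}^N$ of radius $\rho := e^{-\gamma_3\ell}$, where $N := \sum_p n_p$ and $\gamma_3 > 0$ is to be chosen small. The parameter $\kappa$ being at our disposal subject only to $\kappa \geq \kappa_*\ell^3$, I would take it exponentially large in $\ell$ (for instance $\kappa = e^{\gamma\ell}$ with $\gamma > \gamma_3$), so that the factor $\ell^2/\kappa$ appearing in the estimate $|{\bf e}({\bf f})| + |t({\bf f})| \leq C\,\ell^2(|{\bf f}| + 1/\kappa)$ from Proposition~\ref{pr:thisisit} is itself exponentially small. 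Combined with the uniform bound $|R^p_r({\bf f})| \leq C(e^{-\gamma_0\ell} + e^{-\gamma_1\ell})$ and its exponentially small Lipschitz dependence on ${\bf f}$ (obtained by concatenating the Lipschitz estimates of Proposition~\ref{pr:nla}, (\ref{eq:v2mv1}) and Proposition~\ref{pr:z}), the map $T$ sends $B(0,\rho)$ into itself and is a contraction there. The fixed point ${\bf f}^*$ then delivers the required forces with $|{\bf f}^{*\,p}_r| \leq e^{-\gamma_3\ell}$, and the companion values ${\bf e} := {\bf e}({\bf f}^*)$, $t := t({\bf f}^*)$ complete the statement.

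The hard part will be to verify that $T$ really is a contraction despite the fact that the implicit function theorem used in Proposition~\ref{pr:thisisit} only yields an order-$\ell^2$ Lipschitz bound on ${\bf f} \mapsto ({\bf e}({\bf f}), t({\bf f}))$, which does not by itself provide a contraction through the first summand of $T$. The key point, to be exploited here, is that ${\bf e}({\bf f})$ and $t({\bf f})$ are not arbitrary functions of ${\bf f}$ but genuine obstructions, pinned down by the balancing identities of Lemma~\ref{le:net-1.1} applied to the extended network (vertices plus sub-network vertices) and by the cokernel description in (iii) of Lemma~\ref{le:net-2.3}. Projecting the fixed-point equation against the translation and rotation directions, ${\bf e}$ and $t$ are exhibited as linear functionals of the exponentially small residuals $R^p_r({\bf f})$, so that the effective Lipschitz constant of ${\bf f} \mapsto -2({\bf e} + i\,t\,p)/n_p$ on $B(0,\rho)$ inherits an exponential prefactor. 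The Banach fixed point then closes, and choosing $\gamma_3 < \min(\gamma_0, \gamma_1, \gamma)$ sufficiently small yields the announced decay.
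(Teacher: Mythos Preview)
Your strategy of reducing the proposition to a fixed-point problem in ${\bf f}$ is right, but you have made it harder than necessary in two places, and as a result the contraction argument does not close.

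First, the fixed-point equation. The ${\bf e}, t$ in the statement of the proposition are existential; they need not coincide with the ${\bf e}({\bf f}), t({\bf f})$ produced by Proposition~\ref{pr:thisisit}. If you take them to be $-{\bf e}({\bf f})$ and $-t({\bf f})$, comparing the corollary's expansion with the desired form gives simply
\[
{\bf f}^p_r \;=\; \frac{\eta_{z^p_r}}{C_*}\, R^p_r({\bf f}),
\]
with no $({\bf e}+itp)/n_p$ term left over. The term you identify as ``the hard part'' disappears.

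Second, the choice of fixed-point theorem. The paper's proof is one line: Brouwer. With the equation above, the map $T({\bf f}) = (\eta_{z^p_r} R^p_r({\bf f})/C_*)_{p,r}$ is continuous in ${\bf f}$ (this is exactly what Propositions~\ref{pr:nla} and~\ref{pr:z} assert; note they promise only continuous dependence on the forces, not Lipschitz) and satisfies $|T({\bf f})| \leq C(e^{-\gamma_0\ell}+e^{-\gamma_1\ell})$ uniformly on the ball $|{\bf f}| \leq \epsilon_*/\ell^3$. For $\gamma_3<\min(\gamma_0,\gamma_1)$, $T$ sends the closed ball of radius $e^{-\gamma_3\ell}$ to itself, and Brouwer yields the fixed point.

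Your contraction route would instead require a small Lipschitz constant for ${\bf f}\mapsto R^p_r({\bf f})$, which the paper never establishes: the Lipschitz estimate~(\ref{eq:v2mv1}) concerns only the edge points $\tilde z^{[p,q]}_j$, and the chain Proposition~\ref{pr:thisisit} $\to$ network $\to$ $\tilde u$ $\to$ Proposition~\ref{pr:nla} carries a factor of order $\ell^2$ that is not shown to cancel. Your projection idea for forcing ${\bf e}({\bf f}),t({\bf f})$ to be exponentially small does not work as stated: Lemma~\ref{le:net-1.1} is an identity valid for every weighted network and places no constraint linking ${\bf e},t$ to $R$. Indeed, ${\bf e}$ and $t$ are only shown to vanish \emph{after} the present proposition, via the Pohozaev argument at the end of section~9. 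Taking $\kappa$ exponentially large is also unnecessary.
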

\begin{proof}
This result is just a consequence of Brower's fixed point theorem.
\end{proof}

To complete the proof of the existence of a solution of (\ref{eq:nls}) close to $\tilde u$ given by (\ref{eq:tildeu}), we use a Pohozaev type argument. To explain this, let us assume that the function $u$ solves 
\begin{equation}
\Delta u - u + u^3 =  f,
\label{eq:deltau=f}
\end{equation}
in ${\bf C}$ and further assume that both $u$ and $f$ are tending to $0$ exponentially fast at infinity. Then, we have the following result which is a consequence of Pohozaev identity.
\begin{lemma}
Given any Killing vector field $\Xi$  (i.e. a vector field which generates a group of isometries of $\bf C$), the following identity holds 
\begin{equation}
\iint_{\bf C} \langle \Xi , \nabla  u\rangle_{\bf C} \,  f \, dx \, dy =0  .
\label{eq:xi}
\end{equation}
\end{lemma}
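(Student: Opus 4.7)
The plan is to substitute $f = \Delta u - u + u^3$ into the integral and verify that each of the three resulting pieces vanishes separately. The single input about $\Xi$ that I will use is that, on Euclidean $\bf C$, a Killing vector field satisfies $\partial_i \Xi_j + \partial_j \Xi_i = 0$; equivalently, $\nabla \Xi$ is antisymmetric. This has two consequences that will both be needed: $\operatorname{div}\Xi = 0$, and the contraction of $\nabla \Xi$ with any symmetric $2$-tensor is zero.

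For the zeroth and cubic order contribution, I would rewrite $-u + u^3 = F'(u)$ with $F(u) := \tfrac{1}{4}u^4 - \tfrac{1}{2}u^2$, so that
\[
\langle \Xi, \nabla u\rangle_{\bf C}\,(-u + u^3) = \Xi \cdot \nabla F(u).
\]
A single integration by parts then converts the corresponding integral into $-\iint_{\bf C} F(u)\,\operatorname{div}\Xi\,dx\,dy$, which vanishes by the first consequence of the Killing condition.

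For the Laplacian piece I would integrate by parts twice in $\iint \langle\Xi,\nabla u\rangle_{\bf C}\,\Delta u$. This is the standard Pohozaev-type manipulation: expanding $\partial_i(\Xi_j\partial_j u) = (\partial_i\Xi_j)(\partial_j u) + \Xi_j(\partial_i\partial_j u)$ and using $\sum_i (\partial_i\partial_j u)(\partial_i u) = \tfrac{1}{2}\partial_j|\nabla u|^2$ gives
\[
\iint_{\bf C} \langle\Xi,\nabla u\rangle_{\bf C}\,\Delta u\,dx\,dy = -\iint_{\bf C}\sum_{i,j}(\partial_i\Xi_j)(\partial_i u)(\partial_j u)\,dx\,dy + \tfrac{1}{2}\iint_{\bf C} |\nabla u|^2\,\operatorname{div}\Xi\,dx\,dy.
\]
The first term is the contraction of $\nabla\Xi$ against the symmetric tensor $\nabla u\otimes\nabla u$, hence vanishes pointwise by the second consequence of the Killing condition; the second term vanishes by the first. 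Summing the three contributions yields the claimed identity.

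The only thing to check carefully, but which is entirely routine, is the vanishing of boundary terms on circles $\partial D(0,R)$ during the integrations by parts. This follows from the exponential decay of $u$ and $f$ combined with standard elliptic regularity applied to $\Delta u = u - u^3 + f$, which propagates the exponential decay to $\nabla u$ and $\nabla^2 u$, so that all boundary integrals tend to zero as $R\to\infty$. There is no deeper obstacle: the whole content of the lemma is algebraic and lies in the two consequences of the Killing equation.
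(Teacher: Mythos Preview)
Your proof is correct and is essentially the same Pohozaev argument as the paper's. The only cosmetic difference is packaging: the paper assembles everything into a single divergence identity
\[
\mathrm{div}\Big((\Xi\cdot\nabla u)\,\nabla u - \tfrac{1}{2}(|\nabla u|^2+u^2)\,\Xi + \tfrac{1}{4}u^4\,\Xi\Big) = \langle \Xi,\nabla u\rangle_{\bf C}\, f
\]
and applies the divergence theorem once, whereas you treat the $\Delta u$ and $-u+u^3$ pieces separately; the underlying uses of $\mathrm{div}\,\Xi=0$ and the antisymmetry of $\nabla\Xi$ are identical in both.
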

\begin{proof} Multiplying (\ref{eq:deltau=f}) by $\Xi \cdot \nabla u$ and using simple manipulations, we get 
\[
\mbox{div} \left(  ( \Xi \cdot \nabla u) \, \nabla u - {1 \over 2} \, (|\nabla u|^2 + u^2)  \, \Xi + {1 \over 4} \, u^4  \, \Xi
\right) =   \langle \Xi , \nabla u \rangle_{\bf C} \,  f , 
\]
Then, the divergence theorem implies that
\[
\iint_{\bf C}  \langle \Xi , \nabla u \rangle_{\bf C} \,  f  \, dx \, dy =0  , 
\]
provided $u$ and $f$ decay fast enough at infinity.
\end{proof}

In our case, 
\[
f  : = C_* \, \sum_{p \in \mathscr V} \sum_{r \in \mathscr V^p} \eta_r \, \Upsilon (\ell) \,  \chi_{\ell/4}Ê\, \left\langle \frac{{\bf e} + i \, t \, p}{n_p} , \nabla  u_0 (\cdot - r) \right\rangle_{\bf C}.
\]
Plugging this expression into (\ref{eq:xi}), one concludes that ${\bf e} =0$ and $t=0$ provided $\ell $ is chosen large enough. 

Let us describe the general existence result we have obtained. 
\begin{theorem}
Assume that $(\mathring {\mathscr N} , \mathring a )$ is a  closable, flexible network and further assume that, for each $p \in \mathring {\mathscr V}$, there exists a flexible unitary network  $( \mathring{\mathscr N}^p , \mathring a^p )$ such that properties (i)-(vii) in \S~\ref{se:dsds} are fulfilled. Then, there exists $\ell* >0$ and $\kappa_* >0$ such that, for all $\ell \geq \ell_*$ and $\kappa \geq \kappa_* \, \ell^3$, there exist a network $({\mathscr N}, a)$ and subnetworks $({\mathscr N}^p, a^p)$ and a solution of (\ref{eq:nls}) which is close to the approximate solution $\tilde u$ defined in (\ref{eq:tildeu}). 
\label{th:general-result}\end{theorem}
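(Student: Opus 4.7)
The plan is to assemble the results of Sections~3--8 into a full Liapunov--Schmidt reduction. Fix $\delta \in (-1,0)$ and take $\ell \geq \ell_*$ and $\kappa \geq \kappa_* \, \ell^3$, where $\ell_*$ and $\kappa_*$ are chosen large enough so that every quantitative assertion of Proposition~\ref{pr:thisisit}, Proposition~\ref{pr:lana}, Proposition~\ref{pr:nla} and Proposition~\ref{pr:z} is valid. The unknowns of the problem split naturally into three groups: (a) the force parameters $\{\mathbf f_r^p\}_{p\in\mathscr V,\, r\in\mathscr V^p}$ which enter the construction of the weighted networks $(\mathscr N,\tilde a)$ and $(\mathscr N^p,\tilde a^p)$ through Proposition~\ref{pr:thisisit}; (b) the positions $\tilde z_j^{[p,q]}$ of the points lying strictly inside the long edges $[z^p_{r^p_q},z^q_{r^q_p}]$; and (c) the perturbation $v$ of the approximate solution $\tilde u$ and the Lagrange multipliers $F_z$. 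Once all three groups are fixed consistently, we obtain a genuine solution of \eqref{eq:nls}.

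First I would run the constructions in order. Given any admissible choice of $(\mathbf f_r^p)$ with $\ell^3|\mathbf f_r^p|\le\epsilon_*$ and any choice of points $\tilde z_j^{[p,q]}$ satisfying \eqref{eqdsz}, Proposition~\ref{pr:thisisit} produces the perturbed weighted network and sub-networks, and hence the set $\tilde Z$ and the approximate solution $\tilde u$ in \eqref{eq:tildeu}. Proposition~\ref{pr:lana} furnishes the right inverse $\tilde G$ of $\tilde L$ modulo the finite-dimensional space spanned by the $\Xi_z$, and Proposition~\ref{pr:nla} then solves, by the contraction mapping principle in the ball of radius $2C_0 \Upsilon(\ell)$ of $\mathbb L^\infty_\delta(\mathbf C)$, the projected nonlinear equation
\[
\tilde L\, v+\tilde E+\tilde Q(v)=\sum_{z\in\tilde Z}\langle F_z,\Xi_z\rangle_{\mathbf C},
\]
yielding $v$ and complex numbers $F_z$ depending continuously on $(\mathbf f_r^p)$ and smoothly on $(\tilde z_j^{[p,q]})$.

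The next step is to kill the coefficients $F_z$, which is the truly nonlinear part of the argument. Using the expansions of Section~8, which are derived from the asymptotics \eqref{eq:upsilon1}--\eqref{eq:upsilon2} of $\Upsilon$ and the balancing identities (\ref{eq:balj}) and (c)--(d) of Proposition~\ref{pr:thisisit}, the equations $F_z=0$ for the mid-edge points $z=\tilde z_j^{[p,q]}$ take the discrete-Laplacian form
\[
\dot z_{j+1}-2\dot z_j+\dot z_{j-1}=\mathcal O(\ell\, e^{-\gamma_1\ell})+\mathcal O(\ell\, e^{-2\gamma_0\ell}),
\]
with $\dot z_0=\dot z_{2m_{[p,q]}}=0$. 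Choosing $\gamma_0=\gamma_1/4$ and inverting the matrix $\mathbb T$ defined in \eqref{defA0}, Proposition~\ref{pr:z} produces, via a second contraction mapping, the positions $\tilde z_j^{[p,q]}$ that zero out $F_z$ on every mid-edge vertex, uniformly in $(\mathbf f_r^p)$. This reduces the problem to the vertex forces: by the first corollary following Lemma~\ref{le:mlml}, for the remaining $z=z^p_r$ one has
\[
F_z=-C_*\,\eta_z\,\Upsilon(\ell)\Big(\mathbf f_r^p+\tfrac{\mathbf e+i\, t\, p}{n_p}\Big)+\Upsilon(\ell)\, \mathcal O(e^{-\gamma_1\ell/4}),
\]
which is a finite-dimensional fixed-point problem for $(\mathbf f_r^p)$ depending on the a priori unknown $(\mathbf e,t)\in\mathbf C\times\mathbf R$; Brouwer's theorem then yields $(\mathbf f_r^p)$, with $|\mathbf f_r^p|\le e^{-\gamma_3\ell}$, such that $F_z$ at these vertices has the prescribed form $C_*\,\eta_z\,\Upsilon(\ell)(\mathbf e+it\, p)/n_p$.

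Finally one must verify that $\mathbf e=0$ and $t=0$, which is the delicate point since they are not variational parameters of the construction. For this I would use the Pohozaev-type identity: testing $\Delta u-u+u^3=f$ against $\langle\Xi,\nabla u\rangle_{\mathbf C}$ for any Killing field $\Xi$ of the plane yields $\iint\langle\Xi,\nabla u\rangle_{\mathbf C}\, f\, dx\, dy=0$. Applied to the three Killing fields generating translations and rotation, and combined with the exponential decay of $u$ and of $\nabla u_0$, the invariances (\ref{eq:net-1})--(\ref{eq:net-2}) built into the balanced network, and the conditions $\sum_p(\tilde p-p)=0$, $\sum_p(\tilde p-p)\wedge\tilde p=0$ of Proposition~\ref{pr:thisisit}, this produces a linear system in $(\mathbf e,t)$ whose leading-order matrix is invertible for $\ell$ large. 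Thus $\mathbf e=0$ and $t=0$, so $F_z=0$ for every $z\in\tilde Z$, and $u=\tilde u+v$ is the desired solution of \eqref{eq:nls}. The main obstacle I expect is the last step: the Pohozaev argument must be delicate enough to disentangle $\mathbf e$ and $t$ from the errors of size $\Upsilon(\ell)\,\mathcal O(e^{-\gamma\ell})$ generated by the reduction, and this is where the closability of $(\mathring{\mathscr N},\mathring a)$ and the orthogonality conditions in Lemma~\ref{le:net-2.3} are ultimately used.
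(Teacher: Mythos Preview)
Your proposal is correct and follows essentially the same route as the paper: assemble Propositions~\ref{pr:thisisit}, \ref{pr:lana}, \ref{pr:nla}, and \ref{pr:z} in order, then use Brouwer's theorem on the remaining vertex parameters $\mathbf f_r^p$, and finally invoke the Pohozaev identity against the three planar Killing fields to force $\mathbf e=0$ and $t=0$.

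One small misattribution: closability of $(\mathring{\mathscr N},\mathring a)$ is \emph{not} used in the Pohozaev step. It is consumed much earlier, inside Proposition~\ref{pr:thisisit} (via the analysis culminating in Proposition~\ref{pr:net-7.44}), where it supplies the missing degree of freedom needed to adjust all edge-lengths simultaneously. The Pohozaev argument itself is comparatively crude: once the right-hand side has the exact form $f=C_*\,\Upsilon(\ell)\sum_{p,r}\eta_r\,\chi_{\ell/4}\,\langle (\mathbf e+it\,p)/n_p,\nabla u_0(\cdot-z^p_r)\rangle_{\mathbf C}$, testing against $\partial_x u$, $\partial_y u$, and $(x\partial_y-y\partial_x)u$ produces three equations whose leading part (as $\ell\to\infty$) is a nondegenerate linear system in $(\mathbf e,t)$, with no further structural input from the network required.
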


\begin{remark}
Observe that, in our construction, we need to assume that the integers $m_{[p,q]}$ which appear in (\ref{eq:mpq}) do satisfy
\[
\ell^3 \ll \max_{[p,q] \in \mathscr E} m_{[p,q]}  \ll e^{\gamma \ell}
\]
for some $\gamma >0$. The inequality on the left comes from Proposition~\ref{pr:net-7.44} while the inequality on the right comes from Proposition~\ref{pr:z}. The constraint $m_{[p,q]} \ll e^{\gamma \ell}$ is purely technical and can be removed in the case where one is dealing with (\ref{eq:nls}), however, for other applications it is not clear that this constraint can be removed.  
\end{remark}

\section{Examples}

We give here some examples of balanced, closable networks which can be used in the construction. In particular, this will complete the proof of Theorem~\ref{th:main}. Checking the flexibility of such networks is not so difficult. However, checking whether such a network is closable or not might be a complicated task which have to be done using for example Mathematica. 

\medskip

\noindent {\bf Example 10.1 :} \label{ex:10.3}  An interesting example with symmetry group of order $3$ is given by the following.  Given $0 < \theta  <\pi/4$, the  set of vertices of the network $\mathscr N_{V}$ is given by
\[
\mathscr V_{V} : =  \{ 0 , \tan \theta  , - \tan \theta ,  i \,   ,  - i  \} ,
\]
while its set of edges is defined to be
\[
\begin{array}{rllll}
\mathscr E_{V} & := & \Big\{ [0, \tan \theta ],  [ 0  ,  i ] ,   [ 0  , - \tan \theta ],   [ 0  ,-  i ] ,   [\tan \theta  ,  i ],  [ i,  -\tan \theta  ] , \\[3mm]
&  & \hspace{65mm}  [ -\tan \theta  , - i ],  [ - i  ,  \tan \theta ] \Big\} .
\end{array}
\]
Observe that the network is invariant under the symmetries with respect to the $x$-axis and the $y$-axis.  

\begin{center}
\includegraphics[width=4cm]{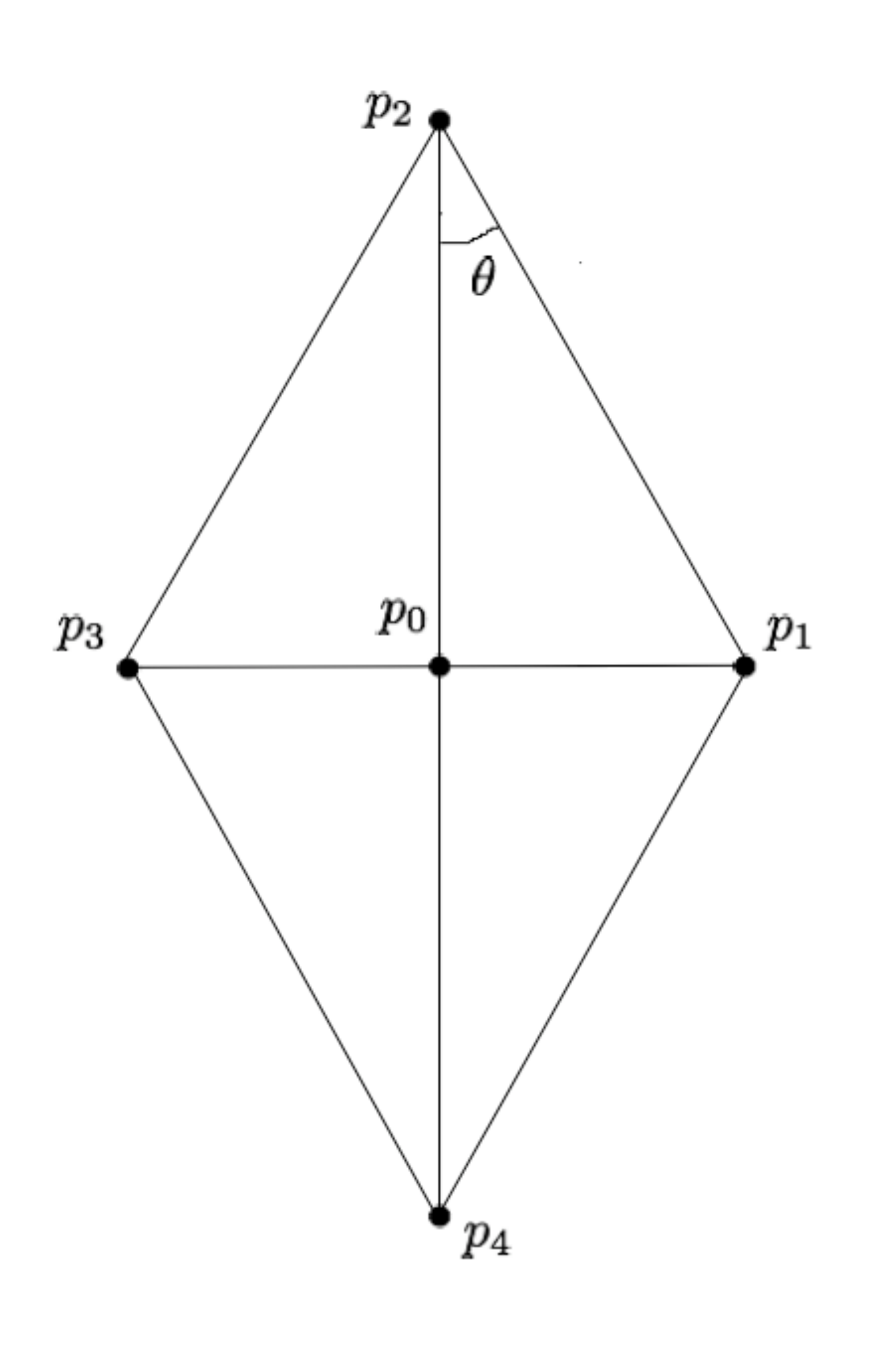}\\
Fig 13 : The network $\mathscr N_V$.
\end{center}

We define the weight function $a : \mathscr E_V \to {\bf R}-\{0\}$ by
\[
a_{[0, \tan \theta ]} = a_{[0, - \tan \theta ]} := - 2 \, \sin \theta,  \qquad  a_{[0, i]} =  a_{[0 , - i]} := - Ê2 \, \cos \theta, 
\]
and
\[
a_{[\tan \theta , i]} = a_{[i ,  - \tan \theta ]} = a_{[-\tan \theta , - i]} = a_{[-i, \tan \theta ]} :=  1 .
\]
It is easy to check that the network $(\mathscr N_V, a)$ is balanced. We also have the 
\begin{lemma}
The balanced network $(\mathscr N_V, a)$ is flexible in the sense of Definition~\ref{de:net-3.2} and closable in the sense of Definition~\ref{de:clos}.
\label{le:clso}
\end{lemma}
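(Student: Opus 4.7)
The proof splits naturally into verifying the two properties independently. Since $n=5$ and $m=8$ satisfy $m = 2n-2$, Proposition~\ref{pr:net-3.1} reduces flexibility to showing that ${\rm D}{\bf L}_{\rm Id}$ has rank $m-1 = 7$, equivalently that its kernel has the minimal dimension $3$ guaranteed by Lemma~\ref{le:net-2.1}.

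To establish flexibility I would compute $\ker({\rm D}{\bf L}_{\rm Id})$ by hand. After normalizing $\dot\Phi_0 = 0$ to kill the translation modes, the four spoke edges $[0,\pm\tan\theta]$ and $[0,\pm i]$ immediately force $\Re\,\dot\Phi_{\pm\tan\theta} = 0$ and $\Im\,\dot\Phi_{\pm i} = 0$. Writing the four remaining real unknowns as $v_1 := \Im\,\dot\Phi_{\tan\theta}$, $v_2 := \Im\,\dot\Phi_{-\tan\theta}$, $u_3 := \Re\,\dot\Phi_{i}$, $u_4 := \Re\,\dot\Phi_{-i}$, the four diagonal edges yield a linear system that forces $u_4 = -u_3$, $v_1 = -\tan\theta\,u_3$ and $v_2 = \tan\theta\,u_3$, cutting the solution space to a single real parameter. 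A direct check identifies this one-parameter family with the infinitesimal rotation $\dot\Phi_p = i\alpha\,p$, so $\ker({\rm D}{\bf L}_{\rm Id})$ is exactly $3$-dimensional and flexibility follows.

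For closability I would use Proposition~\ref{pr:net-2.1} together with the flexibility just established. The mutual adjoints ${\rm D}{\bf L}_{\rm Id}$ and ${\rm D}_a {\bf F}_{({\rm Id}, a)}$ both have rank $m-1$, so $\ker({\rm D}_a {\bf F}_{({\rm Id}, a)})$ is one-dimensional; by Lemma~\ref{le:net-2.3}(ii) it is spanned by $a = (a_{[p,q]})$, and dually the orthogonal complement of $\mathrm{image}({\rm D}{\bf L}_{\rm Id})$ in ${\bf R}^m$ is also spanned by $a$. Moreover, for every $\dot\Phi$, Lemmas~\ref{le:net-2.2}(ii) and~\ref{le:net-2.3}(iii) imply that $-{\rm D}_\Phi {\bf F}_{({\rm Id}, a)}(\dot\Phi)$ is orthogonal to both $({\bf e})_{p \in \mathscr V}$ and $(i\,p)_{p \in \mathscr V}$, and so automatically lies in $\mathrm{image}({\rm D}_a {\bf F}_{({\rm Id}, a)})$. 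Consequently $(0, {\bf T})$ lies in $\mathrm{image}(\Lambda)$ if and only if ${\bf T} \in \mathrm{image}({\rm D}{\bf L}_{\rm Id})$, and closability reduces to the single scalar condition
\[
\sum_{[p,q] \in \mathscr E_V} a_{[p,q]}\, |p-q|\, \ln|a_{[p,q]}| \;\neq\; 0.
\]

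Finally I would evaluate the sum. The four diagonal edges contribute nothing because their weight is $\pm 1$, and the four spokes combine, up to a nonzero factor, into
\[
\Sigma(\theta) \;:=\; \sin^2\theta\,\ln(2\sin\theta) + \cos^2\theta\,\ln(2\cos\theta).
\]
Setting $t := 2\sin\theta \in (0,\sqrt 2)$ and $s := 2\cos\theta \in (\sqrt 2, 2)$ for $\theta \in (0, \pi/4)$, the nonvanishing of $\Sigma(\theta)$ is equivalent to $t^{t^2} s^{s^2} \neq 1$. The elementary bound $\min_{t>0}(t^2 \ln t) = -1/(2e)$, attained at $t = e^{-1/2}$, gives $t^{t^2} \geq e^{-1/(2e)}$, while monotonicity of $s \mapsto s^{s^2}$ for $s \geq 1$ yields $s^{s^2} \geq (\sqrt 2)^2 = 2$ throughout our range, so $t^{t^2} s^{s^2} \geq 2\,e^{-1/(2e)} > 1$, and closability follows. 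The hardest step of this plan is the reduction of closability to the scalar inequality $\Sigma(\theta) \neq 0$, since it requires identifying the precise codimension-one obstruction preventing ${\bf T}$ from lying in the image of ${\rm D}{\bf L}_{\rm Id}$; the flexibility computation and the final elementary estimate are routine.
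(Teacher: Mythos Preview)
Your proof is correct and reaches the same scalar obstruction $\Sigma(\theta)=\sin^2\theta\,\ln(2\sin\theta)+\cos^2\theta\,\ln(2\cos\theta)\neq 0$ as the paper, but the route differs in two respects. For flexibility, the paper checks that ${\rm D}_a{\bf F}_{({\rm Id},a)}$ has rank $7$ by restricting to the hyperplane $\dot a_{[p_1,p_2]}=0$ and showing injectivity; you instead compute $\ker({\rm D}{\bf L}_{\rm Id})$ directly. Either is fine by Proposition~\ref{pr:net-3.1}. For closability, the paper works by brute force: it writes out the system ${\rm D}{\bf L}_{\rm Id}(\dot\Phi)=\lambda{\bf T}$, exploits the symmetries $z_3=-z_1$, $z_4=-z_2$ to reduce to a small linear system, and arrives at $\Sigma(\theta)=0$ as the solvability condition. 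Your argument is cleaner and more conceptual: you use Proposition~\ref{pr:net-2.1} to identify $(\mathrm{image}\,{\rm D}{\bf L}_{\rm Id})^\perp=\ker({\rm D}_a{\bf F}_{({\rm Id},a)})=\mathrm{span}(a)$, and you observe that in the case $m=2n-2$ the image of ${\rm D}_a{\bf F}_{({\rm Id},a)}$ fills the entire $(2n-3)$-dimensional orthogonal complement of $({\bf e})_{p\in\mathscr V}$ and $(ip)_{p\in\mathscr V}$, so any $-{\rm D}_\Phi{\bf F}_{({\rm Id},a)}(\dot\Phi)$ can always be absorbed. This reduces closability to the single pairing $\langle a,{\bf T}\rangle\neq 0$, which unwinds to $\Sigma(\theta)\neq 0$ without any coordinate computation. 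Your approach also supplies an actual proof that $\Sigma(\theta)\neq 0$ on $(0,\pi/4)$, whereas the paper simply asserts ``which never happens.'' The trade-off is that the paper's method is self-contained and does not rely on the adjointness relation, while yours gives a reusable criterion ($\langle a,{\bf T}\rangle\neq 0$) valid for any flexible balanced network with $m=2n-2$.
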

\begin{proof}
In this example $m=8$ and $n=5$ and hence $2m= n-2$. Therefore, to prove that the network is flexible, it is enough to show that ${\rm D}_a {\bf F}_{({\rm Id}, a)}$ has rank equal to $7$.

Let us assume that ${\rm D}_a {\bf F}_{({\rm Id},a)}(\dot{a})=0$ and also that $\dot{a}_{[p_1,p_2]}=0$. Then looking at the component of ${\rm D}_a {\bf F}_{({\rm Id},a)}$ at $p_1$, we find that
\begin{equation}
\dot{a}_{[p_0,p_1]}\frac{p_0-p_1}{|p_0-p_1|}+\dot{a}_{[p_1,p_4]}\frac{p_4-p_1}{|p_4-p_1|}=0.
\end{equation}
Since  $p_0-p_1$ and $p_4-p_1$ are not $\bf R$-collinear, we conclude that $\dot{a}_{[p_0,p_1]}=\dot{a}_{[p_1,p_4]}=0$. Then one looks at the component of ${\rm D}_a {\bf F}_{({\rm Id},a)}$ at $p_2$, we have
\begin{equation}
\dot{a}_{[p_0,p_2]}\frac{p_0-p_2}{|p_0-p_2|}+\dot{a}_{[p_2,p_{3}]}\frac{p_{3}-p_2}{|p_{3}-p_2|} =0.
\end{equation}
Since  $p_0-p_2$ and $p_2-p_3$ are not $\bf R$-collinear, we conclude that $\dot{a}_{[p_0,p_2]} = \dot{a}_{[p_2,p_3]} =0$. Arguing similarly at $p_3$ and $p_4$, we conclude that $\dot a =0$ and hence ${\rm D}_a {\bf F}_{({\rm Id},a)}$, restricted to the hyperplane $\dot{a}_{[p_1,p_2]}=0$, is injective. Therefore, this map has rank at least equal to $m-1 =7$. By Proposition \ref{pr:net-3.1}, this shows that the network is flexible.

Now, it remains to check that the network is closable. This amounts to check that the image of  ${\rm D} {\bf L}_{\rm Id}$ does not contain the vector ${\bf T}$. Namely, that the only solution of 
\[
{\rm D} {\bf L}_{\rm Id}(\dot \Phi) = \lambda \, {\bf T},
\]
is $\lambda =0$ and $\dot \Phi =0$. Writing $p_i- p_0 = z_i$ and $\dot \Phi_{p_i} - \dot \Phi_{p_0} = \dot z_i$, this amount to check that the only solution to
\[
\left\{ \begin{array}{rllll}
\langle 1, \dot z_1\rangle_{\bf C} & = & \lambda \, \tan \theta \, \ln (2 \sin \theta)\\[3mm]
\langle i, \dot z_2\rangle_{\bf C} & = &  \lambda \, \ln (2 \cos \theta)\\[3mm]
\langle 1, \dot z_3\rangle_{\bf C} & = &  - \lambda \tan \theta \, \ln (2 \sin \theta) \\[3mm]
\langle 1, \dot z_4\rangle_{\bf C} & = & - \lambda \, \ln (2 \cos \theta)
\end{array}
\right. \hspace{20mm}
\left\{ \begin{array}{rllll}
\langle z_2 -z_1 , \dot z_2-\dot z_1 \rangle_{\bf C} & = & 0 \\[3mm] 
\langle z_3 -z_2 , \dot z_3-\dot z_2 \rangle_{\bf C} & = & 0 \\[3mm]
\langle z_4 -z_3 , \dot z_4-\dot z_3 \rangle_{\bf C} & = & 0  \\[3mm]
\langle z_1 -z_4 , \dot z_1-\dot z_4 \rangle_{\bf C} & = & 0
\end{array}
\right.
\]
is given by $\dot z_j=0$ for $j=1, \ldots, 4$ and $\lambda =0$.  Using the second system together with the fact that $z_3 =-z_1$ and $z_2=-z_3$, we get 
\[
\langle z_2 -z_1 , \dot z_1 + \dot z_3 -\dot z_2 - \dot z_4 \rangle_{\bf C} = \langle z_3 -z_2 , \dot z_1 + \dot z_3 -\dot z_2 - \dot z_4 \rangle_{\bf C} = 0 
\]
and, since $z_2-z_1$ and $z_3-z_2$ are ${\bf R}$-independent, we conclude that 
\begin{equation}
\dot z_1 + \dot z_3 = \dot z_2 + \dot z_4.
\label{56}
\end{equation}

Using the first system, we get 
\[
\langle 1, \dot z_1 + \dot z_3 \rangle_{\bf C}  =  \langle i, \dot z_2 + \dot z_4 \rangle_{\bf C} =0
\]
and, together with (\ref{56}), this implies that $\dot z_1 + \dot z_3 = \dot z_2 + \dot z_4=0$. Using this information back into the system yields
\[
\left\{ \begin{array}{rllll}
\langle 1, \dot z_1\rangle_{\bf C} & = & \lambda \, \tan \theta \, \ln (2 \sin \theta)\\[3mm]
\langle i, \dot z_2\rangle_{\bf C} & = &  \lambda \, \ln (2 \cos \theta)\\[3mm]
\langle z_2 -z_1 , \dot z_2-\dot z_1 \rangle_{\bf C} & = & 0 \\[3mm] 
\langle z_3 -z_2 , \dot z_2+\dot z_1 \rangle_{\bf C} & = & 0 \\[3mm]
\end{array}
\right.
\]
Since $z_1-z_2 = \tan \theta - i$ and $z_2-z_3 = \tan \theta +i$, the sum of the last two equations implies that 
\[
\tan \theta \, \langle 1, \dot z_1\rangle_{\bf C}  + \langle i, \dot z_2\rangle_{\bf C} = 0,
\] 
using the first two equations, we conclude that ${\bf T}$ is not in the image of ${\rm D} {\bf L}_{\rm Id}$ unless 
\[
 \sin^2 \theta \, \ln (2 \sin \theta) + \cos^2 \theta \, \ln (2 \cos \theta) =0,
\]
which never happens. So, by Definition \ref{de:clos}, the network is closable.
\end{proof}

In the next picture we illustrate the variety of applications of our construction. We start from the network $\mathscr N_V$ and assume that $\theta \in (0, \pi/4]$. We give examples of subnetworks which can be used at the vertices $p_0$, $p_1$ and $p_2$ (similar subnetworks can of course be constructed at the other vertices). The color code is the one we have already used with copies of $+ u_0$ centered at the blue points and copies of $-u_0$ centered at the red points. 

\begin{center}
\includegraphics[width=10cm]{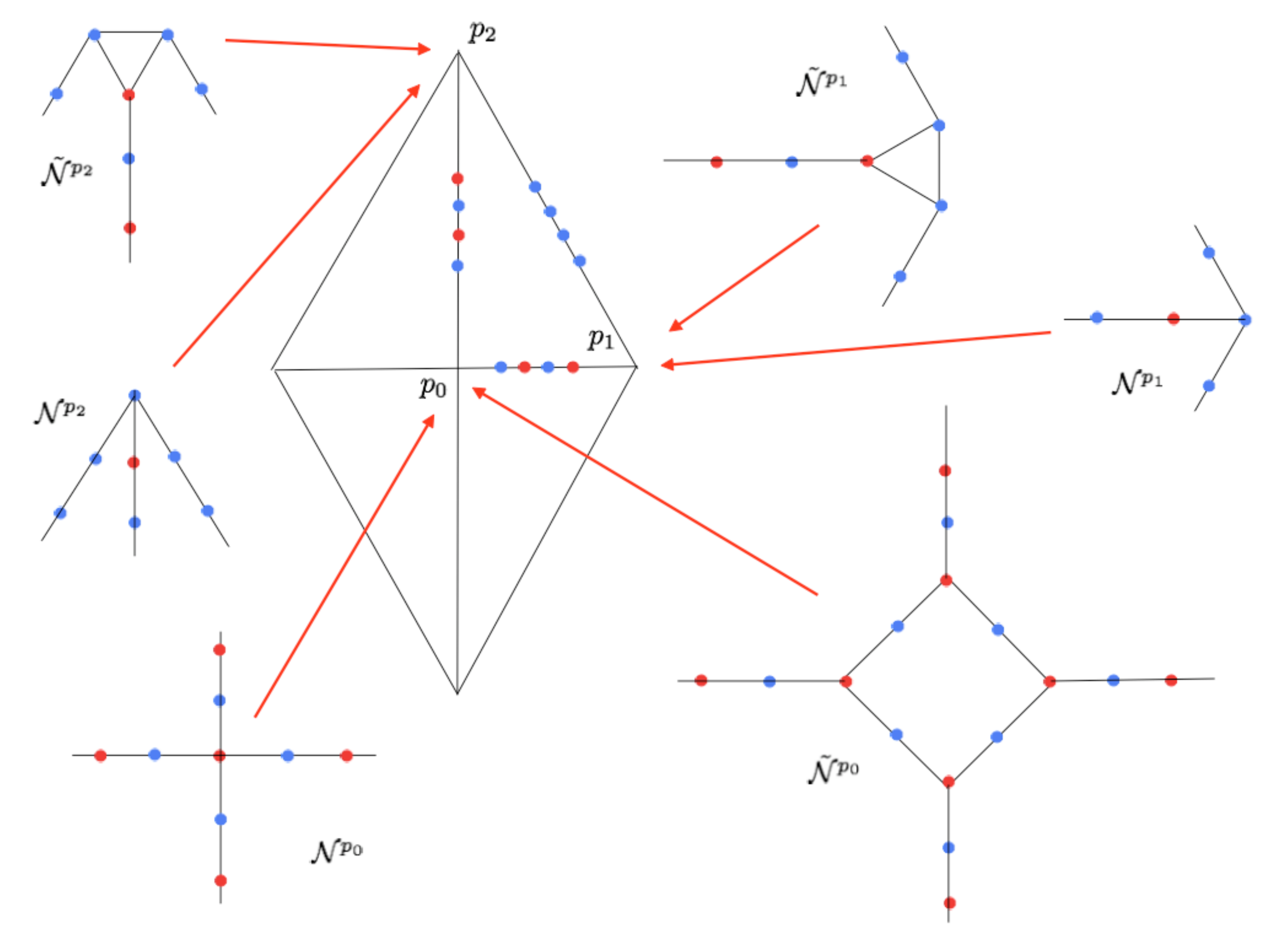}\\
Fig 14 : The network $\mathscr N_V$ with possible subnetworks which can be used at the vertices $p_0$, $p_1$ and $p_2$.
\end{center}

Some comments are due. First observe that the signs of the different subnetworks are compatible with the signs of $(\mathscr N_V, a)$ (see (vii) in the list of properties a subnetwork should fulfill). Let us now concentrated on the subnetworks we can insert at $p_2$. There are two possibilities : either $\mathscr N^{p_2}$ or $\tilde{\mathscr N}^{p_2}$. Observe that one can only use $\mathscr N^{p_2}$ when $\theta > \pi/3$ since otherwise property (vi) is not fulfilled for this subnetwork. For the same reason, $\tilde{\mathscr N}^{p_2}$, which is the unbalanced network described in Example 3.3, can only be used when $\theta >\pi/6$. Analyzing the situation at $p_1$, we see that we can use $\mathscr N^{p_1}$ if $\pi/2 - \theta > \pi/3$ and we can use $\tilde{\mathscr N^{p_1}}$ if $\pi/2 - \theta > \pi/6$.  Finally, we concentrate on the subnetworks which can be used at the point $p_0$. Here, independently of the value of $\theta$,  one can make use of  $\mathscr N^{p_0}$ or one can make use of $\tilde{\mathscr N}^{p_0}$ which is the unbalanced network described in Example 3.2 (namely the network $\mathscr N_{Pol,k}$ for any even integer $k$). To summarize, given the zoology of subnetworks we have at our disposal, we need to restrict 
\[
\theta \in (\pi/6, \pi/3).
\]
But there are certainly infinitely many other choices of subnetworks one can use. 

\medskip

\noindent {\bf Example 10.2 :} \label{ex:3.5} Here is an example of balanced network for which $m < 2n-2$.  Given $0 < \nu < \mu$, the  set of vertices of the network $\mathscr N_{Y}$ is given by
\[
\mathscr V_{Y} : =  \{\mu  + i  , \mu  - i , - \mu  + i  ,  -\mu  - i  ,   \nu ,  -\nu  \} ,
\]
while its set of edges is defined to be
\[
\begin{array}{rllll}
\mathscr E_{Y} & := & \Big\{ [-\nu, \nu],  [ \nu  ,\mu + i ] ,   [ \nu  ,\mu - i ],   [ - \nu  ,- \mu + i ] ,   [- \nu  , -\mu - i ],  [ \mu + i,  \mu - i ] ,  
\\[3mm]
&  & \qquad \qquad  [ -\mu + i  , - \mu + i ],  [ -\mu + i  ,  \mu  + i ],  [ -\mu - i  ,  \mu - i ] \Big\} .
\end{array}
\]
Observe that the network is invariant under the symmetries with respect to the $x$-axis and the $y$-axis.  

\begin{center}
\includegraphics[width=8cm]{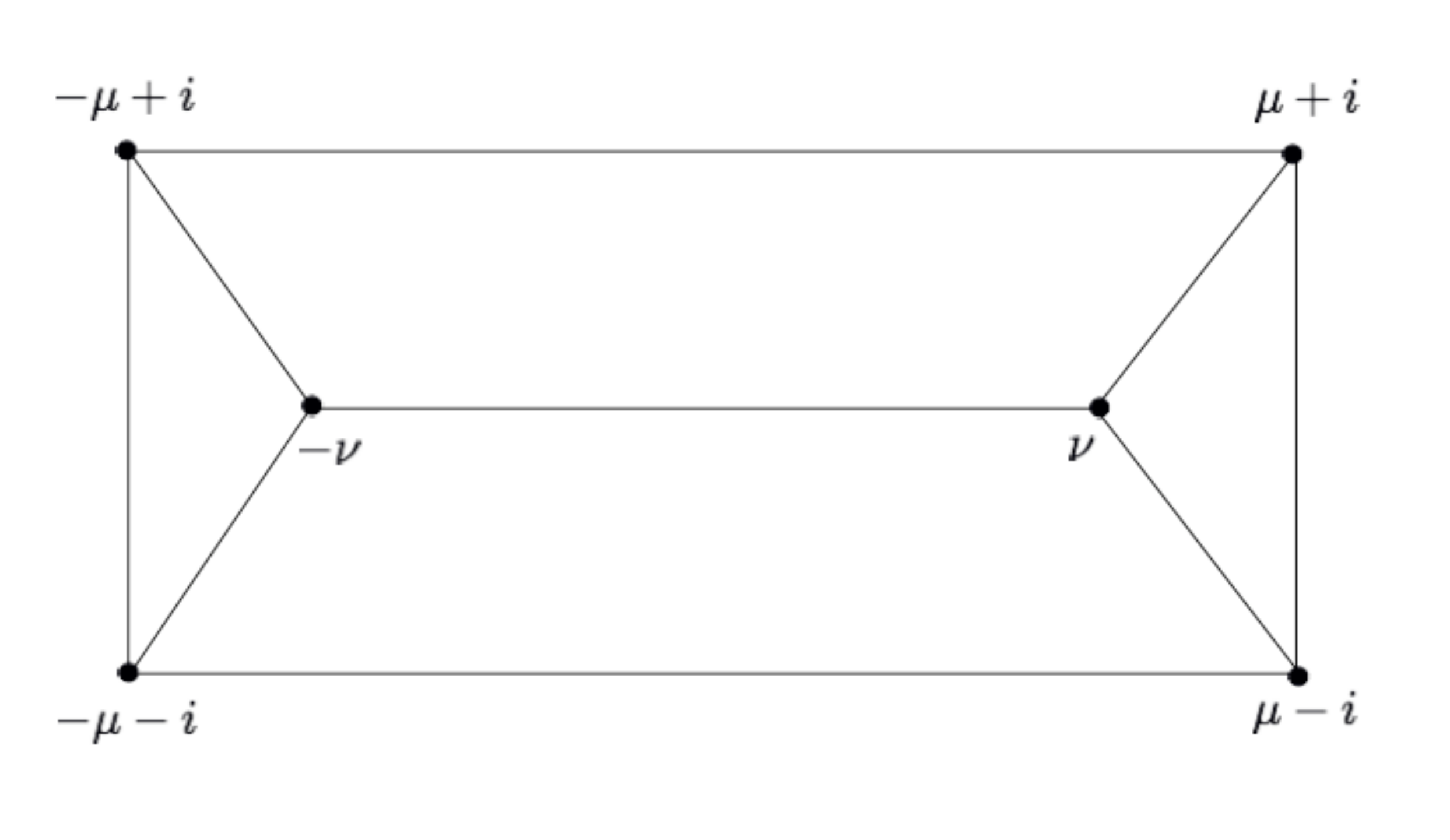}\\
Fig 15 : The network $\mathscr N_Y$.
\end{center}

We define the weight function $a : \mathscr E_Y \to {\bf R}-\{0\}$ by
\[
a_{[-\nu , \nu ]} :=2 \, \cos \theta, \qquad  a_{[\nu, \mu +i]} = a_{[\nu, \mu +i]} = a_{[-\nu, -\mu +i]} = a_{[-\nu, -\mu +i]} := 1
\]
\[
a_{[\mu +i, \mu-i]} =  a_{[-\mu +i, -\mu - i]} := Ê\sin \theta, \qquad \mbox{and} \qquad  a_{[-\mu +i, \mu+i]} =  a_{[-\mu -i, \mu - i]} := \cos \theta ,
\]
where 
\begin{equation}
\cos \theta : =  \frac{\mu-\nu}{\sqrt{1+ (\mu-\nu)^2}} \qquad \text{and} \qquad \sin \theta : = \frac{1}{\sqrt{1+ (\mu-\nu)^2}}.
\label{eq:cs}
\end{equation}
It is easy to check that the network $(\mathscr N_Y, a)$ is balanced. We also have the 
\begin{lemma}
The balanced network $(\mathscr N_Y, a)$ is flexible in the sense of Definition~\ref{de:net-3.2} and closable in the sense of Definition~\ref{de:clos}.
\label{le:net-5.2}
\end{lemma}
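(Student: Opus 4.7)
The strategy mirrors the proof of Lemma~\ref{le:clso} for the closely related network $\mathscr{N}_V$. Since here $m=9$ and $n=6$, we have $m<2n-2=10$, so Proposition~\ref{pr:net-3.1} does not apply directly; flexibility must be established by computing the kernel of $\Lambda$ itself and showing it has dimension exactly $4$. By Lemma~\ref{le:net-2.3} the four vectors $({\bf e})_{p\in\mathscr{V}}$ with ${\bf e}\in\{1,i\}$, $(i\,p)_{p\in\mathscr{V}}$, and $(a_{[p,q]})_{[p,q]\in\mathscr{E}}$ already lie in $\ker\Lambda$, so only the reverse inclusion requires work.

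For flexibility, I would take $(\dot\Phi,\dot a)\in\ker\Lambda$ and exploit the invariance of $\mathscr{N}_Y$ under the reflections across the $x$- and $y$-axes to split into symmetry blocks. At each outer corner $p_j\in\{\mu+i,\mu-i,-\mu+i,-\mu-i\}$ exactly three edges meet (one diagonal towards $\pm\nu$, one vertical and one horizontal), and their unit direction vectors are pairwise ${\bf R}$-independent; the force equation at $p_j$ together with the three length equations on these edges then pins down the associated weight perturbations and $\dot\Phi_{p_j}$ modulo the known generators, exactly as in the proof of Lemma~\ref{le:clso}. Proceeding to the interior vertices $p_5=\nu$ and $p_6=-\nu$, whose force equations and the length equation on the central edge $[-\nu,\nu]$ have not yet been used, closes the system and forces $(\dot\Phi,\dot a)$ to be a combination of the four known kernel generators.

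For closability, following the recipe of Lemma~\ref{le:clso}, it is enough to show that the only solution of ${\rm D}{\bf L}_{\rm Id}(\dot\Phi)=\lambda\,{\bf T}$ (modulo the kernel of ${\rm D}{\bf L}_{\rm Id}$ and the dilation $\dot\Phi=(p)_{p\in\mathscr{V}}$, which produces the vector $(|p-q|)_{[p,q]\in\mathscr{E}}$ in the image) has $\lambda=0$. Writing $\dot z_j:=\dot\Phi_{p_j}$ and expanding the nine scalar length equations, with $\sin\theta$ and $\cos\theta$ given by (\ref{eq:cs}), I would first use the two reflection symmetries to reduce the number of unknowns, then use the four outer length equations (on $e_6,e_7,e_8,e_9$, which constrain only $\Re$ or $\Im$ combinations of the $\dot z_j$) to solve for $\dot\Phi$ in terms of $\lambda$, and finally substitute into the four spoke equations and the central equation to obtain a single compatibility condition on $\lambda$. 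The main obstacle, as in Lemma~\ref{le:clso}, is to verify that this compatibility condition is a transcendental relation of the form $c_0(\mu,\nu)+c_1(\mu,\nu)\ln(\sin\theta)+c_2(\mu,\nu)\ln(\cos\theta)\neq 0$ for the relevant parameter range $0<\nu<\mu$; this is an elementary but somewhat delicate computation, and the claim is that the zero set of this function is a proper analytic subvariety of the parameter space, so that closability holds outside this subvariety and in particular for generic $(\mu,\nu)$.
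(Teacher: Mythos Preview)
Your flexibility outline is sound and close in spirit to the paper's argument, though the execution differs: the paper does not decompose by symmetry but instead fixes the four known kernel directions by the explicit normalisation $\dot\Phi_{z_0}=-\dot\Phi_{\hat z_0}\in{\bf R}$ and $\dot a_{[z_0,\hat z_0]}=0$, and then exhausts the length equations first (obtaining $\dot z_0=\dot{\hat z}_0=0$, then $\dot z_j=i\,x_j(z_j-z_0)$, then $x_1=x_2=x_3=x_4$) before a single summed force equation forces the common value $x$ to vanish; only afterwards are the remaining force equations used to kill $\dot a$. Your symmetry-block reduction would reach the same conclusion, but you should be aware that at each outer corner you cannot determine $\dot\Phi_{p_j}$ and the three incident $\dot a$'s from the local data alone (that would be five real unknowns against five real equations, but the equations are not independent of the global kernel); you really do need the interior vertices to close up, so the ``exactly as in Lemma~\ref{le:clso}'' reference is misleading since that lemma only treated ${\rm D}_a{\bf F}$.

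There is, however, a genuine gap in your closability argument. You conclude only that the compatibility condition $c_0(\mu,\nu)+c_1(\mu,\nu)\ln(\sin\theta)+c_2(\mu,\nu)\ln(\cos\theta)\neq 0$ holds on the complement of a proper analytic subvariety, i.e.\ for \emph{generic} $(\mu,\nu)$. The lemma as stated asserts closability for \emph{every} $0<\nu<\mu$, and your argument does not establish this. The paper does not attempt an analytic proof of non-vanishing either: it writes the condition ${\rm D}{\bf L}_{\rm Id}(\dot\Phi)=\lambda\,{\bf T}$ as an explicit $9\times 12$ linear system with parameters $\mu,\nu$ (after rescaling to $\langle z-z',\dot\Phi_z-\dot\Phi_{z'}\rangle_{\bf C}=\lambda\,|z-z'|^2\ln|a_{[z,z']}|$), records the matrix and the target vector ${\bf T}$, and defers the verification that the augmented system is inconsistent to Mathematica. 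If you want a self-contained argument you must either carry out that symbolic rank computation by hand or exhibit an explicit linear functional that annihilates the image of ${\rm D}{\bf L}_{\rm Id}$ but not ${\bf T}$; merely invoking genericity does not prove the lemma as stated.
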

\begin{proof}
In this example, $m= 9$ and $n=6$ and hence we need to show that the rank of $\Lambda$ is equal to $2n+m-4 = 17$. We set
\[
z_1 : = \mu  + i  , \quad   z_2 : = \mu  - i ,  \quad z_3 : = - \mu  + i  , \quad  z_4 : = -\mu  - i  , \quad  z_0 : = \nu ,  \quad \hat z_0 : = -\nu  .
\]

Since we know that $\Lambda$ has at least a kernel of dimension $4$ spanned by the infinitesimal translations, infinitesimal rotation and the dilation of the weight function, we can assume that we only consider perturbations such that  
\begin{equation}
\dot \Phi_{z_0} = - \dot \Phi_{\hat z_0} \in {\bf R},
\label{eq:jh}
\end{equation}
which takes care of the invariance with respect to translations and rotations, and
\begin{equation}
\hat a_{[z_0, \hat z_0]} =0,
\label{eq:jk}
\end{equation}
which takes care of the invariance with respect to dilations of $a$. For such perturbations, we need to show that $\Lambda$ is injective. So, let us assume that $\dot \phi$ and $\dot a$ are chosen is such a way that 
\[
\Lambda (\dot \Phi, \dot a) =0.
\]
and also that (\ref{eq:jh}) and (\ref{eq:jk}) do hold. We adopt the notations
\[
\dot z_j : =  \dot \Phi_{z_j},
\]
and $\dot{\hat z}_0 : =  \dot \Phi_{\hat z_0}$.

We first exploit the fact that ${\rm D} {\bf L} (\dot \Phi) =0$. Looking at the component of ${\rm D} {\bf L} (\dot \Phi) $ corresponding to the edge $[\hat z_0; z_0]$, we get
\[
\langle \hat z_0 - z_0 ,  \dot{\hat z}_0 - \hat z_0\rangle_{\bf C} =0.
\]
Because of (\ref{eq:jh}), we conclude that $\dot z_0 = \dot {\hat z}_0 =0$. Looking at the component of ${\rm D} {\bf L} (\dot \Phi) $ at $[z_0, z_{1}]$, we get
\[
\langle z_1 - z_0 ,  \dot z_1  \rangle_{\bf C} =0.
\]
and hence, there exists $x_1 \in \bf R$ such that 
\[
\dot z_1  = i \, x_1 \, (z_1 -z_0).
\]
similarly, we find that $\dot z_2  = i \, x_2 \, (z_2 -z_0)$, $\dot z_3  = i \, x_3 \, (z_3 -\hat z_0)$ and $\dot z_4  = i \, x_4 \, (z_4 - \hat z_0)$, for some $x_2, x_3, x_4 \in {\bf R}$. 

Looking now at the component of ${\rm D} {\bf L} (\dot \Phi) $ corresponding to the edge $[z_1, z_2]$, we get 
\[
\langle z_2 -z_1, \dot z_2 - \dot z_1\rangle_{\bf C}=0,
\]
pluging into this identity the information we already have concerning $\dot z_2$ and $\dot z_1$ and using the expression for $z_2$ and $z_2$, on gets
\[
\langle -2 i,  i \, x_2\,  (\mu-\nu -i) - i\, x_1 \, (\mu-\nu+i) \rangle_{\bf C}=0,
\]
and this implies that that $x_2=x_1$. Arguing similarly with the edges $[z_3,z_4], \ldots , [z_4, z_1]$, we conclude that $x_1=x_2=x_3=x_4$. Let us call by $x$ this common value. 

We now exploit the fact that ${\rm D} F_{({\rm Id}, a)} =0$. Summing the components corresponding to $z_0, z_1$ and $z_4$ we get
\[
\begin{array}{llll}
\displaystyle a_{[z_2,z_1]} \, \left( \frac{\dot z_2 - \dot z_1}{|z_2-z_1|} -\frac{\langle z_2-z_1, \dot z_2-\dot z_1\rangle_{\bf C}}{|z_2-z_1|^2}\right) + \dot a_{[z_2,z_1]} \frac{z_2 - z_1}{|z_2-z_1|}\\[3mm]
\hspace{10mm}\displaystyle a_{[z_3,z_4]} \, \left( \frac{\dot z_3 - \dot z_4}{|z_3-z_4|} -\frac{\langle z_3-z_4, \dot z_3-\dot z_4\rangle_{\bf C}}{|z_3-z_4|^2}\right) + \dot a_{[z_3,z_4]} \frac{z_3 - z_4}{|z_3-z_4|} =0
\end{array}
\]
Using the information we already have on the $\dot z_j$ and using the fact that $z_2-z_1 = z_3-z_4$, we conclude that
\[
\left(  \dot a_{[z_2,z_1]} + \dot a_{[z_3,z_4]} + i \, 2  \, x  \right) \, \frac{z_2 - z_1}{|z_2-z_1|} = 0.
\]
and we conclude that  $x =0$. Therefore, we have proven that $\dot \Phi =0$.

The proof now proceeds as in the proof of Lemma~\ref{le:clso}. For example, looking at the component of ${\rm D} F_{({\rm Id}, a)}$ corresponding to $z_0$, we get
\[
\dot a_{[z_1,z_0]} \frac{z_1 - z_0}{|z_1-z_0|} + \dot a_{[z_4,z_0]} \frac{z_4 - z_0}{|z_4-z_0|} =0
\]
and, since $z_1-z_0$ and $z_4-z_0$ are ${\bf R}$-independent, we conclude that $\dot a_{[z_1,z_0]} =\dot a_{[z_4,z_0]} =0$. Proceeding similarly for the other components of ${\rm D} F_{({\rm Id}, a)}$, we prove that $\dot a =0$. This completes the proof of the fact that the network is flexible.

It remains to check that the network is closable. This amounts to check that the only solution to ${\rm D} {\bf L}_{\rm Id} (\dot \Phi ) = \lambda \, {\bf T}$ is $\dot \Phi=0$ and $\lambda =0$. Now observe that the equations in this system are of the form
\[
\frac{\langle z-z',  \dot \Phi_z- \dot \Phi_z'\rangle_{\bf C}}{|z'-z|} = \lambda \, |z'-z| \, \ln |a_{[z',z]}|, 
\]  
or equivalently
\[
\langle z-z',  \dot \Phi_z- \dot \Phi_z'\rangle_{\bf C}  = \lambda \, |z'-z|^2 \, \ln |a_{[z',z]}|.
\]  
This is this last system we will consider. 

We write 
\[
\dot \Phi_{z_4} = x_1+ i x_2, \quad \dot \Phi_{z_1} = x_3+ i x_4, \quad  \Phi_{z_2} = x_5+ i x_6, \quad \Phi_{z_3} = x_7+ i x_8,
\]
and 
\[
\dot \Phi_{\hat z_0} = x_9+ i x_{10}, \qquad \dot \Phi_{z_0} = x_{11}+ i x_{12}
\]
and we identify the image of ${\rm D} {\bf L}_{\rm Id}$ with ${\bf R}^9$ starting by labeling the edges in the following order $[\hat z_0, z_0], [\hat z_0, z_4], [z_0, z_1], [z_0, z_2], [\hat z_0, z_3]$ and next $[z_1, z_2], [z_1, z_3],[z_3, z_4],[z_2, z_4]$ to give a vector in ${\bf R}^9$.

{ 
We recall that $\cos \theta$ and $\sin \theta$ have been defined in (\ref{eq:cs}).   We need to check that the vector of ${\bf R}^9$ whose coordinates are given by
\[
\left( \nu^2 \, \ln (2 \cos \theta) , 0, 0, 0, 0,  \ln ( \sin \theta) , \mu^2 \, \ln (\cos \theta),    \ln ( \sin \theta) , \mu^2 \, \ln (\cos \theta) \right) 
\]
is not in the image of 
\[
\left(
\begin{array}{cccccccccccccccccccc}
0 & 0 & 0	& 0  & 0 & 0 & 0 & 0 & - 2\nu & 0 & 2\nu & 0 \\
\nu -\mu & - 1 & 0 & 0 & 0 & 0 & 0 & 0 &  \mu-\nu  &1    & 0 & 0\\
0 & 0 & \mu -\nu & 1 & 0 & 0 & 0 & 0 &  0&   0      & \nu-\mu & - 1\\
0 & 0 & 0 	& 0 & \mu-\nu & -1 & 0 &0 &0 & 0 & \nu-\mu  &1 \\
0 & 0 & 0 	&0& 0 & 0 & \nu-\mu & 1 & \mu -\nu & -1 & 0  &0\\
0 & 0 & 0 & 2 & 0 & -2 & 0 & 0 & 0 & 0 & 0 & 0 \\
0 & 0 & 2\mu & 0 & 0 & 0 & -2\mu & 0 & 0 & 0 & 0 & 0 \\
0  & -2 & 0 & 0 & 0 & 0 & 0 & 2 & 0 & 0 &  0 & 0\\
- 2\mu & 0 & 0 & 0 & 2\mu & 0  & 0 &0 &0& 0&0 & 0
\end{array}
\right)
\]
This can be checked using Mathematica.}
\end{proof}

\medskip

\noindent {\bf Example 10.3 :} \label{ex:10.4}  Finally, we give an explicit example  of flexible closable network whose symmetry group reduces to the identity.  The  set of vertices of the network $\mathscr N_{C}$ is given by
\[
\mathscr V_{C} : =  \{ a+ib , 1+i  , -1+i ,  -1-i , 1 - i  \} ,
\]
for $ 0 < a < b <1$, while its set of edges is defined to be
\[
\begin{array}{rllll}
\mathscr E_{C} & := & \Big\{ [a+ib,1+i ],  [a+ib,-1+i ] ,   [a+ib,-1-i  ], [ a+ib,1-i] ,\\[3mm]
&   & \hspace{20mm} [1+i,-1+i ],    [-1+i,-1-i] ,  [-1-i,1-i],   [1-i,1+i] \Big\} .
\end{array}
\]
\begin{center}
\includegraphics[width=6cm]{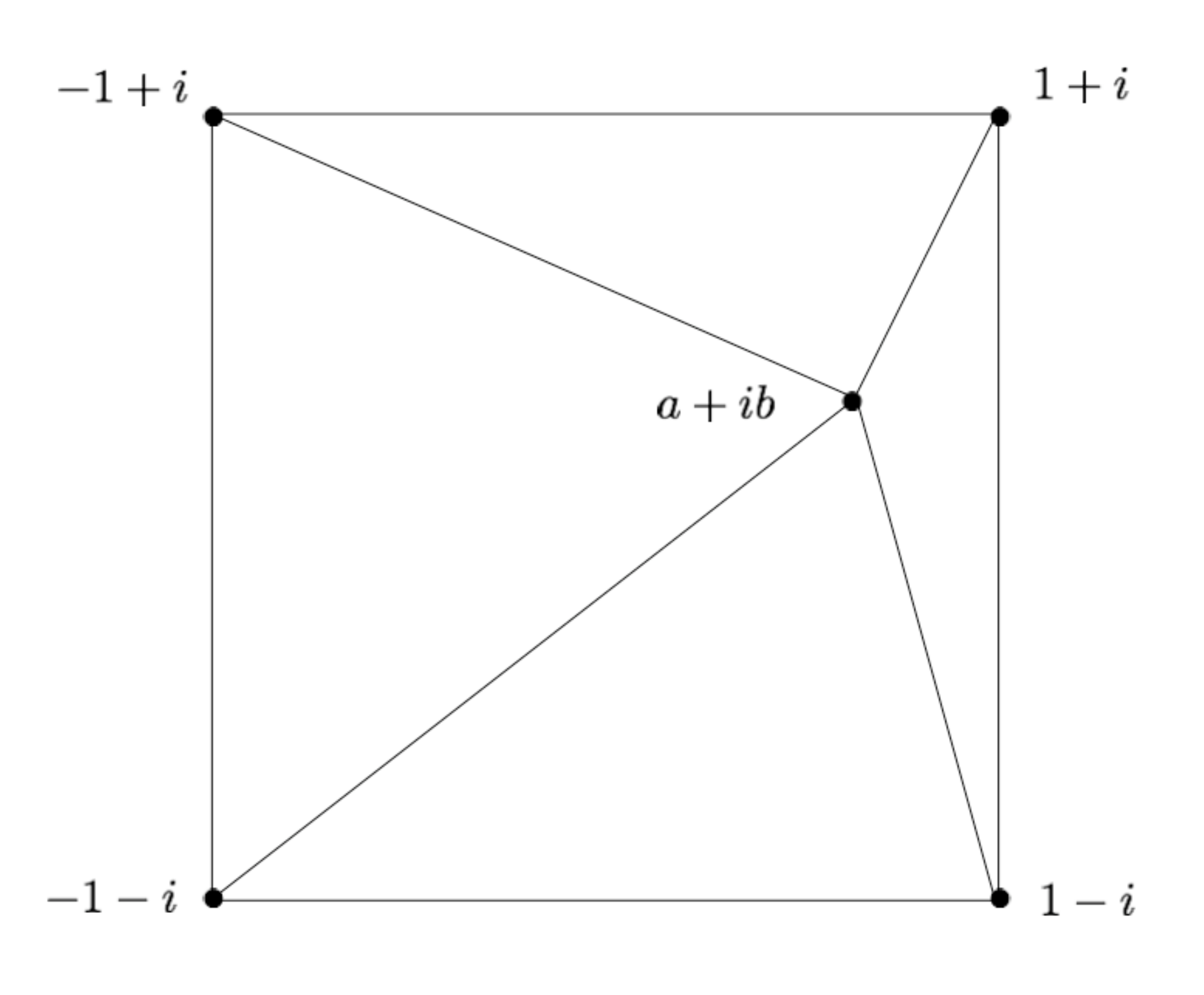}\\
Fig 16 : The nonsymmetric network $\mathscr N_V$.
\end{center}

Observe that this network has symmetry group which reduces to the identity.

We define the weight function $a : \mathscr E_C \to {\bf R}-\{0\}$ by
\[
\begin{array}{rllll}
a_{[a+ib,1+i ]} & = & \displaystyle -{\sqrt{ \frac{1}{(1-a)^2} + \frac{1}{(1-b)^2}}}\\[3mm]
a_{[a+ib,-1+i]}& = & \displaystyle  -{\sqrt{ \frac{1}{(1+a)^2}+ \frac{1}{(1-b)^2}}}\\[3mm]
a_{[a+ib,-1-i]}& = & \displaystyle -{\sqrt{ \frac{1}{(1+a)^2}+ \frac{1}{(1+b)^2}}}\\[3mm]
a_{[a+ib,1-i]}& = & \displaystyle - {\sqrt{ \frac{1}{(1-a)^2}+ \frac{1}{(1+b)^2}}}
\end{array}
\]
and
\[
a_{[1+i,-1+i]}=\frac{1}{1-b},\quad  a_{[1+i,1-i]}=\frac{1}{1-a}, \quad 
a_{[-1+i,-1-i]}=\frac{1}{1+a},\quad a_{[-1-i,1-i]}=\frac{1}{1+b}.
\]
With this choice, it can be checked that the network $(\mathscr N_C, a)$ is balanced. We also have the~:
\begin{lemma}
The balanced network $(\mathscr N_C, a)$ is flexible in the sense of Definition~\ref{de:net-3.2} and closable in the sense of Definition \ref{de:clos}.
\end{lemma}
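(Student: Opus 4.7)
Label the vertices as $p_0 := a+ib$, $p_1 := 1+i$, $p_2 := -1+i$, $p_3 := -1-i$, $p_4 := 1-i$, so that $n=5$ and $m=8$, and in particular $m = 2n-2$. Because we are in the critical case $m = 2n-2$, Proposition~\ref{pr:net-3.1} reduces flexibility to showing that ${\rm D}_a {\bf F}_{({\rm Id}, a)}$ has rank $m-1 = 7$. The plan for this step is to mimic the cascading argument used in the proofs of Lemma~\ref{le:net-5.1}, Lemma~\ref{le:clso} and Lemma~\ref{le:net-5.2}: assume ${\rm D}_a{\bf F}_{({\rm Id}, a)}(\dot a) = 0$ together with the normalization $\dot a_{[p_0, p_1]} = 0$, and prove that $\dot a$ then vanishes on all remaining edges.

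To carry this out, I would first read off the component of ${\rm D}_a{\bf F}_{({\rm Id}, a)}(\dot a)$ at $p_1$: the three contributions along the directions $p_0 - p_1$, $p_2 - p_1 = -2$, and $p_4 - p_1 = -2i$ must sum to zero. With the $p_0$ term killed by the normalization, the two remaining directions $-2$ and $-2i$ are ${\bf R}$-linearly independent, so $\dot a_{[p_1, p_2]} = \dot a_{[p_1, p_4]} = 0$. Looking next at the vertex $p_2$, only the edges to $p_0$ and $p_3$ remain with unknown $\dot a$; the directions $p_0 - p_2 = (1+a) + i(b-1)$ and $p_3 - p_2 = -2i$ are ${\bf R}$-independent (since $1+a \neq 0$), so both weights vanish. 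Iterating this argument at $p_3$ (where the directions $p_0 - p_3$ and $p_4 - p_3 = 2$ are ${\bf R}$-independent since $b+1 \neq 0$) and then at $p_4$ yields $\dot a = 0$ on every edge, establishing that ${\rm D}_a{\bf F}_{({\rm Id}, a)}$ restricted to the hyperplane $\{\dot a_{[p_0,p_1]} = 0\}$ is injective. Flexibility then follows from Proposition~\ref{pr:net-3.1}.

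For closability, according to Definition~\ref{de:clos} one must show that the vector ${\bf T} = (|p-q| \ln|a_{[p,q]}|)_{[p,q] \in \mathscr E} \in {\bf R}^8$ does not lie in the image of ${\rm D}{\bf L}_{\rm Id} : {\bf C}^5 \to {\bf R}^8$. The plan is to proceed exactly as in the closability step of Example~\ref{ex:3.5}: normalize $\dot\Phi$ to eliminate the $4$-dimensional kernel of ${\rm D}{\bf L}_{\rm Id}$ coming from translations, rotation, and a global dilation (equivalently, fix $\dot\Phi_{p_0}$ and the imaginary parts of $\dot\Phi_{p_1}$ in a consistent way), then write out the $8$ equations $\langle p-q, \dot\Phi_p - \dot\Phi_q\rangle_{\bf C} = \lambda\, |p-q|^2 \ln|a_{[p,q]}|$ corresponding to the eight edges. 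This produces a $12 \times 13$ real linear system whose coefficient matrix can be written down explicitly from the coordinates of $p_0, \ldots, p_4$ and the weight function.

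The main obstacle is exactly this last step: verifying inconsistency of the augmented system for generic $0 < a < b < 1$. The four weights joining $p_0$ to the corners are transcendental expressions in $a$ and $b$, while the four weights along the outer square are $1/(1\pm a), 1/(1\pm b)$, so the determinant condition is a transcendental identity that does not simplify by hand. Following the strategy used at the end of the proof of Lemma~\ref{le:net-5.2}, I would reduce the problem to checking that a specific square submatrix of the $9 \times 13$ matrix obtained after restricting to the orthogonal complement of the kernel has nonzero determinant for a dense set of $(a,b) \in (0,1)^2$ with $a<b$, and verify this identity using Mathematica. Since the property of being closable is Zariski open (as is flexibility, by Proposition~\ref{pr:Zd}), verifying the condition at even a single pair $(a,b)$ suffices to conclude closability on a dense open set of parameters, which is all that is needed to complete the proof.
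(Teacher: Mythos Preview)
Your approach matches the paper's almost exactly: the flexibility step is the same cascading argument as in Lemma~\ref{le:clso} (the paper even says ``identical'' to that proof; your choice to normalize $\dot a_{[p_0,p_1]}=0$ rather than an outer edge works equally well), and the closability step is likewise reduced to writing out the system $\langle p-q,\dot\Phi_p-\dot\Phi_q\rangle_{\bf C}=\lambda\,|p-q|^2\ln|a_{[p,q]}|$ and checking via Mathematica that ${\bf T}$ is not in the image of ${\rm D}{\bf L}_{\rm Id}$. Two minor corrections: the kernel of ${\rm D}{\bf L}_{\rm Id}$ is $3$-dimensional (translations and rotation only---dilation is not in this kernel), and the relevant matrix is $8\times 10$ (eight edges, ten real coordinates of $\dot\Phi$), not $12\times 13$ or $9\times 13$; but these do not affect the validity of the plan.
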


\begin{proof}
Since $m=8$ and $n=5$, we have $m=2n-2$ and we only need to check ${\rm D}_a {\bf F}_{({\rm Id},a)}$ has rank equal to 7. The proof of this fact is identical to the corresponding proof in Lemma~\ref{le:clso}. 

Therefore, it remains to check that the network is closable. This amounts to check that the only solution to ${\rm D} {\bf L}_{\rm Id} (\dot \Phi ) = \lambda \, {\bf T}$ is $\dot \Phi=0$ and $\lambda =0$. As in the previous proof, we need to show that the system
\[
\langle z-z',  \dot \Phi_z- \dot \Phi_z'\rangle_{\bf C}  = \lambda \, |z'-z|^2 \, \ln |a_{[z',z]}|,
\]  
has no solution except $\dot \Phi =0$ and $\lambda =0$.

We write 
\[
\dot \Phi_{1+i} = x_1+ i x_2, \quad \dot \Phi_{-1+i} = x_3+ i x_4, \quad  \Phi_{-1-i} = x_5+ i x_6, \quad \Phi_{1-i} = x_7+ i x_8,
\]
and 
\[
\dot \Phi_{a+ib} = x_9+ i x_{10}.
\]
and we identify the image of ${\rm D} {\bf L}_{\rm Id}$ with ${\bf R}^8$ starting by labeling the edges in the following order $[a+ib, 1+i], [a+ib, 1-i], [a+ib, -1-i], [a+ib, -1+i]$ and next $[1-i, 1+i], [1+i, -1+i],[-1+i, -1-i],[-1-i, 1-i]$ to give a vector in ${\bf R}^8$.

{ 
Therefore, we need to check that the vector of ${\bf R}^8$ whose coordinates are given by
\[
\begin{array}{rlllll}
& \displaystyle  \Big( \left( (1- a)^2+ (1- b)^2\right)  \ln \left(  \sqrt{ \frac{1}{(1-a)^2} + \frac{1}{(1-b)^2}} \right),\\
&  \displaystyle \qquad  \left( (1+a)^2+ (1-b)^2 \right) \ln \left(  \sqrt{ \frac{1}{(1+a)^2} + \frac{1}{(1-b)^2}} \right),\\
&  \displaystyle \qquad  \left( (1+a)^2+ (1+b)^2  \right) \ln \left( \sqrt{ \frac{1}{(1+a)^2} + \frac{1}{(1+b)^2}} \right),\\
&  \displaystyle \qquad  \left( (1-a)^2+ (1+b)^2  \right) \ln \left(  \sqrt{ \frac{1}{(1-a)^2} + \frac{1}{(1+b)^2}} \right),\\
&   \displaystyle  \qquad  -4 \ln(1-b), - 4 \ln(1-a) , - 4 \ln (1+a), - 4 \ln(1+b)\Big) 
\end{array}
\]
is not in the image of
\[
\left(
\begin{array}{ccccccccccccccc}
1-a & 1- b & 0 & 0 & 0 & 0 &0 & 0 & a-1 & b-1 \\
 0  &  0 & -1-a & 1-b  & 0 & 0 & 0 & 0 & 1+a & b-1\\
0 & 0 & 0&0  & -1-a & -1-b & 0 & 0 & 1+a & 1+b\\
0 & 0 & 0 & 0 & 0 &0 & 1-a &-1-b &a-1 & 1+ b\\
2&0&-2&0&0& 0&0&0&0&0\\
0&2&0&0&0& 0&0&-2&0&0\\
0&0&0&2&0 &-2&0&0&0&0\\
0&0&0&0&-2&0&2&0&0&0
\end{array}
\right)
\]
This can be checked using Mathematica.}
\end{proof}

The question is now the following~: whether or not can we use this network to complete the proof of Theorem~\ref{th:main}~? When $a+ib=0$ this network corresponds to the network we have already studied in Example 5.2, for which we have found subnetworks which are flexible. By perturbation, one can use subnetworks similar to the one described in this example at least when $a+ib$ is close enough to $0$. This completes the proof of Theorem~\ref{th:main}.

\section{More general nonlinearities and higher dimensional problems}

As already mentioned in the introduction, our result does not only hold for the equation (\ref{eq:nls}) but applies to a broader class of equations. For example, it applies to the equations of the form
\[
\Delta u - u+ f(u) =0,
\]
defined in ${\bf R}^2$, where the nonlinearity $f$ is odd, at least $\mathcal C^{1, \mu} $ for some $ \mu \in (0,1)$ and satisfy the following conditions~:

\begin{enumerate}
\item[(H.1)] $ f(0)=\partial_u f (0)=0$.\\
\item[(H.2)] The equation
\begin{equation}
\label{ground} \Delta \, u-  u +  f (u) = 0 ,
\end{equation}
has a unique {\em positive} (radially symmetric) solution $u_0$ which tends to $0$ exponentially fast at infinity. \\

\item[(H.3)] The solution $u_0$ is {\em nondegenerate}, in the sense that
\begin{equation}
\label{Ker} 
	\mbox{Ker}  \, \left(\Delta - 1 + \partial_u f  (u_0) \right) \cap L^\infty ({\bf R}^2) = \mbox{Span} \left\{ \partial_{x_1}	u_0,  \partial_{x_2} u_0 \right\} .
\end{equation}
\end{enumerate}

Typical example of nonlinearities $f$ satisfying all the above assumptions are given by the function
\[
	f(u)=  (|u|^{p-1}\, u - c \,  |u|^{q-1} \, u)  ,
\]
where $c \geq 0$ and  $1 < q < p$. In this case, the existence of $w_i$ is standard and follows from well known arguments in the calculus of variation while the uniqueness  follows from results of Kwong \cite{K} and Kwong and Zhang \cite{kz}. Concerning the nondegeneracy condition (which essentially follows from the uniqueness of the solutions), we refer to Appendix C of \cite{nitakagi}.

For example, when $c =0$, the nonlinearity is just given by 
\[
f(u) =  |u|^{p-1} \, u  .
\]

In the general case, the function $\Upsilon$ given in (\ref{martin}) for the nonlinearity $u \to u^3$, has to be replaced by
\[
\Upsilon (s) := - \iint_{{\bf C}} u_0 (z -s {\bf e}) \, \mbox{\rm div} \, \left( f(u_0) (z) \, {\bf e} \right) \, dx\, dy.
\]

Let us emphasize that our construction also generalizes to nonlinearities which are not necessarily even (see \cite{MPW} for a precise description of the nonlinearities which are allowed). However in this case, we need to define 4 different type of interaction functions and then the statement of Proposition~\ref{pr:thisisit} become even really involved. This is the reason why, we have chosen not to follow this route even though the constructions are still possible. 

Also, we should emphasize that constructions in higher dimension are also possible. Obviously, if all the network under consideration is included in a plane, one can work equivariantly and extend to construction (we again refer to \cite{MPW} for a description of the nonlinearities which are allowed for such constructions.  Also, the notions of balanced, flexible and closable networks can be extended to higher dimensions in a rather natural way. However, the construction of examples becomes quite difficult and we believed that this was not worth the effort.

\end{document}